\theoremstyle{theorem}
\newtheorem{thm}{Theorem}
\newtheorem{lemma}[thm]{Lemma}
\newtheorem{prop}[thm]{Proposition}
\newtheorem{cor}[thm]{Corollary}
\theoremstyle{definition}
\newtheorem{defn}[thm]{Definition}
\newtheorem{definition}[thm]{Definition}
\newtheorem{notn}[thm]{Notation}
\newtheorem{ex}[thm]{Example}
\newtheorem{example}[thm]{Example}
\newtheorem{remark}[thm]{Remark}
\numberwithin{thm}{section}
\def\m@th{\mathsurround\z@}
\def\cases#1{\left\{\,\vcenter{\normalbaselines\m@th
    \ialign{$##\hfil$&\quad##\hfil\crcr#1\crcr}}\right.}
\def\hang{\hangindent 24pt}
\def\d@nger{\medbreak\begingroup\clubpenalty=10000
  \def\par{\endgraf\endgroup\medbreak} %
  \noindent\hang\hangafter=-2
  \hbox to0pt{\hskip-\hangindent\dbend\hfill}}
\outer\def\danger{\d@nger}
\newcommand{\rr}{\mathbb{R}}
\newcommand{\zz}{\mathbb{Z}}
\newcommand{\nn}{\mathbb{N}}
\newcommand{\kk}{\mathbb{K}}
\renewcommand{\ss}{\mathfrak{S}}
\renewcommand{\aa}{\mathfrak{A}}
\newcommand{\comp}{\operatorname{comp}}
\newcommand{\tog}{\operatorname{Tog}}
\newcommand{\row}{\operatorname{Row}}
\newcommand{\gyr}{\operatorname{Gyr}}
\newcommand{\mc}{\operatorname{MC}}
\newcommand{\rk}{\operatorname{rk}}
\newcommand{\ra}{\rightarrow}
\newcommand{\sm}{\setminus}
\definecolor{green}{HTML}{006600}
\definecolor{orange}{HTML}{FF6200}
\definecolor{purple}{HTML}{990099}
\definecolor{coral}{HTML}{FF7F50}
\definecolor{mahogany}{HTML}{C04000}
\definecolor{gold}{HTML}{DAA541}
\definecolor{chocolate}{HTML}{D5691E}
\renewcommand{\labelitemii}{\scriptsize\raise2pt\hbox{$\!{\blacktriangleright}$}} 
\newcommand{\eso}{\EuScript{O}}
\newcommand{\cala}{\mathcal{A}}
\newcommand{\calc}{\mathcal{C}}
\newcommand{\calf}{\mathcal{F}}
\newcommand{\calj}{\mathcal{J}}
\newcommand{\call}{\mathcal{L}}
\newcommand{\cals}{\mathcal{S}}
\newcommand{\calz}{\mathcal{Z}}
\newcommand{\bfF}{\mathbf{F}}
\newcommand{\bfI}{\mathbf{I}}
\newcommand{\orb}{\mathbf{OR}}
\newcommand{\opb}{\mathbf{OP}}
\begin{document}

\title{Antichain toggling and rowmotion}
\author[Joseph]{Michael Joseph}
\address{Department of Technology and Mathematics, Dalton State College, 650 College Dr., Dalton, GA 30720, USA}
\email{mjjoseph@daltonstate.edu}

\subjclass[2010]{05E18}
\keywords{alternating group, antichain, chain polytope, graded poset, isomorphism, linear extension, order ideal, piecewise-linear toggle, poset, rowmotion, symmetric group, toggle group}

\begin{abstract}
In this paper, we analyze the toggle group on the set of antichains of a poset.  Toggle groups, generated by simple involutions, were first introduced by Cameron and Fon-Der-Flaass for order ideals of posets.  Recently Striker has motivated the study of toggle groups on general families of subsets, including antichains.  This paper expands on this work by examining the relationship between the toggle groups of antichains and order ideals, constructing an explicit isomorphism between the two groups (for a finite poset).
We also focus on the \textit{rowmotion} action on antichains of a poset that has been well-studied in dynamical algebraic combinatorics, describing it as the composition of antichain toggles.
We also describe a piecewise-linear analogue of toggling to Stanley's chain polytope.  We examine the connections with the piecewise-linear toggling Einstein and Propp introduced for order polytopes and prove that almost all of our results for antichain toggles extend to the piecewise-linear setting.

\end{abstract}

\maketitle
\vspace{-0.333 in}
\tableofcontents

\section{Introduction}\label{sec:intro}
In~\cite{cameronfonder}, Cameron and Fon-Der-Flaass defined a group (now called the \textbf{toggle group}) consisting of permutations on the set $\calj(P)$ of order ideals of a poset $P$.  This group is generated by $\#P$ simple maps called \textbf{toggles}
each of which correspond to an element of the poset.  The toggle corresponding to $e\in P$ adds or removes $e$ from the order ideal if the resulting set is still an order ideal, and otherwise does nothing.
While each individual toggle has order 2, the composition of toggles can mix up $\calj(P)$ in a way that is difficult to describe in general.
In fact, Cameron and Fon-Der-Flaass proved that on any finite connected poset, the toggle group is either the symmetric or alternating group on $\calj(P)$.

More recently, Striker has noted that there is nothing significant about order ideals of a poset in the definition of the toggle group.  For any sets $E$ and $\call\subseteq 2^E$, we can define a toggle group corresponding to $\call$.  Striker has studied the behavior of the toggle group on various sets of combinatorial interest, including many subsets of posets: chains, antichains, and interval-closed sets~\cite{strikergentog}.

In Section~\ref{sec:combinatorial}, we analyze the toggle group for the set $\cala(P)$ of antichains of a finite poset $P$; this set is in bijection with the set of order ideals of $P$.  Striker proved that like the classical toggle group on order ideals, the antichain toggle group of a finite connected poset is always either the symmetric or alternating group on $\cala(P)$.  We take this work further and describe the relation between antichain toggles and order ideal toggles (Theorems~\ref{thm:t-star} and~\ref{thm:tau-star}).  In particular, we obtain an explicit isomorphism between the toggle groups of antichains and order ideals of $P$.

Throughout the paper, we also focus on a map first studied by Brouwer and Schrijver~\cite{brouwer1974period} as a map on antichains.  It is named \textbf{rowmotion} in~\cite{strikerwilliams}, though it has various names in the literature.  Rowmotion can be defined as a map on order ideals, order filters, or antichains, as it is the composition of three maps between these sets.

For specific posets, rowmotion has been shown to exhibit nice behavior, which is why it has been of significant interest.  In general, the order of rowmotion is unpredictable, but for many posets it is known to be small.
Also, rowmotion has been shown to exhibit
various phenomena recently introduced under the heading dynamical algebraic combinatorics.
One of these is the \textbf{homomesy} phenomenon, introduced by Propp and Roby in~\cite{propproby}, in which a statistic on a set (e.g.\ cardinality) has the same average across every orbit.  In fact, one of the earliest examples of homomesy is the conjecture of Panyushev~\cite{panyushev} proven by Armstrong, Stump, and Thomas~\cite{ast} that cardinality is homomesic under antichain rowmotion on positive root posets of Weyl groups.  Striker proved a ``toggleability'' statistic to be homomesic under rowmotion on any finite poset~\cite{strikerRS}.
Other homomesic statistics have been discovered on many posets, including on products of chains, minuscule posets, and zigzag posets~\cite{propproby,vorland-3-chains,robydac,shahrzad,rush-wang-homomesy-minuscule,indepsetspaper}.
Other phenomena discovered for rowmotion on various posets include Reiner, Stanton, and White's \textbf{cyclic sieving phenomenon}~\cite{csp,cspsagan,csp-brief} and Dilks, Pechenik, and Striker's \textbf{resonance} phenomenon~\cite{dpsresonance}.


Cameron and Fon-Der-Flaass showed that rowmotion on $\calj(P)$ can also be expressed as the composition of every toggle, each used exactly once, in an order specified by a linear extension~\cite{cameronfonder}.
Having multiple ways to express rowmotion has proven to be fruitful in studying the action on various posets; for this reason rowmotion has received far more attention as a map on order ideals as opposed to antichains.
In Subsection~\ref{subsec:anti-tog}, we show that antichain rowmotion can also be expressed as the composition of every toggle, each used exactly once, in a specified order (Proposition~\ref{prop:row-toggles-anti}).
This gives another tool to studying rowmotion.
In~\cite{indepsetspaper}, Roby and the author proved results for rowmotion on zigzag posets by first analyzing toggles for independent sets of path graphs (which are the antichains of zigzag posets in disguise) and then translating them back to the language of order ideals.

In Subsection~\ref{subsec:gp}, we discuss antichain toggles on graded posets.  As has already been studied for order ideals~\cite{strikerwilliams}, we can apply antichain toggles for an entire rank at once in a graded poset.  We detail the relation between rank toggles for order ideals and antichains.
Furthermore, we delve into a natural analogue of gyration to the toggle group of antichains.  Gyration is an action defined by Striker~\cite{strikerRS} within the toggle group of a graded poset,
named for its connection to Wieland's gyration
on alternating sign matrices~\cite{wieland}.

In Section~\ref{sec:cpl} we explore a generalization to the piecewise-linear setting.  There we define toggles as continuous maps on the chain polytope of a poset, defined by Stanley~\cite{Sta86}. These correspond to antichain toggles when restricted to the vertices.  This follows work of Einstein and Propp~\cite{einpropp} who generalized the notion of toggles from order ideals to the order polytope
of a poset, also defined by Stanley~\cite{Sta86}.
Surprisingly, many properties of rowmotion on order ideals also extend to the order polytope, and we show here that the same is true between antichain toggles and chain polytope toggles.
The main results of this section are Theorems~\ref{thm:iso-cpl} and~\ref{thm:row-C}.

As one would likely expect, some properties of antichain toggles extend to
the chain polytope while others do not.  In
Subsection~\ref{subsec:zigzag}, we give concrete examples as we consider chain polytope toggles on zigzag posets.
We demonstrate that while the main homomesy result of the author and Roby on toggling antichains of zigzag posets~\cite{indepsetspaper} does not
extend to the chain polytope, a different homomesy result does extend.
Despite numerous homomesy results in the literature for finite orbits,
Theorem~\ref{thm:indep-sets-C} is one of the few known results of an asymptotic generalization to orbits that are probably not always finite.

Our new results are in Subsections~\ref{subsec:anti-tog},~\ref{subsec:gp},~\ref{subsec:tog-poly}, and~\ref{subsec:zigzag}.  Some directions for future research are discussed in Section~\ref{sec:future tense}.
The other sections detail the necessary background material and framework as well as the notation we use, much of which varies between sources.

\section{Toggle groups for order ideals and antichains}\label{sec:combinatorial}
\subsection{Poset terminology and notation}
We assume the reader is familiar with elementary poset theory.  Though we very minimally introduce and define the terms and notation used in the paper, any reader
unfamiliar with posets should visit Stanley's text for a thorough introduction
~\cite[Ch.~3]{ec1ed2}.

\begin{defn}
A \textbf{partially ordered set} (or \textbf{poset} for short) is a set $P$ together with a binary relation `$\leq$' on $P$ that is reflexive, antisymmetric, and transitive.
\end{defn}

We use the notation $x\geq y$ to mean $y\leq x$, $x<y$ to mean ``$x\leq y$ and $x\not=y$,'' and $x>y$ to mean ``$x\geq y$ and $x\not=y$.''

Throughout this paper, let $P$ denote a finite poset.

\begin{definition}
For $x,y\in P$, we say that $x$ is \textbf{covered} by $y$ (or equivalently $y$ \textbf{covers} $x$), denoted $x\lessdot y$,
if $x<y$ and there does not exist $z$ in $P$ with $x<z<y$. The notation $x\gtrdot y$ means that $y$ is covered by $x$.  If either $x\leq y$ or $y\leq x$, we say $x$ and $y$ are \textbf{comparable}.
Otherwise, $x$ and $y$ are \textbf{incomparable}, denoted $x\parallel y$.
\end{definition}

For a finite poset, all relations can be formed by the cover relations and transitivity.
We depict such posets by their \textbf{Hasse diagrams}, where each cover relation $x\lessdot y$ is represented by placing $y$ above $x$ and connecting $x$ and $y$ with an edge.

\subsection{Order ideals, antichains, and rowmotion}
In this subsection, we discuss an action that was first studied by Brouwer and Schrijver~\cite{brouwer1974period} and more recently by many others, particularly in~\cite{cameronfonder, panyushev, strikerwilliams, propproby,robydac}.  This action has several names in the literature; we use the name ``rowmotion'' due to Striker and Williams~\cite{strikerwilliams}.

\begin{defn} \hspace{-8 in}.
\begin{itemize}
\item An \textbf{order ideal} (resp.\ \textbf{order filter}) of $P$ is a subset $I\subseteq P$ such that if $x\in I$ and $y<x$ (resp.\ $y>x$) in $P$, then $y\in I$.  We denote the sets of order ideals and order filters of $P$ as $\calj(P)$ and $\calf(P)$ respectively.
\item An \textbf{antichain} (resp.\ \textbf{chain}) of $P$ is a subset $S\subseteq P$ in which any two elements are incomparable (resp.\ comparable).  The set of antichains of $P$ is denoted $\cala(P)$.
\item For a subset $S\subseteq P$, an element $x\in S$ is a \textbf{maximal} (resp.\ \textbf{minimal}) element of $S$ if $S$ does not contain any $y>x$ (resp.\ $y<x$).
\end{itemize}
\end{defn}

Complementation is a natural bijection between $\calj(P)$ and $\calf(P)$.  Let $\comp(S)$ denote the complement of a subset $S\subseteq P$.  Also, any order ideal (resp.\ filter) is uniquely determined by its set of maximal (resp.\ minimal) elements, which is an antichain.  Any antichain $S$ of $P$ generates an order ideal
$\bfI(S) := \{x\in P\;|\; x\leq y, y\in S\}$ whose set of maximal elements is $S$ and an order filter $\bfF(S) := \{x\in P\;|\; x\geq y, y\in S\}$ whose set of minimal elements is $S$.  This gives natural bijections $\bfI:\cala(P)\ra \calj(P)$ and $\bfF:\cala(P)\ra \calf(P)$.

For an antichain $S\in\cala(P)$, we call $\bfI(A)$ the \textbf{order ideal generated by \textsl{A}}, and $\bfF(A)$ the \textbf{order filter generated by \textsl{A}}.

We compose the bijections from above to obtain maps from one of $\calj(P)$, $\cala(P)$, or $\calf(P)$ into itself.

\begin{defn}
For an antichain $A\in \cala(P)$, define $\row_\cala(A)$ to be the set of minimal elements of the complement of the order ideal generated by $A$.
For an order ideal $I\in \calj(P)$, define $\row_\calj(I)$ to be the order ideal generated by the minimal elements of the complement of $I$.
For an order filter $F\in \calf(P)$, define $\row_\calf(F)$ to be the order filter generated by the maximal elements of the complement of $F$.

These maps can each be expressed as the composition of three maps as follows.
\begin{align*}
\row_\cala &: \cala(P) \stackrel{\bfI}{\longrightarrow} \calj(P) \stackrel{\comp}{\longrightarrow} \calf(P) \stackrel{\bfF^{-1}}{\longrightarrow} \cala(P)\\
\row_\calj &: \calj(P) \stackrel{\comp}{\longrightarrow} \calf(P) \stackrel{\bfF^{-1}}{\longrightarrow} \cala(P) \stackrel{\bfI}{\longrightarrow} \calj(P)\\
\row_\calf &: \calf(P) \stackrel{\comp}{\longrightarrow} \calj(P) \stackrel{\bfI^{-1}}{\longrightarrow} \cala(P) \stackrel{\bfF}{\longrightarrow} \calf(P)
\end{align*}
\end{defn}

These bijections are all called \textbf{rowmotion}.  We will focus primarily on $\row_\cala$ and $\row_\calj$ (since $\row_\calf:\calf(P)\ra \calf(P)$ is equivalent to $\row_\calj$ for the dual poset that swaps the `$\leq$' and `$\geq$' relations).  There is a correspondence between the orbits under these two maps; each $\row_\cala$-orbit $\eso$ has a corresponding $\row_\calj$-orbit consisting of the order ideals generated by the antichains in $\eso$, and vice versa.  The following commutative diagram depicts this relation.

\begin{center}
\begin{tikzpicture}
\node at (0,1.8) {$\cala(P)$};
\node at (0,0) {$\calj(P)$};
\node at (3.25,1.8) {$\cala(P)$};
\node at (3.25,0) {$\calj(P)$};
\draw[semithick, ->] (0,1.3) -- (0,0.5);
\node[left] at (0,0.9) {$\bfI$};
\draw[semithick, ->] (0.7,0) -- (2.5,0);
\node[below] at (1.5,0) {$\row_\calj$};
\draw[semithick, ->] (0.7,1.8) -- (2.5,1.8);
\node[above] at (1.5,1.8) {$\row_\cala$};
\draw[semithick, ->] (3.25,1.3) -- (3.25,0.5);
\node[right] at (3.25,0.9) {$\bfI$};
\end{tikzpicture}
\end{center}

\begin{ex}\label{ex:a3-row}
Consider the following poset $P$ (which is the positive root poset $\Phi^+(A_3)$).

\begin{center}
\begin{tikzpicture}[scale=0.5]
\draw[thick] (-0.1, 1.9) -- (-0.9, 1.1);
\draw[thick] (0.1, 1.9) -- (0.9, 1.1);
\draw[thick] (-1.1, 0.9) -- (-1.9, 0.1);
\draw[thick] (-0.9, 0.9) -- (-0.1, 0.1);
\draw[thick] (0.9, 0.9) -- (0.1, 0.1);
\draw[thick] (1.1, 0.9) -- (1.9, 0.1);
\draw[fill] (0,2) circle [radius=0.2];
\draw[fill] (-1,1) circle [radius=0.2];
\draw[fill] (1,1) circle [radius=0.2];
\draw[fill] (-2,0) circle [radius=0.2];
\draw[fill] (0,0) circle [radius=0.2];
\draw[fill] (2,0) circle [radius=0.2];
\end{tikzpicture}
\end{center}

Below we show an example of each of $\row_\cala$ acting on an antichain and $\row_\calj$ acting on an order ideal as their respective three-step processes.  In each, hollow circles represent elements of $P$ not in the antichain, order ideal, or order filter.  
Notice that the order ideal we start with is generated by the antichain we begin with.  After applying rowmotion to both, we get the order ideal generated by the antichain we obtain.

\begin{center}
\begin{tikzpicture}[scale=0.5]
\node at (-3.5,1) {$\row_\cala:$};
\node at (-3.5,-3) {$\row_\calj:$};
\begin{scope}
\draw[thick] (-0.1, 1.9) -- (-0.9, 1.1);
\draw[thick] (0.1, 1.9) -- (0.9, 1.1);
\draw[thick] (-1.1, 0.9) -- (-1.9, 0.1);
\draw[thick] (-0.9, 0.9) -- (-0.1, 0.1);
\draw[thick] (0.9, 0.9) -- (0.1, 0.1);
\draw[thick] (1.1, 0.9) -- (1.9, 0.1);
\draw (0,2) circle [radius=0.2];
\draw (-1,1) circle [radius=0.2];
\draw[fill] (1,1) circle [radius=0.2];
\draw[fill] (-2,0) circle [radius=0.2];
\draw (0,0) circle [radius=0.2];
\draw (2,0) circle [radius=0.2];
\end{scope}
\node at (3.5,1) {$\stackrel{\bfI}{\longmapsto}$};
\begin{scope}[shift={(7,0)}]
\draw[thick] (-0.1, 1.9) -- (-0.9, 1.1);
\draw[thick] (0.1, 1.9) -- (0.9, 1.1);
\draw[thick] (-1.1, 0.9) -- (-1.9, 0.1);
\draw[thick] (-0.9, 0.9) -- (-0.1, 0.1);
\draw[thick] (0.9, 0.9) -- (0.1, 0.1);
\draw[thick] (1.1, 0.9) -- (1.9, 0.1);
\draw (0,2) circle [radius=0.2];
\draw (-1,1) circle [radius=0.2];
\draw[fill] (1,1) circle [radius=0.2];
\draw[fill] (-2,0) circle [radius=0.2];
\draw[fill] (0,0) circle [radius=0.2];
\draw[fill] (2,0) circle [radius=0.2];
\end{scope}
\node at (10.5,1) {$\stackrel{\comp}{\longmapsto}$};
\begin{scope}[shift={(14,0)}]
\draw[thick] (-0.1, 1.9) -- (-0.9, 1.1);
\draw[thick] (0.1, 1.9) -- (0.9, 1.1);
\draw[thick] (-1.1, 0.9) -- (-1.9, 0.1);
\draw[thick] (-0.9, 0.9) -- (-0.1, 0.1);
\draw[thick] (0.9, 0.9) -- (0.1, 0.1);
\draw[thick] (1.1, 0.9) -- (1.9, 0.1);
\draw[fill] (0,2) circle [radius=0.2];
\draw[fill] (-1,1) circle [radius=0.2];
\draw (1,1) circle [radius=0.2];
\draw (-2,0) circle [radius=0.2];
\draw (0,0) circle [radius=0.2];
\draw (2,0) circle [radius=0.2];
\end{scope}
\node at (17.5,1) {$\stackrel{\bfF^{-1}}{\longmapsto}$};
\begin{scope}[shift={(21,0)}]
\draw[thick] (-0.1, 1.9) -- (-0.9, 1.1);
\draw[thick] (0.1, 1.9) -- (0.9, 1.1);
\draw[thick] (-1.1, 0.9) -- (-1.9, 0.1);
\draw[thick] (-0.9, 0.9) -- (-0.1, 0.1);
\draw[thick] (0.9, 0.9) -- (0.1, 0.1);
\draw[thick] (1.1, 0.9) -- (1.9, 0.1);
\draw (0,2) circle [radius=0.2];
\draw[fill] (-1,1) circle [radius=0.2];
\draw (1,1) circle [radius=0.2];
\draw (-2,0) circle [radius=0.2];
\draw (0,0) circle [radius=0.2];
\draw (2,0) circle [radius=0.2];
\end{scope}

\begin{scope}[shift={(0,-4)}]
\draw[thick] (-0.1, 1.9) -- (-0.9, 1.1);
\draw[thick] (0.1, 1.9) -- (0.9, 1.1);
\draw[thick] (-1.1, 0.9) -- (-1.9, 0.1);
\draw[thick] (-0.9, 0.9) -- (-0.1, 0.1);
\draw[thick] (0.9, 0.9) -- (0.1, 0.1);
\draw[thick] (1.1, 0.9) -- (1.9, 0.1);
\draw (0,2) circle [radius=0.2];
\draw (-1,1) circle [radius=0.2];
\draw[fill] (1,1) circle [radius=0.2];
\draw[fill] (-2,0) circle [radius=0.2];
\draw[fill] (0,0) circle [radius=0.2];
\draw[fill] (2,0) circle [radius=0.2];
\end{scope}
\node at (3.5,-3) {$\stackrel{\comp}{\longmapsto}$};
\begin{scope}[shift={(7,-4)}]
\draw[thick] (-0.1, 1.9) -- (-0.9, 1.1);
\draw[thick] (0.1, 1.9) -- (0.9, 1.1);
\draw[thick] (-1.1, 0.9) -- (-1.9, 0.1);
\draw[thick] (-0.9, 0.9) -- (-0.1, 0.1);
\draw[thick] (0.9, 0.9) -- (0.1, 0.1);
\draw[thick] (1.1, 0.9) -- (1.9, 0.1);
\draw[fill] (0,2) circle [radius=0.2];
\draw[fill] (-1,1) circle [radius=0.2];
\draw (1,1) circle [radius=0.2];
\draw (-2,0) circle [radius=0.2];
\draw (0,0) circle [radius=0.2];
\draw (2,0) circle [radius=0.2];
\end{scope}
\node at (10.5,-3) {$\stackrel{\bfF^{-1}}{\longmapsto}$};
\begin{scope}[shift={(14,-4)}]
\draw[thick] (-0.1, 1.9) -- (-0.9, 1.1);
\draw[thick] (0.1, 1.9) -- (0.9, 1.1);
\draw[thick] (-1.1, 0.9) -- (-1.9, 0.1);
\draw[thick] (-0.9, 0.9) -- (-0.1, 0.1);
\draw[thick] (0.9, 0.9) -- (0.1, 0.1);
\draw[thick] (1.1, 0.9) -- (1.9, 0.1);
\draw (0,2) circle [radius=0.2];
\draw[fill] (-1,1) circle [radius=0.2];
\draw (1,1) circle [radius=0.2];
\draw (-2,0) circle [radius=0.2];
\draw (0,0) circle [radius=0.2];
\draw (2,0) circle [radius=0.2];
\end{scope}
\node at (17.5,-3) {$\stackrel{\bfI}{\longmapsto}$};
\begin{scope}[shift={(21,-4)}]
\draw[thick] (-0.1, 1.9) -- (-0.9, 1.1);
\draw[thick] (0.1, 1.9) -- (0.9, 1.1);
\draw[thick] (-1.1, 0.9) -- (-1.9, 0.1);
\draw[thick] (-0.9, 0.9) -- (-0.1, 0.1);
\draw[thick] (0.9, 0.9) -- (0.1, 0.1);
\draw[thick] (1.1, 0.9) -- (1.9, 0.1);
\draw (0,2) circle [radius=0.2];
\draw[fill] (-1,1) circle [radius=0.2];
\draw (1,1) circle [radius=0.2];
\draw[fill] (-2,0) circle [radius=0.2];
\draw[fill] (0,0) circle [radius=0.2];
\draw (2,0) circle [radius=0.2];
\end{scope}
\end{tikzpicture}
\end{center}
\end{ex}

\subsection{Toggle group of $\calj(P)$}

Cameron and Fon-Der-Flaass showed
that rowmotion on $\calj(P)$ can be expressed in terms of basic involutions called \textit{toggles}.
Before discussing our new results regarding antichain toggles in the later subsections, we cover some important well-known results about toggling order ideals.

\begin{defn}[\cite{cameronfonder}]\label{def:comb-t}
Let $e\in P$.  Then the \textbf{order ideal toggle} corresponding to $e$ is the map $t_e: \calj(P)\ra \calj(P)$
defined by
$$t_e(I)=\left\{\begin{array}{ll}
I\cup\{e\} &\text{if $e\not\in I$ and $I\cup\{e\}\in \calj(P)$,}\\
I\sm\{e\} &\text{if $e\in I$ and $I\sm\{e\}\in \calj(P)$,}\\
I &\text{otherwise.}
\end{array}\right.$$
We use the convention that a composition $f_1 f_2 \cdots f_k$ of maps (such as toggles) is performed right to left.  Let $\tog_\calj(P)$ denote the \textbf{toggle group} of $\calj(P)$, which is the group generated by the toggles $\{t_e\;|\; e\in P\}$.
\end{defn}

Informally, $t_e$ adds or removes $e$ from the given order ideal $I$ provided the result is also an order ideal, and otherwise does nothing.  The following is clearly an equivalent description of the toggle $t_e$ so we include it without proof.

\begin{prop}\label{prop:wild nile ride}
Let $I\in \calj(P)$ and $e\in P$.  Then
$$t_e(I)=\left\{\begin{array}{ll}
I\cup\{e\} &\text{if $e$ is a minimal element of $P\sm I$,}\\
I\sm\{e\} &\text{if $e$ is a maximal element of $I$,}\\
I &\text{otherwise.}
\end{array}\right.$$
\end{prop}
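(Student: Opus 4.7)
The plan is to verify that the two descriptions of $t_e$ coincide case by case. Since both definitions produce an output in $\{I\cup\{e\},\,I\sm\{e\},\,I\}$, it suffices to show that the hypotheses of the first two cases of Definition~\ref{def:comb-t} agree with those of the first two cases of the proposition; the ``otherwise'' branches will then match automatically by elimination. I will also note at the start that since $e$ is either in $I$ or in $P\sm I$, the hypotheses ``$e$ is a minimal element of $P\sm I$'' and ``$e$ is a maximal element of $I$'' are mutually exclusive, and so each order ideal $I$ falls into exactly one of the three cases on each side.

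For the first case, I would establish: when $e\notin I$, the set $I\cup\{e\}$ lies in $\calj(P)$ if and only if $e$ is a minimal element of $P\sm I$. In the forward direction, suppose $I\cup\{e\}$ is an order ideal and let $y<e$ in $P$; then $y\in I\cup\{e\}$, and since $y\neq e$ we get $y\in I$, so no element of $P\sm I$ lies strictly below $e$. Conversely, if $e$ is minimal in $P\sm I$, then every $y<e$ already belongs to $I$, so $I\cup\{e\}$ remains downward-closed, hence an order ideal.

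For the second case, a symmetric argument handles $e\in I$: the set $I\sm\{e\}$ belongs to $\calj(P)$ if and only if $e$ is a maximal element of $I$. If $e$ is maximal in $I$, then no $x\in I\sm\{e\}$ satisfies $x>e$, so removing $e$ cannot violate the downward-closed condition at any relation $y<x$. Conversely, if some $x\in I$ has $x>e$, then $x\in I\sm\{e\}$ while the required $e<x$ witness $e$ is no longer present, violating the order ideal condition.

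No serious obstacle is anticipated, as the proof is essentially an unpacking of definitions; the only subtlety is verifying the two trichotomies are genuinely equivalent, which the two biconditionals above ensure. The ``otherwise'' case in each formulation captures precisely the remaining situation (namely, $e\notin I$ with some $y<e$ in $P\sm I$, or $e\in I$ with some $x>e$ in $I$), so the maps defined by the two descriptions agree on every $I\in\calj(P)$.
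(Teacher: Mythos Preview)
Your proof is correct. The paper actually declines to prove this proposition at all, stating only that ``the following is clearly an equivalent description of the toggle $t_e$ so we include it without proof.'' Your argument supplies exactly the definition-unpacking that the paper deems obvious: you verify the two biconditionals (that $I\cup\{e\}\in\calj(P)$ iff $e$ is minimal in $P\sm I$, and $I\sm\{e\}\in\calj(P)$ iff $e$ is maximal in $I$) and observe that the ``otherwise'' branches then line up. There is nothing to compare strategically, since the paper offers no alternative; your write-up is a faithful and complete justification of what the paper asserts without argument.
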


\begin{prop}[\cite{cameronfonder}]\label{prop:J-toggle-inv-commute}
Each toggle $t_x$ is an involution (i.e., $t_x^2$ is the identity).  Two order ideal toggles $t_x,t_y$ commute if and only if neither $x$ nor $y$ covers the other.
\end{prop}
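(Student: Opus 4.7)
The plan is to use the local description of the toggle given in Proposition~\ref{prop:wild nile ride} throughout, since it makes both parts essentially a matter of looking at which cover relations change.

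For the involution claim, I would split into the three cases of Proposition~\ref{prop:wild nile ride}. The ``otherwise'' case is trivial. If $x$ is a minimal element of $P \sm I$, set $I' = I \cup \{x\} = t_x(I)$. I would verify that $x$ is a maximal element of $I'$: any $y > x$ in $I'$ must satisfy $y \in I$ (since $y \neq x$), but then $x < y \in I$ forces $x \in I$, contradicting $x \in P \sm I$. Hence $t_x(I') = I' \sm \{x\} = I$. The case when $x$ is maximal in $I$ is dual.

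For the commutation claim, I would introduce, for each $e\in P$, the notion of the \emph{status} of $e$ at $I$: either $e$ is addable (minimal in $P\sm I$), removable (maximal in $I$), or neither. The key observation I would prove first is: \emph{the status of $x$ at $I$ depends only on whether $x\in I$, whether each element covered by $x$ is in $I$, and whether each element covering $x$ is in $I$.} This is immediate from the characterization in Proposition~\ref{prop:wild nile ride}, since $x$ is addable iff $x\notin I$ and every element covered by $x$ is in $I$, and $x$ is removable iff $x\in I$ and no element covering $x$ is in $I$. Now suppose $x\neq y$ and neither covers the other. Then $t_y$ changes $I$ only at $y$, and $y$ is neither $x$, nor covered by $x$, nor covering $x$; hence the status of $x$ at $t_y(I)$ equals the status of $x$ at $I$, and symmetrically for $y$. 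Writing out the at-most-nine cases for the pair of statuses, in each case one sees directly that $t_x t_y(I)$ and $t_y t_x(I)$ are obtained from $I$ by performing the same independent add/remove operations at $x$ and at $y$, so they are equal.

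For the converse, I would prove the contrapositive by constructing, when $y$ covers $x$ (WLOG), an explicit order ideal on which $t_xt_y$ and $t_yt_x$ disagree. The candidate is
\[
I \;=\; \{z\in P : z<y\}\sm\{x\}.
\]
I expect the main small obstacle to be verifying $I\in\calj(P)$, and this is exactly where $x\lessdot y$ is used: if $w<z\in I$ and $w=x$, then $x<z<y$ would contradict the cover relation, so $w\in I$. Then $t_y(I) = I$ because $x<y$ is not in $I$, while $t_x(I) = I\cup\{x\}$ because every element strictly below $x$ lies in $I$. Hence $t_xt_y(I) = I\cup\{x\}$, whereas $\{z : z<y\} = I\cup\{x\}$ shows that $y$ is addable to $I\cup\{x\}$, giving $t_yt_x(I) = I\cup\{x,y\}$. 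These differ, completing the argument. The main conceptual content is the status-locality observation in the forward direction; the counterexample in the converse is straightforward once the candidate ideal is in hand.
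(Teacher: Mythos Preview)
Your proposal is correct and follows essentially the same approach as the paper. The only minor differences are organizational: you unify the paper's separate ``incomparable'' and ``comparable but no cover'' cases into a single status-locality argument, and for the non-commuting witness you use $I=\{z:z<y\}\setminus\{x\}$ whereas the paper uses $I=\{z:z<y\}$ (yielding $t_xt_y(I)=I\cup\{y\}$ versus $t_yt_x(I)=I\setminus\{x\}$), but both choices work equally well.
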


\begin{proof}
Let $x,y\in P$.
It is clear from the definition of $t_x$ that for any $I\in \calj(P)$, applying $t_x$ twice gives $I$.  Thus, $t_x^2$ is the identity.  To show when $t_x,t_y$ commute, we consider four cases.

\textbf{Case 1: $x=y$.} Then $t_xt_y=t_xt_x=t_yt_x$.

\textbf{Case 2: $x\parallel y$.}  Then whether or not one of $x$ or $y$ can be in an order ideal has no effect on whether the other can so $t_xt_y=t_yt_x$.

\textbf{Case 3: $x<y$ or $y<x$ but neither one covers the other.}  Without loss of generality, assume $x<y$.  Since $y$ does not cover $x$, there exists $z\in P$ such that $x<z<y$.  Then $z$ must be in any order ideal containing $y$, but cannot be in any order ideal that does not contain $x$.  Thus, we cannot change whether or not $x$ is in an order ideal and then do the same for $y$, or vice versa, without changing the status of $z$.  So $t_xt_y=t_yt_x$.

\textbf{Case 4: either $x\lessdot y$ or $y\lessdot x$.}  Without loss of generality, assume $x\lessdot y$.  Let $I=\{z\in P\;|\; z<y\}$ which is an order ideal that has $x$ as a maximal element.  Then $t_x t_y (I)=I\cup\{y\}$ and $t_y t_x (I)=I\sm\{x\}$ so $t_xt_y\not=t_yt_x$.
\end{proof}

\begin{defn}
A sequence $(x_1,x_2,\dots,x_n)$ containing all of the elements of a finite poset $P$ exactly once is called a \textbf{linear extension} of $P$ if it is order-preserving, that is, whenever $x_i<x_j$ in $P$ then $i<j$.
\end{defn}

\begin{prop}[\cite{cameronfonder}]\label{prop:row-toggles}
Let $(x_1,x_2,\dots,x_n)$ be any linear extension of $P$.  Then $\row_\calj=t_{x_1} t_{x_2} \cdots t_{x_n}$.
\end{prop}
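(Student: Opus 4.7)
The plan is to prove $\row_\calj(I) = t_{x_1} t_{x_2} \cdots t_{x_n}(I)$ by a downward induction on the toggle index. Set $I^{(n)} := I$ and $I^{(k-1)} := t_{x_k}(I^{(k)})$ for $k = n, n-1, \ldots, 1$; the goal is $I^{(0)} = \row_\calj(I)$. Unwinding the definition, $x \in \row_\calj(I)$ iff $x \leq m$ for some minimal element $m$ of $P \sm I$. The key structural observation is that each $t_{x_k}$ can change only whether $x_k$ itself belongs to the set. Combined with the linear extension property ($x_i < x_j \Rightarrow i < j$), this means that when $t_{x_k}$ is about to be applied, every element strictly above $x_k$ in $P$ has already been processed (such elements are $x_j$ with $j > k$), while every element strictly below $x_k$ in $P$ is still untouched and therefore lies in $I^{(k)}$ iff it lies in $I$.

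I would then establish the main inductive claim: \emph{for every $j > k$, $x_j \in I^{(k)}$ iff $x_j \in \row_\calj(I)$}, by downward induction on $k$, with vacuous base case $k = n$. The inductive step reduces to showing $x_k \in I^{(k-1)}$ iff $x_k \in \row_\calj(I)$, which splits, via Proposition~\ref{prop:wild nile ride}, into two subcases according to whether $x_k \in I$. If $x_k \in I$, then $t_{x_k}$ removes $x_k$ precisely when $x_k$ is maximal in $I^{(k)}$, i.e., when no $x_j$ with $j > k$ and $x_j > x_k$ lies in $I^{(k)}$; the inductive hypothesis converts this to the condition that no such $x_j$ lies in $\row_\calj(I)$, which I would show is equivalent to $x_k \notin \row_\calj(I)$. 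If $x_k \notin I$, then $t_{x_k}$ adds $x_k$ precisely when $x_k$ is minimal in $P \sm I^{(k)}$; by the structural observation this condition becomes ``$x_k$ is minimal in $P \sm I$,'' which is exactly the condition $x_k \in \row_\calj(I)$ (witnessed by $x_k \leq x_k$).

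The argument is essentially bookkeeping, with no deep obstacle. The most delicate subcase is $x_k \in I$ together with $x_k \in \row_\calj(I)$: here $x_k$ itself cannot be a minimal element of $P \sm I$, so any witness $m$ satisfies $x_k < m$ strictly; such $m$ is some $x_j$ with $j > k$ and itself belongs to $\row_\calj(I)$, so the inductive hypothesis puts $m$ into $I^{(k)}$, confirming that $x_k$ is not maximal there and hence is left in place by $t_{x_k}$. Assembling the four subcases completes the induction, and taking $k = 0$ yields $I^{(0)} = \row_\calj(I)$.
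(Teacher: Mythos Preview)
Your argument is correct. The downward induction with the invariant ``for $j>k$, $x_j\in I^{(k)}$ iff $x_j\in\row_\calj(I)$'' works cleanly, and all four subcases check out. One small point you leave implicit is that $x_k$ itself is untouched in $I^{(k)}$ (so $x_k\in I^{(k)}$ iff $x_k\in I$), which is what makes the case split on ``$x_k\in I$'' align with the hypotheses of Proposition~\ref{prop:wild nile ride}; you use this but do not state it, since your structural observation only covers elements \emph{strictly below} $x_k$. Also, in the subcase $x_k\in I$, $x_k\notin\row_\calj(I)$, the equivalence you say you ``would show'' follows immediately from the fact that $\row_\calj(I)$ is an order ideal: if some $x_j>x_k$ were in $\row_\calj(I)$ then $x_k$ would be too.

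As for comparison: the paper does not give its own proof of this proposition but simply cites Cameron and Fon--Der--Flaass. Your direct inductive argument via Proposition~\ref{prop:wild nile ride} is a standard and perfectly good way to establish it.
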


%
%
%

This proposition describes that (for finite posets) $\row_\calj$ is the product of every toggle exactly once in an order determined by a linear extension.  This has been particularly useful in examining rowmotion on certain posets due to the simple nature in which individual toggles act.  Additionally, for the large class of ``rowed-and-columned'' posets, Striker and Williams prove that $\row_\calj$ is conjugate in $\tog_\calj(P)$ to an action called ``promotion'' named for its connection with Sch\"{u}tzenberger's promotion on linear extensions of posets~\cite{sch72,strikerwilliams}.  In fact, they show $\row_\calj$ is conjugate to a large family of generalized rowmotion and promotion maps defined in terms of rows and columns.  As a result, the orbit structure and the homomesic property of certain types of statistics are preserved between promotion and rowmotion, so one can often use either rowmotion or promotion to study the other.  This tactic has been utilized by, e.g., Propp and Roby~\cite{propproby} and Vorland~\cite{vorland-3-chains} in studying products of chain posets.

\begin{example}\label{ex:row-toggles}
For the poset of Example~\ref{ex:a3-row}, as labeled below, $(a,b,c,d,e,f)$ gives a linear extension.
We show the effect of applying $t_at_bt_ct_dt_et_f$ to the order ideal considered in Example~\ref{ex:a3-row}.
In each step, we indicate the element whose toggle we apply next in {\color{red}red}.
Notice that the outcome is the same order ideal we obtained by the three step process, demonstrating Proposition~\ref{prop:row-toggles}.

\begin{center}
\begin{tikzpicture}[scale=0.35]
\begin{scope}[shift={(0,-4)}]
\draw[thick] (-0.1, 1.9) -- (-0.9, 1.1);
\draw[thick] (0.1, 1.9) -- (0.9, 1.1);
\draw[thick] (-1.1, 0.9) -- (-1.9, 0.1);
\draw[thick] (-0.9, 0.9) -- (-0.1, 0.1);
\draw[thick] (0.9, 0.9) -- (0.1, 0.1);
\draw[thick] (1.1, 0.9) -- (1.9, 0.1);
\draw[red] (0,2) circle [radius=0.2];
\draw (-1,1) circle [radius=0.2];
\draw[fill] (1,1) circle [radius=0.2];
\draw[fill] (-2,0) circle [radius=0.2];
\draw[fill] (0,0) circle [radius=0.2];
\draw[fill] (2,0) circle [radius=0.2];
\node[above] at (0,2) {$f$};
\node[left] at (-1,1) {$d$};
\node[right] at (1,1) {$e$};
\node[below] at (-2,0) {$a$};
\node[below] at (0,0) {$b$};
\node[below] at (2,0) {$c$};
\end{scope}
\node at (3.5,-3) {$\stackrel{t_f}{\longmapsto}$};
\begin{scope}[shift={(7,-4)}]
\draw[thick] (-0.1, 1.9) -- (-0.9, 1.1);
\draw[thick] (0.1, 1.9) -- (0.9, 1.1);
\draw[thick] (-1.1, 0.9) -- (-1.9, 0.1);
\draw[thick] (-0.9, 0.9) -- (-0.1, 0.1);
\draw[thick] (0.9, 0.9) -- (0.1, 0.1);
\draw[thick] (1.1, 0.9) -- (1.9, 0.1);
\draw (0,2) circle [radius=0.2];
\draw (-1,1) circle [radius=0.2];
\draw[red,fill] (1,1) circle [radius=0.2];
\draw[fill] (-2,0) circle [radius=0.2];
\draw[fill] (0,0) circle [radius=0.2];
\draw[fill] (2,0) circle [radius=0.2];
\node[above] at (0,2) {$f$};
\node[left] at (-1,1) {$d$};
\node[right] at (1,1) {$e$};
\node[below] at (-2,0) {$a$};
\node[below] at (0,0) {$b$};
\node[below] at (2,0) {$c$};
\end{scope}
\node at (10.5,-3) {$\stackrel{t_e}{\longmapsto}$};
\begin{scope}[shift={(14,-4)}]
\draw[thick] (-0.1, 1.9) -- (-0.9, 1.1);
\draw[thick] (0.1, 1.9) -- (0.9, 1.1);
\draw[thick] (-1.1, 0.9) -- (-1.9, 0.1);
\draw[thick] (-0.9, 0.9) -- (-0.1, 0.1);
\draw[thick] (0.9, 0.9) -- (0.1, 0.1);
\draw[thick] (1.1, 0.9) -- (1.9, 0.1);
\draw (0,2) circle [radius=0.2];
\draw[red] (-1,1) circle [radius=0.2];
\draw (1,1) circle [radius=0.2];
\draw[fill] (-2,0) circle [radius=0.2];
\draw[fill] (0,0) circle [radius=0.2];
\draw[fill] (2,0) circle [radius=0.2];
\node[above] at (0,2) {$f$};
\node[left] at (-1,1) {$d$};
\node[right] at (1,1) {$e$};
\node[below] at (-2,0) {$a$};
\node[below] at (0,0) {$b$};
\node[below] at (2,0) {$c$};
\end{scope}
\node at (17.5,-3) {$\stackrel{t_d}{\longmapsto}$};
\begin{scope}[shift={(21,-4)}]
\draw[thick] (-0.1, 1.9) -- (-0.9, 1.1);
\draw[thick] (0.1, 1.9) -- (0.9, 1.1);
\draw[thick] (-1.1, 0.9) -- (-1.9, 0.1);
\draw[thick] (-0.9, 0.9) -- (-0.1, 0.1);
\draw[thick] (0.9, 0.9) -- (0.1, 0.1);
\draw[thick] (1.1, 0.9) -- (1.9, 0.1);
\draw (0,2) circle [radius=0.2];
\draw[fill] (-1,1) circle [radius=0.2];
\draw (1,1) circle [radius=0.2];
\draw[fill] (-2,0) circle [radius=0.2];
\draw[fill] (0,0) circle [radius=0.2];
\draw[red,fill] (2,0) circle [radius=0.2];
\node[above] at (0,2) {$f$};
\node[left] at (-1,1) {$d$};
\node[right] at (1,1) {$e$};
\node[below] at (-2,0) {$a$};
\node[below] at (0,0) {$b$};
\node[below] at (2,0) {$c$};
\end{scope}
\node at (24.5,-3) {$\stackrel{t_c}{\longmapsto}$};
\begin{scope}[shift={(28,-4)}]
\draw[thick] (-0.1, 1.9) -- (-0.9, 1.1);
\draw[thick] (0.1, 1.9) -- (0.9, 1.1);
\draw[thick] (-1.1, 0.9) -- (-1.9, 0.1);
\draw[thick] (-0.9, 0.9) -- (-0.1, 0.1);
\draw[thick] (0.9, 0.9) -- (0.1, 0.1);
\draw[thick] (1.1, 0.9) -- (1.9, 0.1);
\draw (0,2) circle [radius=0.2];
\draw[fill] (-1,1) circle [radius=0.2];
\draw (1,1) circle [radius=0.2];
\draw[fill] (-2,0) circle [radius=0.2];
\draw[red,fill] (0,0) circle [radius=0.2];
\draw (2,0) circle [radius=0.2];
\node[above] at (0,2) {$f$};
\node[left] at (-1,1) {$d$};
\node[right] at (1,1) {$e$};
\node[below] at (-2,0) {$a$};
\node[below] at (0,0) {$b$};
\node[below] at (2,0) {$c$};
\end{scope}
\node at (31.5,-3) {$\stackrel{t_b}{\longmapsto}$};
\begin{scope}[shift={(35,-4)}]
\draw[thick] (-0.1, 1.9) -- (-0.9, 1.1);
\draw[thick] (0.1, 1.9) -- (0.9, 1.1);
\draw[thick] (-1.1, 0.9) -- (-1.9, 0.1);
\draw[thick] (-0.9, 0.9) -- (-0.1, 0.1);
\draw[thick] (0.9, 0.9) -- (0.1, 0.1);
\draw[thick] (1.1, 0.9) -- (1.9, 0.1);
\draw (0,2) circle [radius=0.2];
\draw[fill] (-1,1) circle [radius=0.2];
\draw (1,1) circle [radius=0.2];
\draw[red,fill] (-2,0) circle [radius=0.2];
\draw[fill] (0,0) circle [radius=0.2];
\draw (2,0) circle [radius=0.2];
\node[above] at (0,2) {$f$};
\node[left] at (-1,1) {$d$};
\node[right] at (1,1) {$e$};
\node[below] at (-2,0) {$a$};
\node[below] at (0,0) {$b$};
\node[below] at (2,0) {3};
\end{scope}
\node at (38.5,-3) {$\stackrel{t_a}{\longmapsto}$};
\begin{scope}[shift={(42,-4)}]
\draw[thick] (-0.1, 1.9) -- (-0.9, 1.1);
\draw[thick] (0.1, 1.9) -- (0.9, 1.1);
\draw[thick] (-1.1, 0.9) -- (-1.9, 0.1);
\draw[thick] (-0.9, 0.9) -- (-0.1, 0.1);
\draw[thick] (0.9, 0.9) -- (0.1, 0.1);
\draw[thick] (1.1, 0.9) -- (1.9, 0.1);
\draw (0,2) circle [radius=0.2];
\draw[fill] (-1,1) circle [radius=0.2];
\draw (1,1) circle [radius=0.2];
\draw[fill] (-2,0) circle [radius=0.2];
\draw[fill] (0,0) circle [radius=0.2];
\draw (2,0) circle [radius=0.2];
\node[above] at (0,2) {$f$};
\node[left] at (-1,1) {$d$};
\node[right] at (1,1) {$e$};
\node[below] at (-2,0) {$a$};
\node[below] at (0,0) {$b$};
\node[below] at (2,0) {$c$};
\end{scope}
\end{tikzpicture}
\end{center}
\end{example}

\subsection{Toggle group of $\cala(P)$}\label{subsec:anti-tog}

While toggling order ideals has received by far the most attention
over the years since Cameron and Fon-Der-Flaass introduced the concept in 1995,
toggles can be defined for any family of subsets of a given set.  In~\cite{strikergentog}, Striker defines toggle groups for general families of subsets.  For a set $E$ and set of ``allowed subsets'' $\call\subseteq 2^E$, each $e\in E$ has a corresponding toggle map which adds or removes $e$ from any set in $\call$ provided the result is still in $\call$ and otherwise does nothing.  In $\tog_\calj(P)$, the set $E$ is the poset $P$, while the set $\call$ of allowed subsets is $\calj(P)$.

Homomesy and other nice behavior have been discovered for actions in generalized toggle groups
for noncrossing partitions~\cite{efgjmpr}
as well as for subsets of an $n$-element set whose cardinality ranges between $r$ and $n-r$~\cite{whirling}.
Also, Roby and the author prove results about rowmotion on zigzag posets by analyzing toggles on independent sets of path graphs~\cite{indepsetspaper}, which are the same as antichains of zigzag posets; see Remark~\ref{rem:indep-sets}.

In this section, we examine the antichain toggle group $\tog_\cala(P)$ where the set of allowed subsets is $\cala(P)$.  Cameron and Fon-Der-Flaass proved that for a finite connected poset $P$ (i.e., $P$ has a connected Hasse diagram), $\tog_\calj(P)$ is either the symmetric group $\ss_{\calj(P)}$ or alternating group $\aa_{\calj(P)}$ on $\calj(P)$~\cite[Theorem~4]{cameronfonder}.  Striker has analyzed antichain toggle groups in~\cite[\S3.3]{strikergentog}, where it is likewise proven that for a finite connected poset $P$, $\tog_\cala(P)$ is either the symmetric group $\ss_{\cala(P)}$ or alternating group $\aa_{\cala(P)}$ on $\cala(P)$.  We expand on this work and will construct an explicit isomorphism between $\tog_\calj(P)$ and $\tog_\cala(P)$, ruling out the possibility that for a given poset, one of these groups is a symmetric group with the other being an alternating group.

The other key result of this section is Proposition~\ref{prop:row-toggles-anti} that, for a finite poset $P$, $\row_\cala$ is the product of every antichain toggle, each used exactly once in an order given by a linear extension (but the opposite order from that of $\row_\calj$).  This provides another tool for analyzing rowmotion.
Although Brouwer and Schrijver originally considered rowmotion as a map on antichains,
rowmotion on order ideals has received far more attention due to its known description as a product of toggles.

\begin{defn}[\cite{strikergentog}]\label{def:comb-tau}
Let $e\in P$.  Then the \textbf{antichain toggle} corresponding to $e$ is the map $\tau_e: \cala(P)\ra \cala(P)$
defined by
$$\tau_e(A)=\left\{\begin{array}{ll}
A\cup\{e\} &\text{if $e\not\in A$ and $A\cup\{e\}\in\cala(P)$,}\\
A\sm\{e\} &\text{if $e\in A$,}\\
A &\text{otherwise.}
\end{array}\right.$$
Let $\tog_\cala(P)$ denote the \textbf{toggle group} of $\cala(P)$ generated by the toggles $\{\tau_e\;|\; e\in P\}$.
\end{defn}

We use $\tau_e$ for antichain toggles to distinguish them from the order ideal toggles $t_e$.
Unlike for order ideals, removing an element from an antichain always results in an antichain.  This is why we have simplified the definition above so the second case is slightly different from that of $t_e$.  For any $e$, the toggle $\tau_e$ is clearly an involution (as is any toggle defined using Striker's definition), using the same reasoning as for order ideal toggles.

\begin{prop}[{\cite[Lemma 3.12]{strikergentog}}]\label{prop:tau-commute}
Two antichain toggles $\tau_x,\tau_y$ commute if and only if $x=y$ or $x\parallel y$.
\end{prop}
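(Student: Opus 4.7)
The proof splits into two directions. For the forward direction, I would prove the contrapositive: if $x \neq y$ and $x,y$ are comparable, then $\tau_x, \tau_y$ fail to commute. Without loss of generality assume $x < y$, and apply both compositions to the empty antichain. Since $\{x\}$ and $\{y\}$ are antichains while $\{x,y\}$ is not, we get $\tau_x \tau_y(\emptyset) = \tau_x(\{y\}) = \{y\}$ because $x < y$ blocks adding $x$, while $\tau_y \tau_x(\emptyset) = \tau_y(\{x\}) = \{x\}$ for the symmetric reason. These differ, so the toggles do not commute.

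For the backward direction, the case $x = y$ is trivial. For $x \parallel y$, the key observation I would record at the outset is that for any $A \in \cala(P)$, the condition ``$A \cup \{x\} \in \cala(P)$'' is equivalent to ``no element of $A$ is comparable to $x$,'' and since $y$ is incomparable to $x$, the truth of this condition is unchanged by inserting or removing $y$ from $A$. The symmetric statement holds for $y$. This decouples the effects of $\tau_x$ and $\tau_y$.

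Armed with this observation, I would do a four-case analysis based on whether $x$ and $y$ lie in $A$. When both lie in $A$, both compositions simply remove $\{x,y\}$. When exactly one lies in $A$, say $x \in A$ and $y \notin A$, the decoupling observation shows that $A \cup \{y\} \in \cala(P)$ iff $(A \setminus \{x\}) \cup \{y\} \in \cala(P)$, so the two compositions either both add $y$ and remove $x$ (giving $(A \setminus \{x\}) \cup \{y\}$) or both only remove $x$ (giving $A \setminus \{x\}$). When neither lies in $A$, a sub-analysis on which of $x,y$ is compatible with $A$ settles matters: compatibility of each is independent of the other by the decoupling observation, so $\tau_x\tau_y(A)$ and $\tau_y\tau_x(A)$ agree in all four sub-configurations, producing whichever of $A$, $A \cup \{x\}$, $A \cup \{y\}$, $A \cup \{x,y\}$ is determined by the compatibilities.

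The main obstacle is simply the bookkeeping of the subcases when neither $x$ nor $y$ lies in $A$; nothing is genuinely difficult once the decoupling observation is stated, but care is needed to confirm that ``compatibility of $x$ with $A$'' really equals ``compatibility of $x$ with $A \cup \{y\}$'' in the relevant places, which is exactly what $x \parallel y$ provides.
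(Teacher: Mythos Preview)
Your proof is correct and follows essentially the same approach as the paper: the paper also uses the empty antichain to show non-commutativity when $x<y$ or $y<x$, and for $x\parallel y$ it invokes exactly your ``decoupling observation'' (phrased as ``whether $x$ is in an antichain has no effect on whether $y$ can be in that antichain and vice versa''). The only difference is that the paper leaves the $x\parallel y$ case at that one-line observation, whereas you spell out the four-case bookkeeping explicitly.
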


Note from Propositions~\ref{prop:J-toggle-inv-commute}
and~\ref{prop:tau-commute} that antichain toggles commute less often than order ideal toggles.

\begin{proof}
Let $x,y\in P$.

\textbf{Case 1: $x=y$.} Then $\tau_x\tau_y=\tau_x\tau_x=\tau_y\tau_x$.

\textbf{Case 2: $x\parallel y$.}  Then whether $x$ is in an antichain has no effect on whether $y$ can be in that antichain and vice versa.
So $\tau_x \tau_y = \tau_y \tau_x$.

\textbf{Case 3: $x<y$ or $y<x$.}  Then $\O$, $\{x\}$, $\{y\}$ are all antichains of $P$, but not $\{x,y\}$.  In this scenario $t_xt_y(\O)=\{y\}$ but $t_yt_x(\O)=\{x\}$.
\end{proof}

\begin{defn}
For $e\in P$, let $e_1,\dots,e_k$ be the elements covered by $e$.  Define $t_e^*\in\tog_\cala(P)$ as $t_e^* := \tau_{e_1} \tau_{e_2}\cdots \tau_{e_k} \tau_e \tau_{e_1} \tau_{e_2}\cdots \tau_{e_k}$.  (If $e$ is a minimal element of $P$, then $k=0$ and so $t_e^*=\tau_e$.)
\end{defn}

Due to incomparability, all of the toggles $\tau_{e_1},\tau_{e_2},\dots, \tau_{e_k}$ commute with each other (but not with $\tau_e$).  Therefore, the definition of $t_e^*$ is well-defined and does not depend on the order of $e_1,e_2,\dots,e_k$.  For this reason, the toggles $\tau_{e_1},\tau_{e_2},\dots, \tau_{e_k}$ can be applied ``simultaneously,'' so $t_e^*$ is the conjugate of $\tau_e$ by the product of all antichain toggles for the elements covered by $e$.  As stated formally in the following theorem, applying $t_e^*$ to an antichain $A$ describes the effect that $t_e$ has on the order ideal $\bfI(A)$ generated by $A$.

\begin{thm}\label{thm:t-star}
Let $I\in \calj(P)$, $e\in P$, and $A=\bfI^{-1}(I)$ be the antichain of maximal elements of $I$.  Then the antichain $\bfI^{-1}(t_e(I))$ of maximal elements of $t_e(I)$ is $t_e^*(A)$.  That is, the following diagram commutes.

\begin{center}
\begin{tikzpicture}
\node at (0,1.8) {$\cala(P)$};
\node at (0,0) {$\calj(P)$};
\node at (3.25,1.8) {$\cala(P)$};
\node at (3.25,0) {$\calj(P)$};
\draw[semithick, ->] (0,1.3) -- (0,0.5);
\node[left] at (0,0.9) {$\bfI$};
\draw[semithick, ->] (0.7,0) -- (2.5,0);
\node[below] at (1.5,0) {$t_e$};
\draw[semithick, ->] (0.7,1.8) -- (2.5,1.8);
\node[above] at (1.5,1.8) {$t_e^*$};
\draw[semithick, ->] (3.25,1.3) -- (3.25,0.5);
\node[right] at (3.25,0.9) {$\bfI$};
\end{tikzpicture}
\end{center}
\end{thm}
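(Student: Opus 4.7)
The plan is to verify the commutativity of the diagram by case analysis on how $t_e$ acts on $I$: either (i) $e$ is a minimal element of $P\setminus I$, so $t_e(I)=I\cup\{e\}$; (ii) $e$ is a maximal element of $I$, so $t_e(I)=I\setminus\{e\}$; or (iii) $t_e(I)=I$. The crucial underlying observation is that, since $e_1,\ldots,e_k$ are all covered by $e$, they are pairwise incomparable. Hence the toggles $\tau_{e_1},\ldots,\tau_{e_k}$ commute with each other by Proposition~\ref{prop:tau-commute}, and the applicability of each $\tau_{e_i}$ to the current antichain depends only on whether $e_i$ is in the antichain and on whether some element of the antichain is comparable to $e_i$ (and any such comparable element cannot be another $e_j$).

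In Case (i), all $e_i$ lie in $I$. During the first pass $\tau_{e_1}\cdots\tau_{e_k}$, each $e_i\in A$ is removed, while each $e_i\in I\setminus A$ is blocked from being added by some $a\in A$ with $a>e_i$; crucially, this $a$ cannot be any $e_j$ (else $e_j>e_i$, contradicting incomparability), so it persists throughout the pass. The intermediate antichain is therefore $A\setminus(A\cap\{e_1,\ldots,e_k\})$. Next, $\tau_e$ adds $e$, since no remaining element is comparable to $e$ (one readily checks: $a>e$ would force $e\in I$, and $a<e$ would force $a<e_j$ with $e_j\in I$, contradicting maximality of $a$). The second pass does nothing, as $e$ now blocks every $e_i$. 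The result is exactly the set of maximal elements of $I\cup\{e\}$.

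Case (ii) can be reduced to Case (i) using the involution property of both $t_e$ and $t_e^*$ (the latter because the defining expression of $t_e^*$ is a palindrome of involutions, with the $\tau_{e_i}$'s commuting). Specifically, setting $I'=I\setminus\{e\}=t_e(I)$, one checks that $e$ is minimal in $P\setminus I'$, so Case (i) applied to $I'$ gives $t_e^*(\bfI^{-1}(I'))=A$, and inverting yields $t_e^*(A)=\bfI^{-1}(I')$, as required.

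Case (iii) splits further. If $e\in I$ is not maximal, some $a\in A$ satisfies $a>e$, hence $a>e_i$ for all $i$; this single element blocks every toggle in all three passes, so $t_e^*(A)=A$. The main obstacle is the remaining subcase: $e\notin I$ is not minimal in $P\setminus I$, so some $e_i\notin I$. Here I would let $C=A\cap\{e_1,\ldots,e_k\}$ and $D=\{e_i:e_i\notin I\}\neq\emptyset$; an analysis analogous to Case (i) shows the first pass produces $(A\setminus C)\cup D$, then $\tau_e$ is inert because some element of $D$ lies in the current set and is covered by $e$, and finally the second pass exactly reverses the first. The delicate part is verifying the second pass: each $e_i\in C$ is now unblocked because no $a\in A\setminus C$ can lie above $e_i$ (that would violate the antichain property of $A$), while each $e_i\in I\setminus A$ remains blocked by the same $a\in A\setminus C$ that originally prevented its addition, and each $e_i\in D$ (now present) is simply removed. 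The state returns to $A$, completing the proof.
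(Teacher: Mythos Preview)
Your case analysis matches the paper's four cases (your (i), (ii), (iii)(a), (iii)(b) correspond to their Cases 4, 2, 1, 3 respectively), and most of the reasoning is sound.  In particular, your reduction of Case (ii) to Case (i) via the involution property of $t_e^*$ is a nice shortcut the paper does not use; the paper instead argues Case (ii) directly.

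However, there is a genuine gap in Case (iii)(b).  You claim the first pass produces $(A\setminus C)\cup D$, but this is false in general: an element $e_i\in D$ can be blocked from below by some $a\in A$ with $a<e_i$.  (Concretely, take the $3$-chain $x<e_1<e$ with $A=\{x\}$ and $I=\{x\}$; then $D=\{e_1\}$, yet $\tau_{e_1}$ cannot add $e_1$ because $x<e_1$.)  Consequently your justification that ``$\tau_e$ is inert because some element of $D$ lies in the current set'' breaks down when none of $D$ gets added.  The fix, which is exactly what the paper does, is to observe that for each $e_i\in D$, either $e_i$ gets toggled in, or some $a\in A\setminus C$ with $a<e_i<e$ persists; in either event an element below $e$ sits in the intermediate antichain, so $\tau_e$ is inert.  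Once $\tau_e$ is known to be inert, your ``delicate'' verification of the second pass is unnecessary: since the $\tau_{e_i}$ commute and are involutions, $\tau_{e_1}\cdots\tau_{e_k}\,\tau_{e_1}\cdots\tau_{e_k}$ is the identity, and $t_e^*(A)=A$ follows immediately.
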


We include a proof of Theorem~\ref{thm:t-star} now, but we will reprove  it later as a restriction of Theorem~\ref{thm:iso-cpl}.

\begin{figure}
\begin{tabular}{|c|}
\hline
\begin{tikzpicture}[scale=0.38]
\draw[thick] (-0.1, 1.9) -- (-0.9, 1.1);
\draw[thick] (0, 0.1) -- (0,0.9);
\draw[thick] (0, 1.1) -- (0,1.9);
\draw[thick] (-1, 0.1) -- (-1,0.9);
\draw[thick] (0.1, 1.9) -- (0.9, 1.1);
\draw[thick] (-1.1, 0.9) -- (-1.9, 0.1);
\draw[thick] (-0.9, 0.9) -- (-0.1, 0.1);
\draw[thick] (0.9, 0.9) -- (0.1, 0.1);
\draw[thick] (-1.1, -1.4) -- (-1.9, -0.1);
\draw[thick] (0.9, -1.4) -- (0.1, -0.1);
\draw[thick] (-0.9, -1.4) -- (-0.1, -0.1);
\draw[fill] (0,2) circle [radius=0.2];
\draw (-1,1) circle [radius=0.2];
\draw (1,1) circle [radius=0.2];
\draw (0,1) circle [radius=0.2];
\draw (-2,0) circle [radius=0.2];
\draw (-1,0) circle [radius=0.2];
\draw[red] (0,0) circle [radius=0.2];
\draw (-1,-1.5) circle [radius=0.2];
\draw (1,-1.5) circle [radius=0.2];
\node[left] at (-1,1) {\footnotesize{$e$}};
\node[below] at (-2,0) {\footnotesize{$e_1$}};
\node[below] at (-1,0) {\footnotesize{$e_2$}};
\node[below] at (0,0) {\footnotesize{$e_3$}};
\draw[thick, ->] (1.5,0.4) -- (3,0.4);
\node[below] at (2.25,0.4) {$\tau_{e_3}$};
\begin{scope}[shift={(5.5,0)}]
\draw[thick] (-0.1, 1.9) -- (-0.9, 1.1);
\draw[thick] (0, 0.1) -- (0,0.9);
\draw[thick] (0, 1.1) -- (0,1.9);
\draw[thick] (-1, 0.1) -- (-1,0.9);
\draw[thick] (0.1, 1.9) -- (0.9, 1.1);
\draw[thick] (-1.1, 0.9) -- (-1.9, 0.1);
\draw[thick] (-0.9, 0.9) -- (-0.1, 0.1);
\draw[thick] (0.9, 0.9) -- (0.1, 0.1);
\draw[thick] (-1.1, -1.4) -- (-1.9, -0.1);
\draw[thick] (0.9, -1.4) -- (0.1, -0.1);
\draw[thick] (-0.9, -1.4) -- (-0.1, -0.1);
\draw[fill] (0,2) circle [radius=0.2];
\draw (-1,1) circle [radius=0.2];
\draw (1,1) circle [radius=0.2];
\draw (0,1) circle [radius=0.2];
\draw (-2,0) circle [radius=0.2];
\draw[red] (-1,0) circle [radius=0.2];
\draw (0,0) circle [radius=0.2];
\draw (-1,-1.5) circle [radius=0.2];
\draw (1,-1.5) circle [radius=0.2];
\node[left] at (-1,1) {\footnotesize{$e$}};
\node[below] at (-2,0) {\footnotesize{$e_1$}};
\node[below] at (-1,0) {\footnotesize{$e_2$}};
\node[below] at (0,0) {\footnotesize{$e_3$}};
\draw[thick, ->] (1.5,0.4) -- (3,0.4);
\node[below] at (2.25,0.4) {$\tau_{e_2}$};
\end{scope}
\begin{scope}[shift={(11,0)}]
\draw[thick] (-0.1, 1.9) -- (-0.9, 1.1);
\draw[thick] (0, 0.1) -- (0,0.9);
\draw[thick] (0, 1.1) -- (0,1.9);
\draw[thick] (-1, 0.1) -- (-1,0.9);
\draw[thick] (0.1, 1.9) -- (0.9, 1.1);
\draw[thick] (-1.1, 0.9) -- (-1.9, 0.1);
\draw[thick] (-0.9, 0.9) -- (-0.1, 0.1);
\draw[thick] (0.9, 0.9) -- (0.1, 0.1);
\draw[thick] (-1.1, -1.4) -- (-1.9, -0.1);
\draw[thick] (0.9, -1.4) -- (0.1, -0.1);
\draw[thick] (-0.9, -1.4) -- (-0.1, -0.1);
\draw[fill] (0,2) circle [radius=0.2];
\draw (-1,1) circle [radius=0.2];
\draw (1,1) circle [radius=0.2];
\draw (0,1) circle [radius=0.2];
\draw[red] (-2,0) circle [radius=0.2];
\draw (-1,0) circle [radius=0.2];
\draw (0,0) circle [radius=0.2];
\draw (-1,-1.5) circle [radius=0.2];
\draw (1,-1.5) circle [radius=0.2];
\node[left] at (-1,1) {\footnotesize{$e$}};
\node[below] at (-2,0) {\footnotesize{$e_1$}};
\node[below] at (-1,0) {\footnotesize{$e_2$}};
\node[below] at (0,0) {\footnotesize{$e_3$}};
\draw[thick, ->] (1.5,0.4) -- (3,0.4);
\node[below] at (2.25,0.4) {$\tau_{e_1}$};
\end{scope}
\begin{scope}[shift={(16.5,0)}]
\draw[thick] (-0.1, 1.9) -- (-0.9, 1.1);
\draw[thick] (0, 0.1) -- (0,0.9);
\draw[thick] (0, 1.1) -- (0,1.9);
\draw[thick] (-1, 0.1) -- (-1,0.9);
\draw[thick] (0.1, 1.9) -- (0.9, 1.1);
\draw[thick] (-1.1, 0.9) -- (-1.9, 0.1);
\draw[thick] (-0.9, 0.9) -- (-0.1, 0.1);
\draw[thick] (0.9, 0.9) -- (0.1, 0.1);
\draw[thick] (-1.1, -1.4) -- (-1.9, -0.1);
\draw[thick] (0.9, -1.4) -- (0.1, -0.1);
\draw[thick] (-0.9, -1.4) -- (-0.1, -0.1);
\draw[fill] (0,2) circle [radius=0.2];
\draw[red] (-1,1) circle [radius=0.2];
\draw (1,1) circle [radius=0.2];
\draw (0,1) circle [radius=0.2];
\draw (-2,0) circle [radius=0.2];
\draw (-1,0) circle [radius=0.2];
\draw (0,0) circle [radius=0.2];
\draw (-1,-1.5) circle [radius=0.2];
\draw (1,-1.5) circle [radius=0.2];
\node[left] at (-1,1) {\footnotesize{$e$}};
\node[below] at (-2,0) {\footnotesize{$e_1$}};
\node[below] at (-1,0) {\footnotesize{$e_2$}};
\node[below] at (0,0) {\footnotesize{$e_3$}};
\draw[thick, ->] (1.5,0.4) -- (3,0.4);
\node[below] at (2.25,0.4) {$\tau_{e}$};
\end{scope}
\begin{scope}[shift={(22,0)}]
\draw[thick] (-0.1, 1.9) -- (-0.9, 1.1);
\draw[thick] (0, 0.1) -- (0,0.9);
\draw[thick] (0, 1.1) -- (0,1.9);
\draw[thick] (-1, 0.1) -- (-1,0.9);
\draw[thick] (0.1, 1.9) -- (0.9, 1.1);
\draw[thick] (-1.1, 0.9) -- (-1.9, 0.1);
\draw[thick] (-0.9, 0.9) -- (-0.1, 0.1);
\draw[thick] (0.9, 0.9) -- (0.1, 0.1);
\draw[thick] (-1.1, -1.4) -- (-1.9, -0.1);
\draw[thick] (0.9, -1.4) -- (0.1, -0.1);
\draw[thick] (-0.9, -1.4) -- (-0.1, -0.1);
\draw[fill] (0,2) circle [radius=0.2];
\draw (-1,1) circle [radius=0.2];
\draw (1,1) circle [radius=0.2];
\draw (0,1) circle [radius=0.2];
\draw (-2,0) circle [radius=0.2];
\draw (-1,0) circle [radius=0.2];
\draw[red] (0,0) circle [radius=0.2];
\draw (-1,-1.5) circle [radius=0.2];
\draw (1,-1.5) circle [radius=0.2];
\node[left] at (-1,1) {\footnotesize{$e$}};
\node[below] at (-2,0) {\footnotesize{$e_1$}};
\node[below] at (-1,0) {\footnotesize{$e_2$}};
\node[below] at (0,0) {\footnotesize{$e_3$}};
\draw[thick, ->] (1.5,0.4) -- (3,0.4);
\node[below] at (2.25,0.4) {$\tau_{e_3}$};
\end{scope}
\begin{scope}[shift={(27.5,0)}]
\draw[thick] (-0.1, 1.9) -- (-0.9, 1.1);
\draw[thick] (0, 0.1) -- (0,0.9);
\draw[thick] (0, 1.1) -- (0,1.9);
\draw[thick] (-1, 0.1) -- (-1,0.9);
\draw[thick] (0.1, 1.9) -- (0.9, 1.1);
\draw[thick] (-1.1, 0.9) -- (-1.9, 0.1);
\draw[thick] (-0.9, 0.9) -- (-0.1, 0.1);
\draw[thick] (0.9, 0.9) -- (0.1, 0.1);
\draw[thick] (-1.1, -1.4) -- (-1.9, -0.1);
\draw[thick] (0.9, -1.4) -- (0.1, -0.1);
\draw[thick] (-0.9, -1.4) -- (-0.1, -0.1);
\draw[fill] (0,2) circle [radius=0.2];
\draw (-1,1) circle [radius=0.2];
\draw (1,1) circle [radius=0.2];
\draw (0,1) circle [radius=0.2];
\draw (-2,0) circle [radius=0.2];
\draw[red] (-1,0) circle [radius=0.2];
\draw (0,0) circle [radius=0.2];
\draw (-1,-1.5) circle [radius=0.2];
\draw (1,-1.5) circle [radius=0.2];
\node[left] at (-1,1) {\footnotesize{$e$}};
\node[below] at (-2,0) {\footnotesize{$e_1$}};
\node[below] at (-1,0) {\footnotesize{$e_2$}};
\node[below] at (0,0) {\footnotesize{$e_3$}};
\draw[thick, ->] (1.5,0.4) -- (3,0.4);
\node[below] at (2.25,0.4) {$\tau_{e_2}$};
\end{scope}
\begin{scope}[shift={(33,0)}]
\draw[thick] (-0.1, 1.9) -- (-0.9, 1.1);
\draw[thick] (0, 0.1) -- (0,0.9);
\draw[thick] (0, 1.1) -- (0,1.9);
\draw[thick] (-1, 0.1) -- (-1,0.9);
\draw[thick] (0.1, 1.9) -- (0.9, 1.1);
\draw[thick] (-1.1, 0.9) -- (-1.9, 0.1);
\draw[thick] (-0.9, 0.9) -- (-0.1, 0.1);
\draw[thick] (0.9, 0.9) -- (0.1, 0.1);
\draw[thick] (-1.1, -1.4) -- (-1.9, -0.1);
\draw[thick] (0.9, -1.4) -- (0.1, -0.1);
\draw[thick] (-0.9, -1.4) -- (-0.1, -0.1);
\draw[fill] (0,2) circle [radius=0.2];
\draw (-1,1) circle [radius=0.2];
\draw (1,1) circle [radius=0.2];
\draw (0,1) circle [radius=0.2];
\draw[red] (-2,0) circle [radius=0.2];
\draw (-1,0) circle [radius=0.2];
\draw (0,0) circle [radius=0.2];
\draw (-1,-1.5) circle [radius=0.2];
\draw (1,-1.5) circle [radius=0.2];
\node[left] at (-1,1) {\footnotesize{$e$}};
\node[below] at (-2,0) {\footnotesize{$e_1$}};
\node[below] at (-1,0) {\footnotesize{$e_2$}};
\node[below] at (0,0) {\footnotesize{$e_3$}};
\draw[thick, ->] (1.5,0.4) -- (3,0.4);
\node[below] at (2.25,0.4) {$\tau_{e_1}$};
\end{scope}
\begin{scope}[shift={(38.5,0)}]
\draw[thick] (-0.1, 1.9) -- (-0.9, 1.1);
\draw[thick] (0, 0.1) -- (0,0.9);
\draw[thick] (0, 1.1) -- (0,1.9);
\draw[thick] (-1, 0.1) -- (-1,0.9);
\draw[thick] (0.1, 1.9) -- (0.9, 1.1);
\draw[thick] (-1.1, 0.9) -- (-1.9, 0.1);
\draw[thick] (-0.9, 0.9) -- (-0.1, 0.1);
\draw[thick] (0.9, 0.9) -- (0.1, 0.1);
\draw[thick] (-1.1, -1.4) -- (-1.9, -0.1);
\draw[thick] (0.9, -1.4) -- (0.1, -0.1);
\draw[thick] (-0.9, -1.4) -- (-0.1, -0.1);
\draw[fill] (0,2) circle [radius=0.2];
\draw (-1,1) circle [radius=0.2];
\draw (1,1) circle [radius=0.2];
\draw (0,1) circle [radius=0.2];
\draw (-2,0) circle [radius=0.2];
\draw (-1,0) circle [radius=0.2];
\draw (0,0) circle [radius=0.2];
\draw (-1,-1.5) circle [radius=0.2];
\draw (1,-1.5) circle [radius=0.2];
\node[left] at (-1,1) {\footnotesize{$e$}};
\node[below] at (-2,0) {\footnotesize{$e_1$}};
\node[below] at (-1,0) {\footnotesize{$e_2$}};
\node[below] at (0,0) {\footnotesize{$e_3$}};
\end{scope}
\draw[thick, ->] (-0.5,-2) -- (-0.5,-3.5);
\draw[thick, ->] (38,-2) -- (38,-3.5);
\node[left] at (-0.5,-2.75) {$\bfI$};
\node[right] at (38,-2.75) {$\bfI$};
\begin{scope}[shift={(0,-6)}]
\draw[thick] (-0.1, 1.9) -- (-0.9, 1.1);
\draw[thick] (0, 0.1) -- (0,0.9);
\draw[thick] (0, 1.1) -- (0,1.9);
\draw[thick] (-1, 0.1) -- (-1,0.9);
\draw[thick] (0.1, 1.9) -- (0.9, 1.1);
\draw[thick] (-1.1, 0.9) -- (-1.9, 0.1);
\draw[thick] (-0.9, 0.9) -- (-0.1, 0.1);
\draw[thick] (0.9, 0.9) -- (0.1, 0.1);
\draw[thick] (-1.1, -1.4) -- (-1.9, -0.1);
\draw[thick] (0.9, -1.4) -- (0.1, -0.1);
\draw[thick] (-0.9, -1.4) -- (-0.1, -0.1);
\draw[fill] (0,2) circle [radius=0.2];
\draw[red,fill] (-1,1) circle [radius=0.2];
\draw[fill] (1,1) circle [radius=0.2];
\draw[fill] (0,1) circle [radius=0.2];
\draw[fill] (-2,0) circle [radius=0.2];
\draw[fill] (-1,0) circle [radius=0.2];
\draw[fill] (0,0) circle [radius=0.2];
\draw[fill] (-1,-1.5) circle [radius=0.2];
\draw[fill] (1,-1.5) circle [radius=0.2];
\node[left] at (-1,1) {\footnotesize{$e$}};
\node[below] at (-2,0) {\footnotesize{$e_1$}};
\node[below] at (-1,0) {\footnotesize{$e_2$}};
\node[below] at (0,0) {\footnotesize{$e_3$}};
\draw[thick, ->] (1.5,0.4) -- (36,0.4);
\node[below] at (18.75,0.4) {$t_e$};
\end{scope}
\begin{scope}[shift={(38.5,-6)}]
\draw[thick] (-0.1, 1.9) -- (-0.9, 1.1);
\draw[thick] (0, 0.1) -- (0,0.9);
\draw[thick] (0, 1.1) -- (0,1.9);
\draw[thick] (-1, 0.1) -- (-1,0.9);
\draw[thick] (0.1, 1.9) -- (0.9, 1.1);
\draw[thick] (-1.1, 0.9) -- (-1.9, 0.1);
\draw[thick] (-0.9, 0.9) -- (-0.1, 0.1);
\draw[thick] (0.9, 0.9) -- (0.1, 0.1);
\draw[thick] (-1.1, -1.4) -- (-1.9, -0.1);
\draw[thick] (0.9, -1.4) -- (0.1, -0.1);
\draw[thick] (-0.9, -1.4) -- (-0.1, -0.1);
\draw[fill] (0,2) circle [radius=0.2];
\draw[red,fill] (-1,1) circle [radius=0.2];
\draw[fill] (1,1) circle [radius=0.2];
\draw[fill] (0,1) circle [radius=0.2];
\draw[fill] (-2,0) circle [radius=0.2];
\draw[fill] (-1,0) circle [radius=0.2];
\draw[fill] (0,0) circle [radius=0.2];
\draw[fill] (-1,-1.5) circle [radius=0.2];
\draw[fill] (1,-1.5) circle [radius=0.2];
\node[left] at (-1,1) {\footnotesize{$e$}};
\node[below] at (-2,0) {\footnotesize{$e_1$}};
\node[below] at (-1,0) {\footnotesize{$e_2$}};
\node[below] at (0,0) {\footnotesize{$e_3$}};
\end{scope}
\end{tikzpicture}
\\\hline
\begin{tikzpicture}[scale=0.38]
\draw[thick] (-0.1, 1.9) -- (-0.9, 1.1);
\draw[thick] (0, 0.1) -- (0,0.9);
\draw[thick] (0, 1.1) -- (0,1.9);
\draw[thick] (-1, 0.1) -- (-1,0.9);
\draw[thick] (0.1, 1.9) -- (0.9, 1.1);
\draw[thick] (-1.1, 0.9) -- (-1.9, 0.1);
\draw[thick] (-0.9, 0.9) -- (-0.1, 0.1);
\draw[thick] (0.9, 0.9) -- (0.1, 0.1);
\draw[thick] (-1.1, -1.4) -- (-1.9, -0.1);
\draw[thick] (0.9, -1.4) -- (0.1, -0.1);
\draw[thick] (-0.9, -1.4) -- (-0.1, -0.1);
\draw (0,2) circle [radius=0.2];
\draw[fill] (-1,1) circle [radius=0.2];
\draw[fill] (1,1) circle [radius=0.2];
\draw (0,1) circle [radius=0.2];
\draw (-2,0) circle [radius=0.2];
\draw (-1,0) circle [radius=0.2];
\draw[red] (0,0) circle [radius=0.2];
\draw (-1,-1.5) circle [radius=0.2];
\draw (1,-1.5) circle [radius=0.2];
\node[left] at (-1,1) {\footnotesize{$e$}};
\node[below] at (-2,0) {\footnotesize{$e_1$}};
\node[below] at (-1,0) {\footnotesize{$e_2$}};
\node[below] at (0,0) {\footnotesize{$e_3$}};
\draw[thick, ->] (1.5,0.4) -- (3,0.4);
\node[below] at (2.25,0.4) {$\tau_{e_3}$};
\begin{scope}[shift={(5.5,0)}]
\draw[thick] (-0.1, 1.9) -- (-0.9, 1.1);
\draw[thick] (0, 0.1) -- (0,0.9);
\draw[thick] (0, 1.1) -- (0,1.9);
\draw[thick] (-1, 0.1) -- (-1,0.9);
\draw[thick] (0.1, 1.9) -- (0.9, 1.1);
\draw[thick] (-1.1, 0.9) -- (-1.9, 0.1);
\draw[thick] (-0.9, 0.9) -- (-0.1, 0.1);
\draw[thick] (0.9, 0.9) -- (0.1, 0.1);
\draw[thick] (-1.1, -1.4) -- (-1.9, -0.1);
\draw[thick] (0.9, -1.4) -- (0.1, -0.1);
\draw[thick] (-0.9, -1.4) -- (-0.1, -0.1);
\draw (0,2) circle [radius=0.2];
\draw[fill] (-1,1) circle [radius=0.2];
\draw[fill] (1,1) circle [radius=0.2];
\draw (0,1) circle [radius=0.2];
\draw (-2,0) circle [radius=0.2];
\draw[red] (-1,0) circle [radius=0.2];
\draw (0,0) circle [radius=0.2];
\draw (-1,-1.5) circle [radius=0.2];
\draw (1,-1.5) circle [radius=0.2];
\node[left] at (-1,1) {\footnotesize{$e$}};
\node[below] at (-2,0) {\footnotesize{$e_1$}};
\node[below] at (-1,0) {\footnotesize{$e_2$}};
\node[below] at (0,0) {\footnotesize{$e_3$}};
\draw[thick, ->] (1.5,0.4) -- (3,0.4);
\node[below] at (2.25,0.4) {$\tau_{e_2}$};
\end{scope}
\begin{scope}[shift={(11,0)}]
\draw[thick] (-0.1, 1.9) -- (-0.9, 1.1);
\draw[thick] (0, 0.1) -- (0,0.9);
\draw[thick] (0, 1.1) -- (0,1.9);
\draw[thick] (-1, 0.1) -- (-1,0.9);
\draw[thick] (0.1, 1.9) -- (0.9, 1.1);
\draw[thick] (-1.1, 0.9) -- (-1.9, 0.1);
\draw[thick] (-0.9, 0.9) -- (-0.1, 0.1);
\draw[thick] (0.9, 0.9) -- (0.1, 0.1);
\draw[thick] (-1.1, -1.4) -- (-1.9, -0.1);
\draw[thick] (0.9, -1.4) -- (0.1, -0.1);
\draw[thick] (-0.9, -1.4) -- (-0.1, -0.1);
\draw (0,2) circle [radius=0.2];
\draw[fill] (-1,1) circle [radius=0.2];
\draw[fill] (1,1) circle [radius=0.2];
\draw (0,1) circle [radius=0.2];
\draw[red] (-2,0) circle [radius=0.2];
\draw (-1,0) circle [radius=0.2];
\draw (0,0) circle [radius=0.2];
\draw (-1,-1.5) circle [radius=0.2];
\draw (1,-1.5) circle [radius=0.2];
\node[left] at (-1,1) {\footnotesize{$e$}};
\node[below] at (-2,0) {\footnotesize{$e_1$}};
\node[below] at (-1,0) {\footnotesize{$e_2$}};
\node[below] at (0,0) {\footnotesize{$e_3$}};
\draw[thick, ->] (1.5,0.4) -- (3,0.4);
\node[below] at (2.25,0.4) {$\tau_{e_1}$};
\end{scope}
\begin{scope}[shift={(16.5,0)}]
\draw[thick] (-0.1, 1.9) -- (-0.9, 1.1);
\draw[thick] (0, 0.1) -- (0,0.9);
\draw[thick] (0, 1.1) -- (0,1.9);
\draw[thick] (-1, 0.1) -- (-1,0.9);
\draw[thick] (0.1, 1.9) -- (0.9, 1.1);
\draw[thick] (-1.1, 0.9) -- (-1.9, 0.1);
\draw[thick] (-0.9, 0.9) -- (-0.1, 0.1);
\draw[thick] (0.9, 0.9) -- (0.1, 0.1);
\draw[thick] (-1.1, -1.4) -- (-1.9, -0.1);
\draw[thick] (0.9, -1.4) -- (0.1, -0.1);
\draw[thick] (-0.9, -1.4) -- (-0.1, -0.1);
\draw (0,2) circle [radius=0.2];
\draw[red,fill] (-1,1) circle [radius=0.2];
\draw[fill] (1,1) circle [radius=0.2];
\draw (0,1) circle [radius=0.2];
\draw (-2,0) circle [radius=0.2];
\draw (-1,0) circle [radius=0.2];
\draw (0,0) circle [radius=0.2];
\draw (-1,-1.5) circle [radius=0.2];
\draw (1,-1.5) circle [radius=0.2];
\node[left] at (-1,1) {\footnotesize{$e$}};
\node[below] at (-2,0) {\footnotesize{$e_1$}};
\node[below] at (-1,0) {\footnotesize{$e_2$}};
\node[below] at (0,0) {\footnotesize{$e_3$}};
\draw[thick, ->] (1.5,0.4) -- (3,0.4);
\node[below] at (2.25,0.4) {$\tau_{e}$};
\end{scope}
\begin{scope}[shift={(22,0)}]
\draw[thick] (-0.1, 1.9) -- (-0.9, 1.1);
\draw[thick] (0, 0.1) -- (0,0.9);
\draw[thick] (0, 1.1) -- (0,1.9);
\draw[thick] (-1, 0.1) -- (-1,0.9);
\draw[thick] (0.1, 1.9) -- (0.9, 1.1);
\draw[thick] (-1.1, 0.9) -- (-1.9, 0.1);
\draw[thick] (-0.9, 0.9) -- (-0.1, 0.1);
\draw[thick] (0.9, 0.9) -- (0.1, 0.1);
\draw[thick] (-1.1, -1.4) -- (-1.9, -0.1);
\draw[thick] (0.9, -1.4) -- (0.1, -0.1);
\draw[thick] (-0.9, -1.4) -- (-0.1, -0.1);
\draw (0,2) circle [radius=0.2];
\draw (-1,1) circle [radius=0.2];
\draw[fill] (1,1) circle [radius=0.2];
\draw (0,1) circle [radius=0.2];
\draw (-2,0) circle [radius=0.2];
\draw (-1,0) circle [radius=0.2];
\draw[red] (0,0) circle [radius=0.2];
\draw (-1,-1.5) circle [radius=0.2];
\draw (1,-1.5) circle [radius=0.2];
\node[left] at (-1,1) {\footnotesize{$e$}};
\node[below] at (-2,0) {\footnotesize{$e_1$}};
\node[below] at (-1,0) {\footnotesize{$e_2$}};
\node[below] at (0,0) {\footnotesize{$e_3$}};
\draw[thick, ->] (1.5,0.4) -- (3,0.4);
\node[below] at (2.25,0.4) {$\tau_{e_3}$};
\end{scope}
\begin{scope}[shift={(27.5,0)}]
\draw[thick] (-0.1, 1.9) -- (-0.9, 1.1);
\draw[thick] (0, 0.1) -- (0,0.9);
\draw[thick] (0, 1.1) -- (0,1.9);
\draw[thick] (-1, 0.1) -- (-1,0.9);
\draw[thick] (0.1, 1.9) -- (0.9, 1.1);
\draw[thick] (-1.1, 0.9) -- (-1.9, 0.1);
\draw[thick] (-0.9, 0.9) -- (-0.1, 0.1);
\draw[thick] (0.9, 0.9) -- (0.1, 0.1);
\draw[thick] (-1.1, -1.4) -- (-1.9, -0.1);
\draw[thick] (0.9, -1.4) -- (0.1, -0.1);
\draw[thick] (-0.9, -1.4) -- (-0.1, -0.1);
\draw (0,2) circle [radius=0.2];
\draw (-1,1) circle [radius=0.2];
\draw[fill] (1,1) circle [radius=0.2];
\draw (0,1) circle [radius=0.2];
\draw (-2,0) circle [radius=0.2];
\draw[red] (-1,0) circle [radius=0.2];
\draw (0,0) circle [radius=0.2];
\draw (-1,-1.5) circle [radius=0.2];
\draw (1,-1.5) circle [radius=0.2];
\node[left] at (-1,1) {\footnotesize{$e$}};
\node[below] at (-2,0) {\footnotesize{$e_1$}};
\node[below] at (-1,0) {\footnotesize{$e_2$}};
\node[below] at (0,0) {\footnotesize{$e_3$}};
\draw[thick, ->] (1.5,0.4) -- (3,0.4);
\node[below] at (2.25,0.4) {$\tau_{e_2}$};
\end{scope}
\begin{scope}[shift={(33,0)}]
\draw[thick] (-0.1, 1.9) -- (-0.9, 1.1);
\draw[thick] (0, 0.1) -- (0,0.9);
\draw[thick] (0, 1.1) -- (0,1.9);
\draw[thick] (-1, 0.1) -- (-1,0.9);
\draw[thick] (0.1, 1.9) -- (0.9, 1.1);
\draw[thick] (-1.1, 0.9) -- (-1.9, 0.1);
\draw[thick] (-0.9, 0.9) -- (-0.1, 0.1);
\draw[thick] (0.9, 0.9) -- (0.1, 0.1);
\draw[thick] (-1.1, -1.4) -- (-1.9, -0.1);
\draw[thick] (0.9, -1.4) -- (0.1, -0.1);
\draw[thick] (-0.9, -1.4) -- (-0.1, -0.1);
\draw (0,2) circle [radius=0.2];
\draw (-1,1) circle [radius=0.2];
\draw[fill] (1,1) circle [radius=0.2];
\draw (0,1) circle [radius=0.2];
\draw[red] (-2,0) circle [radius=0.2];
\draw[fill] (-1,0) circle [radius=0.2];
\draw (0,0) circle [radius=0.2];
\draw (-1,-1.5) circle [radius=0.2];
\draw (1,-1.5) circle [radius=0.2];
\node[left] at (-1,1) {\footnotesize{$e$}};
\node[below] at (-2,0) {\footnotesize{$e_1$}};
\node[below] at (-1,0) {\footnotesize{$e_2$}};
\node[below] at (0,0) {\footnotesize{$e_3$}};
\draw[thick, ->] (1.5,0.4) -- (3,0.4);
\node[below] at (2.25,0.4) {$\tau_{e_1}$};
\end{scope}
\begin{scope}[shift={(38.5,0)}]
\draw[thick] (-0.1, 1.9) -- (-0.9, 1.1);
\draw[thick] (0, 0.1) -- (0,0.9);
\draw[thick] (0, 1.1) -- (0,1.9);
\draw[thick] (-1, 0.1) -- (-1,0.9);
\draw[thick] (0.1, 1.9) -- (0.9, 1.1);
\draw[thick] (-1.1, 0.9) -- (-1.9, 0.1);
\draw[thick] (-0.9, 0.9) -- (-0.1, 0.1);
\draw[thick] (0.9, 0.9) -- (0.1, 0.1);
\draw[thick] (-1.1, -1.4) -- (-1.9, -0.1);
\draw[thick] (0.9, -1.4) -- (0.1, -0.1);
\draw[thick] (-0.9, -1.4) -- (-0.1, -0.1);
\draw (0,2) circle [radius=0.2];
\draw (-1,1) circle [radius=0.2];
\draw[fill] (1,1) circle [radius=0.2];
\draw (0,1) circle [radius=0.2];
\draw[fill] (-2,0) circle [radius=0.2];
\draw[fill] (-1,0) circle [radius=0.2];
\draw (0,0) circle [radius=0.2];
\draw (-1,-1.5) circle [radius=0.2];
\draw (1,-1.5) circle [radius=0.2];
\node[left] at (-1,1) {\footnotesize{$e$}};
\node[below] at (-2,0) {\footnotesize{$e_1$}};
\node[below] at (-1,0) {\footnotesize{$e_2$}};
\node[below] at (0,0) {\footnotesize{$e_3$}};
\end{scope}
\draw[thick, ->] (-0.5,-2) -- (-0.5,-3.5);
\draw[thick, ->] (38,-2) -- (38,-3.5);
\node[left] at (-0.5,-2.75) {$\bfI$};
\node[right] at (38,-2.75) {$\bfI$};
\begin{scope}[shift={(0,-6)}]
\draw[thick] (-0.1, 1.9) -- (-0.9, 1.1);
\draw[thick] (0, 0.1) -- (0,0.9);
\draw[thick] (0, 1.1) -- (0,1.9);
\draw[thick] (-1, 0.1) -- (-1,0.9);
\draw[thick] (0.1, 1.9) -- (0.9, 1.1);
\draw[thick] (-1.1, 0.9) -- (-1.9, 0.1);
\draw[thick] (-0.9, 0.9) -- (-0.1, 0.1);
\draw[thick] (0.9, 0.9) -- (0.1, 0.1);
\draw[thick] (-1.1, -1.4) -- (-1.9, -0.1);
\draw[thick] (0.9, -1.4) -- (0.1, -0.1);
\draw[thick] (-0.9, -1.4) -- (-0.1, -0.1);
\draw (0,2) circle [radius=0.2];
\draw[red,fill] (-1,1) circle [radius=0.2];
\draw[fill] (1,1) circle [radius=0.2];
\draw (0,1) circle [radius=0.2];
\draw[fill] (-2,0) circle [radius=0.2];
\draw[fill] (-1,0) circle [radius=0.2];
\draw[fill] (0,0) circle [radius=0.2];
\draw[fill] (-1,-1.5) circle [radius=0.2];
\draw[fill] (1,-1.5) circle [radius=0.2];
\node[left] at (-1,1) {\footnotesize{$e$}};
\node[below] at (-2,0) {\footnotesize{$e_1$}};
\node[below] at (-1,0) {\footnotesize{$e_2$}};
\node[below] at (0,0) {\footnotesize{$e_3$}};
\draw[thick, ->] (1.5,0.4) -- (36,0.4);
\node[below] at (18.75,0.4) {$t_e$};
\end{scope}
\begin{scope}[shift={(38.5,-6)}]
\draw[thick] (-0.1, 1.9) -- (-0.9, 1.1);
\draw[thick] (0, 0.1) -- (0,0.9);
\draw[thick] (0, 1.1) -- (0,1.9);
\draw[thick] (-1, 0.1) -- (-1,0.9);
\draw[thick] (0.1, 1.9) -- (0.9, 1.1);
\draw[thick] (-1.1, 0.9) -- (-1.9, 0.1);
\draw[thick] (-0.9, 0.9) -- (-0.1, 0.1);
\draw[thick] (0.9, 0.9) -- (0.1, 0.1);
\draw[thick] (-1.1, -1.4) -- (-1.9, -0.1);
\draw[thick] (0.9, -1.4) -- (0.1, -0.1);
\draw[thick] (-0.9, -1.4) -- (-0.1, -0.1);
\draw (0,2) circle [radius=0.2];
\draw[red] (-1,1) circle [radius=0.2];
\draw[fill] (1,1) circle [radius=0.2];
\draw (0,1) circle [radius=0.2];
\draw[fill] (-2,0) circle [radius=0.2];
\draw[fill] (-1,0) circle [radius=0.2];
\draw[fill] (0,0) circle [radius=0.2];
\draw[fill] (-1,-1.5) circle [radius=0.2];
\draw[fill] (1,-1.5) circle [radius=0.2];
\node[left] at (-1,1) {\footnotesize{$e$}};
\node[below] at (-2,0) {\footnotesize{$e_1$}};
\node[below] at (-1,0) {\footnotesize{$e_2$}};
\node[below] at (0,0) {\footnotesize{$e_3$}};
\end{scope}
\end{tikzpicture}
\\\hline
\begin{tikzpicture}[scale=0.38]
\draw[thick] (-0.1, 1.9) -- (-0.9, 1.1);
\draw[thick] (0, 0.1) -- (0,0.9);
\draw[thick] (0, 1.1) -- (0,1.9);
\draw[thick] (-1, 0.1) -- (-1,0.9);
\draw[thick] (0.1, 1.9) -- (0.9, 1.1);
\draw[thick] (-1.1, 0.9) -- (-1.9, 0.1);
\draw[thick] (-0.9, 0.9) -- (-0.1, 0.1);
\draw[thick] (0.9, 0.9) -- (0.1, 0.1);
\draw[thick] (-1.1, -1.4) -- (-1.9, -0.1);
\draw[thick] (0.9, -1.4) -- (0.1, -0.1);
\draw[thick] (-0.9, -1.4) -- (-0.1, -0.1);
\draw (0,2) circle [radius=0.2];
\draw (-1,1) circle [radius=0.2];
\draw (1,1) circle [radius=0.2];
\draw (0,1) circle [radius=0.2];
\draw (-2,0) circle [radius=0.2];
\draw[fill] (-1,0) circle [radius=0.2];
\draw[red] (0,0) circle [radius=0.2];
\draw (-1,-1.5) circle [radius=0.2];
\draw[fill] (1,-1.5) circle [radius=0.2];
\node[left] at (-1,1) {\footnotesize{$e$}};
\node[below] at (-2,0) {\footnotesize{$e_1$}};
\node[below] at (-1,0) {\footnotesize{$e_2$}};
\node[below] at (0,0) {\footnotesize{$e_3$}};
\draw[thick, ->] (1.5,0.4) -- (3,0.4);
\node[below] at (2.25,0.4) {$\tau_{e_3}$};
\begin{scope}[shift={(5.5,0)}]
\draw[thick] (-0.1, 1.9) -- (-0.9, 1.1);
\draw[thick] (0, 0.1) -- (0,0.9);
\draw[thick] (0, 1.1) -- (0,1.9);
\draw[thick] (-1, 0.1) -- (-1,0.9);
\draw[thick] (0.1, 1.9) -- (0.9, 1.1);
\draw[thick] (-1.1, 0.9) -- (-1.9, 0.1);
\draw[thick] (-0.9, 0.9) -- (-0.1, 0.1);
\draw[thick] (0.9, 0.9) -- (0.1, 0.1);
\draw[thick] (-1.1, -1.4) -- (-1.9, -0.1);
\draw[thick] (0.9, -1.4) -- (0.1, -0.1);
\draw[thick] (-0.9, -1.4) -- (-0.1, -0.1);
\draw (0,2) circle [radius=0.2];
\draw (-1,1) circle [radius=0.2];
\draw (1,1) circle [radius=0.2];
\draw (0,1) circle [radius=0.2];
\draw (-2,0) circle [radius=0.2];
\draw[red,fill] (-1,0) circle [radius=0.2];
\draw (0,0) circle [radius=0.2];
\draw (-1,-1.5) circle [radius=0.2];
\draw[fill] (1,-1.5) circle [radius=0.2];
\node[left] at (-1,1) {\footnotesize{$e$}};
\node[below] at (-2,0) {\footnotesize{$e_1$}};
\node[below] at (-1,0) {\footnotesize{$e_2$}};
\node[below] at (0,0) {\footnotesize{$e_3$}};
\draw[thick, ->] (1.5,0.4) -- (3,0.4);
\node[below] at (2.25,0.4) {$\tau_{e_2}$};
\end{scope}
\begin{scope}[shift={(11,0)}]
\draw[thick] (-0.1, 1.9) -- (-0.9, 1.1);
\draw[thick] (0, 0.1) -- (0,0.9);
\draw[thick] (0, 1.1) -- (0,1.9);
\draw[thick] (-1, 0.1) -- (-1,0.9);
\draw[thick] (0.1, 1.9) -- (0.9, 1.1);
\draw[thick] (-1.1, 0.9) -- (-1.9, 0.1);
\draw[thick] (-0.9, 0.9) -- (-0.1, 0.1);
\draw[thick] (0.9, 0.9) -- (0.1, 0.1);
\draw[thick] (-1.1, -1.4) -- (-1.9, -0.1);
\draw[thick] (0.9, -1.4) -- (0.1, -0.1);
\draw[thick] (-0.9, -1.4) -- (-0.1, -0.1);
\draw (0,2) circle [radius=0.2];
\draw (-1,1) circle [radius=0.2];
\draw (1,1) circle [radius=0.2];
\draw (0,1) circle [radius=0.2];
\draw[red] (-2,0) circle [radius=0.2];
\draw (-1,0) circle [radius=0.2];
\draw (0,0) circle [radius=0.2];
\draw (-1,-1.5) circle [radius=0.2];
\draw[fill] (1,-1.5) circle [radius=0.2];
\node[left] at (-1,1) {\footnotesize{$e$}};
\node[below] at (-2,0) {\footnotesize{$e_1$}};
\node[below] at (-1,0) {\footnotesize{$e_2$}};
\node[below] at (0,0) {\footnotesize{$e_3$}};
\draw[thick, ->] (1.5,0.4) -- (3,0.4);
\node[below] at (2.25,0.4) {$\tau_{e_1}$};
\end{scope}
\begin{scope}[shift={(16.5,0)}]
\draw[thick] (-0.1, 1.9) -- (-0.9, 1.1);
\draw[thick] (0, 0.1) -- (0,0.9);
\draw[thick] (0, 1.1) -- (0,1.9);
\draw[thick] (-1, 0.1) -- (-1,0.9);
\draw[thick] (0.1, 1.9) -- (0.9, 1.1);
\draw[thick] (-1.1, 0.9) -- (-1.9, 0.1);
\draw[thick] (-0.9, 0.9) -- (-0.1, 0.1);
\draw[thick] (0.9, 0.9) -- (0.1, 0.1);
\draw[thick] (-1.1, -1.4) -- (-1.9, -0.1);
\draw[thick] (0.9, -1.4) -- (0.1, -0.1);
\draw[thick] (-0.9, -1.4) -- (-0.1, -0.1);
\draw (0,2) circle [radius=0.2];
\draw[red] (-1,1) circle [radius=0.2];
\draw (1,1) circle [radius=0.2];
\draw (0,1) circle [radius=0.2];
\draw[fill] (-2,0) circle [radius=0.2];
\draw (-1,0) circle [radius=0.2];
\draw (0,0) circle [radius=0.2];
\draw (-1,-1.5) circle [radius=0.2];
\draw[fill] (1,-1.5) circle [radius=0.2];
\node[left] at (-1,1) {\footnotesize{$e$}};
\node[below] at (-2,0) {\footnotesize{$e_1$}};
\node[below] at (-1,0) {\footnotesize{$e_2$}};
\node[below] at (0,0) {\footnotesize{$e_3$}};
\draw[thick, ->] (1.5,0.4) -- (3,0.4);
\node[below] at (2.25,0.4) {$\tau_{e}$};
\end{scope}
\begin{scope}[shift={(22,0)}]
\draw[thick] (-0.1, 1.9) -- (-0.9, 1.1);
\draw[thick] (0, 0.1) -- (0,0.9);
\draw[thick] (0, 1.1) -- (0,1.9);
\draw[thick] (-1, 0.1) -- (-1,0.9);
\draw[thick] (0.1, 1.9) -- (0.9, 1.1);
\draw[thick] (-1.1, 0.9) -- (-1.9, 0.1);
\draw[thick] (-0.9, 0.9) -- (-0.1, 0.1);
\draw[thick] (0.9, 0.9) -- (0.1, 0.1);
\draw[thick] (-1.1, -1.4) -- (-1.9, -0.1);
\draw[thick] (0.9, -1.4) -- (0.1, -0.1);
\draw[thick] (-0.9, -1.4) -- (-0.1, -0.1);
\draw (0,2) circle [radius=0.2];
\draw (-1,1) circle [radius=0.2];
\draw (1,1) circle [radius=0.2];
\draw (0,1) circle [radius=0.2];
\draw[fill] (-2,0) circle [radius=0.2];
\draw (-1,0) circle [radius=0.2];
\draw[red] (0,0) circle [radius=0.2];
\draw (-1,-1.5) circle [radius=0.2];
\draw[fill] (1,-1.5) circle [radius=0.2];
\node[left] at (-1,1) {\footnotesize{$e$}};
\node[below] at (-2,0) {\footnotesize{$e_1$}};
\node[below] at (-1,0) {\footnotesize{$e_2$}};
\node[below] at (0,0) {\footnotesize{$e_3$}};
\draw[thick, ->] (1.5,0.4) -- (3,0.4);
\node[below] at (2.25,0.4) {$\tau_{e_3}$};
\end{scope}
\begin{scope}[shift={(27.5,0)}]
\draw[thick] (-0.1, 1.9) -- (-0.9, 1.1);
\draw[thick] (0, 0.1) -- (0,0.9);
\draw[thick] (0, 1.1) -- (0,1.9);
\draw[thick] (-1, 0.1) -- (-1,0.9);
\draw[thick] (0.1, 1.9) -- (0.9, 1.1);
\draw[thick] (-1.1, 0.9) -- (-1.9, 0.1);
\draw[thick] (-0.9, 0.9) -- (-0.1, 0.1);
\draw[thick] (0.9, 0.9) -- (0.1, 0.1);
\draw[thick] (-1.1, -1.4) -- (-1.9, -0.1);
\draw[thick] (0.9, -1.4) -- (0.1, -0.1);
\draw[thick] (-0.9, -1.4) -- (-0.1, -0.1);
\draw (0,2) circle [radius=0.2];
\draw (-1,1) circle [radius=0.2];
\draw (1,1) circle [radius=0.2];
\draw (0,1) circle [radius=0.2];
\draw[fill] (-2,0) circle [radius=0.2];
\draw[red] (-1,0) circle [radius=0.2];
\draw (0,0) circle [radius=0.2];
\draw (-1,-1.5) circle [radius=0.2];
\draw[fill] (1,-1.5) circle [radius=0.2];
\node[left] at (-1,1) {\footnotesize{$e$}};
\node[below] at (-2,0) {\footnotesize{$e_1$}};
\node[below] at (-1,0) {\footnotesize{$e_2$}};
\node[below] at (0,0) {\footnotesize{$e_3$}};
\draw[thick, ->] (1.5,0.4) -- (3,0.4);
\node[below] at (2.25,0.4) {$\tau_{e_2}$};
\end{scope}
\begin{scope}[shift={(33,0)}]
\draw[thick] (-0.1, 1.9) -- (-0.9, 1.1);
\draw[thick] (0, 0.1) -- (0,0.9);
\draw[thick] (0, 1.1) -- (0,1.9);
\draw[thick] (-1, 0.1) -- (-1,0.9);
\draw[thick] (0.1, 1.9) -- (0.9, 1.1);
\draw[thick] (-1.1, 0.9) -- (-1.9, 0.1);
\draw[thick] (-0.9, 0.9) -- (-0.1, 0.1);
\draw[thick] (0.9, 0.9) -- (0.1, 0.1);
\draw[thick] (-1.1, -1.4) -- (-1.9, -0.1);
\draw[thick] (0.9, -1.4) -- (0.1, -0.1);
\draw[thick] (-0.9, -1.4) -- (-0.1, -0.1);
\draw (0,2) circle [radius=0.2];
\draw (-1,1) circle [radius=0.2];
\draw (1,1) circle [radius=0.2];
\draw (0,1) circle [radius=0.2];
\draw[red,fill] (-2,0) circle [radius=0.2];
\draw[fill] (-1,0) circle [radius=0.2];
\draw (0,0) circle [radius=0.2];
\draw (-1,-1.5) circle [radius=0.2];
\draw[fill] (1,-1.5) circle [radius=0.2];
\node[left] at (-1,1) {\footnotesize{$e$}};
\node[below] at (-2,0) {\footnotesize{$e_1$}};
\node[below] at (-1,0) {\footnotesize{$e_2$}};
\node[below] at (0,0) {\footnotesize{$e_3$}};
\draw[thick, ->] (1.5,0.4) -- (3,0.4);
\node[below] at (2.25,0.4) {$\tau_{e_1}$};
\end{scope}
\begin{scope}[shift={(38.5,0)}]
\draw[thick] (-0.1, 1.9) -- (-0.9, 1.1);
\draw[thick] (0, 0.1) -- (0,0.9);
\draw[thick] (0, 1.1) -- (0,1.9);
\draw[thick] (-1, 0.1) -- (-1,0.9);
\draw[thick] (0.1, 1.9) -- (0.9, 1.1);
\draw[thick] (-1.1, 0.9) -- (-1.9, 0.1);
\draw[thick] (-0.9, 0.9) -- (-0.1, 0.1);
\draw[thick] (0.9, 0.9) -- (0.1, 0.1);
\draw[thick] (-1.1, -1.4) -- (-1.9, -0.1);
\draw[thick] (0.9, -1.4) -- (0.1, -0.1);
\draw[thick] (-0.9, -1.4) -- (-0.1, -0.1);
\draw (0,2) circle [radius=0.2];
\draw (-1,1) circle [radius=0.2];
\draw (1,1) circle [radius=0.2];
\draw (0,1) circle [radius=0.2];
\draw (-2,0) circle [radius=0.2];
\draw[fill] (-1,0) circle [radius=0.2];
\draw (0,0) circle [radius=0.2];
\draw (-1,-1.5) circle [radius=0.2];
\draw[fill] (1,-1.5) circle [radius=0.2];
\node[left] at (-1,1) {\footnotesize{$e$}};
\node[below] at (-2,0) {\footnotesize{$e_1$}};
\node[below] at (-1,0) {\footnotesize{$e_2$}};
\node[below] at (0,0) {\footnotesize{$e_3$}};
\end{scope}
\draw[thick, ->] (-0.5,-2) -- (-0.5,-3.5);
\draw[thick, ->] (38,-2) -- (38,-3.5);
\node[left] at (-0.5,-2.75) {$\bfI$};
\node[right] at (38,-2.75) {$\bfI$};
\begin{scope}[shift={(0,-6)}]
\draw[thick] (-0.1, 1.9) -- (-0.9, 1.1);
\draw[thick] (0, 0.1) -- (0,0.9);
\draw[thick] (0, 1.1) -- (0,1.9);
\draw[thick] (-1, 0.1) -- (-1,0.9);
\draw[thick] (0.1, 1.9) -- (0.9, 1.1);
\draw[thick] (-1.1, 0.9) -- (-1.9, 0.1);
\draw[thick] (-0.9, 0.9) -- (-0.1, 0.1);
\draw[thick] (0.9, 0.9) -- (0.1, 0.1);
\draw[thick] (-1.1, -1.4) -- (-1.9, -0.1);
\draw[thick] (0.9, -1.4) -- (0.1, -0.1);
\draw[thick] (-0.9, -1.4) -- (-0.1, -0.1);
\draw (0,2) circle [radius=0.2];
\draw[red] (-1,1) circle [radius=0.2];
\draw (1,1) circle [radius=0.2];
\draw (0,1) circle [radius=0.2];
\draw (-2,0) circle [radius=0.2];
\draw[fill] (-1,0) circle [radius=0.2];
\draw (0,0) circle [radius=0.2];
\draw (-1,-1.5) circle [radius=0.2];
\draw[fill] (1,-1.5) circle [radius=0.2];
\node[left] at (-1,1) {\footnotesize{$e$}};
\node[below] at (-2,0) {\footnotesize{$e_1$}};
\node[below] at (-1,0) {\footnotesize{$e_2$}};
\node[below] at (0,0) {\footnotesize{$e_3$}};
\draw[thick, ->] (1.5,0.4) -- (36,0.4);
\node[below] at (18.75,0.4) {$t_e$};
\end{scope}
\begin{scope}[shift={(38.5,-6)}]
\draw[thick] (-0.1, 1.9) -- (-0.9, 1.1);
\draw[thick] (0, 0.1) -- (0,0.9);
\draw[thick] (0, 1.1) -- (0,1.9);
\draw[thick] (-1, 0.1) -- (-1,0.9);
\draw[thick] (0.1, 1.9) -- (0.9, 1.1);
\draw[thick] (-1.1, 0.9) -- (-1.9, 0.1);
\draw[thick] (-0.9, 0.9) -- (-0.1, 0.1);
\draw[thick] (0.9, 0.9) -- (0.1, 0.1);
\draw[thick] (-1.1, -1.4) -- (-1.9, -0.1);
\draw[thick] (0.9, -1.4) -- (0.1, -0.1);
\draw[thick] (-0.9, -1.4) -- (-0.1, -0.1);
\draw (0,2) circle [radius=0.2];
\draw[red] (-1,1) circle [radius=0.2];
\draw (1,1) circle [radius=0.2];
\draw (0,1) circle [radius=0.2];
\draw (-2,0) circle [radius=0.2];
\draw[fill] (-1,0) circle [radius=0.2];
\draw (0,0) circle [radius=0.2];
\draw (-1,-1.5) circle [radius=0.2];
\draw[fill] (1,-1.5) circle [radius=0.2];
\node[left] at (-1,1) {\footnotesize{$e$}};
\node[below] at (-2,0) {\footnotesize{$e_1$}};
\node[below] at (-1,0) {\footnotesize{$e_2$}};
\node[below] at (0,0) {\footnotesize{$e_3$}};
\end{scope}
\end{tikzpicture}
\\\hline
\begin{tikzpicture}[scale=0.38]
\draw[thick] (-0.1, 1.9) -- (-0.9, 1.1);
\draw[thick] (0, 0.1) -- (0,0.9);
\draw[thick] (0, 1.1) -- (0,1.9);
\draw[thick] (-1, 0.1) -- (-1,0.9);
\draw[thick] (0.1, 1.9) -- (0.9, 1.1);
\draw[thick] (-1.1, 0.9) -- (-1.9, 0.1);
\draw[thick] (-0.9, 0.9) -- (-0.1, 0.1);
\draw[thick] (0.9, 0.9) -- (0.1, 0.1);
\draw[thick] (-1.1, -1.4) -- (-1.9, -0.1);
\draw[thick] (0.9, -1.4) -- (0.1, -0.1);
\draw[thick] (-0.9, -1.4) -- (-0.1, -0.1);
\draw (0,2) circle [radius=0.2];
\draw (-1,1) circle [radius=0.2];
\draw[fill] (1,1) circle [radius=0.2];
\draw (0,1) circle [radius=0.2];
\draw[fill] (-2,0) circle [radius=0.2];
\draw[fill] (-1,0) circle [radius=0.2];
\draw[red] (0,0) circle [radius=0.2];
\draw (-1,-1.5) circle [radius=0.2];
\draw (1,-1.5) circle [radius=0.2];
\node[left] at (-1,1) {\footnotesize{$e$}};
\node[below] at (-2,0) {\footnotesize{$e_1$}};
\node[below] at (-1,0) {\footnotesize{$e_2$}};
\node[below] at (0,0) {\footnotesize{$e_3$}};
\draw[thick, ->] (1.5,0.4) -- (3,0.4);
\node[below] at (2.25,0.4) {$\tau_{e_3}$};
\begin{scope}[shift={(5.5,0)}]
\draw[thick] (-0.1, 1.9) -- (-0.9, 1.1);
\draw[thick] (0, 0.1) -- (0,0.9);
\draw[thick] (0, 1.1) -- (0,1.9);
\draw[thick] (-1, 0.1) -- (-1,0.9);
\draw[thick] (0.1, 1.9) -- (0.9, 1.1);
\draw[thick] (-1.1, 0.9) -- (-1.9, 0.1);
\draw[thick] (-0.9, 0.9) -- (-0.1, 0.1);
\draw[thick] (0.9, 0.9) -- (0.1, 0.1);
\draw[thick] (-1.1, -1.4) -- (-1.9, -0.1);
\draw[thick] (0.9, -1.4) -- (0.1, -0.1);
\draw[thick] (-0.9, -1.4) -- (-0.1, -0.1);
\draw (0,2) circle [radius=0.2];
\draw (-1,1) circle [radius=0.2];
\draw[fill] (1,1) circle [radius=0.2];
\draw (0,1) circle [radius=0.2];
\draw[fill] (-2,0) circle [radius=0.2];
\draw[red,fill] (-1,0) circle [radius=0.2];
\draw (0,0) circle [radius=0.2];
\draw (-1,-1.5) circle [radius=0.2];
\draw (1,-1.5) circle [radius=0.2];
\node[left] at (-1,1) {\footnotesize{$e$}};
\node[below] at (-2,0) {\footnotesize{$e_1$}};
\node[below] at (-1,0) {\footnotesize{$e_2$}};
\node[below] at (0,0) {\footnotesize{$e_3$}};
\draw[thick, ->] (1.5,0.4) -- (3,0.4);
\node[below] at (2.25,0.4) {$\tau_{e_2}$};
\end{scope}
\begin{scope}[shift={(11,0)}]
\draw[thick] (-0.1, 1.9) -- (-0.9, 1.1);
\draw[thick] (0, 0.1) -- (0,0.9);
\draw[thick] (0, 1.1) -- (0,1.9);
\draw[thick] (-1, 0.1) -- (-1,0.9);
\draw[thick] (0.1, 1.9) -- (0.9, 1.1);
\draw[thick] (-1.1, 0.9) -- (-1.9, 0.1);
\draw[thick] (-0.9, 0.9) -- (-0.1, 0.1);
\draw[thick] (0.9, 0.9) -- (0.1, 0.1);
\draw[thick] (-1.1, -1.4) -- (-1.9, -0.1);
\draw[thick] (0.9, -1.4) -- (0.1, -0.1);
\draw[thick] (-0.9, -1.4) -- (-0.1, -0.1);
\draw (0,2) circle [radius=0.2];
\draw (-1,1) circle [radius=0.2];
\draw[fill] (1,1) circle [radius=0.2];
\draw (0,1) circle [radius=0.2];
\draw[red,fill] (-2,0) circle [radius=0.2];
\draw (-1,0) circle [radius=0.2];
\draw (0,0) circle [radius=0.2];
\draw (-1,-1.5) circle [radius=0.2];
\draw (1,-1.5) circle [radius=0.2];
\node[left] at (-1,1) {\footnotesize{$e$}};
\node[below] at (-2,0) {\footnotesize{$e_1$}};
\node[below] at (-1,0) {\footnotesize{$e_2$}};
\node[below] at (0,0) {\footnotesize{$e_3$}};
\draw[thick, ->] (1.5,0.4) -- (3,0.4);
\node[below] at (2.25,0.4) {$\tau_{e_1}$};
\end{scope}
\begin{scope}[shift={(16.5,0)}]
\draw[thick] (-0.1, 1.9) -- (-0.9, 1.1);
\draw[thick] (0, 0.1) -- (0,0.9);
\draw[thick] (0, 1.1) -- (0,1.9);
\draw[thick] (-1, 0.1) -- (-1,0.9);
\draw[thick] (0.1, 1.9) -- (0.9, 1.1);
\draw[thick] (-1.1, 0.9) -- (-1.9, 0.1);
\draw[thick] (-0.9, 0.9) -- (-0.1, 0.1);
\draw[thick] (0.9, 0.9) -- (0.1, 0.1);
\draw[thick] (-1.1, -1.4) -- (-1.9, -0.1);
\draw[thick] (0.9, -1.4) -- (0.1, -0.1);
\draw[thick] (-0.9, -1.4) -- (-0.1, -0.1);
\draw (0,2) circle [radius=0.2];
\draw[red] (-1,1) circle [radius=0.2];
\draw[fill] (1,1) circle [radius=0.2];
\draw (0,1) circle [radius=0.2];
\draw (-2,0) circle [radius=0.2];
\draw (-1,0) circle [radius=0.2];
\draw (0,0) circle [radius=0.2];
\draw (-1,-1.5) circle [radius=0.2];
\draw (1,-1.5) circle [radius=0.2];
\node[left] at (-1,1) {\footnotesize{$e$}};
\node[below] at (-2,0) {\footnotesize{$e_1$}};
\node[below] at (-1,0) {\footnotesize{$e_2$}};
\node[below] at (0,0) {\footnotesize{$e_3$}};
\draw[thick, ->] (1.5,0.4) -- (3,0.4);
\node[below] at (2.25,0.4) {$\tau_{e}$};
\end{scope}
\begin{scope}[shift={(22,0)}]
\draw[thick] (-0.1, 1.9) -- (-0.9, 1.1);
\draw[thick] (0, 0.1) -- (0,0.9);
\draw[thick] (0, 1.1) -- (0,1.9);
\draw[thick] (-1, 0.1) -- (-1,0.9);
\draw[thick] (0.1, 1.9) -- (0.9, 1.1);
\draw[thick] (-1.1, 0.9) -- (-1.9, 0.1);
\draw[thick] (-0.9, 0.9) -- (-0.1, 0.1);
\draw[thick] (0.9, 0.9) -- (0.1, 0.1);
\draw[thick] (-1.1, -1.4) -- (-1.9, -0.1);
\draw[thick] (0.9, -1.4) -- (0.1, -0.1);
\draw[thick] (-0.9, -1.4) -- (-0.1, -0.1);
\draw (0,2) circle [radius=0.2];
\draw[fill] (-1,1) circle [radius=0.2];
\draw[fill] (1,1) circle [radius=0.2];
\draw (0,1) circle [radius=0.2];
\draw (-2,0) circle [radius=0.2];
\draw (-1,0) circle [radius=0.2];
\draw[red] (0,0) circle [radius=0.2];
\draw (-1,-1.5) circle [radius=0.2];
\draw (1,-1.5) circle [radius=0.2];
\node[left] at (-1,1) {\footnotesize{$e$}};
\node[below] at (-2,0) {\footnotesize{$e_1$}};
\node[below] at (-1,0) {\footnotesize{$e_2$}};
\node[below] at (0,0) {\footnotesize{$e_3$}};
\draw[thick, ->] (1.5,0.4) -- (3,0.4);
\node[below] at (2.25,0.4) {$\tau_{e_3}$};
\end{scope}
\begin{scope}[shift={(27.5,0)}]
\draw[thick] (-0.1, 1.9) -- (-0.9, 1.1);
\draw[thick] (0, 0.1) -- (0,0.9);
\draw[thick] (0, 1.1) -- (0,1.9);
\draw[thick] (-1, 0.1) -- (-1,0.9);
\draw[thick] (0.1, 1.9) -- (0.9, 1.1);
\draw[thick] (-1.1, 0.9) -- (-1.9, 0.1);
\draw[thick] (-0.9, 0.9) -- (-0.1, 0.1);
\draw[thick] (0.9, 0.9) -- (0.1, 0.1);
\draw[thick] (-1.1, -1.4) -- (-1.9, -0.1);
\draw[thick] (0.9, -1.4) -- (0.1, -0.1);
\draw[thick] (-0.9, -1.4) -- (-0.1, -0.1);
\draw (0,2) circle [radius=0.2];
\draw[fill] (-1,1) circle [radius=0.2];
\draw[fill] (1,1) circle [radius=0.2];
\draw (0,1) circle [radius=0.2];
\draw (-2,0) circle [radius=0.2];
\draw[red] (-1,0) circle [radius=0.2];
\draw (0,0) circle [radius=0.2];
\draw (-1,-1.5) circle [radius=0.2];
\draw (1,-1.5) circle [radius=0.2];
\node[left] at (-1,1) {\footnotesize{$e$}};
\node[below] at (-2,0) {\footnotesize{$e_1$}};
\node[below] at (-1,0) {\footnotesize{$e_2$}};
\node[below] at (0,0) {\footnotesize{$e_3$}};
\draw[thick, ->] (1.5,0.4) -- (3,0.4);
\node[below] at (2.25,0.4) {$\tau_{e_2}$};
\end{scope}
\begin{scope}[shift={(33,0)}]
\draw[thick] (-0.1, 1.9) -- (-0.9, 1.1);
\draw[thick] (0, 0.1) -- (0,0.9);
\draw[thick] (0, 1.1) -- (0,1.9);
\draw[thick] (-1, 0.1) -- (-1,0.9);
\draw[thick] (0.1, 1.9) -- (0.9, 1.1);
\draw[thick] (-1.1, 0.9) -- (-1.9, 0.1);
\draw[thick] (-0.9, 0.9) -- (-0.1, 0.1);
\draw[thick] (0.9, 0.9) -- (0.1, 0.1);
\draw[thick] (-1.1, -1.4) -- (-1.9, -0.1);
\draw[thick] (0.9, -1.4) -- (0.1, -0.1);
\draw[thick] (-0.9, -1.4) -- (-0.1, -0.1);
\draw (0,2) circle [radius=0.2];
\draw[fill] (-1,1) circle [radius=0.2];
\draw[fill] (1,1) circle [radius=0.2];
\draw (0,1) circle [radius=0.2];
\draw[red] (-2,0) circle [radius=0.2];
\draw (-1,0) circle [radius=0.2];
\draw (0,0) circle [radius=0.2];
\draw (-1,-1.5) circle [radius=0.2];
\draw (1,-1.5) circle [radius=0.2];
\node[left] at (-1,1) {\footnotesize{$e$}};
\node[below] at (-2,0) {\footnotesize{$e_1$}};
\node[below] at (-1,0) {\footnotesize{$e_2$}};
\node[below] at (0,0) {\footnotesize{$e_3$}};
\draw[thick, ->] (1.5,0.4) -- (3,0.4);
\node[below] at (2.25,0.4) {$\tau_{e_1}$};
\end{scope}
\begin{scope}[shift={(38.5,0)}]
\draw[thick] (-0.1, 1.9) -- (-0.9, 1.1);
\draw[thick] (0, 0.1) -- (0,0.9);
\draw[thick] (0, 1.1) -- (0,1.9);
\draw[thick] (-1, 0.1) -- (-1,0.9);
\draw[thick] (0.1, 1.9) -- (0.9, 1.1);
\draw[thick] (-1.1, 0.9) -- (-1.9, 0.1);
\draw[thick] (-0.9, 0.9) -- (-0.1, 0.1);
\draw[thick] (0.9, 0.9) -- (0.1, 0.1);
\draw[thick] (-1.1, -1.4) -- (-1.9, -0.1);
\draw[thick] (0.9, -1.4) -- (0.1, -0.1);
\draw[thick] (-0.9, -1.4) -- (-0.1, -0.1);
\draw (0,2) circle [radius=0.2];
\draw[fill] (-1,1) circle [radius=0.2];
\draw[fill] (1,1) circle [radius=0.2];
\draw (0,1) circle [radius=0.2];
\draw (-2,0) circle [radius=0.2];
\draw (-1,0) circle [radius=0.2];
\draw (0,0) circle [radius=0.2];
\draw (-1,-1.5) circle [radius=0.2];
\draw (1,-1.5) circle [radius=0.2];
\node[left] at (-1,1) {\footnotesize{$e$}};
\node[below] at (-2,0) {\footnotesize{$e_1$}};
\node[below] at (-1,0) {\footnotesize{$e_2$}};
\node[below] at (0,0) {\footnotesize{$e_3$}};
\end{scope}
\draw[thick, ->] (-0.5,-2) -- (-0.5,-3.5);
\draw[thick, ->] (38,-2) -- (38,-3.5);
\node[left] at (-0.5,-2.75) {$\bfI$};
\node[right] at (38,-2.75) {$\bfI$};
\begin{scope}[shift={(0,-6)}]
\draw[thick] (-0.1, 1.9) -- (-0.9, 1.1);
\draw[thick] (0, 0.1) -- (0,0.9);
\draw[thick] (0, 1.1) -- (0,1.9);
\draw[thick] (-1, 0.1) -- (-1,0.9);
\draw[thick] (0.1, 1.9) -- (0.9, 1.1);
\draw[thick] (-1.1, 0.9) -- (-1.9, 0.1);
\draw[thick] (-0.9, 0.9) -- (-0.1, 0.1);
\draw[thick] (0.9, 0.9) -- (0.1, 0.1);
\draw[thick] (-1.1, -1.4) -- (-1.9, -0.1);
\draw[thick] (0.9, -1.4) -- (0.1, -0.1);
\draw[thick] (-0.9, -1.4) -- (-0.1, -0.1);
\draw (0,2) circle [radius=0.2];
\draw[red] (-1,1) circle [radius=0.2];
\draw[fill] (1,1) circle [radius=0.2];
\draw (0,1) circle [radius=0.2];
\draw[fill] (-2,0) circle [radius=0.2];
\draw[fill] (-1,0) circle [radius=0.2];
\draw[fill] (0,0) circle [radius=0.2];
\draw[fill] (-1,-1.5) circle [radius=0.2];
\draw[fill] (1,-1.5) circle [radius=0.2];
\node[left] at (-1,1) {\footnotesize{$e$}};
\node[below] at (-2,0) {\footnotesize{$e_1$}};
\node[below] at (-1,0) {\footnotesize{$e_2$}};
\node[below] at (0,0) {\footnotesize{$e_3$}};
\draw[thick, ->] (1.5,0.4) -- (36,0.4);
\node[below] at (18.75,0.4) {$t_e$};
\end{scope}
\begin{scope}[shift={(38.5,-6)}]
\draw[thick] (-0.1, 1.9) -- (-0.9, 1.1);
\draw[thick] (0, 0.1) -- (0,0.9);
\draw[thick] (0, 1.1) -- (0,1.9);
\draw[thick] (-1, 0.1) -- (-1,0.9);
\draw[thick] (0.1, 1.9) -- (0.9, 1.1);
\draw[thick] (-1.1, 0.9) -- (-1.9, 0.1);
\draw[thick] (-0.9, 0.9) -- (-0.1, 0.1);
\draw[thick] (0.9, 0.9) -- (0.1, 0.1);
\draw[thick] (-1.1, -1.4) -- (-1.9, -0.1);
\draw[thick] (0.9, -1.4) -- (0.1, -0.1);
\draw[thick] (-0.9, -1.4) -- (-0.1, -0.1);
\draw (0,2) circle [radius=0.2];
\draw[red,fill] (-1,1) circle [radius=0.2];
\draw[fill] (1,1) circle [radius=0.2];
\draw (0,1) circle [radius=0.2];
\draw[fill] (-2,0) circle [radius=0.2];
\draw[fill] (-1,0) circle [radius=0.2];
\draw[fill] (0,0) circle [radius=0.2];
\draw[fill] (-1,-1.5) circle [radius=0.2];
\draw[fill] (1,-1.5) circle [radius=0.2];
\node[left] at (-1,1) {\footnotesize{$e$}};
\node[below] at (-2,0) {\footnotesize{$e_1$}};
\node[below] at (-1,0) {\footnotesize{$e_2$}};
\node[below] at (0,0) {\footnotesize{$e_3$}};
\end{scope}
\end{tikzpicture}
\\\hline
\end{tabular}
\caption{Four examples of Theorem~\ref{thm:t-star} corresponding (in order) to the four cases of the proof.}
\label{fig:4ex}
\end{figure}

\begin{proof}
We have four cases to consider.  The four examples in Figure~\ref{fig:4ex} correspond in order to the cases in this proof.

\textbf{Case 1: $e\in I$ and $I\sm\{e\}\not\in \calj(P)$.}
Then $t_e(I)=I$ so we wish to show that $t_e^*(A)=A$.  In this case $e$ is not a maximal element of $I$ so there exists a maximal element $y\in I$ for which $e<y$.  Then each $e_i<y$ for $1\leq i\leq k$.  Also $y\in A$ so each of $e_1,\dots,e_k,e$ is not in $A$ and cannot be toggled in.  So $t_e^*(A)=A$.

\textbf{Case 2: $e\in I$ and $I\sm\{e\}\in \calj(P)$.}
Then $e$ is a maximal element of $I$ so $e\in A$ but no $e_i$ covered by $e$ is.  Clearly $e$ is not a maximal element of $t_e(I)=I\sm\{e\}$.  Any $e_i\lessdot e$ is a maximal element of $t_e(I)$ if and only if the only $x > e_i$ in $I$ is $x=e$.  Other than these, the maximal elements of $t_e(I)$ and $I$ are the same.

Applying $\tau_{e_1}\cdots \tau_{e_k}$ to $A$ does nothing because $e\in A$.  Then applying $\tau_e$ to $A$ removes $e$ from $A$.  Then applying $\tau_{e_1}\cdots \tau_{e_k}$ to $A\sm \{e\}$ adds in any $e_i$ for which no $y > e_i$ is in $A\sm \{e\}$.  These are precisely the elements $e_i$ for which the only $x > e_i$ in $I$ is $x=e$.  Thus, $t_e^*(A)$ is the set of maximal elements of $t_e(I)$.

\textbf{Case 3: $e\not\in I$ and $I\cup\{e\}\not\in \calj(P)$.}  Then $t_e(I)=I$ so we wish to show that $t_e^*(A)=A$.  In this case there exists some $e_i\lessdot e$ not in $I$, so in particular this case cannot happen when $e$ is a minimal element of $P$.
Fix such an $e_i$.  Then $e_i\not\in A$.  If there were $y>e_i$ in $A$, then $y$ would be in $I$ and thus $e_i$ would be in $I$, a contradiction.  So no element greater than $e_i$ is in $A$.

Then when applying $\tau_{e_1}\cdots \tau_{e_k}$ to $A$, either $e_i$ gets toggled into the antichain or there is some $x< e_i<e$ that is in $A$.  In either scenario, there exists an element less than $e$ in $\tau_{e_1}\cdots \tau_{e_k}(A)$.  So applying $\tau_e$ leaves $\tau_{e_1}\cdots \tau_{e_k}(A)$ unchanged.  Then applying $\tau_{e_1}\cdots \tau_{e_k}$ again undoes the effect of applying $\tau_{e_1}\cdots \tau_{e_k}$ in the first place.  Thus, $t_e^*(A)=A$.

\textbf{Case 4: $e\not\in I$ and $I\cup\{e\}\in \calj(P)$.}
Then every $e_i$ is in $I$.  Each $e_i$ is either a maximal element of $I$ or less than some $y\not=e$ in $I$.  Also any element of $A$ comparable with $e$ must be one of $e_1,\dots,e_k$. So $e$ is a maximal element of $t_e(I)$, while none of $e_1,\dots,e_k$ are.  Other than these, the maximal elements of $I$ and $t_e(I)$ are identical.
Applying $\tau_{e_1}\cdots \tau_{e_k}$ to $A$ removes any $e_i$ that is in $A$.  However, it does not insert any $e_i$ that is not in $A$ because such an element is less than some $y\in A$.  Thus, $\tau_{e_1}\cdots \tau_{e_k}(A)$ contains no element that is comparable with $e$, so applying $\tau_e$ adds $e$ to the antichain.  Since $e$ is in $\tau_e\tau_{e_1}\cdots \tau_{e_k}(A)$, none of $e_1,\dots,e_k$ can be added to it.  So $t_e^*(A)=A\cup\{e\}\sm\{e_1,\dots,e_k\}$, exactly the set of maximal elements of $t_e(I)$.
\end{proof}

\begin{defn}
Let $S\subseteq P$.  Let $\eta_S:=t_{x_1}t_{x_2}\cdots t_{x_k}$
where $(x_1,x_2,\dots,x_k)$ is a linear extension of the subposet $\{x\in P\;|\;x<y,y\in S\}$.  (In the special case where every element of $S$ is minimal in $P$, $\eta_S$ is the identity.)  For $e\in P$, we write $\eta_e:=\eta_{\{e\}}$.
\end{defn}

\begin{remark}\label{rem:etienne-84}
Any two linear extensions of a poset differ by a sequence of swaps between adjacent incomparable elements~\cite{etienne-84}.  So $\eta_S$ is well-defined
(since Proposition~\ref{prop:J-toggle-inv-commute}
shows that such swaps do not change the product $t_{x_1}t_{x_2}\cdots t_{x_k}$).
\end{remark}

\begin{defn}
For $e\in P$, define $\tau_e^*\in\tog_\calj(P)$ as $\tau_e^* := \eta_e t_e \eta_e^{-1}$.
\end{defn}

\begin{thm}\label{thm:tau-star}
Let $A\in \cala(P)$, $e\in P$, and $I=\bfI(A)$ be the order ideal generated by $A$.  Then the order ideal $\bfI(\tau_e(A))$ generated by $\tau_e(A)$ is $\tau_e^*(I)$.  That is, the following diagram commutes.

\begin{center}
\begin{tikzpicture}
\node at (0,1.8) {$\cala(P)$};
\node at (0,0) {$\calj(P)$};
\node at (3.25,1.8) {$\cala(P)$};
\node at (3.25,0) {$\calj(P)$};
\draw[semithick, ->] (0,1.3) -- (0,0.5);
\node[left] at (0,0.9) {$\bfI$};
\draw[semithick, ->] (0.7,0) -- (2.5,0);
\node[below] at (1.5,0) {$\tau_e^*$};
\draw[semithick, ->] (0.7,1.8) -- (2.5,1.8);
\node[above] at (1.5,1.8) {$\tau_e$};
\draw[semithick, ->] (3.25,1.3) -- (3.25,0.5);
\node[right] at (3.25,0.9) {$\bfI$};
\end{tikzpicture}
\end{center}
\end{thm}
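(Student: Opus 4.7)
My plan is to prove Theorem~\ref{thm:tau-star} by direct case analysis on the three cases of Definition~\ref{def:comb-tau} for $\tau_e(A)$. Write $I := \bfI(A)$, $I' := \eta_e^{-1}(I)$, and $D_e := \{x \in P : x < e\}$. The starting observation is that every toggle in $\eta_e = t_{x_1} \cdots t_{x_k}$ has the form $t_{x_i}$ with $x_i \in D_e$, so $\eta_e$ and $\eta_e^{-1}$ alter an order ideal only within $D_e$. In particular, $I' \cap (P \setminus D_e) = I \cap (P \setminus D_e)$, so $e \in I'$ if and only if $e \in I$ if and only if some element of $A$ is $\geq e$.

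The central claim I would establish is a key lemma asserting that the toggle $t_e$ fires on $I'$ in exactly the same pattern that $\tau_e$ fires on $A$: if $e \in A$, then $e$ is maximal in $I'$ and $t_e(I') = I' \setminus \{e\}$; if $e \notin A$ and $A \cup \{e\} \in \cala(P)$, then $e \notin I'$ while $D_e \subseteq I'$, so $t_e(I') = I' \cup \{e\}$; and otherwise $t_e$ fixes $I'$. In the ``otherwise'' case, $\eta_e(t_e(I')) = \eta_e(\eta_e^{-1}(I)) = I = \bfI(\tau_e(A))$ is immediate. In each of the first two cases, I would verify $\eta_e(t_e(I')) = \bfI(\tau_e(A))$ by running the same structural analysis on the target antichain $\tau_e(A)$, producing matching explicit expressions on both sides.

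The technical engine for proving the key lemma, and the step I expect to require the most care, is a forced-versus-free decomposition of $D_e$ relative to $I$. Call $x \in D_e$ \emph{forced} if there exists $y > x$ with $y \in I \setminus D_e$, and \emph{free} otherwise. Forced elements lie in $I$ and remain in every ideal reached by toggling inside $D_e$, so the toggles for forced $x_i$ act as the identity throughout the computation of $\eta_e^{-1}(I)$. The free elements $D_e^{\mathrm{free}}$ form a subposet on which the remaining toggles act; they appear in an induced linear-extension order, so by Proposition~\ref{prop:row-toggles} they compose to rowmotion of the poset $D_e^{\mathrm{free}}$, and the inverse product to inverse rowmotion. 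The key lemma then follows from the identities $\row^{-1}(\emptyset) = D_e^{\mathrm{free}}$ (the full ideal) and $\row(D_e^{\mathrm{free}}) = \emptyset$, together with case-specific inputs: in the addition case the hypothesis $A \cup \{e\} \in \cala(P)$ forces $I \cap D_e^{\mathrm{free}} = \emptyset$, so $I' \cap D_e^{\mathrm{free}} = D_e^{\mathrm{free}}$ and hence $D_e \subseteq I'$; in the removal case $e \in I \setminus D_e$ witnesses that every $x \in D_e$ is forced, so $D_e^{\mathrm{free}} = \emptyset$ and $I' = I$ with $e$ still maximal; and in each no-op subcase some $a \in A$ comparable to $e$ leaves a persistent obstruction (above $e$ when $a > e$, or somewhere in the free part when $a < e$) that prevents $t_e$ from firing on $I'$.
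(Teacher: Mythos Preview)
Your approach is correct and genuinely different from the paper's. The paper proves Theorem~\ref{thm:tau-star} by induction on $e$ using only group-theoretic manipulations: for the base case ($e$ minimal) one has $\tau_e^*=t_e$ and invokes Theorem~\ref{thm:t-star}; for the inductive step one writes $\tau_e=\tau_{e_1}\cdots\tau_{e_k}\,t_e^*\,\tau_{e_1}\cdots\tau_{e_k}$, applies Theorem~\ref{thm:t-star} (for $t_e^*$) together with the induction hypothesis (for the $\tau_{e_i}$), and then simplifies the resulting conjugate of $t_e$ to $\tau_e^*$ via Lemma~\ref{lem:tau-star} and the commutation rules of Proposition~\ref{prop:J-toggle-inv-commute}. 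No case analysis on $A$ is performed, and the action of $\eta_e^{-1}$ on a specific ideal is never computed.

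Your forced/free decomposition gives a direct combinatorial picture instead: it identifies $\eta_e^{-1}$ (after discarding the inert forced toggles) with inverse rowmotion on the subposet $D_e^{\mathrm{free}}$, and the three cases of $\tau_e$ reduce to the identities $\row^{-1}(\emptyset)=D_e^{\mathrm{free}}$, $\row(D_e^{\mathrm{free}})=\emptyset$, and $\row^{-1}(K)\neq D_e^{\mathrm{free}}$ for $K\neq\emptyset$. This is self-contained---it needs neither Theorem~\ref{thm:t-star} nor Lemma~\ref{lem:tau-star}---and it makes transparent exactly how $\eta_e^{-1}$ reshapes $I$ below $e$. The trade-off is that your argument is tied to the combinatorics of order ideals (e.g., the claim that forced toggles are inert relies on the $0/1$ nature of membership), whereas the paper's purely algebraic proof is deliberately written so that it transfers verbatim to the piecewise-linear setting once Proposition~\ref{prop:cpl-inv-commute} and the $t_e^*$ half of Theorem~\ref{thm:iso-cpl} are in hand; this portability is explicitly exploited later in the paper.
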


\begin{ex}
In the product of two chains poset $P=[3]\times[2]$ given by
\begin{center}
\begin{tikzpicture}[scale=0.567]
\draw[thick] (-0.1, 1.9) -- (-0.9, 1.1);
\draw[thick] (0.1, 1.9) -- (0.9, 1.1);
\draw[thick] (-1.1, 0.9) -- (-1.9, 0.1);
\draw[thick] (-0.9, 0.9) -- (-0.1, 0.1);
\draw[thick] (0.9, 0.9) -- (0.1, 0.1);
\draw[thick] (-1.1, -0.9) -- (-1.9, -0.1);
\draw[thick] (-0.9, -0.9) -- (-0.1, -0.1);
\draw[fill] (0,2) circle [radius=0.2];
\draw[fill] (-1,1) circle [radius=0.2];
\draw[fill] (1,1) circle [radius=0.2];
\draw[fill] (-2,0) circle [radius=0.2];
\draw[fill] (0,0) circle [radius=0.2];
\draw[fill] (-1,-1) circle [radius=0.2];
\node[right] at (-0.7,-1) {(1,1)};
\node[right] at (0.3,0) {(2,1)};
\node[right] at (1.3,1) {(3,1)};
\node[left] at (-2.3,0) {(1,2)};
\node[left] at (-1.3,1) {(2,2)};
\node[left] at (-0.3,2) {(3,2)};
\node at (-6,0.75) {$P=$};
\end{tikzpicture}
\end{center}
we have $\eta_{(2,2)}=t_{(1,1)}t_{(1,2)}t_{(2,1)}$, so $\tau_{(2,2)}^*=t_{(1,1)}t_{(1,2)}t_{(2,1)}t_{(2,2)}t_{(2,1)}t_{(1,2)}t_{(1,1)}$.  An illustration of Theorem~\ref{thm:tau-star} for an antichain of this poset is below.

\begin{center}
\begin{tikzpicture}[scale=0.317]
\begin{scope}
\draw[thick] (-0.1, 1.9) -- (-0.9, 1.1);
\draw[thick] (0.1, 1.9) -- (0.9, 1.1);
\draw[thick] (-1.1, 0.9) -- (-1.9, 0.1);
\draw[thick] (-0.9, 0.9) -- (-0.1, 0.1);
\draw[thick] (0.9, 0.9) -- (0.1, 0.1);
\draw[thick] (-1.1, -0.9) -- (-1.9, -0.1);
\draw[thick] (-0.9, -0.9) -- (-0.1, -0.1);
\draw (0,2) circle [radius=0.2];
\draw (-1,1) circle [radius=0.2];
\draw[fill] (1,1) circle [radius=0.2];
\draw (-2,0) circle [radius=0.2];
\draw[fill] (0,0) circle [radius=0.2];
\draw[red,fill] (-1,-1) circle [radius=0.2];
\draw[thick, ->] (2,0.4) -- (3.5,0.4);
\node[below] at (2.75,0.4) {$t_{(1,1)}$};
\end{scope}
\begin{scope}[shift={(7,0)}]
\draw[thick] (-0.1, 1.9) -- (-0.9, 1.1);
\draw[thick] (0.1, 1.9) -- (0.9, 1.1);
\draw[thick] (-1.1, 0.9) -- (-1.9, 0.1);
\draw[thick] (-0.9, 0.9) -- (-0.1, 0.1);
\draw[thick] (0.9, 0.9) -- (0.1, 0.1);
\draw[thick] (-1.1, -0.9) -- (-1.9, -0.1);
\draw[thick] (-0.9, -0.9) -- (-0.1, -0.1);
\draw (0,2) circle [radius=0.2];
\draw (-1,1) circle [radius=0.2];
\draw[fill] (1,1) circle [radius=0.2];
\draw[red] (-2,0) circle [radius=0.2];
\draw[fill] (0,0) circle [radius=0.2];
\draw[fill] (-1,-1) circle [radius=0.2];
\draw[thick, ->] (2,0.4) -- (3.5,0.4);
\node[below] at (2.75,0.4) {$t_{(1,2)}$};
\end{scope}
\begin{scope}[shift={(14,0)}]
\draw[thick] (-0.1, 1.9) -- (-0.9, 1.1);
\draw[thick] (0.1, 1.9) -- (0.9, 1.1);
\draw[thick] (-1.1, 0.9) -- (-1.9, 0.1);
\draw[thick] (-0.9, 0.9) -- (-0.1, 0.1);
\draw[thick] (0.9, 0.9) -- (0.1, 0.1);
\draw[thick] (-1.1, -0.9) -- (-1.9, -0.1);
\draw[thick] (-0.9, -0.9) -- (-0.1, -0.1);
\draw (0,2) circle [radius=0.2];
\draw (-1,1) circle [radius=0.2];
\draw[fill] (1,1) circle [radius=0.2];
\draw[fill] (-2,0) circle [radius=0.2];
\draw[red,fill] (0,0) circle [radius=0.2];
\draw[fill] (-1,-1) circle [radius=0.2];
\draw[thick, ->] (2,0.4) -- (3.5,0.4);
\node[below] at (2.75,0.4) {$t_{(2,1)}$};
\end{scope}
\begin{scope}[shift={(21,0)}]
\draw[thick] (-0.1, 1.9) -- (-0.9, 1.1);
\draw[thick] (0.1, 1.9) -- (0.9, 1.1);
\draw[thick] (-1.1, 0.9) -- (-1.9, 0.1);
\draw[thick] (-0.9, 0.9) -- (-0.1, 0.1);
\draw[thick] (0.9, 0.9) -- (0.1, 0.1);
\draw[thick] (-1.1, -0.9) -- (-1.9, -0.1);
\draw[thick] (-0.9, -0.9) -- (-0.1, -0.1);
\draw (0,2) circle [radius=0.2];
\draw[red] (-1,1) circle [radius=0.2];
\draw[fill] (1,1) circle [radius=0.2];
\draw[fill] (-2,0) circle [radius=0.2];
\draw[fill] (0,0) circle [radius=0.2];
\draw[fill] (-1,-1) circle [radius=0.2];
\draw[thick, ->] (2,0.4) -- (3.5,0.4);
\node[below] at (2.75,0.4) {$t_{(2,2)}$};
\end{scope}
\begin{scope}[shift={(28,0)}]
\draw[thick] (-0.1, 1.9) -- (-0.9, 1.1);
\draw[thick] (0.1, 1.9) -- (0.9, 1.1);
\draw[thick] (-1.1, 0.9) -- (-1.9, 0.1);
\draw[thick] (-0.9, 0.9) -- (-0.1, 0.1);
\draw[thick] (0.9, 0.9) -- (0.1, 0.1);
\draw[thick] (-1.1, -0.9) -- (-1.9, -0.1);
\draw[thick] (-0.9, -0.9) -- (-0.1, -0.1);
\draw (0,2) circle [radius=0.2];
\draw[fill] (-1,1) circle [radius=0.2];
\draw[fill] (1,1) circle [radius=0.2];
\draw[fill] (-2,0) circle [radius=0.2];
\draw[red,fill] (0,0) circle [radius=0.2];
\draw[fill] (-1,-1) circle [radius=0.2];
\draw[thick, ->] (2,0.4) -- (3.5,0.4);
\node[below] at (2.75,0.4) {$t_{(2,1)}$};
\end{scope}
\begin{scope}[shift={(35,0)}]
\draw[thick] (-0.1, 1.9) -- (-0.9, 1.1);
\draw[thick] (0.1, 1.9) -- (0.9, 1.1);
\draw[thick] (-1.1, 0.9) -- (-1.9, 0.1);
\draw[thick] (-0.9, 0.9) -- (-0.1, 0.1);
\draw[thick] (0.9, 0.9) -- (0.1, 0.1);
\draw[thick] (-1.1, -0.9) -- (-1.9, -0.1);
\draw[thick] (-0.9, -0.9) -- (-0.1, -0.1);
\draw (0,2) circle [radius=0.2];
\draw[fill] (-1,1) circle [radius=0.2];
\draw[fill] (1,1) circle [radius=0.2];
\draw[red,fill] (-2,0) circle [radius=0.2];
\draw[fill] (0,0) circle [radius=0.2];
\draw[fill] (-1,-1) circle [radius=0.2];
\draw[thick, ->] (2,0.4) -- (3.5,0.4);
\node[below] at (2.75,0.4) {$t_{(1,2)}$};
\end{scope}
\begin{scope}[shift={(42,0)}]
\draw[thick] (-0.1, 1.9) -- (-0.9, 1.1);
\draw[thick] (0.1, 1.9) -- (0.9, 1.1);
\draw[thick] (-1.1, 0.9) -- (-1.9, 0.1);
\draw[thick] (-0.9, 0.9) -- (-0.1, 0.1);
\draw[thick] (0.9, 0.9) -- (0.1, 0.1);
\draw[thick] (-1.1, -0.9) -- (-1.9, -0.1);
\draw[thick] (-0.9, -0.9) -- (-0.1, -0.1);
\draw (0,2) circle [radius=0.2];
\draw[fill] (-1,1) circle [radius=0.2];
\draw[fill] (1,1) circle [radius=0.2];
\draw[fill] (-2,0) circle [radius=0.2];
\draw[fill] (0,0) circle [radius=0.2];
\draw[red,fill] (-1,-1) circle [radius=0.2];
\draw[thick, ->] (2,0.4) -- (3.5,0.4);
\node[below] at (2.75,0.4) {$t_{(1,1)}$};
\end{scope}
\begin{scope}[shift={(49,0)}]
\draw[thick] (-0.1, 1.9) -- (-0.9, 1.1);
\draw[thick] (0.1, 1.9) -- (0.9, 1.1);
\draw[thick] (-1.1, 0.9) -- (-1.9, 0.1);
\draw[thick] (-0.9, 0.9) -- (-0.1, 0.1);
\draw[thick] (0.9, 0.9) -- (0.1, 0.1);
\draw[thick] (-1.1, -0.9) -- (-1.9, -0.1);
\draw[thick] (-0.9, -0.9) -- (-0.1, -0.1);
\draw (0,2) circle [radius=0.2];
\draw[fill] (-1,1) circle [radius=0.2];
\draw[fill] (1,1) circle [radius=0.2];
\draw[fill] (-2,0) circle [radius=0.2];
\draw[fill] (0,0) circle [radius=0.2];
\draw[fill] (-1,-1) circle [radius=0.2];
\end{scope}
\draw[thick, ->] (-0.5,4.5) -- (-0.5,3);
\draw[thick, ->] (48.5,4.5) -- (48.5,3);
\node[left] at (-0.5,3.75) {$\bfI$};
\node[right] at (49,3.75) {$\bfI$};
\draw[thick, ->] (2,6.9) -- (45.5,6.9);
\node[below] at (23.75,6.9) {$\tau_{(2,2)}$};
\begin{scope}[shift={(0,6.5)}]
\draw[thick] (-0.1, 1.9) -- (-0.9, 1.1);
\draw[thick] (0.1, 1.9) -- (0.9, 1.1);
\draw[thick] (-1.1, 0.9) -- (-1.9, 0.1);
\draw[thick] (-0.9, 0.9) -- (-0.1, 0.1);
\draw[thick] (0.9, 0.9) -- (0.1, 0.1);
\draw[thick] (-1.1, -0.9) -- (-1.9, -0.1);
\draw[thick] (-0.9, -0.9) -- (-0.1, -0.1);
\draw (0,2) circle [radius=0.2];
\draw[red] (-1,1) circle [radius=0.2];
\draw[fill] (1,1) circle [radius=0.2];
\draw (-2,0) circle [radius=0.2];
\draw (0,0) circle [radius=0.2];
\draw (-1,-1) circle [radius=0.2];
\end{scope}
\begin{scope}[shift={(49,6.5)}]
\draw[thick] (-0.1, 1.9) -- (-0.9, 1.1);
\draw[thick] (0.1, 1.9) -- (0.9, 1.1);
\draw[thick] (-1.1, 0.9) -- (-1.9, 0.1);
\draw[thick] (-0.9, 0.9) -- (-0.1, 0.1);
\draw[thick] (0.9, 0.9) -- (0.1, 0.1);
\draw[thick] (-1.1, -0.9) -- (-1.9, -0.1);
\draw[thick] (-0.9, -0.9) -- (-0.1, -0.1);
\draw (0,2) circle [radius=0.2];
\draw[red,fill] (-1,1) circle [radius=0.2];
\draw[fill] (1,1) circle [radius=0.2];
\draw (-2,0) circle [radius=0.2];
\draw (0,0) circle [radius=0.2];
\draw (-1,-1) circle [radius=0.2];
\end{scope}
\end{tikzpicture}
\end{center}
\end{ex}

To prove Theorem~\ref{thm:tau-star}, we first need a lemma.
The proof of Lemma~\ref{lem:tau-star} and Theorem~\ref{thm:tau-star} will both be purely at the group-theoretic level,
using properties of $\tog_\calj(P)$ and $\tog_\cala(P)$ proved earlier in the paper, and not the definitions of toggles themselves.
This will allow us to use the same proof in the generalization to the piecewise-linear setting after proving the analogue of Theorem~\ref{thm:t-star} and commutativity of toggles.  This will be Theorem~\ref{thm:iso-cpl}.

\begin{lemma}\label{lem:tau-star}
Let $e_1,\dots,e_k$ be pairwise incomparable elements of $P$.  Then for $1\leq i\leq k$,
$$\tau_{e_1}^*\tau_{e_2}^*\dots \tau_{e_i}^* = \eta_{\{e_1,\dots,e_i\}} t_{e_1}t_{e_2}\dots t_{e_i} \eta_{\{e_1,\dots,e_i\}}^{-1}.$$
\end{lemma}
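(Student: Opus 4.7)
The plan is to proceed by induction on $i$. The base case $i=1$ is immediate: by definition $\tau_{e_1}^* = \eta_{e_1}t_{e_1}\eta_{e_1}^{-1}$, and $\eta_{\{e_1\}} = \eta_{e_1}$.

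For the inductive step, set $S_j := \{e_1,\ldots,e_j\}$ and $D_T := \{x\in P : x<y \text{ for some } y\in T\}$, so $\eta_T$ is the (well-defined) product of the toggles $\{t_x\}_{x\in D_T}$ along any linear extension of $D_T$. I would partition $D_{S_i}$ into three pieces,
\[
A := D_{S_{i-1}}\setminus D_{\{e_i\}}, \qquad B := D_{\{e_i\}}\setminus D_{S_{i-1}}, \qquad C := D_{S_{i-1}} \cap D_{\{e_i\}},
\]
and verify three short incomparability facts using the pairwise incomparability of $e_1,\ldots,e_i$: (i) any $a\in A$ is incomparable with any $b\in B$; (ii) any $b\in B$ is incomparable with every $e_j$ for $j<i$; and (iii) any $a\in A$ is incomparable with $e_i$. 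For example, (iii) holds because $a\not<e_i$ by definition of $A$, while $e_i<a$ would force $e_i<a<e_j$ for some $j<i$, contradicting $e_i\parallel e_j$. A similar short argument shows that no element of $A\cup B$ lies below any element of $C$, which lets me choose linear extensions that list $C$ first to obtain the decompositions
\[
\eta_{S_{i-1}} = \eta_C\,\eta_A, \qquad \eta_{e_i} = \eta_C\,\eta_B, \qquad \eta_{S_i} = \eta_C\,\eta_A\,\eta_B.
\]

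By Proposition~\ref{prop:J-toggle-inv-commute}, facts (i), (ii), (iii) imply respectively that $\eta_A$ commutes with $\eta_B$, that $\eta_B$ commutes with $t_{e_1}\cdots t_{e_{i-1}}$, and that $\eta_A$ commutes with $t_{e_i}$. With these in hand, combining the inductive hypothesis with the definition of $\tau_{e_i}^*$ reduces the claim to simplifying
\[
\eta_{S_{i-1}}(t_{e_1}\cdots t_{e_{i-1}})\eta_{S_{i-1}}^{-1}\,\eta_{e_i}\,t_{e_i}\,\eta_{e_i}^{-1}.
\]
I would first collapse $\eta_{S_{i-1}}^{-1}\eta_{e_i} = \eta_A^{-1}\eta_B = \eta_B\eta_A^{-1}$, then commute $\eta_B$ leftward past $t_{e_1}\cdots t_{e_{i-1}}$ and absorb $\eta_{S_{i-1}}\eta_B = \eta_{S_i}$, then commute $\eta_A^{-1}$ past $t_{e_i}$, and finally recognize $\eta_A^{-1}\eta_{e_i}^{-1} = (\eta_{e_i}\eta_A)^{-1} = \eta_{S_i}^{-1}$, producing $\eta_{S_i}(t_{e_1}\cdots t_{e_i})\eta_{S_i}^{-1}$ as required.

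The main obstacle is the combinatorial bookkeeping: establishing the $A,B,C$ partition, the three incomparability facts, and the existence of $C$-first linear extensions. The algebraic manipulation after that is mechanical. Reassuringly, the entire argument is purely group-theoretic (using only commutation of toggles and well-definedness of $\eta$), which matches the author's stated intention that the same proof will lift to the piecewise-linear setting when proving Theorem~\ref{thm:iso-cpl}.
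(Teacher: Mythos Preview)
Your proof is correct and follows essentially the same approach as the paper's own proof. The paper partitions $D_{S_i}$ into the same three pieces (there called $\{x_1,\dots,x_a\}$, $\{y_1,\dots,y_b\}$, $\{z_1,\dots,z_c\}$, corresponding to your $C$, $A$, $B$), establishes the same incomparability facts, obtains the same factorizations of $\eta_{S_{i-1}}$, $\eta_{e_i}$, and $\eta_{S_i}$, and carries out the same cancellation and commutation steps; your presentation is simply a bit more compact by naming the block products $\eta_A,\eta_B,\eta_C$ rather than writing out the toggle strings explicitly.
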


\begin{proof}
This claim is true by definition for $i=1$ and we proceed inductively.  Suppose it is true for some given $i\leq k-1$.  Let
\begin{itemize}
\item $x_1,\dots,x_a$ be the elements that are both less than $e_{i+1}$ and less than at least one of $e_1,\dots,e_i$,
\item $y_1,\dots,y_b$ be the elements that are less than at least one of $e_1,\dots,e_i$ but not less than $e_{i+1}$,
\item $z_1,\dots,z_c$ be the elements that are less than $e_{i+1}$ but not less than any of $e_1,\dots,e_i$.
\end{itemize}
Clearly, it is possible for one or more of the sets $\{x_1,\dots,x_a\}$, $\{y_1,\dots,y_b\}$, and $\{z_1,\dots,z_c\}$ to be empty.  For example, if $b=0$, then the product $T_{y_1}\cdots t_{y_b}$ is just the identity.

Note than none of $y_1,\dots,y_b$ are less than any of $x_1,\dots,x_a$ because any element less than some $x_j$ is automatically less than $e_{i+1}$.  By similar reasoning, none of $z_1,\dots,z_c$ are less than any of $x_1,\dots,x_a$ either.  Also any pair $y_m,z_n$ are incomparable, because $z_n \leq y_m$ would imply $y_m$ is less than some $e_j$, while $y_m \leq z_n$ would imply $z_n<e_{i+1}$.  By transitivity and the pairwise incomparability of $e_1,\dots,e_{i+1}$, each $y_m$ is incomparable with $e_{i+1}$, and each $z_m$ is incomparable with any of $e_1,\dots,e_i$.

We will pick the indices so that $(x_1,\dots, x_a)$, $(y_1,\dots,y_b)$, and $(z_1,\dots,z_c)$ are linear extensions of the subposets $\{x_1,\dots,x_a\}$, $\{y_1,\dots,y_b\}$, and $\{z_1,\dots,z_c\}$, respectively.  Then we have the following
\begin{itemize}
\item $(x_1,\dots,x_a,y_1,\dots,y_b)$ is a linear extension of $\big\{p\in P\;|\; p<q,q\in\{e_1,\dots, e_i\}\big\}$.
\begin{itemize}
\item This yields $\eta_{\{e_1,\dots,e_i\}}=t_{x_1}\cdots t_{x_a}t_{y_1}\cdots t_{y_b}$.
\end{itemize}
\item $(x_1,\dots,x_a,z_1,\dots,z_c)$ is a linear extension of $\{p\in P \;|\; p<e_{i+1}\}$.
\begin{itemize}
\item This yields $\eta_{e_{i+1}}=t_{x_1}\cdots t_{x_a}t_{z_1}\cdots t_{z_c}$.
\end{itemize}
\item $(x_1,\dots,x_a,y_1,\dots,y_b,z_1,\dots,z_c)$ and 
$(x_1,\dots,x_a,z_1,\dots,z_c,y_1,\dots,y_b)$ are both linear extensions of $\big\{p\in P \;|\; p<q,q\in\{e_1,\dots, e_{i+1}\}\big\}$.
\begin{itemize}
\item This yields $\eta_{\{e_1,\dots,e_{i+1}\}}=t_{x_1}\cdots t_{x_a}t_{y_1}\cdots t_{y_b}t_{z_1}\cdots t_{z_c}$.
\end{itemize}
\end{itemize}

Using the induction hypothesis,
\begin{align*}
&  \tau_{e_1}^*\dots \tau_{e_i}^* \tau_{e_{i+1}}^* \\
&=
\eta_{\{e_1,\dots,e_i\}} t_{e_1}\dots t_{e_i} \eta_{\{e_1,\dots,e_i\}}^{-1}\eta_{e_{i+1}} t_{e_{i+1}} \eta_{e_{i+1}}^{-1}\\
&=
t_{x_1}\cdots t_{x_a} t_{y_1} \cdots t_{y_b} t_{e_1} \cdots t_{e_i} t_{y_b} \cdots t_{y_1} t_{x_a}\cdots t_{x_1} t_{x_1}\cdots t_{x_a} t_{z_1} \cdots t_{z_c} t_{e_{i+1}} t_{z_c} \cdots t_{z_1} t_{x_a}\cdots t_{x_1}\\
&=
t_{x_1}\cdots t_{x_a} t_{y_1} \cdots t_{y_b} t_{e_1} \cdots t_{e_i} t_{y_b} \cdots t_{y_1} t_{z_1} \cdots t_{z_c} t_{e_{i+1}} t_{z_c} \cdots t_{z_1} t_{x_a}\cdots t_{x_1}\\
&=
t_{x_1}\cdots t_{x_a} t_{y_1} \cdots t_{y_b} t_{e_1} \cdots t_{e_i} t_{z_1} \cdots t_{z_c} t_{e_{i+1}} t_{z_c} \cdots t_{z_1} t_{y_b} \cdots t_{y_1} t_{x_a}\cdots t_{x_1}\\
&=
t_{x_1}\cdots t_{x_a} t_{y_1} \cdots t_{y_b} t_{z_1} \cdots t_{z_c} t_{e_1} \cdots t_{e_i} t_{e_{i+1}} t_{z_c} \cdots t_{z_1} t_{y_b} \cdots t_{y_1} t_{x_a}\cdots t_{x_1}\\
&=
\eta_{\{e_1,\dots,e_{i+1}\}} t_{e_1}\dots t_{e_i}
t_{e_{i+1}}\eta_{\{e_1,\dots,e_{i+1}\}}^{-1}
\end{align*}
where each commutation above is between toggles for pairwise incomparable elements.
\end{proof}

We are now ready to prove Theorem~\ref{thm:tau-star}.

\begin{proof}[Proof of Theorem~\ref{thm:tau-star}]
We use induction on $e$.  If $e$ is a minimal element of $P$, then $\tau_e^*=t_e$, so the diagram commutes by Theorem~\ref{thm:t-star}.

Now suppose $e$ is not minimal.  Let $e_1,\dots,e_k$ be the elements of $P$ covered by $e$, and suppose that the theorem is true for every $e_i$.  That is, for every antichain $A$ with $I=\bfI(A)$, the order ideal generated by $\tau_{e_i}(A)$ is $\tau_{e_i}^*(I)$.
Then the order ideal generated by $\tau_{e_1}\tau_{e_2}\dots \tau_{e_k}(A)$ is
$\tau_{e_1}^*\tau_{e_2}^*\dots \tau_{e_k}^*(I)
= \eta_{\{e_1,\dots,e_k\}} t_{e_1}t_{e_2}\dots t_{e_k} \eta_{\{e_1,\dots,e_k\}}^{-1}(I)$ by Lemma~\ref{lem:tau-star}.

From the definition of $t_e^*$, it follows that $\tau_{e_1}\tau_{e_2}\cdots \tau_{e_k} t_e^* \tau_{e_1}\tau_{e_2}\cdots \tau_{e_k} 
=\tau_e$.
Then the order ideal generated by $\tau_e(A)=\tau_{e_1}\tau_{e_2}\cdots \tau_{e_k} t_e^* \tau_{e_1}\tau_{e_2}\cdots \tau_{e_k} (A)$ is 
$$\eta_{\{e_1,\dots,e_k\}} t_{e_1}t_{e_2}\cdots t_{e_k} \eta_{\{e_1,\dots,e_k\}}^{-1} t_e \eta_{\{e_1,\dots,e_k\}} t_{e_1}t_{e_2}\cdots t_{e_k} \eta_{\{e_1,\dots,e_k\}}^{-1}(I)$$ by Theorem~\ref{thm:t-star} (for $t_e^*$) and the induction hypothesis (for $\tau_{e_1}\tau_{e_2}\cdots \tau_{e_k}$).  Thus, it suffices to show that
\begin{equation}\label{eq:tau-star}
\eta_{\{e_1,\dots,e_k\}} t_{e_1}t_{e_2}\cdots t_{e_k} \eta_{\{e_1,\dots,e_k\}}^{-1} t_e \eta_{\{e_1,\dots,e_k\}} t_{e_1}t_{e_2}\cdots t_{e_k} \eta_{\{e_1,\dots,e_k\}}^{-1}=\tau_e^*.
\end{equation}
The toggles in the product $\eta_{\{e_1,\dots,e_k\}}$ correspond to elements strictly less than $e_1,\dots,e_k$; none of these cover nor are covered by $e$.  Thus by Proposition~\ref{prop:J-toggle-inv-commute}, we can commute $t_e$ with $\eta_{\{e_1,\dots,e_k\}}$ on the left side of~(\ref{eq:tau-star}) and then cancel $\eta_{\{e_1,\dots,e_k\}}^{-1} \eta_{\{e_1,\dots,e_k\}}$.  Also, since $e_1,\dots,e_k$ are pairwise incomparable, we can commute $t_{e_1},\dots, t_{e_k}$.  Thus the left side of~(\ref{eq:tau-star}) is
$\eta_{\{e_1,\dots,e_k\}} t_{e_1}t_{e_2}\cdots t_{e_k} t_e t_{e_k} \cdots t_{e_2}t_{e_1} \eta_{\{e_1,\dots,e_k\}}^{-1}$.
Note that $$\{x\in P \;|\; x<e\}=\{x\in P \;|\; x<y ,y\in\{e_1,\dots,e_k\}\}\cup \{e_1,\dots,e_k\}$$ where the union is disjoint and that $e_1,\dots,e_k$ are maximal elements of this set.  Thus for any linear extension $(x_1,\dots,x_n)$ of $\{x\in P \;|\; x<y ,y\in\{e_1,\dots,e_k\}\}$, a linear extension of
$\{x\in P \;|\; x<e\}$ is
$(x_1,\dots,x_n,e_1,\dots,e_k)$.  So $\eta_{\{e_1,\dots,e_k\}} t_{e_1}t_{e_2}\cdots t_{e_k}
=\eta_e$ which means the left side of~(\ref{eq:tau-star}) is $\eta_e t_e \eta_e^{-1}= \tau_e^*$, same as the right side.
\end{proof}

The following is a corollary of Theorems~\ref{thm:t-star} and~\ref{thm:tau-star}.

\begin{cor}\label{cor:iso}
There is an isomorphism from $\tog_\cala(P)$ to $\tog_\calj(P)$ given by $\tau_e\mapsto \tau_e^*$, with inverse given by $t_e\mapsto t_e^*$.
\end{cor}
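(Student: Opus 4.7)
The plan is to realize both maps $\tau_e \mapsto \tau_e^*$ and $t_e \mapsto t_e^*$ as restrictions of a single conjugation-by-bijection isomorphism, and then check that this isomorphism carries one toggle group onto the other. Concretely, since $\bfI : \cala(P) \to \calj(P)$ is a bijection, conjugation by $\bfI$ induces a group isomorphism
\[
\Phi : \mathfrak{S}_{\cala(P)} \longrightarrow \mathfrak{S}_{\calj(P)}, \qquad f \longmapsto \bfI \circ f \circ \bfI^{-1},
\]
whose inverse $\Phi^{-1}$ is conjugation by $\bfI^{-1}$. My proposal is to show that $\Phi$ restricts to an isomorphism $\tog_\cala(P) \to \tog_\calj(P)$ sending $\tau_e$ to $\tau_e^*$.

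The first step is to translate the commuting diagrams of Theorems~\ref{thm:t-star} and~\ref{thm:tau-star} into statements about $\Phi$. Theorem~\ref{thm:tau-star} says exactly that $\bfI \circ \tau_e \circ \bfI^{-1} = \tau_e^*$ on $\calj(P)$, i.e.\ $\Phi(\tau_e) = \tau_e^*$ for every $e \in P$. Dually, Theorem~\ref{thm:t-star} says $\bfI^{-1} \circ t_e \circ \bfI = t_e^*$ on $\cala(P)$, i.e.\ $\Phi^{-1}(t_e) = t_e^*$ for every $e \in P$.

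The second step is to verify surjectivity in both directions. Since each $\tau_e^*$ lies in $\tog_\calj(P)$ by its definition, $\Phi$ sends the generating set $\{\tau_e : e \in P\}$ of $\tog_\cala(P)$ into $\tog_\calj(P)$, hence $\Phi(\tog_\cala(P)) \subseteq \tog_\calj(P)$. Symmetrically, each $t_e^*$ lies in $\tog_\cala(P)$ by its definition, so $\Phi^{-1}(\tog_\calj(P)) \subseteq \tog_\cala(P)$; applying $\Phi$ gives the reverse inclusion $\tog_\calj(P) \subseteq \Phi(\tog_\cala(P))$. Thus the restriction $\Phi|_{\tog_\cala(P)} : \tog_\cala(P) \to \tog_\calj(P)$ is a well-defined group isomorphism sending $\tau_e$ to $\tau_e^*$, and its inverse sends $t_e$ to $t_e^*$.

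There is essentially no obstacle: the heavy lifting was already done in Theorems~\ref{thm:t-star} and~\ref{thm:tau-star}, which guarantee that both conjugation maps land inside the \emph{toggle} subgroups rather than merely inside the ambient symmetric groups. The only subtlety worth flagging is that one must not try to define $\Phi$ directly by the generator rule $\tau_e \mapsto \tau_e^*$ and check well-definedness against the (unknown) presentation of $\tog_\cala(P)$; routing the construction through conjugation by $\bfI$ sidesteps this entirely.
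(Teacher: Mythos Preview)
Your proposal is correct and is exactly the natural argument: the paper does not spell out a proof of this corollary at all, merely stating that it follows from Theorems~\ref{thm:t-star} and~\ref{thm:tau-star}, and your conjugation-by-$\bfI$ construction is precisely the intended way to make that implication precise. The key observation you make---that routing through conjugation by the bijection $\bfI$ avoids any well-definedness check on generators---is the right way to frame it.
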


\begin{remark}\label{rem:indep-sets}
Striker has proven that toggle groups on many families of subsets are either symmetric or alternating groups, including independent sets of connected graphs~\cite[\S3.6]{strikergentog}.  An \textbf{independent set} of a graph is a subset of the vertices, for which no two are connected by an edge.  Antichains of $P$ are the same as independent sets of the \textbf{comparability graph} of $P$, in which two elements are connected by an edge if they are comparable (different from the Hasse diagram that only includes \textit{cover} relations).  So any result that holds in general for toggling independent sets of graphs also does for toggling
antichains\footnote{And similarly chains of posets are the independent sets of the \textit{incomparability graph} in which two elements are connected by an edge if they are incomparable.  So any result that holds in general for toggling independent sets also holds for toggling chains.},
but not necessarily vice versa, since it is straightforward to show that e.g.\ a cycle graph with five vertices is not the comparability graph for any poset.
\end{remark}

The following proposition explains that we can state $\row_\cala$ by performing antichain toggles at every element, but in the \emph{opposite} order as that of $\row_\calj$ in Proposition~\ref{prop:row-toggles}.

\begin{prop}\label{prop:row-toggles-anti}
Let $(x_1,x_2,\dots,x_n)$ be any linear extension of a finite poset $P$.  Then $\row_\cala=\tau_{x_n}\cdots \tau_{x_2} \tau_{x_1}$.
\end{prop}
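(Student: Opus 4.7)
The plan is to prove the identity directly by tracking how the antichain evolves under the successive toggles $\tau_{x_1}, \tau_{x_2}, \ldots, \tau_{x_n}$. Fix $A \in \cala(P)$, let $I := \bfI(A)$, and set $A^{(i)} := \tau_{x_i}\cdots \tau_{x_2}\tau_{x_1}(A)$ for $0 \le i \le n$. By definition $\row_\cala(A)$ is the set of minimal elements of $P \setminus I$, so it suffices to show $A^{(n)}$ equals that set. I will prove by induction on $i$ the more refined claim that
$$A^{(i)} = \bigl(A \setminus \{x_1, \ldots, x_i\}\bigr)\;\cup\;\bigl\{x_j : 1 \le j \le i \text{ and } x_j \text{ is a minimal element of } P \setminus I\bigr\}.$$
Setting $i=n$ then gives exactly $\row_\cala(A)$, since the first set is empty and the second enumerates all minimal elements of $P \setminus I$.

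The base case $i = 0$ is immediate. For the inductive step I assume the description of $A^{(i-1)}$ and analyze $\tau_{x_i}$ in three cases. If $x_i \in A$ then $x_i \in A^{(i-1)}$ (it has not been touched before), so $\tau_{x_i}$ removes it, and the description updates correctly because $x_i \in I$ rules out adding $x_i$ to the second set. If $x_i \in I \setminus A$, some $y \in A$ satisfies $x_i < y$; the linear extension forces $y = x_j$ with $j > i$, so $y$ is an untouched element of $A^{(i-1)}$ comparable to $x_i$, and $\tau_{x_i}$ does nothing. If $x_i \notin I$ and $x_i$ is minimal in $P \setminus I$, I verify that no element of $A^{(i-1)}$ is comparable to $x_i$: an untouched $y\in A$ with $y < x_i$ would satisfy $y = x_j$ with $j < i$ by linear extension, contradicting untouched-ness; an untouched $y > x_i$ would force $x_i \in I$; and a previously added $x_j < x_i$ minimal in $P \setminus I$ would contradict $x_i$'s minimality in $P \setminus I$. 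Hence $\tau_{x_i}$ inserts $x_i$ exactly as the claim requires. If $x_i \notin I$ but is not minimal in $P \setminus I$, pick some $z < x_i$ with $z \in P\setminus I$ and let $x_{j'}$ be a minimal element of $P \setminus I$ with $x_{j'} \le z$; the linear extension gives $j' < i$, so $x_{j'}\in A^{(i-1)}$ by the inductive hypothesis and $x_{j'}<x_i$ blocks the toggle, leaving $A^{(i-1)}$ unchanged while the second set in the description is unaffected.

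The main obstacle is simply organizing the case analysis cleanly; all of it rests on the single structural observation that a linear extension forces anything strictly below $x_i$ to be processed before step $i$, which guarantees that the relevant minimal elements of $P \setminus I$ are already sitting in the antichain when they need to block (or match) the toggle at step $i$. A brief bookkeeping point is to note that each $A^{(i)}$ is genuinely an antichain: the ``untouched'' part of $A$ is one, the added minimal elements of $P\setminus I$ are pairwise incomparable, and the same linear-extension argument used above shows no element of one part can be comparable to an element of the other. This completes the induction and hence the proposition.
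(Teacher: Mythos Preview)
Your proof is correct and takes essentially the same approach as the paper: a direct case analysis tracking the effect of each toggle $\tau_{x_i}$, split according to whether $x_i$ lies in $A$, in $\bfI(A)\setminus A$, or in $P\setminus\bfI(A)$ (minimal or not), using the linear-extension order to guarantee which elements are still present or already inserted at step $i$. Your version makes the argument more formal by stating an explicit inductive invariant for $A^{(i)}$, whereas the paper argues the same four cases informally; otherwise the two proofs are identical in substance.
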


Like the proofs of Theorem~\ref{thm:tau-star} and Lemma~\ref{lem:tau-star},
we could prove this proposition algebraically using Theorems~\ref{thm:t-star} and~\ref{thm:tau-star}, which is what we will do in Section~\ref{sec:cpl} for the piecewise-linear generalization (Theorem~\ref{thm:row-C}).  However, the following is a much more elegant proof.

\begin{proof}
Let $A=\{a_1,\dots,a_k\}$ be an antichain.
Recall that $\row_\cala(A)$ is the set of minimal elements of the complement of the order ideal generated by $A$.  Let us consider what happens when we apply $\tau_{x_i}$ in the product $\tau_{x_n}\cdots \tau_{x_2} \tau_{x_1}$.
\begin{itemize}
\item If $x_i<a_j\in A$, then $\tau_{x_i}$ is performed before $\tau_{a_j}$ so $\tau_{x_i}$ cannot add $x_i$ to the antichain.
\item If $x_i\in A$, then $\tau_{x_i}$ removes $x_i$ from $A$.
\item Otherwise, $x_i\in P\sm \bfI(A)$.  In this case $\tau_{x_i}$ is performed after any element of $A$ less than $x_i$ (if any) has been toggled out.  If $x_i$ is a minimal element of $P\sm \bfI(A)$ (i.e., $x_i\in\row_\cala(A)$), then $\tau_{x_i}$ adds $x_i$ to the antichain.  If $x_i$ is not a minimal element of $P\sm \bfI(A)$, then when it is time to toggle $\tau_{x_i}$, some $z\in \row_\cala(A)$ with $z<x_i$ is in the antichain, so we cannot add $x_i$.
\end{itemize}
Thus, $\tau_{x_n}\cdots \tau_{x_2} \tau_{x_1}(A)=\row_\cala(A)$.
\end{proof}

\begin{example}\label{ex:row-toggles-anti}
For the poset of Example~\ref{ex:a3-row}, with elements as named below, $(a,b,c,d,e,f)$ is a linear extension.
We show the effect of applying $\tau_f \tau_e \tau_d \tau_c \tau_b \tau_a$ to the antichain considered in Example~\ref{ex:a3-row}.  Notice that the outcome is the same antichain we obtained by the three step process, demonstrating Proposition~\ref{prop:row-toggles-anti}.

\begin{center}
\begin{tikzpicture}[scale=0.35]
\begin{scope}[shift={(0,-4)}]
\draw[thick] (-0.1, 1.9) -- (-0.9, 1.1);
\draw[thick] (0.1, 1.9) -- (0.9, 1.1);
\draw[thick] (-1.1, 0.9) -- (-1.9, 0.1);
\draw[thick] (-0.9, 0.9) -- (-0.1, 0.1);
\draw[thick] (0.9, 0.9) -- (0.1, 0.1);
\draw[thick] (1.1, 0.9) -- (1.9, 0.1);
\draw (0,2) circle [radius=0.2];
\draw (-1,1) circle [radius=0.2];
\draw[fill] (1,1) circle [radius=0.2];
\draw[red,fill] (-2,0) circle [radius=0.2];
\draw (0,0) circle [radius=0.2];
\draw (2,0) circle [radius=0.2];
\node[above] at (0,2) {$f$};
\node[left] at (-1,1) {$d$};
\node[right] at (1,1) {$e$};
\node[below] at (-2,0) {$a$};
\node[below] at (0,0) {$b$};
\node[below] at (2,0) {$c$};
\end{scope}
\node at (3.5,-3) {$\stackrel{\tau_a}{\longmapsto}$};
\begin{scope}[shift={(7,-4)}]
\draw[thick] (-0.1, 1.9) -- (-0.9, 1.1);
\draw[thick] (0.1, 1.9) -- (0.9, 1.1);
\draw[thick] (-1.1, 0.9) -- (-1.9, 0.1);
\draw[thick] (-0.9, 0.9) -- (-0.1, 0.1);
\draw[thick] (0.9, 0.9) -- (0.1, 0.1);
\draw[thick] (1.1, 0.9) -- (1.9, 0.1);
\draw (0,2) circle [radius=0.2];
\draw (-1,1) circle [radius=0.2];
\draw[fill] (1,1) circle [radius=0.2];
\draw (-2,0) circle [radius=0.2];
\draw[red] (0,0) circle [radius=0.2];
\draw (2,0) circle [radius=0.2];
\node[above] at (0,2) {$f$};
\node[left] at (-1,1) {$d$};
\node[right] at (1,1) {$e$};
\node[below] at (-2,0) {$a$};
\node[below] at (0,0) {$b$};
\node[below] at (2,0) {$c$};
\end{scope}
\node at (10.5,-3) {$\stackrel{\tau_b}{\longmapsto}$};
\begin{scope}[shift={(14,-4)}]
\draw[thick] (-0.1, 1.9) -- (-0.9, 1.1);
\draw[thick] (0.1, 1.9) -- (0.9, 1.1);
\draw[thick] (-1.1, 0.9) -- (-1.9, 0.1);
\draw[thick] (-0.9, 0.9) -- (-0.1, 0.1);
\draw[thick] (0.9, 0.9) -- (0.1, 0.1);
\draw[thick] (1.1, 0.9) -- (1.9, 0.1);
\draw (0,2) circle [radius=0.2];
\draw (-1,1) circle [radius=0.2];
\draw[fill] (1,1) circle [radius=0.2];
\draw (-2,0) circle [radius=0.2];
\draw (0,0) circle [radius=0.2];
\draw[red] (2,0) circle [radius=0.2];
\node[above] at (0,2) {$f$};
\node[left] at (-1,1) {$d$};
\node[right] at (1,1) {$e$};
\node[below] at (-2,0) {$a$};
\node[below] at (0,0) {$b$};
\node[below] at (2,0) {$c$};
\end{scope}
\node at (17.5,-3) {$\stackrel{\tau_c}{\longmapsto}$};
\begin{scope}[shift={(21,-4)}]
\draw[thick] (-0.1, 1.9) -- (-0.9, 1.1);
\draw[thick] (0.1, 1.9) -- (0.9, 1.1);
\draw[thick] (-1.1, 0.9) -- (-1.9, 0.1);
\draw[thick] (-0.9, 0.9) -- (-0.1, 0.1);
\draw[thick] (0.9, 0.9) -- (0.1, 0.1);
\draw[thick] (1.1, 0.9) -- (1.9, 0.1);
\draw (0,2) circle [radius=0.2];
\draw[red] (-1,1) circle [radius=0.2];
\draw[fill] (1,1) circle [radius=0.2];
\draw (-2,0) circle [radius=0.2];
\draw (0,0) circle [radius=0.2];
\draw (2,0) circle [radius=0.2];
\node[above] at (0,2) {$f$};
\node[left] at (-1,1) {$d$};
\node[right] at (1,1) {$e$};
\node[below] at (-2,0) {$a$};
\node[below] at (0,0) {$b$};
\node[below] at (2,0) {$c$};
\end{scope}
\node at (24.5,-3) {$\stackrel{\tau_d}{\longmapsto}$};
\begin{scope}[shift={(28,-4)}]
\draw[thick] (-0.1, 1.9) -- (-0.9, 1.1);
\draw[thick] (0.1, 1.9) -- (0.9, 1.1);
\draw[thick] (-1.1, 0.9) -- (-1.9, 0.1);
\draw[thick] (-0.9, 0.9) -- (-0.1, 0.1);
\draw[thick] (0.9, 0.9) -- (0.1, 0.1);
\draw[thick] (1.1, 0.9) -- (1.9, 0.1);
\draw (0,2) circle [radius=0.2];
\draw[fill] (-1,1) circle [radius=0.2];
\draw[red,fill] (1,1) circle [radius=0.2];
\draw (-2,0) circle [radius=0.2];
\draw (0,0) circle [radius=0.2];
\draw (2,0) circle [radius=0.2];
\node[above] at (0,2) {$f$};
\node[left] at (-1,1) {$d$};
\node[right] at (1,1) {$e$};
\node[below] at (-2,0) {$a$};
\node[below] at (0,0) {$b$};
\node[below] at (2,0) {$c$};
\end{scope}
\node at (31.5,-3) {$\stackrel{\tau_e}{\longmapsto}$};
\begin{scope}[shift={(35,-4)}]
\draw[thick] (-0.1, 1.9) -- (-0.9, 1.1);
\draw[thick] (0.1, 1.9) -- (0.9, 1.1);
\draw[thick] (-1.1, 0.9) -- (-1.9, 0.1);
\draw[thick] (-0.9, 0.9) -- (-0.1, 0.1);
\draw[thick] (0.9, 0.9) -- (0.1, 0.1);
\draw[thick] (1.1, 0.9) -- (1.9, 0.1);
\draw[red] (0,2) circle [radius=0.2];
\draw[fill] (-1,1) circle [radius=0.2];
\draw (1,1) circle [radius=0.2];
\draw (-2,0) circle [radius=0.2];
\draw (0,0) circle [radius=0.2];
\draw (2,0) circle [radius=0.2];
\node[above] at (0,2) {$f$};
\node[left] at (-1,1) {$d$};
\node[right] at (1,1) {$e$};
\node[below] at (-2,0) {$a$};
\node[below] at (0,0) {$b$};
\node[below] at (2,0) {3};
\end{scope}
\node at (38.5,-3) {$\stackrel{\tau_f}{\longmapsto}$};
\begin{scope}[shift={(42,-4)}]
\draw[thick] (-0.1, 1.9) -- (-0.9, 1.1);
\draw[thick] (0.1, 1.9) -- (0.9, 1.1);
\draw[thick] (-1.1, 0.9) -- (-1.9, 0.1);
\draw[thick] (-0.9, 0.9) -- (-0.1, 0.1);
\draw[thick] (0.9, 0.9) -- (0.1, 0.1);
\draw[thick] (1.1, 0.9) -- (1.9, 0.1);
\draw (0,2) circle [radius=0.2];
\draw[fill] (-1,1) circle [radius=0.2];
\draw (1,1) circle [radius=0.2];
\draw (-2,0) circle [radius=0.2];
\draw (0,0) circle [radius=0.2];
\draw (2,0) circle [radius=0.2];
\node[above] at (0,2) {$f$};
\node[left] at (-1,1) {$d$};
\node[right] at (1,1) {$e$};
\node[below] at (-2,0) {$a$};
\node[below] at (0,0) {$b$};
\node[below] at (2,0) {$c$};
\end{scope}
\end{tikzpicture}
\end{center}
\end{example}

\subsection{Graded posets and gyration}\label{subsec:gp}
Thus far, the posets for which rowmotion has been shown to exhibit nice behavior are \textbf{graded}, i.e., posets $P$ with a well defined \textbf{rank function} $\rk: P\ra \zz_{\geq 0}$ satisfying \begin{itemize}
\item $\rk(x)=0$ for any minimal element $x$,
\item $\rk(y)=\rk(x)+1$ if $y\gtrdot x$,
\item every maximal element $x$ has $\rk(x)=r$, where $r$ is called the \textbf{rank} of $P$.
\end{itemize}

For $x\in P$, we call $\rk(x)$ the \textbf{rank} of $x$.  Note that the rank function is uniquely determined y the poset.

In a graded poset $P$, elements of the same rank are pairwise incomparable.  Thus we can define toggling by an entire rank at once (either order ideal or antichain toggling).  This has already been well-studied for order ideal toggles~\cite{strikerwilliams,einpropp}\footnote{Actually, Striker and Williams defined this for a related family of ``rowed-and-columned'' posets~\cite{strikerwilliams}.  Since we can draw the Hasse diagram for a graded poset in a way where each row corresponds to a rank, the name ``rowmotion'' came from the fact that it is toggling by rows for special posets.}.

\begin{definition}
For a graded poset $P$, define $$t_{\rk=i}:=\prod\limits_{\rk(x)=i} t_x,\hspace{0.3 in}\tau_{\rk=i}:=\prod\limits_{\rk(x)=i} \tau_x,\hspace{0.3 in}t_{\rk=i}^*:=\prod\limits_{\rk(x)=i} t_x^*,\hspace{0.3 in}\tau_{\rk=i}^*:=\prod\limits_{\rk(x)=i} \tau_x^*.$$
\end{definition}

All of the rank toggles
$t_{\rk=i},\tau_{\rk=i},t_{\rk=i}^*,\tau_{\rk=i}^*$
defined above are involutions because they are products of commuting involutions.
The following is clear from Propositions~\ref{prop:row-toggles} and~\ref{prop:row-toggles-anti}.  The $\row_\calj$ part is~\cite[Corollary 4.9]{strikerwilliams}, also found in~\cite{einpropp}.

\begin{cor}\label{cor:row-rank}
For a graded poset $P$ of rank $r$, 
$\row_\calj=t_{\rk=0}t_{\rk=1}t_{\rk=2}\cdots t_{\rk=r}$
and
$\row_\cala=\tau_{\rk=r}\cdots\tau_{\rk=2}\tau_{\rk=1}\tau_{\rk=0}$.
\end{cor}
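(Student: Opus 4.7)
The plan is to deduce both formulas directly from Propositions~\ref{prop:row-toggles} and~\ref{prop:row-toggles-anti} by choosing a linear extension adapted to the grading. Specifically, since $P$ is graded of rank $r$, I can list the elements rank by rank: first the rank-$0$ elements in any order, then the rank-$1$ elements in any order, and so on up through rank $r$. This yields a valid linear extension $(x_1, x_2, \dots, x_n)$ because whenever $x_i < x_j$ we have $\rk(x_i) < \rk(x_j)$, which forces $i < j$ by construction.

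Next I would substitute this linear extension into Proposition~\ref{prop:row-toggles} to write $\row_\calj = t_{x_1} t_{x_2} \cdots t_{x_n}$, and then group the toggles by rank. Within each rank the corresponding elements are pairwise incomparable, so by Proposition~\ref{prop:J-toggle-inv-commute} the toggles within a single rank all commute with each other. Hence the product of toggles of rank-$i$ elements (in the order given by the linear extension) equals $t_{\rk=i}$, independent of the internal ordering chosen. Collecting these blocks gives $\row_\calj = t_{\rk=0} t_{\rk=1} \cdots t_{\rk=r}$.

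For the antichain statement I would apply Proposition~\ref{prop:row-toggles-anti} to the same linear extension, obtaining $\row_\cala = \tau_{x_n} \cdots \tau_{x_2} \tau_{x_1}$. Reversing the order puts the rank-$r$ toggles first and the rank-$0$ toggles last. By Proposition~\ref{prop:tau-commute}, antichain toggles for pairwise incomparable elements commute, so each rank block collapses to a single $\tau_{\rk=i}$ regardless of the order used inside the block. This yields $\row_\cala = \tau_{\rk=r} \cdots \tau_{\rk=1} \tau_{\rk=0}$.

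There is essentially no obstacle here: the only thing to verify is that a rank-by-rank listing is a linear extension and that intra-rank toggles commute, both of which are immediate from the definition of a graded poset together with the commutation results already established. The proof is therefore a direct assembly of existing ingredients, which is why the corollary is labeled as clear.
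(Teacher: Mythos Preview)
Your argument is correct and matches the paper's approach: the paper simply states that the corollary is clear from Propositions~\ref{prop:row-toggles} and~\ref{prop:row-toggles-anti}, and your rank-by-rank linear extension together with the intra-rank commutation is exactly the intended reading.
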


\begin{ex}\label{ex:row-by-rank}
In Figure~\ref{fig:row-by-rank}, we demonstrate both $\row_\calj$ (top) and $\row_\cala$ (bottom) in terms of the rank toggles.  For applying $t_{\rk=i}$, we can insert or remove each element of rank $i$ subject to Proposition~\ref{prop:wild nile ride}.  To apply $\tau_{\rk=i}$, we remove each element of rank $i$ that is in the antichain; otherwise we add the element if and only if it is incomparable with every element in the antichain.  The poset elements toggled in the following step are shown in {\color{red} red}.
\end{ex}

\begin{figure}
\begin{center}
\begin{tikzpicture}[scale=0.486]
\begin{scope}[shift={(0,-4)}]
\draw[thick] (-0.1, 1.9) -- (-0.9, 1.1);
\draw[thick] (0.1, 1.9) -- (0.9, 1.1);
\draw[thick] (-1.1, 0.9) -- (-1.9, 0.1);
\draw[thick] (-0.9, 0.9) -- (-0.1, 0.1);
\draw[thick] (0.9, 0.9) -- (0.1, 0.1);
\draw[thick] (-1.1, -0.9) -- (-1.9, -0.1);
\draw[thick] (-0.9, -0.9) -- (-0.1, -0.1);
\draw[red] (0,2) circle [radius=0.2];
\draw (-1,1) circle [radius=0.2];
\draw[fill] (1,1) circle [radius=0.2];
\draw[fill] (-2,0) circle [radius=0.2];
\draw[fill] (0,0) circle [radius=0.2];
\draw[fill] (-1,-1) circle [radius=0.2];
\node at (-3.75,-1) {\small{rank 0}};
\node at (-3.75,0) {\small{rank 1}};
\node at (-3.75,1) {\small{rank 2}};
\node at (-3.75,2) {\small{rank 3}};
\end{scope}
\node at (3,-3.5) {\Large{$\stackrel{\normalsize{t_{\rk=3}}}{\longmapsto}$}};
\begin{scope}[shift={(7,-4)}]
\draw[thick] (-0.1, 1.9) -- (-0.9, 1.1);
\draw[thick] (0.1, 1.9) -- (0.9, 1.1);
\draw[thick] (-1.1, 0.9) -- (-1.9, 0.1);
\draw[thick] (-0.9, 0.9) -- (-0.1, 0.1);
\draw[thick] (0.9, 0.9) -- (0.1, 0.1);
\draw[thick] (-1.1, -0.9) -- (-1.9, -0.1);
\draw[thick] (-0.9, -0.9) -- (-0.1, -0.1);
\draw (0,2) circle [radius=0.2];
\draw[red] (-1,1) circle [radius=0.2];
\draw[red,fill] (1,1) circle [radius=0.2];
\draw[fill] (-2,0) circle [radius=0.2];
\draw[fill] (0,0) circle [radius=0.2];
\draw[fill] (-1,-1) circle [radius=0.2];
\end{scope}
\node at (10,-3.5) {\Large{$\stackrel{\normalsize{t_{\rk=2}}}{\longmapsto}$}};
\begin{scope}[shift={(14,-4)}]
\draw[thick] (-0.1, 1.9) -- (-0.9, 1.1);
\draw[thick] (0.1, 1.9) -- (0.9, 1.1);
\draw[thick] (-1.1, 0.9) -- (-1.9, 0.1);
\draw[thick] (-0.9, 0.9) -- (-0.1, 0.1);
\draw[thick] (0.9, 0.9) -- (0.1, 0.1);
\draw[thick] (-1.1, -0.9) -- (-1.9, -0.1);
\draw[thick] (-0.9, -0.9) -- (-0.1, -0.1);
\draw (0,2) circle [radius=0.2];
\draw[fill] (-1,1) circle [radius=0.2];
\draw (1,1) circle [radius=0.2];
\draw[red,fill] (-2,0) circle [radius=0.2];
\draw[red,fill] (0,0) circle [radius=0.2];
\draw[fill] (-1,-1) circle [radius=0.2];
\end{scope}
\node at (17,-3.5) {\Large{$\stackrel{\normalsize{t_{\rk=1}}}{\longmapsto}$}};
\begin{scope}[shift={(21,-4)}]
\draw[thick] (-0.1, 1.9) -- (-0.9, 1.1);
\draw[thick] (0.1, 1.9) -- (0.9, 1.1);
\draw[thick] (-1.1, 0.9) -- (-1.9, 0.1);
\draw[thick] (-0.9, 0.9) -- (-0.1, 0.1);
\draw[thick] (0.9, 0.9) -- (0.1, 0.1);
\draw[thick] (-1.1, -0.9) -- (-1.9, -0.1);
\draw[thick] (-0.9, -0.9) -- (-0.1, -0.1);
\draw (0,2) circle [radius=0.2];
\draw[fill] (-1,1) circle [radius=0.2];
\draw (1,1) circle [radius=0.2];
\draw[fill] (-2,0) circle [radius=0.2];
\draw[fill] (0,0) circle [radius=0.2];
\draw[red,fill] (-1,-1) circle [radius=0.2];
\end{scope}
\node at (24,-3.5) {\Large{$\stackrel{\normalsize{t_{\rk=0}}}{\longmapsto}$}};
\begin{scope}[shift={(28,-4)}]
\draw[thick] (-0.1, 1.9) -- (-0.9, 1.1);
\draw[thick] (0.1, 1.9) -- (0.9, 1.1);
\draw[thick] (-1.1, 0.9) -- (-1.9, 0.1);
\draw[thick] (-0.9, 0.9) -- (-0.1, 0.1);
\draw[thick] (0.9, 0.9) -- (0.1, 0.1);
\draw[thick] (-1.1, -0.9) -- (-1.9, -0.1);
\draw[thick] (-0.9, -0.9) -- (-0.1, -0.1);
\draw (0,2) circle [radius=0.2];
\draw[fill] (-1,1) circle [radius=0.2];
\draw (1,1) circle [radius=0.2];
\draw[fill] (-2,0) circle [radius=0.2];
\draw[fill] (0,0) circle [radius=0.2];
\draw[fill] (-1,-1) circle [radius=0.2];
\end{scope}
\end{tikzpicture}
\end{center}

\begin{center}
\begin{tikzpicture}[scale=0.486]
\begin{scope}[shift={(0,-4)}]
\draw[thick] (-0.1, 1.9) -- (-0.9, 1.1);
\draw[thick] (0.1, 1.9) -- (0.9, 1.1);
\draw[thick] (-1.1, 0.9) -- (-1.9, 0.1);
\draw[thick] (-0.9, 0.9) -- (-0.1, 0.1);
\draw[thick] (0.9, 0.9) -- (0.1, 0.1);
\draw[thick] (-1.1, -0.9) -- (-1.9, -0.1);
\draw[thick] (-0.9, -0.9) -- (-0.1, -0.1);
\draw (0,2) circle [radius=0.2];
\draw (-1,1) circle [radius=0.2];
\draw[fill] (1,1) circle [radius=0.2];
\draw[fill] (-2,0) circle [radius=0.2];
\draw (0,0) circle [radius=0.2];
\draw[red] (-1,-1) circle [radius=0.2];
\node at (-3.75,-1) {\small{rank 0}};
\node at (-3.75,0) {\small{rank 1}};
\node at (-3.75,1) {\small{rank 2}};
\node at (-3.75,2) {\small{rank 3}};
\end{scope}
\node at (3,-3.5) {\Large{$\stackrel{\normalsize{\tau_{\rk=0}}}{\longmapsto}$}};
\begin{scope}[shift={(7,-4)}]
\draw[thick] (-0.1, 1.9) -- (-0.9, 1.1);
\draw[thick] (0.1, 1.9) -- (0.9, 1.1);
\draw[thick] (-1.1, 0.9) -- (-1.9, 0.1);
\draw[thick] (-0.9, 0.9) -- (-0.1, 0.1);
\draw[thick] (0.9, 0.9) -- (0.1, 0.1);
\draw[thick] (-1.1, -0.9) -- (-1.9, -0.1);
\draw[thick] (-0.9, -0.9) -- (-0.1, -0.1);
\draw (0,2) circle [radius=0.2];
\draw (-1,1) circle [radius=0.2];
\draw[fill] (1,1) circle [radius=0.2];
\draw[red,fill] (-2,0) circle [radius=0.2];
\draw[red] (0,0) circle [radius=0.2];
\draw (-1,-1) circle [radius=0.2];
\end{scope}
\node at (10,-3.5) {\Large{$\stackrel{\normalsize{\tau_{\rk=1}}}{\longmapsto}$}};
\begin{scope}[shift={(14,-4)}]
\draw[thick] (-0.1, 1.9) -- (-0.9, 1.1);
\draw[thick] (0.1, 1.9) -- (0.9, 1.1);
\draw[thick] (-1.1, 0.9) -- (-1.9, 0.1);
\draw[thick] (-0.9, 0.9) -- (-0.1, 0.1);
\draw[thick] (0.9, 0.9) -- (0.1, 0.1);
\draw[thick] (-1.1, -0.9) -- (-1.9, -0.1);
\draw[thick] (-0.9, -0.9) -- (-0.1, -0.1);
\draw (0,2) circle [radius=0.2];
\draw[red] (-1,1) circle [radius=0.2];
\draw[red,fill] (1,1) circle [radius=0.2];
\draw (-2,0) circle [radius=0.2];
\draw (0,0) circle [radius=0.2];
\draw (-1,-1) circle [radius=0.2];
\end{scope}
\node at (17,-3.5) {\Large{$\stackrel{\normalsize{\tau_{\rk=2}}}{\longmapsto}$}};
\begin{scope}[shift={(21,-4)}]
\draw[thick] (-0.1, 1.9) -- (-0.9, 1.1);
\draw[thick] (0.1, 1.9) -- (0.9, 1.1);
\draw[thick] (-1.1, 0.9) -- (-1.9, 0.1);
\draw[thick] (-0.9, 0.9) -- (-0.1, 0.1);
\draw[thick] (0.9, 0.9) -- (0.1, 0.1);
\draw[thick] (-1.1, -0.9) -- (-1.9, -0.1);
\draw[thick] (-0.9, -0.9) -- (-0.1, -0.1);
\draw[red] (0,2) circle [radius=0.2];
\draw[fill] (-1,1) circle [radius=0.2];
\draw (1,1) circle [radius=0.2];
\draw (-2,0) circle [radius=0.2];
\draw (0,0) circle [radius=0.2];
\draw (-1,-1) circle [radius=0.2];
\end{scope}
\node at (24,-3.5) {\Large{$\stackrel{\normalsize{\tau_{\rk=3}}}{\longmapsto}$}};
\begin{scope}[shift={(28,-4)}]
\draw[thick] (-0.1, 1.9) -- (-0.9, 1.1);
\draw[thick] (0.1, 1.9) -- (0.9, 1.1);
\draw[thick] (-1.1, 0.9) -- (-1.9, 0.1);
\draw[thick] (-0.9, 0.9) -- (-0.1, 0.1);
\draw[thick] (0.9, 0.9) -- (0.1, 0.1);
\draw[thick] (-1.1, -0.9) -- (-1.9, -0.1);
\draw[thick] (-0.9, -0.9) -- (-0.1, -0.1);
\draw (0,2) circle [radius=0.2];
\draw[fill] (-1,1) circle [radius=0.2];
\draw (1,1) circle [radius=0.2];
\draw (-2,0) circle [radius=0.2];
\draw (0,0) circle [radius=0.2];
\draw (-1,-1) circle [radius=0.2];
\end{scope}
\end{tikzpicture}
\end{center}
\caption{In this illustration of Corollary~\ref{cor:row-rank}, we demonstrate $\row_\calj$ on top and $\row_\cala$ on the bottom. See Example~\ref{ex:row-by-rank}.}
\label{fig:row-by-rank}
\end{figure}

The following is a basic corollary to Theorems~\ref{thm:t-star} and~\ref{thm:tau-star}.

\begin{cor}
For a graded poset $P$, the following diagrams commute.

\begin{center}\phantom{1}\hfill
\begin{tikzpicture}
\node at (0,1.8) {$\cala(P)$};
\node at (0,0) {$\calj(P)$};
\node at (3.25,1.8) {$\cala(P)$};
\node at (3.25,0) {$\calj(P)$};
\draw[semithick, ->] (0,1.3) -- (0,0.5);
\node[left] at (0,0.9) {$\bfI$};
\draw[semithick, ->] (0.7,0) -- (2.5,0);
\node[below] at (1.5,0) {$t_{\rk=i}$};
\draw[semithick, ->] (0.7,1.8) -- (2.5,1.8);
\node[above] at (1.5,1.8) {$t_{\rk=i}^*$};
\draw[semithick, ->] (3.25,1.3) -- (3.25,0.5);
\node[right] at (3.25,0.9) {$\bfI$};
\end{tikzpicture}
\hfill
\begin{tikzpicture}
\node at (0,1.8) {$\cala(P)$};
\node at (0,0) {$\calj(P)$};
\node at (3.25,1.8) {$\cala(P)$};
\node at (3.25,0) {$\calj(P)$};
\draw[semithick, ->] (0,1.3) -- (0,0.5);
\node[left] at (0,0.9) {$\bfI$};
\draw[semithick, ->] (0.7,0) -- (2.5,0);
\node[below] at (1.5,0) {$\tau_{\rk=i}^*$};
\draw[semithick, ->] (0.7,1.8) -- (2.5,1.8);
\node[above] at (1.5,1.8) {$\tau_{\rk=i}$};
\draw[semithick, ->] (3.25,1.3) -- (3.25,0.5);
\node[right] at (3.25,0.9) {$\bfI$};
\end{tikzpicture}\hfill\phantom{1}
\end{center}
\end{cor}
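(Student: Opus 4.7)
The plan is to deduce both commutative diagrams from the single-element versions already proved as Theorems~\ref{thm:t-star} and~\ref{thm:tau-star} by iterating over the elements of a given rank.

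Fix $i$ and enumerate the rank-$i$ elements of $P$ as $x_1,\ldots,x_m$. Because elements of the same rank are pairwise incomparable, Proposition~\ref{prop:J-toggle-inv-commute} implies that the order ideal toggles $t_{x_1},\ldots,t_{x_m}$ commute pairwise, so $t_{\rk=i}$ is well-defined independently of the order of the product; analogously, Proposition~\ref{prop:tau-commute} gives pairwise commutativity of $\tau_{x_1},\ldots,\tau_{x_m}$, so $\tau_{\rk=i}$ is well-defined.

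For the first diagram, Theorem~\ref{thm:t-star} tells us that $\bfI \, t_{x_j}^* = t_{x_j}\, \bfI$ for each $j$. Sliding $\bfI$ leftward across the composition one factor at a time gives
\[
\bfI \big(t_{x_1}^* t_{x_2}^*\cdots t_{x_m}^*\big) \;=\; \big(t_{x_1} t_{x_2}\cdots t_{x_m}\big)\, \bfI \;=\; t_{\rk=i}\, \bfI.
\]
As a byproduct, $t_{x_1}^*\cdots t_{x_m}^* = \bfI^{-1}\, t_{\rk=i}\, \bfI$ is independent of the chosen ordering of $x_1,\ldots,x_m$, so $t_{\rk=i}^*$ is a well-defined element of $\tog_\cala(P)$ and the left diagram commutes.

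The right diagram is handled by the same argument with Theorem~\ref{thm:tau-star} in place of Theorem~\ref{thm:t-star}: from $\bfI\, \tau_{x_j} = \tau_{x_j}^*\, \bfI$ for every $j$, sliding $\bfI$ across yields $\bfI\, \tau_{\rk=i} = \tau_{\rk=i}^*\, \bfI$, which also confirms that $\tau_{\rk=i}^*$ is well-defined. There is no substantive obstacle here: the only subtlety is verifying that $t_{\rk=i}^*$ and $\tau_{\rk=i}^*$ are independent of the order of the product, and this falls out of the conjugation identities above once the underlying rank toggles on the other side are known to be order-independent.
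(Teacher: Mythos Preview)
Your proof is correct and is exactly the routine unpacking the paper has in mind: it states the result only as ``a basic corollary to Theorems~\ref{thm:t-star} and~\ref{thm:tau-star}'' with no further argument, and your sliding of $\bfI$ across the product of single-element toggles is precisely that. Your remark on well-definedness of $t_{\rk=i}^*$ and $\tau_{\rk=i}^*$ is a nice bonus; the paper handles this separately by noting (via the isomorphism of Corollary~\ref{cor:iso}) that these are products of commuting involutions.
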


In a graded poset, we can state any $t_e^*$ and $\tau_e^*$ in terms of $\tau_e$, $t_e$, and rank toggles.

\begin{prop}
If $\rk(e)=i$, then $t_e^*=\tau_{\rk=i-1}\tau_e\tau_{\rk=i-1}$ and $t_{\rk=i}^*=
\tau_{\rk=i-1}\tau_{\rk=i}\tau_{\rk=i-1}$
(where the empty product $\tau_{\rk=-1}$ is the identity).
\end{prop}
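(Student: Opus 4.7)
The plan is to prove both identities by direct algebraic manipulation inside $\tog_\cala(P)$, leveraging Proposition~\ref{prop:tau-commute} and the fact that each rank toggle is an involution. For the first identity, let $e_1,\dots,e_k$ be the elements covered by $e$; in a graded poset these are precisely the rank-$(i-1)$ elements below $e$, since rank increments by one along covers. Factor $\tau_{\rk=i-1}=\tau_{e_1}\cdots\tau_{e_k}\,\pi$, where $\pi$ is the product of $\tau_x$ over the remaining rank-$(i-1)$ elements. Any such remaining $x$ satisfies $x\parallel e$: it cannot exceed $e$ by rank considerations, and if $x<e$ then $x\lessdot e$ (again by the rank increment), contradicting $x\notin\{e_1,\dots,e_k\}$. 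Hence by Proposition~\ref{prop:tau-commute} every factor of $\pi$ commutes with $\tau_e$, so $\pi\tau_e=\tau_e\pi$.

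Using this, together with the fact that $\pi^2=1$ (its factors are pairwise commuting involutions), I would compute
\[
\tau_{\rk=i-1}\tau_e\tau_{\rk=i-1}
= \tau_{e_1}\cdots\tau_{e_k}\,\pi\,\tau_e\,\pi\,\tau_{e_1}\cdots\tau_{e_k}
= \tau_{e_1}\cdots\tau_{e_k}\,\tau_e\,\tau_{e_1}\cdots\tau_{e_k}
= t_e^*,
\]
which is the first identity.

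For the second identity, I will apply the first identity to every element $f$ of rank $i$ and use that $\tau_{\rk=i-1}$ is an involution to telescope. Letting $f_1,\dots,f_m$ be the rank-$i$ elements (which are pairwise incomparable, so the factors $t_{f_j}^*$ commute and $t_{\rk=i}^*$ is well-defined via Corollary~\ref{cor:iso} and Proposition~\ref{prop:J-toggle-inv-commute}):
\[
t_{\rk=i}^* = \prod_{j=1}^m t_{f_j}^* = \prod_{j=1}^m \bigl(\tau_{\rk=i-1}\tau_{f_j}\tau_{\rk=i-1}\bigr)
= \tau_{\rk=i-1}\,\tau_{f_1}\cdots\tau_{f_m}\,\tau_{\rk=i-1}
= \tau_{\rk=i-1}\,\tau_{\rk=i}\,\tau_{\rk=i-1},
\]
since consecutive copies of $\tau_{\rk=i-1}$ cancel.

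The only delicate step is the structural observation that in a graded poset the rank-$(i-1)$ elements not covered by $e$ are incomparable with $e$; everything else is routine manipulation of commuting involutions. The edge case $i=0$ (so $e$ is minimal) is handled by the convention $\tau_{\rk=-1}=\mathrm{id}$, under which the identity reduces to $t_e^*=\tau_e$, consistent with the definition of $t_e^*$ when $k=0$.
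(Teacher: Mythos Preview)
Your proof is correct and follows essentially the same approach as the paper: factor $\tau_{\rk=i-1}$ into the toggles for elements covered by $e$ and those not, observe the latter are incomparable with $e$ and hence commute past $\tau_e$ and cancel, then telescope the first identity over all rank-$i$ elements to obtain the second. The paper's argument is the same, with your $\pi$ written out explicitly as $\tau_{x_1}\cdots\tau_{x_m}$.
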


\begin{proof}
Let $e_1,\dots,e_k,x_1,\dots,x_m$ be the elements of rank $i-1$, where $e_1,\dots,e_k$ are covered by $e$ and $x_1,\dots,x_m$ are not.  Then $x_1,\dots,x_m$ are each incomparable with each other, with $e$, and with $e_1,\dots,e_k$.  Thus in the expression $$\tau_{\rk=i-1}\tau_e\tau_{\rk=i-1}=
\tau_{e_1}\cdots \tau_{e_k} \tau_{x_1}\cdots \tau_{x_m} \tau_e \tau_{e_1}\cdots \tau_{e_k} \tau_{x_1}\cdots \tau_{x_m}$$ each $\tau_{x_j}$ can be moved and canceled with the other one.  Therefore,
$$\tau_{\rk=i-1}\tau_e\tau_{\rk=i-1}=
\tau_{e_1}\cdots \tau_{e_k}\tau_e \tau_{e_1}\cdots \tau_{e_k}=t_e^*.$$

Now let $y_1,\dots,y_h$ be the elements of rank $i$.  Then \begin{align*}
t_{\rk=i}^*&= t_{y_1}^* t_{y_2}^* \cdots t_{y_h}^*
\\&=
\tau_{\rk=i-1}\tau_{y_1}\tau_{\rk=i-1}
\tau_{\rk=i-1}\tau_{y_2}\tau_{\rk=i-1}
\cdots
\tau_{\rk=i-1}\tau_{y_h}\tau_{\rk=i-1}
\\&=
\tau_{\rk=i-1}\tau_{y_1}\tau_{y_2}\cdots\tau_{y_h}\tau_{\rk=i-1}\\&=
\tau_{\rk=i-1}\tau_{\rk=i}\tau_{\rk=i-1}.
\end{align*}
\end{proof}

\begin{prop}\label{prop:rank tau star}
If $\rk(e)=i$, then $\tau_e^*=t_{\rk=0}t_{\rk=1}\cdots t_{\rk=i-1} t_e t_{\rk=i-1}\cdots t_{\rk=1}t_{\rk=0}$ and
$\tau_{\rk=i}^*=t_{\rk=0}t_{\rk=1}\cdots t_{\rk=i-1}t_{\rk=i}t_{\rk=i-1}\cdots t_{\rk=1}t_{\rk=0}$.
\end{prop}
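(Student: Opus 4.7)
The plan is to deduce both formulas from Lemma~\ref{lem:tau-star}: taking $k=1$ recovers $\tau_e^* = \eta_e t_e \eta_e^{-1}$, and taking $e_1,\ldots,e_k$ to be the (pairwise incomparable) rank-$i$ elements yields $\tau_{\rk=i}^* = \eta_{\{\rk=i\}} t_{\rk=i} \eta_{\{\rk=i\}}^{-1}$. So the real work lies in rewriting these conjugators in terms of the rank toggles $t_{\rk=j}$.

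First I would identify $\eta_{\{\rk=i\}}$ explicitly. In a graded poset of rank at least $i$, every element of rank less than $i$ is non-maximal, so climbing covers produces an element of rank $i$ above it; thus $\{x \in P : x < y \text{ for some } y \in \{\rk=i\}\}$ is exactly $\{x \in P : \rk(x) < i\}$. Ordering this set by increasing rank (and arbitrarily within each rank) gives a valid linear extension, and since toggles for same-rank (hence incomparable) elements commute by Proposition~\ref{prop:J-toggle-inv-commute}, we obtain $\eta_{\{\rk=i\}} = t_{\rk=0}t_{\rk=1}\cdots t_{\rk=i-1}$. Each $t_{\rk=j}$ is its own inverse, so $\eta_{\{\rk=i\}}^{-1} = t_{\rk=i-1}\cdots t_{\rk=0}$. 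Substituting into the lemma's expression for $\tau_{\rk=i}^*$ proves the second formula immediately.

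For the first formula, the key step is to show $\eta_{\{\rk=i\}} t_e \eta_{\{\rk=i\}}^{-1} = \eta_e t_e \eta_e^{-1}$. Setting $T := \{y \in P : \rk(y) < i,\ y \not< e\}$, I would observe two things. First, any $y \in T$ is incomparable with $e$, since $y = e$ and $y > e$ are both excluded by rank. Second, no element of $T$ is less than any element of $\{x : x < e\}$: if $y < x < e$ then $y < e$, contradicting $y \in T$. Consequently, a linear extension of $\{x : \rk(x) < i\}$ can be chosen by first listing a linear extension of $\{x : x < e\}$ and then one of $T$, giving the factorization $\eta_{\{\rk=i\}} = \eta_e \eta_T$, where $\eta_T$ denotes the corresponding product of toggles for $T$. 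Since $y \parallel e$ for every $y \in T$, Proposition~\ref{prop:J-toggle-inv-commute} shows each $t_y$ commutes with $t_e$, so $\eta_T t_e = t_e \eta_T$ and hence
$$\eta_{\{\rk=i\}} t_e \eta_{\{\rk=i\}}^{-1} = \eta_e \,\eta_T t_e \eta_T^{-1}\, \eta_e^{-1} = \eta_e t_e \eta_e^{-1} = \tau_e^*,$$
yielding the first formula.

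The main subtlety is recognizing and exploiting the factorization $\eta_{\{\rk=i\}} = \eta_e \eta_T$: the ``extra'' toggles in $\eta_{\{\rk=i\}}$ beyond $\eta_e$ correspond precisely to elements incomparable with $e$, so conjugation by $\eta_T$ leaves $t_e$ fixed. Once this is in hand, the rest is just a bookkeeping computation using the commutation rules from Proposition~\ref{prop:J-toggle-inv-commute} and the involutivity of each $t_{\rk=j}$.
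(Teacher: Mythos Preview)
Your proof is correct and follows essentially the same approach as the paper's: both identify $t_{\rk=0}\cdots t_{\rk=i-1}$ with the product over a linear extension that lists $\{x:x<e\}$ first and then the remaining rank-$<i$ elements (all incomparable with $e$), commute the latter toggles through $t_e$, and cancel. The only cosmetic difference is order---you establish the $\tau_{\rk=i}^*$ formula first via Lemma~\ref{lem:tau-star} and then deduce the $\tau_e^*$ formula, whereas the paper proves the $\tau_e^*$ formula directly and remarks that the $\tau_{\rk=i}^*$ formula then follows (either from it or from Lemma~\ref{lem:tau-star}).
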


\begin{proof}
Let $(x_1,\dots,x_a)$ be a linear extension of
$\{x\in P\;|\; x<e\}$.
If $y\parallel e$, then $y$ is not less than any of $x_1,\dots,x_a$.  Thus, we have a linear extension of the form $(x_1,\dots,x_a,y_1,\dots,y_b)$ for $\{p\in P\;|\; \rk(p)\leq i-1\}$, where $y_1,\dots,y_b$ are all incomparable with $e$.  Since we can rearrange the toggles in $t_{\rk=0}t_{\rk=1}\cdots t_{\rk=i-1}$ according to any linear extension, $$t_{\rk=0}t_{\rk=1}\cdots t_{\rk=i-1}= t_{x_1}t_{x_2}\cdots t_{x_a} t_{y_1}t_{y_2}\cdots t_{y_b}.$$  Therefore,
\begin{align*}
t_{\rk=0}t_{\rk=1}\cdots t_{\rk=i-1}t_e t_{\rk=i-1}\cdots t_{\rk=1}t_{\rk=0}
&=
t_{x_1}t_{x_2}\cdots t_{x_a} t_{y_1}t_{y_2}\cdots t_{y_b} t_e t_{y_b}\cdots t_{y_2}t_{y_1} t_{x_a} \cdots t_{x_2}t_{x_1}
\\&=
t_{x_1}t_{x_2}\cdots t_{x_a} t_e t_{y_1}t_{y_2}\cdots t_{y_b} t_{y_b}\cdots t_{y_2}t_{y_1} t_{x_a} \cdots t_{x_2}t_{x_1}
\\&=
t_{x_1}t_{x_2}\cdots t_{x_a} t_e  t_{x_a} \cdots t_{x_2}t_{x_1}
\\&= \eta_e t_e \eta_e^{-1}
\\&= \tau_e^*.
\end{align*}
Then the $\tau_{\rk=i}^*$ expression follows easily from the above or from Lemma~\ref{lem:tau-star}.
\end{proof}

Given any graded poset $P$, Striker defines in~\cite[\S6]{strikerRS} an element of $\tog_\calj(P)$ called \textit{gyration}, which is conjugate to $\row_\calj$.  The name ``gyration'' is due to its connection with Wieland's map of the same name on alternating sign matrices~\cite{wieland}.

\begin{defn}[\cite{strikerRS}]
Let $P$ be a graded poset.  Then \textbf{order ideal gyration}
$\gyr_\calj:\calj(P) \ra \calj(P)$ is the map that applies the order ideal
toggles for elements in even ranks first, then the odd ranks.
\end{defn}

The order ideal rank toggles $t_{\rk=i}, t_{\rk=j}$ commute
when $i$ and $j$ have the same parity (or more generally when $|i-j|\not=1$).  This is because there are no cover relations between an element of rank $i$ and one of rank $j$ in this
scenario.  Therefore, the definition of $\gyr_\calj$ is well-defined.
It does not matter the order in which elements of even rank are toggled, and similarly for odd rank.

We credit David Einstein and James Propp for the suggestion to define an analogue of gyration with antichain toggles instead, and for great assistance in its definition.
Antichain rank toggles never commute
with each other, so toggling ``the even ranks'' and ``the odd ranks'' are ambiguous unless we define an order for applying the toggles.
We choose the following for the definition of antichain gyration.

\begin{defn}
Let $P$ be a graded poset.  Then \textbf{antichain gyration}
$\gyr_\cala:\cala(P) \ra \cala(P)$ is the map that first applies the antichain toggles for odd ranks starting from the bottom of the poset
up to the top, and then toggles the even ranks from the top of the poset
down to the bottom.
\end{defn}

For example, if $P$ has rank 6, then
$\gyr_\cala=\tau_{\rk=0}\tau_{\rk=2}\tau_{\rk=4}\tau_{\rk=6}\tau_{\rk=5}\tau_{\rk=3}\tau_{\rk=1}$.  We define $\gyr_\cala$ in this way so that
the relation between $\gyr_\calj$ and $\gyr_\cala$ matches that of 
$\row_\calj$ and $\row_\cala$, as in the following theorem.

\begin{thm}\label{thm:antichain gyration}
Let $P$ be a graded poset. The following diagram commutes.

\begin{center}
\begin{tikzpicture}
\node at (0,1.8) {$\cala(P)$};
\node at (0,0) {$\calj(P)$};
\node at (3.25,1.8) {$\cala(P)$};
\node at (3.25,0) {$\calj(P)$};
\draw[semithick, ->] (0,1.3) -- (0,0.5);
\node[left] at (0,0.9) {$\bfI$};
\draw[semithick, ->] (0.7,0) -- (2.5,0);
\node[below] at (1.5,0) {$\gyr_\calj$};
\draw[semithick, ->] (0.7,1.8) -- (2.5,1.8);
\node[above] at (1.5,1.8) {$\gyr_\cala$};
\draw[semithick, ->] (3.25,1.3) -- (3.25,0.5);
\node[right] at (3.25,0.9) {$\bfI$};
\end{tikzpicture}
\end{center}
\end{thm}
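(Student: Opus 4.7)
The plan is to reduce the statement to a purely algebraic identity in $\tog_\calj(P)$ and then verify it by telescoping.

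First, iterating Theorem~\ref{thm:tau-star} along composition yields
\[
\bfI \circ (\tau_{e_1}\tau_{e_2}\cdots\tau_{e_k}) = (\tau_{e_1}^*\tau_{e_2}^*\cdots\tau_{e_k}^*) \circ \bfI
\]
for every word in antichain toggles. Applying this to the defining expression for $\gyr_\cala$ gives $\bfI \circ \gyr_\cala = G \circ \bfI$, where $G \in \tog_\calj(P)$ is obtained from $\gyr_\cala$ by replacing each factor $\tau_{\rk=i}$ with $\tau_{\rk=i}^*$. Since $\bfI$ is a bijection, the diagram commutes if and only if $G = \gyr_\calj$ as elements of $\tog_\calj(P)$.

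To verify this identity, I would expand each $\tau_{\rk=i}^*$ via Proposition~\ref{prop:rank tau star} into the palindromic product $t_{\rk=0}t_{\rk=1}\cdots t_{\rk=i-1}\, t_{\rk=i}\, t_{\rk=i-1}\cdots t_{\rk=1}t_{\rk=0}$ and then work inside $\tog_\calj(P)$ using only the involution property $t_{\rk=i}^2 = \mathrm{id}$ and the commutation relation (inherited from Proposition~\ref{prop:J-toggle-inv-commute}) that $t_{\rk=i}$ and $t_{\rk=j}$ commute whenever $|i-j|\ne 1$. The heart of the argument is the following pair of telescoping identities, each established by induction on $k$:
\begin{align*}
\tau_{\rk=0}^*\tau_{\rk=2}^*\cdots\tau_{\rk=2k}^* &= (t_{\rk=1}t_{\rk=3}\cdots t_{\rk=2k-1})\cdot t_{\rk=2k}\cdot (t_{\rk=2k-1}t_{\rk=2k-2}\cdots t_{\rk=1}t_{\rk=0}),\\
\tau_{\rk=2k+1}^*\tau_{\rk=2k-1}^*\cdots\tau_{\rk=1}^* &= (t_{\rk=0}t_{\rk=1}\cdots t_{\rk=2k})\cdot t_{\rk=2k+1}\cdot (t_{\rk=2k}t_{\rk=2k-2}\cdots t_{\rk=2}t_{\rk=0}).
\end{align*}
In each inductive step, multiplying by the next $\tau^*$ makes the inner tails of adjacent palindromes meet and cancel entirely via pairwise $t_{\rk=j}^2 = \mathrm{id}$; the leftover top toggle is then brought into position using commutations between toggles of non-adjacent rank (which differ by $2$, hence commute).

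With these two formulas in hand, I multiply the appropriate even and odd products (the ranges of $k$ determined by the rank $r$ of $P$) to compute $G$. The descending tail $t_{\rk=2k-1}\cdots t_{\rk=0}$ from the even identity meets the ascending head $t_{\rk=0}\cdots t_{\rk=2k'}$ from the odd identity, and a symmetric wave of pairwise involution cancellations wipes both out completely. What remains is a product of odd-rank toggles on the left and even-rank toggles on the right; since toggles within a fixed parity pairwise commute, this rearranges to $\bigl(\prod_{i\,\mathrm{odd}} t_{\rk=i}\bigr)\bigl(\prod_{i\,\mathrm{even}} t_{\rk=i}\bigr) = \gyr_\calj$.

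The main obstacle is the case analysis on the parity of $r$, which determines whether the top-rank toggle $\tau_{\rk=r}^*$ sits in the even or the odd half of $\gyr_\cala$. Both cases collapse under the same telescoping pattern, but the indexing in the two identities must be aligned with the actual rank in each case to ensure no stray toggles survive the cancellation; this is precisely why the definition of $\gyr_\cala$ uses ``odd bottom-up followed by even top-down,'' and matches the ordering of palindromic expansions needed so that the innermost segments annihilate cleanly.
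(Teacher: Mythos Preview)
Your proposal is correct and follows essentially the same route as the paper. The paper likewise reduces to the identity $G=\gyr_\calj$ in $\tog_\calj(P)$, proves the very same two telescoping identities you state (packaged there as an abstract lemma about products $b_j=a_0\cdots a_{j-1}a_ja_{j-1}\cdots a_0$ of involutions, with $a_j=t_{\rk=j}$), multiplies them, cancels the meeting tails, and splits on the parity of the rank; one small remark is that the inductive step for those identities needs only $t_{\rk=i}^2=\mathrm{id}$, so your appeal to non-adjacent commutation there is not actually required.
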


See Figure~\ref{fig:gyr-by-rank} for an example illustrating Theorem~\ref{thm:antichain gyration}.  In order to prove the theorem, we begin with a lemma.

\begin{figure}
\begin{center}
\begin{tikzpicture}[scale=3/8]
\begin{scope}[shift={(0,-4)}]
\draw[thick] (-0.1, 1.9) -- (-0.9, 1.1);
\draw[thick] (0.1, 1.9) -- (0.9, 1.1);
\draw[thick] (-1.1, 0.9) -- (-1.9, 0.1);
\draw[thick] (1.1, 0.9) -- (1.9, 0.1);
\draw[thick] (-0.9, 0.9) -- (-0.1, 0.1);
\draw[thick] (0.9, 0.9) -- (0.1, 0.1);
\draw[thick] (-1.1, -0.9) -- (-1.9, -0.1);
\draw[thick] (1.1, -0.9) -- (1.9, -0.1);
\draw[thick] (-0.9, -0.9) -- (-0.1, -0.1);
\draw[thick] (0.9, -0.9) -- (0.1, -0.1);
\draw[thick] (-0.1, -1.9) -- (-0.9, -1.1);
\draw[thick] (0.1, -1.9) -- (0.9, -1.1);
\draw (0,2) circle [radius=0.2];
\draw (-1,1) circle [radius=0.2];
\draw (1,1) circle [radius=0.2];
\draw (-2,0) circle [radius=0.2];
\draw (0,0) circle [radius=0.2];
\draw[fill] (2,0) circle [radius=0.2];
\draw[red,fill] (-1,-1) circle [radius=0.2];
\draw[red] (1,-1) circle [radius=0.2];
\draw (0,-2) circle [radius=0.2];
\end{scope}
\node at (4,-3.5) {\Large{$\stackrel{\normalsize{\tau_{\rk=1}}}{\longmapsto}$}};
\begin{scope}[shift={(8,-4)}]
\draw[thick] (-0.1, 1.9) -- (-0.9, 1.1);
\draw[thick] (0.1, 1.9) -- (0.9, 1.1);
\draw[thick] (-1.1, 0.9) -- (-1.9, 0.1);
\draw[thick] (1.1, 0.9) -- (1.9, 0.1);
\draw[thick] (-0.9, 0.9) -- (-0.1, 0.1);
\draw[thick] (0.9, 0.9) -- (0.1, 0.1);
\draw[thick] (-1.1, -0.9) -- (-1.9, -0.1);
\draw[thick] (1.1, -0.9) -- (1.9, -0.1);
\draw[thick] (-0.9, -0.9) -- (-0.1, -0.1);
\draw[thick] (0.9, -0.9) -- (0.1, -0.1);
\draw[thick] (-0.1, -1.9) -- (-0.9, -1.1);
\draw[thick] (0.1, -1.9) -- (0.9, -1.1);
\draw (0,2) circle [radius=0.2];
\draw[red] (-1,1) circle [radius=0.2];
\draw[red] (1,1) circle [radius=0.2];
\draw (-2,0) circle [radius=0.2];
\draw (0,0) circle [radius=0.2];
\draw[fill] (2,0) circle [radius=0.2];
\draw (-1,-1) circle [radius=0.2];
\draw (1,-1) circle [radius=0.2];
\draw (0,-2) circle [radius=0.2];
\end{scope}
\node at (12,-3.5) {\Large{$\stackrel{\normalsize{\tau_{\rk=3}}}{\longmapsto}$}};
\begin{scope}[shift={(16,-4)}]
\draw[thick] (-0.1, 1.9) -- (-0.9, 1.1);
\draw[thick] (0.1, 1.9) -- (0.9, 1.1);
\draw[thick] (-1.1, 0.9) -- (-1.9, 0.1);
\draw[thick] (1.1, 0.9) -- (1.9, 0.1);
\draw[thick] (-0.9, 0.9) -- (-0.1, 0.1);
\draw[thick] (0.9, 0.9) -- (0.1, 0.1);
\draw[thick] (-1.1, -0.9) -- (-1.9, -0.1);
\draw[thick] (1.1, -0.9) -- (1.9, -0.1);
\draw[thick] (-0.9, -0.9) -- (-0.1, -0.1);
\draw[thick] (0.9, -0.9) -- (0.1, -0.1);
\draw[thick] (-0.1, -1.9) -- (-0.9, -1.1);
\draw[thick] (0.1, -1.9) -- (0.9, -1.1);
\draw[red] (0,2) circle [radius=0.2];
\draw[fill] (-1,1) circle [radius=0.2];
\draw (1,1) circle [radius=0.2];
\draw (-2,0) circle [radius=0.2];
\draw (0,0) circle [radius=0.2];
\draw[fill] (2,0) circle [radius=0.2];
\draw (-1,-1) circle [radius=0.2];
\draw (1,-1) circle [radius=0.2];
\draw (0,-2) circle [radius=0.2];
\end{scope}
\node at (20,-3.5) {\Large{$\stackrel{\normalsize{\tau_{\rk=4}}}
{\longmapsto}$}};
\begin{scope}[shift={(24,-4)}]
\draw[thick] (-0.1, 1.9) -- (-0.9, 1.1);
\draw[thick] (0.1, 1.9) -- (0.9, 1.1);
\draw[thick] (-1.1, 0.9) -- (-1.9, 0.1);
\draw[thick] (1.1, 0.9) -- (1.9, 0.1);
\draw[thick] (-0.9, 0.9) -- (-0.1, 0.1);
\draw[thick] (0.9, 0.9) -- (0.1, 0.1);
\draw[thick] (-1.1, -0.9) -- (-1.9, -0.1);
\draw[thick] (1.1, -0.9) -- (1.9, -0.1);
\draw[thick] (-0.9, -0.9) -- (-0.1, -0.1);
\draw[thick] (0.9, -0.9) -- (0.1, -0.1);
\draw[thick] (-0.1, -1.9) -- (-0.9, -1.1);
\draw[thick] (0.1, -1.9) -- (0.9, -1.1);
\draw (0,2) circle [radius=0.2];
\draw[fill] (-1,1) circle [radius=0.2];
\draw (1,1) circle [radius=0.2];
\draw[red] (-2,0) circle [radius=0.2];
\draw[red] (0,0) circle [radius=0.2];
\draw[red,fill] (2,0) circle [radius=0.2];
\draw (-1,-1) circle [radius=0.2];
\draw (1,-1) circle [radius=0.2];
\draw (0,-2) circle [radius=0.2];
\end{scope}
\node at (28,-3.5) {\Large{$\stackrel{\normalsize{\tau_{\rk=2}}}
{\longmapsto}$}};
\begin{scope}[shift={(32,-4)}]
\draw[thick] (-0.1, 1.9) -- (-0.9, 1.1);
\draw[thick] (0.1, 1.9) -- (0.9, 1.1);
\draw[thick] (-1.1, 0.9) -- (-1.9, 0.1);
\draw[thick] (1.1, 0.9) -- (1.9, 0.1);
\draw[thick] (-0.9, 0.9) -- (-0.1, 0.1);
\draw[thick] (0.9, 0.9) -- (0.1, 0.1);
\draw[thick] (-1.1, -0.9) -- (-1.9, -0.1);
\draw[thick] (1.1, -0.9) -- (1.9, -0.1);
\draw[thick] (-0.9, -0.9) -- (-0.1, -0.1);
\draw[thick] (0.9, -0.9) -- (0.1, -0.1);
\draw[thick] (-0.1, -1.9) -- (-0.9, -1.1);
\draw[thick] (0.1, -1.9) -- (0.9, -1.1);
\draw (0,2) circle [radius=0.2];
\draw[fill] (-1,1) circle [radius=0.2];
\draw (1,1) circle [radius=0.2];
\draw (-2,0) circle [radius=0.2];
\draw (0,0) circle [radius=0.2];
\draw (2,0) circle [radius=0.2];
\draw (-1,-1) circle [radius=0.2];
\draw (1,-1) circle [radius=0.2];
\draw[red] (0,-2) circle [radius=0.2];
\end{scope}
\node at (36,-3.5) {\Large{$\stackrel{\normalsize{\tau_{\rk=0}}}
{\longmapsto}$}};
\begin{scope}[shift={(40,-4)}]
\draw[thick] (-0.1, 1.9) -- (-0.9, 1.1);
\draw[thick] (0.1, 1.9) -- (0.9, 1.1);
\draw[thick] (-1.1, 0.9) -- (-1.9, 0.1);
\draw[thick] (1.1, 0.9) -- (1.9, 0.1);
\draw[thick] (-0.9, 0.9) -- (-0.1, 0.1);
\draw[thick] (0.9, 0.9) -- (0.1, 0.1);
\draw[thick] (-1.1, -0.9) -- (-1.9, -0.1);
\draw[thick] (1.1, -0.9) -- (1.9, -0.1);
\draw[thick] (-0.9, -0.9) -- (-0.1, -0.1);
\draw[thick] (0.9, -0.9) -- (0.1, -0.1);
\draw[thick] (-0.1, -1.9) -- (-0.9, -1.1);
\draw[thick] (0.1, -1.9) -- (0.9, -1.1);
\draw (0,2) circle [radius=0.2];
\draw[fill] (-1,1) circle [radius=0.2];
\draw (1,1) circle [radius=0.2];
\draw (-2,0) circle [radius=0.2];
\draw (0,0) circle [radius=0.2];
\draw (2,0) circle [radius=0.2];
\draw (-1,-1) circle [radius=0.2];
\draw (1,-1) circle [radius=0.2];
\draw (0,-2) circle [radius=0.2];
\end{scope}
\end{tikzpicture}
\end{center}

\begin{center}
\begin{tikzpicture}[scale=3/8]
\begin{scope}[shift={(0,-4)}]
\draw[thick] (-0.1, 1.9) -- (-0.9, 1.1);
\draw[thick] (0.1, 1.9) -- (0.9, 1.1);
\draw[thick] (-1.1, 0.9) -- (-1.9, 0.1);
\draw[thick] (1.1, 0.9) -- (1.9, 0.1);
\draw[thick] (-0.9, 0.9) -- (-0.1, 0.1);
\draw[thick] (0.9, 0.9) -- (0.1, 0.1);
\draw[thick] (-1.1, -0.9) -- (-1.9, -0.1);
\draw[thick] (1.1, -0.9) -- (1.9, -0.1);
\draw[thick] (-0.9, -0.9) -- (-0.1, -0.1);
\draw[thick] (0.9, -0.9) -- (0.1, -0.1);
\draw[thick] (-0.1, -1.9) -- (-0.9, -1.1);
\draw[thick] (0.1, -1.9) -- (0.9, -1.1);
\draw[red] (0,2) circle [radius=0.2];
\draw (-1,1) circle [radius=0.2];
\draw (1,1) circle [radius=0.2];
\draw[red] (-2,0) circle [radius=0.2];
\draw[red] (0,0) circle [radius=0.2];
\draw[red,fill] (2,0) circle [radius=0.2];
\draw[fill] (-1,-1) circle [radius=0.2];
\draw[fill] (1,-1) circle [radius=0.2];
\draw[red,fill] (0,-2) circle [radius=0.2];
\end{scope}
\node at (10,-3) {$t_{\rk=0}t_{\rk=2}t_{\rk=4}$};
\draw[thick, ->] (6.5,-4) -- (13.5,-4);
\begin{scope}[shift={(20,-4)}]
\draw[thick] (-0.1, 1.9) -- (-0.9, 1.1);
\draw[thick] (0.1, 1.9) -- (0.9, 1.1);
\draw[thick] (-1.1, 0.9) -- (-1.9, 0.1);
\draw[thick] (1.1, 0.9) -- (1.9, 0.1);
\draw[thick] (-0.9, 0.9) -- (-0.1, 0.1);
\draw[thick] (0.9, 0.9) -- (0.1, 0.1);
\draw[thick] (-1.1, -0.9) -- (-1.9, -0.1);
\draw[thick] (1.1, -0.9) -- (1.9, -0.1);
\draw[thick] (-0.9, -0.9) -- (-0.1, -0.1);
\draw[thick] (0.9, -0.9) -- (0.1, -0.1);
\draw[thick] (-0.1, -1.9) -- (-0.9, -1.1);
\draw[thick] (0.1, -1.9) -- (0.9, -1.1);
\draw (0,2) circle [radius=0.2];
\draw[red] (-1,1) circle [radius=0.2];
\draw[red] (1,1) circle [radius=0.2];
\draw[fill] (-2,0) circle [radius=0.2];
\draw[fill] (0,0) circle [radius=0.2];
\draw (2,0) circle [radius=0.2];
\draw[red,fill] (-1,-1) circle [radius=0.2];
\draw[red,fill] (1,-1) circle [radius=0.2];
\draw[fill] (0,-2) circle [radius=0.2];
\end{scope}
\node at (30,-3) {$t_{\rk=1}t_{\rk=3}$};
\draw[thick, ->] (27.5,-4) -- (32.5,-4);
\begin{scope}[shift={(40,-4)}]
\draw[thick] (-0.1, 1.9) -- (-0.9, 1.1);
\draw[thick] (0.1, 1.9) -- (0.9, 1.1);
\draw[thick] (-1.1, 0.9) -- (-1.9, 0.1);
\draw[thick] (1.1, 0.9) -- (1.9, 0.1);
\draw[thick] (-0.9, 0.9) -- (-0.1, 0.1);
\draw[thick] (0.9, 0.9) -- (0.1, 0.1);
\draw[thick] (-1.1, -0.9) -- (-1.9, -0.1);
\draw[thick] (1.1, -0.9) -- (1.9, -0.1);
\draw[thick] (-0.9, -0.9) -- (-0.1, -0.1);
\draw[thick] (0.9, -0.9) -- (0.1, -0.1);
\draw[thick] (-0.1, -1.9) -- (-0.9, -1.1);
\draw[thick] (0.1, -1.9) -- (0.9, -1.1);
\draw (0,2) circle [radius=0.2];
\draw[fill] (-1,1) circle [radius=0.2];
\draw (1,1) circle [radius=0.2];
\draw[fill] (-2,0) circle [radius=0.2];
\draw[fill] (0,0) circle [radius=0.2];
\draw (2,0) circle [radius=0.2];
\draw[fill] (-1,-1) circle [radius=0.2];
\draw[fill] (1,-1) circle [radius=0.2];
\draw[fill] (0,-2) circle [radius=0.2];
\end{scope}
\end{tikzpicture}
\end{center}
\caption{Top: An example of $\gyr_\cala$.  Bottom: An example of $\gyr_\calj$.  Together, they illustrate Theorem~\ref{thm:antichain gyration}.  In each step, we indicate elements whose toggles we apply next in {\color{red}red}.}
\label{fig:gyr-by-rank}
\end{figure}

\begin{lemma}\label{lem:toad village}
Let $a_0,a_1,\dots,a_k$ be elements of a group $G$, such that $a_i^2$ is the identity for every $i\in\{0,1,\dots,k\}$.
For every $j\in\{0,1,\dots,k\}$, set
$$b_j=\underbrace{a_0 a_1\cdots a_{j-1}}_
{\emph{subscripts increase by 1}}
a_j \underbrace{a_{j-1}\cdots a_1 a_0}_
{\emph{subscripts decrease by 1}}.$$
Then for each $i\in\nn$ satisfying $2i\leq k$, we have
$$\underbrace{b_0b_2\cdots b_{2i}}_
{\emph{subscripts increase by 2}}=
\underbrace{a_1a_3\cdots a_{2i-1}}_
{\emph{subscripts increase by 2}}
a_{2i}
\underbrace{a_{2i-1}\cdots a_1a_0}_
{\emph{subscripts decrease by 1}}.
$$
\end{lemma}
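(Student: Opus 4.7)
The plan is to prove the lemma by straightforward induction on $i$, leaning entirely on the assumption that each $a_j$ is its own inverse. The base case $i=0$ just asserts $b_0 = a_0$, which is immediate from the definition (the products with no factors are the identity).

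For the inductive step, assume the claim at $i-1$:
\[
b_0 b_2 \cdots b_{2i-2} \;=\; a_1 a_3 \cdots a_{2i-3}\, a_{2i-2}\, a_{2i-3} \cdots a_1 a_0.
\]
I right-multiply by $b_{2i} = a_0 a_1 \cdots a_{2i-1}\, a_{2i}\, a_{2i-1} \cdots a_1 a_0$ and focus on the word formed at the junction:
\[
\cdots a_{2i-2}\, a_{2i-3} \cdots a_1 a_0 \;\cdot\; a_0 a_1 \cdots a_{2i-3}\, a_{2i-2}\, a_{2i-1} \cdots
\]
The tail of the inductive-hypothesis side and the head of $b_{2i}$ are palindromic around index $0$, so using $a_j^2 = \text{id}$ for $j = 0, 1, \ldots, 2i-2$ in turn, the pairs $a_0 a_0$, then $a_1 a_1$, and so on up through $a_{2i-2} a_{2i-2}$ telescope away one by one. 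After $2i-1$ cancellations, nothing remains from the tail of the IH, and the head of $b_{2i}$ has been whittled down to $a_{2i-1} a_{2i}$. The product is therefore
\[
a_1 a_3 \cdots a_{2i-3}\, a_{2i-1}\, a_{2i}\, a_{2i-1} \cdots a_1 a_0,
\]
which is precisely the $i$-th case of the claim.

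The only substantive point to check is that the cancellations go through cleanly — that the inner factors match up to index $2i-2$ on both sides of the junction, no more and no less, so that exactly the middle portion disappears and exactly $a_{2i-1} a_{2i}$ survives on the right. This is really a bookkeeping issue rather than a genuine obstacle, since the definition of $b_j$ is built precisely to produce this palindromic telescoping. No group-theoretic hypothesis beyond involutivity of the $a_j$ is used, so the lemma holds in any group containing involutions $a_0,\ldots,a_k$.
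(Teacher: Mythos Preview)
Your proof is correct and follows essentially the same approach as the paper: induction on $i$ with base case $b_0=a_0$, then right-multiplying the inductive hypothesis by $b_{2i}$ and telescoping the adjacent pairs $a_j a_j$ for $j=0,1,\dots,2i-2$ using involutivity. The paper additionally writes out the $i=1,2$ cases explicitly for illustration, but the argument is the same.
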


\begin{proof}
We proceed inductively.
For the base case $i=0$, $b_0=a_0$.
This is consistent with the lemma
as $a_1 a_3 \cdots a_{2i-1}$ and $a_{2i-1}\cdots a_3 a_1$ are empty products.
The $i=1,2$ cases
$$b_0b_2=\underbrace{a_0
a_0}_{\text{identity}}a_1a_2a_1a_0=
a_1a_2a_1a_0$$
and
\begin{align*}
b_0b_2b_4&=\underbrace{a_0a_0}_{\text{identity}}a_1\underbrace{a_2a_1a_0
a_0a_1a_2}_{\text{identity}}a_3a_4a_3a_2a_1a_0\\&=
a_1a_3a_4a_3a_2a_1a_0
\end{align*}
help illustrate the lemma more clearly.

Now for the induction hypothesis,
we assume the lemma for $i-1$.  That is, we assume
$$\underbrace{b_0b_2\cdots b_{2i-2}}_
{\text{subscripts increase by 2}}=
\underbrace{a_1a_3\cdots a_{2i-3}}_
{\text{subscripts increase by 2}}
a_{2i-2}
\underbrace{a_{2i-3}\cdots a_1a_0}_
{\text{subscripts decrease by 1}}.$$
Now we multiply both sides on the right by $b_{2i}$, which is
$$a_0a_1a_2\cdots a_{2i-1}a_{2i}a_{2i-1}
\cdots a_2a_1a_0.$$
This gives us
\begin{align*}
&\phantom{==}\underbrace{
b_0 b_2 \cdots b_{2i-2}b_{=2i}
}_{\text{subscripts increase by 2}}
\\&=
\underbrace{
a_1 a_3 \cdots a_{2i-3}
}_{\text{subscripts increase by 2}}
\underbrace{
a_{2i-2} a_{2i-3}
\cdots a_2a_1a_0
}_{\text{subscripts decrease by 1}}
\underbrace{
a_0a_1a_2
\cdots a_{2i-3}a_{2i-2}
}_{\text{subscripts increase by 1}}
a_{2i-1}
\underbrace{
a_{2i} a_{2i-1} a_{2i-2}
\cdots a_2a_1a_0
}_{\text{subscripts decrease by 1}}\\
&=
\underbrace{
a_1 a_3 \cdots a_{2i-3}
}_{\text{subscripts increase by 2}}
a_{2i-1}
\underbrace{
a_{2i} a_{2i-1} a_{2i-2}
\cdots a_2a_1a_0
}_{\text{subscripts decrease by 1}}\\
&=
\underbrace{
a_1a_3 \cdots a_{2i-3}a_{2i-1}
}_{\text{subscripts increase by 2}}
\underbrace{
a_{2i} a_{2i-1} a_{2i-2}
\cdots a_2a_1a_0
}_{\text{subscripts decrease by 1}}
\end{align*}
which proves the lemma.
\end{proof}

\begin{proof}[Proof of Theorem~\ref{thm:antichain gyration}]
If $P$ has rank $2k$, then
$$\gyr_\calj = \underbrace{
t_{\rk=1} t_{\rk=3} \cdots t_{\rk=2k-1}
}_{\text{ranks increase by 2}}
\underbrace{
t_{\rk=2k} t_{\rk=2k-2} \cdots t_{\rk=2} t_{\rk=0}
}_{\text{ranks decrease by 2}}$$
and
$$\gyr_\cala = \underbrace{
\tau_{\rk=0} \tau_{\rk=2} \cdots \tau_{\rk=2k-2}\tau_{\rk=2k}
}_{\text{ranks increase by 2}}
\underbrace{
\tau_{\rk=2k-1} \cdots \tau_{\rk=3} \tau_{\rk=1}
}_{\text{ranks decrease by 2}}.
$$
So for posets of even rank $2k$, it suffices to prove that
\begin{equation}\label{eq:goal 2k}
\underbrace{
\tau^*_{\rk=0} \tau^*_{\rk=2} \cdots \tau^*_{\rk=2k}
}_{\text{ranks increase by 2}}
\underbrace{
\tau^*_{\rk=2k-1} \cdots \tau^*_{\rk=3} \tau^*_{\rk=1}
}_{\text{ranks decrease by 2}}
=
\underbrace{
t_{\rk=1} t_{\rk=3} \cdots t_{\rk=2k-1}
}_{\text{ranks increase by 2}}
\underbrace{
t_{\rk=2k} \cdots t_{\rk=2} t_{\rk=0}
}_{\text{ranks decrease by 2}}.
\end{equation}
On the other hand, if $P$ has rank $2k+1$, then
$$\gyr_\calj = \underbrace{
t_{\rk=1} t_{\rk=3} \cdots t_{\rk=2k-1}t_{\rk=2k+1}
}_{\text{ranks increase by 2}}
\underbrace{
t_{\rk=2k} t_{\rk=2k-2} \cdots t_{\rk=2} t_{\rk=0}
}_{\text{ranks decrease by 2}}$$
and
$$\gyr_\cala = \underbrace{
\tau_{\rk=0} \tau_{\rk=2} \cdots \tau_{\rk=2k-2}\tau_{\rk=2k}
}_{\text{ranks increase by 2}}
\underbrace{
\tau_{\rk=2k+1}\tau_{\rk=2k-1} \cdots \tau_{\rk=3} \tau_{\rk=1}
}_{\text{ranks decrease by 2}}.
$$
Thus, for posets of odd rank $2k+1$, it suffices to prove that
\begin{equation}\label{eq:goal 2k+1}
\underbrace{
\tau^*_{\rk=0} \tau^*_{\rk=2} \cdots \tau^*_{\rk=2k}
}_{\text{ranks increase by 2}}
\underbrace{
\tau^*_{\rk=2k+1} \cdots \tau^*_{\rk=3} \tau^*_{\rk=1}
}_{\text{ranks decrease by 2}}
=
\underbrace{
t_{\rk=1} t_{\rk=3} \cdots t_{\rk=2k+1}
}_{\text{ranks increase by 2}}
\underbrace{
t_{\rk=2k} \cdots t_{\rk=2} t_{\rk=0}
}_{\text{ranks decrease by 2}}.
\end{equation}

To prove Eq.~(\ref{eq:goal 2k}) and~(\ref{eq:goal 2k+1}),
we list a few equations.
By setting $a_j=t_{\rk=j}$ and $i=k$ in Lemma~\ref{lem:toad village} (so $b_j=\tau^*_{\rk=j}$), we obtain
\begin{equation}\label{eq:toad village}
\underbrace{
\tau^*_{\rk=0} \tau^*_{\rk=2} \cdots \tau^*_{\rk=2k}
}_{\text{ranks increase by 2}}
=
\underbrace{
t_{\rk=1} t_{\rk=3} \cdots t_{\rk=2k-1}
}_{\text{ranks increase by 2}}
\underbrace{
t_{\rk=2k} t_{\rk=2k-1} t_{\rk=2k-2}
\cdots t_{\rk=2}t_{\rk=1}t_{\rk=0}
}_{\text{ranks decrease by 1}}.
\end{equation}

We can prove the following by setting $a_j=t_{\rk=i+1}$ and $i=k-1$ in Lemma~\ref{lem:toad village} (so $b_j=t_{\rk=0}\tau^*_{\rk=j+1}t_{\rk=0})$), then conjugating both sides by $t_{\rk=0}$ and inverting both sides.
\begin{equation}\label{eq:gee wiz}
\underbrace{
\tau^*_{\rk=2k-1} \tau^*_{\rk=2k-3} \cdots \tau^*_{\rk=3}\tau^*_{\rk=1}
}_{\text{ranks decrease by 2}}
=
\underbrace{
t_{\rk=0} t_{\rk=1} t_{\rk=2} \cdots t_{\rk=2k-1}
}_{\text{ranks increase by 1}}
\underbrace{
t_{\rk=2k-2} t_{\rk=2k-4}
\cdots t_{\rk=2}t_{\rk=0}
}_{\text{ranks decrease by 2}}.
\end{equation}
By replacing $k$ with $k+1$ in Eq.~(\ref{eq:gee wiz}), we obtain
\begin{equation}\label{eq:double header}
\underbrace{
\tau^*_{\rk=2k+1} \tau^*_{\rk=2k-1} \cdots \tau^*_{\rk=3}\tau^*_{\rk=1}
}_{\text{ranks decrease by 2}}
=
\underbrace{
t_{\rk=0} t_{\rk=1} t_{\rk=2} \cdots t_{\rk=2k+1}
}_{\text{ranks increase by 1}}
\underbrace{
t_{\rk=2k} t_{\rk=2k-2}
\cdots t_{\rk=2}t_{\rk=0}
}_{\text{ranks decrease by 2}}.
\end{equation}

To prove Eq.~(\ref{eq:goal 2k}), we multiply the left and right sides of Eq.~(\ref{eq:toad village}) by those of Eq.~(\ref{eq:gee wiz})
to obtain
\begin{align*}
&\phantom{==}\underbrace{
\tau^*_{\rk=0} \tau^*_{\rk=2} \cdots \tau^*_{\rk=2k}
}_{\text{ranks increase by 2}}
\underbrace{
\tau^*_{\rk=2k-1} \cdots \tau^*_{\rk=3}\tau^*_{\rk=1}
}_{\text{ranks decrease by 2}}\\
&=
\underbrace{
t_{\rk=1} t_{\rk=3} \cdots t_{\rk=2k-1}
}_{\text{ranks increase by 2}}
t_{\rk=2k} \underbrace{
t_{\rk=2k-1} t_{\rk=2k-2}
\cdots t_{\rk=2}t_{\rk=1}t_{\rk=0}
}_{\text{ranks decrease by 1}}\\
&\phantom{==}
\underbrace{
t_{\rk=0} t_{\rk=1} t_{\rk=2} \cdots t_{\rk=2k-1}
}_{\text{ranks increase by 1}}
\underbrace{
t_{\rk=2k-2} 
\cdots t_{\rk=2}t_{\rk=0}
}_{\text{ranks decrease by 2}}\\
&= \underbrace{
t_{\rk=1} t_{\rk=3} \cdots t_{\rk=2k-1}
}_{\text{ranks increase by 2}}
t_{\rk=2k} \underbrace{
t_{\rk=2k-2} 
\cdots t_{\rk=2}t_{\rk=0}
}_{\text{ranks decrease by 2}}\\
&=
\underbrace{
t_{\rk=1} t_{\rk=3} \cdots t_{\rk=2k-1}
}_{\text{ranks increase by 2}}
\underbrace{
t_{\rk=2k} t_{\rk=2k-2} 
\cdots t_{\rk=2}t_{\rk=0}
}_{\text{ranks decrease by 2}}.
\end{align*}

Similarly, for the proof of
Eq.~(\ref{eq:goal 2k+1}), we multiply the left and right sides of Eq.~(\ref{eq:toad village}) by those of Eq.~(\ref{eq:double header}).
This gives us
\begin{align*}
&\phantom{==}\underbrace{
\tau^*_{\rk=0} \tau^*_{\rk=2} \cdots \tau^*_{\rk=2k}
}_{\text{ranks increase by 2}}
\underbrace{
\tau^*_{\rk=2k+1} \cdots \tau^*_{\rk=3}\tau^*_{\rk=1}
}_{\text{ranks decrease by 2}}\\
&=
\underbrace{
t_{\rk=1} t_{\rk=3} \cdots t_{\rk=2k-1}
}_{\text{ranks increase by 2}}
\underbrace{t_{\rk=2k} 
t_{\rk=2k-1} t_{\rk=2k-2}
\cdots t_{\rk=2}t_{\rk=1}t_{\rk=0}
}_{\text{ranks decrease by 1}}\\
&\phantom{==}
\underbrace{
t_{\rk=0} t_{\rk=1} t_{\rk=2} \cdots t_{\rk=2k-1}t_{\rk=2k} 
}_{\text{ranks increase by 1}}
t_{\rk=2k+1}
\underbrace{
t_{\rk=2k} 
\cdots t_{\rk=2}t_{\rk=0}
}_{\text{ranks decrease by 2}}\\
&= \underbrace{
t_{\rk=1} t_{\rk=3} \cdots t_{\rk=2k-1}
}_{\text{ranks increase by 2}}
t_{\rk=2k+1} \underbrace{
t_{\rk=2k} 
\cdots t_{\rk=2}t_{\rk=0}
}_{\text{ranks decrease by 2}}\\
&=
\underbrace{
t_{\rk=1} t_{\rk=3} \cdots t_{\rk=2k-1}t_{\rk=2k+1}
}_{\text{ranks increase by 2}}
\underbrace{
t_{\rk=2k}
\cdots t_{\rk=2}t_{\rk=0}
}_{\text{ranks decrease by 2}}
\end{align*}
concluding the proof of Eq.~(\ref{eq:goal 2k+1}) and the proof
of the theorem.
\end{proof}

\section{Piecewise-linear generalization}\label{sec:cpl}

We call the toggles and rowmotion maps on $\calj(P)$ and $\cala(P)$ \textbf{combinatorial} toggling and rowmotion as they are acting on combinatorial sets.
Einstein and Propp~\cite{einpropp}
have generalized these maps on $\calj(P)$ to \textbf{piecewise-linear} toggling and rowmotion, by constructing continuous maps that act on Stanley's ``order polytope,'' an extension of $\calj(P)$ and $\calf(P)$~\cite{Sta86}.
In this section, we expand on this work and generalize the toggles $\tau_e$ on antichains to another polytope of Stanley, called the ``chain polytope'' which extends antichains.
For certain posets $P$ in which cardinality is a homomesic statistic under $\row_\cala$,  this appears to extend to the piecewise-linear setting.

Many of the algebraic properties that hold in the combinatorial setting have also been proven for the piecewise-linear setting, and furthermore generalized to the birational setting~\cite{einpropp,grinberg-roby,robydac}.  We will show that almost all that we proved for the relationship between toggles in $\tog_\cala(P)$ and $\tog_\calj(P)$ also extends to the piecewise-linear setting.
We will not discuss birational toggling here except in the final two paragraphs
of Section~\ref{sec:future tense}, where we mention it as a possible direction for future research.

\subsection{Poset polytopes}\label{subsec:cpl-intro}
\begin{notn}
For a set $X$ and finite poset $P$, let $X^P$ denote the set of $X$-\textbf{labelings} of $P$, i.e., the set of functions $f:P\ra X$.
Given $f\in X^P$ and $e\in P$, we call $f(e)$ the \textbf{label}
of $e$.
\end{notn}

A subset $S\subseteq P$ corresponds naturally to a $\{0,1\}$-labeling $f$ of $P$ by letting $f(x)=1$ if $x\in S$ and $f(x)=0$ if $x\not\in S$, as in the example below.

\begin{center}
\begin{tikzpicture}[scale=0.567]
\begin{scope}
\draw[thick] (-0.1, 1.9) -- (-0.9, 1.1);
\draw[thick] (0.1, 1.9) -- (0.9, 1.1);
\draw[thick] (-1.1, 0.9) -- (-1.9, 0.1);
\draw[thick] (-0.9, 0.9) -- (-0.1, 0.1);
\draw[thick] (0.9, 0.9) -- (0.1, 0.1);
\draw[thick] (1.1, 0.9) -- (1.9, 0.1);
\draw (0,2) circle [radius=0.2];
\draw (-1,1) circle [radius=0.2];
\draw[fill] (1,1) circle [radius=0.2];
\draw[fill] (-2,0) circle [radius=0.2];
\draw (0,0) circle [radius=0.2];
\draw (2,0) circle [radius=0.2];
\end{scope}
\node at (3.5,1) {$\longleftrightarrow$};
\begin{scope}[shift={(7,0)}]
\draw[thick] (-0.25, 1.75) -- (-0.75, 1.25);
\draw[thick] (0.25, 1.75) -- (0.75, 1.25);
\draw[thick] (-1.25, 0.75) -- (-1.75, 0.25);
\draw[thick] (-0.75, 0.75) -- (-0.25, 0.25);
\draw[thick] (0.75, 0.75) -- (0.25, 0.25);
\draw[thick] (1.25, 0.75) -- (1.75, 0.25);
\node at (0,2) {0};
\node at (-1,1) {0};
\node at (1,1) {1};
\node at (-2,0) {1};
\node at (0,0) {0};
\node at (2,0) {0};
\end{scope}
\end{tikzpicture}
\end{center}

This labeling is called the \textbf{indicator function} of the subset.  We consider a subset and its indicator function as two separate ways of writing the same object, so we will not distinguish the two.

\begin{prop}\label{prop:as binary labelings}\hspace{-8 in}\begin{tiny}.\end{tiny}
\begin{enumerate}
\item Antichains of $P$ are precisely the $\{0,1\}$-labelings $f$ of $P$ such that for every chain $x_1<x_2<\cdots< x_n$ in $P$, we have $\sum\limits_{i=1}^n f(x_i)\leq 1$.
\item Order ideals of $P$ are precisely the $\{0,1\}$-labelings $f$ of $P$ that are \textbf{order-reversing}, meaning that $f(x)\geq f(y)$ whenever $x\leq y$.
\item Order filters of $P$ are precisely the $\{0,1\}$-labelings $f$ of $P$ that are \textbf{order-preserving}, meaning that $f(x)\leq f(y)$ whenever $x\leq y$.
\end{enumerate}
\end{prop}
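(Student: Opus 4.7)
The plan is to establish each of the three equivalences by direct translation between the set-theoretic definitions given earlier in the subsection and the indicator-function characterizations, proving both directions in each case. Since each part is essentially an unpacking of a definition through the bijection between subsets and their indicator functions, the argument amounts to bookkeeping, and the three parts are handled independently.

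For (1), the key observation is that if $f$ is the indicator of a subset $A\subseteq P$, then for any chain $x_1<x_2<\cdots<x_n$ the sum $\sum_{i=1}^n f(x_i)$ counts $|A\cap\{x_1,\dots,x_n\}|$. If $A$ is an antichain, then this intersection has size at most one, since any two distinct elements of a chain are comparable and therefore cannot both lie in $A$. Conversely, suppose the chain-sum bound holds for every chain. If two comparable elements $x<y$ both lay in $A$, they would form a chain of length two with indicator sum $2$, contradicting the hypothesis; hence $A$ is an antichain.

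For (2), I will translate ``$I$ is an order ideal'' directly into the labeling condition. Given $I\in\calj(P)$ with indicator $f$ and a relation $x\leq y$, the inequality $f(x)\geq f(y)$ is automatic when $f(y)=0$, and when $f(y)=1$ we have $y\in I$, so $x\leq y$ forces $x\in I$ and hence $f(x)=1$. Conversely, if $f\colon P\to\{0,1\}$ is order-reversing, then the set $I=f^{-1}(1)$ automatically contains every $x\leq y$ whenever $y\in I$, which is precisely the defining condition of an order ideal. Part (3) follows by the same argument with the inequalities reversed, replacing ``order ideal'' by ``order filter'' throughout.

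The whole argument is routine, so there is no substantive obstacle; the only points needing care are that the chain condition in (1) is vacuous for the empty chain and trivially holds for singleton chains, and that the forward directions in (2) and (3) reduce to the case where the label at the ``larger'' or ``smaller'' element is $1$, which avoids any need for case analysis on $\{0,1\}$-arithmetic.
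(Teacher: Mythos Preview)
Your proposal is correct and follows essentially the same approach as the paper's proof: a direct translation between the set-theoretic definitions of antichain, order ideal, and order filter and their indicator-function formulations, handling each part independently. The paper's argument is slightly terser (for part (1) it simply notes that the sum condition ``is exactly the same condition'' as containing at most one element of any chain, and for part (3) it just says ``Analogous to (2)''), but the content is the same.
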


\begin{proof}\hspace{-8 in}\begin{tiny}.\end{tiny}

\begin{enumerate}
\item A subset $S\subseteq P$ is an antichain if and only if $S$ contains at most one element in any chain $x_1<x_2<\cdots<x_n$; for binary functions this is exactly the same condition as $\sum\limits_{i=1}^n f(x_i)\leq 1$.
\item The condition that makes $I\subseteq P$ an order ideal is that if $x<y$ and $y\in I$, then $x\in I$.  Consider a pair $x,y\in P$ satisfying $x\leq y$.  If $f(y)=0$, then automatically $f(x)\geq f(y)$.  If $f(y)=1$, then $f(x)\geq f(y)$ if and only if $f(x)=1$, which is exactly the requirement to be an order ideal.
\item Analogous to (2).
\end{enumerate}
\end{proof}

We now generalize these from labelings in $\{0,1\}^P$ to $[0,1]^P$.
In~\cite{Sta86}, Stanley introduced two polytopes associated with a poset: the chain polytope and the order polytope.  Stanley's ``order polytope'' is what we call the ``order-preserving polytope.''

\begin{defn}\hspace{-1 in}\begin{tiny}.\end{tiny}
\begin{itemize}
\item The \textbf{chain polytope} of $P$, denoted $\calc(P)$, is the set of all labelings $f\in [0,1]^P$ such that $\sum\limits_{i=1}^n f(x_i)\leq 1$ for any chain $x_1<x_2<\cdots<x_n$.
\item The \textbf{order-reversing polytope} of $P$, denoted $OR(P)$, is the set of all order-reversing labelings $f\in [0,1]^P$.
\item The \textbf{order-preserving polytope} of $P$, denoted $OP(P)$, is the set of all order-preserving labelings $f\in [0,1]^P$.
\end{itemize}
\end{defn}

By Proposition~\ref{prop:as binary labelings}, $\cala(P)=\calc(P)\cap \{0,1\}^P$, $\calj(P)=OR(P)\cap \{0,1\}^P$, and $\calf(P)=OP(P)\cap \{0,1\}^P$ (the vertices of the respective polytopes~\cite{Sta86}).  Thus, anything we prove to be true on these polytopes is also true for the combinatorial sets $\cala(P)$, $\calj(P)$, and $\calf(P)$.  What is more surprising, however, is that almost all of what we
proved in Section~\ref{sec:combinatorial} when working over $\cala(P)$, $\calj(P)$, and $\calf(P)$ can be extended to $\calc(P)$, $OR(P)$, and $OP(P)$ in a natural way.

As we will not use polytope theory in this paper, knowledge of polytopes is not necessary to understand the rest of this paper.  The reader may choose to think of $\calc(P)$, $OR(P)$, and $OP(P)$ simply as subsets of $[0,1]^P$.

\subsection{The poset $\hat{P}$}\label{subsec:P-hat}

In order to work with $OR(P)$ and $OP(P)$, we create a new poset $\hat{P}=P\cup\left\{
\hat{m}, \hat{M} \right\}$ from any given poset $P$ by adjoining a minimal element $\hat{m}$ and maximal element $\hat{M}$.
For any $x,y\in P$, $x\leq y$ in $\hat{P}$ if and only if $x\leq y$ in $P$.  For any $x\in\hat{P}$, we let $\hat{m}\leq x \leq \hat{M}$.  When we make statements like ``$x\leq y$'' or ``$x\gtrdot y$'' or ``$x\parallel y$,'' we need not clarify if we mean in $P$ or $\hat{P}$, since there is no ambiguity:  If at least one of $x$ and $y$ is $\hat{m}$ or $\hat{M}$, then we must mean $\hat{P}$.  On the other hand,
if both $x,y\in P$, then those types of statements hold in $P$ if and only if they hold in $\hat{P}$.  Note that a maximal or minimal element of $P$ does not remain as such in $\hat{P}$.

\begin{ex}\label{ex:P-hat}\hspace{-1 in}\begin{tiny}.\end{tiny}

\begin{center}
\begin{tikzpicture}[scale=0.5]
\draw[thick] (-0.1, 1.9) -- (-0.9, 1.1);
\draw[thick] (0.1, 1.9) -- (0.9, 1.1);
\draw[thick] (-1.1, 0.9) -- (-1.9, 0.1);
\draw[thick] (-0.9, 0.9) -- (-0.1, 0.1);
\draw[thick] (0.9, 0.9) -- (0.1, 0.1);
\draw[thick] (1.1, 0.9) -- (1.9, 0.1);
\draw[fill] (0,2) circle [radius=0.2];
\draw[fill] (-1,1) circle [radius=0.2];
\draw[fill] (1,1) circle [radius=0.2];
\draw[fill] (-2,0) circle [radius=0.2];
\draw[fill] (0,0) circle [radius=0.2];
\draw[fill] (2,0) circle [radius=0.2];
\node at (-3.8,1) {If $P=$};
\begin{scope}[shift={(9.3,0)}];
\draw[thick] (-0.1, 1.9) -- (-0.9, 1.1);
\draw[thick] (0.1, 1.9) -- (0.9, 1.1);
\draw[thick] (-1.1, 0.9) -- (-1.9, 0.1);
\draw[thick] (-0.9, 0.9) -- (-0.1, 0.1);
\draw[thick] (0.9, 0.9) -- (0.1, 0.1);
\draw[thick] (1.1, 0.9) -- (1.9, 0.1);
\draw[dashed] (0, 3.26) -- (0,2.1);
\draw[dashed] (0, -0.1) -- (0,-1.2);
\draw[dashed] (-2, -0.1) -- (0,-1.2);
\draw[dashed] (2, -0.1) -- (0,-1.2);
\draw[fill] (0,2) circle [radius=0.2];
\draw[fill] (-1,1) circle [radius=0.2];
\draw[fill] (1,1) circle [radius=0.2];
\draw[fill] (-2,0) circle [radius=0.2];
\draw[fill] (0,0) circle [radius=0.2];
\draw[fill] (2,0) circle [radius=0.2];
\draw[fill] (0,3.3) circle [radius=0.2];
\draw[fill] (0,-1.3) circle [radius=0.2];
\node at (-3.8,1) {then $\hat{P}=$};
\node at (2.9,1) {.};
\node[above] at (0,3.3) {$\hat{M}$};
\node[below] at (0,-1.3) {$\hat{m}$};
\end{scope}
\end{tikzpicture}
\end{center}
We will use dashed lines throughout the paper to denote the edges going to $\hat{m}$ and $\hat{M}$, so that it will be clear if we are drawing $P$ or $\hat{P}$. 
\end{ex}

We extend every $f\in OR(P)$ to a labeling of $\hat{P}$ by setting $f\left(\hat{m}\right) =1$ and $f\left(\hat{M}\right) =0$.\footnote{Elsewhere in the literature, $\hat{m}$ and $\hat{M}$ are denoted $\hat{0}$ and $\hat{1}$ respectively.  With this norm, order-reversing maps would have $f\left(\hat{0}\right)=1$ and $f\left(\hat{1}\right)=0$.  This is potentially confusing so we deviate from this norm.}
We likewise extend every $f\in OP(P)$ to a labeling of $\hat{P}$ by $f\left(\hat{m}\right) =0$ and $f\left(\hat{M}\right) =1$.
Even though a constant labeling is both order-reversing and order-preserving, we only consider it to be in one of $OR(P)$ and $OP(P)$ at any time, and assign the appropriate labels to $\hat{m}$ and $\hat{M}$ accordingly.  We do not extend elements of $\calc(P)$ to $\hat{P}$.

Working over $\hat{P}$ will allow us to state definitions and theorems without splitting them into several cases.  For example, $x\gtrdot \hat{m}$ (resp.\ $x\lessdot \hat{M}$) means that $x$ is a minimal (resp.\ maximal) element of $P$. Also for $x\in P$, the sets $\left\{\left.y\in \hat{P}\;\right|\; y\gtrdot x\right\}$ and $\left\{\left.y\in \hat{P}\;\right|\; y\lessdot x\right\}$ are always nonempty so a labeling $f$ achieves maximum and minimum values on these sets.

\subsection{Rowmotion on poset polytopes}\label{subsec:row-polytope}

In this subsection, we define rowmotion on $\calc(P)$, $OR(P)$, and $OP(P)$ as the composition of three maps in a way analogous to the rowmotion definitions in Section~\ref{sec:combinatorial}.

\begin{defn}
The \textbf{complement} of a labeling is given by $\comp:[0,1]^P \ra [0,1]^P$ where $(\comp(f))(x)=1-f(x)$ for all $x\in P$.
\end{defn}

Note that $\comp$ is an involution that takes elements in $OR(P)$ to ones in $OP(P)$ and vice versa.  When restricted to $\{0,1\}^P$ (which again we think of as subsets of $P$), $\comp$ corresponds to the usual complementation operation, hence the name.

\begin{prop}\label{prop:transfer-map}
There is a bijection $\orb: \calc(P) \ra OR(P)$ given by
$$(\orb(g))(x) = \max\left\{g(y_1)+g(y_2)+\cdots+ g(y_k)\;\left|\; x=y_1 \lessdot y_2 \lessdot \cdots \lessdot y_k \lessdot \hat{M} \right.\right\}$$ with inverse given by
$$\left(\orb^{-1}(f)\right)(x) = \min\left\{f(x)-f(y)\;\left|\; y\in \hat{P}, y \gtrdot x\right.\right\}
=f(x) - \max\left\{f(y)\;\left|\; y\in \hat{P}, y\gtrdot x\right.\right\}.$$  Also there is a
bijection $\opb: \calc(P) \ra OP(P)$ given by
$$(\opb(g))(x) = \max\left\{g(y_1)+g(y_2)+\cdots+ g(y_k)\;|\; \hat{m}
\lessdot y_1 \lessdot y_2 \lessdot \cdots \lessdot y_k =x \right\}$$ with inverse given by
$$\left(\opb^{-1}(f)\right)(x) = \min\left\{f(x)-f(y)\;\left|\; y\in \hat{P}, y \lessdot x\right.\right\}
=f(x) - \max\left\{f(y)\;\left|\; y\in \hat{P}, y \lessdot x\right.\right\}.$$
\end{prop}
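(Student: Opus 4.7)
The plan is to verify four things: (a) $\orb$ maps $\calc(P)$ into $OR(P)$, (b) the putative inverse formula maps $OR(P)$ into $\calc(P)$, (c) $\orb^{-1} \circ \orb = \mathrm{id}$, and (d) $\orb \circ \orb^{-1} = \mathrm{id}$. The $\opb$ statements will then follow from the same argument with the roles of $\hat{m}$ and $\hat{M}$ interchanged, equivalently by applying the $\orb$ result to the dual poset $P^*$ (noting that $\opb_P = \orb_{P^*}$ after tracking how cover relations reverse).

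For (a), I would observe that non-negativity of $\orb(g)(x)$ is immediate from $g \geq 0$, and that the bound $\orb(g)(x) \leq 1$ follows because $x = y_1 \lessdot \cdots \lessdot y_k$ is a chain in $P$, so the defining inequality of $\calc(P)$ gives $\sum g(y_i) \leq 1$. To show $\orb(g)$ is order-reversing: given $x \leq y$ in $P$, I would refine a chain from $x$ to $y$ to a saturated one and prepend it to any saturated chain realizing $\orb(g)(y)$; the concatenation starts at $x$, remains saturated, and has $g$-sum at least $\orb(g)(y)$. For (b), non-negativity of $\orb^{-1}(f)$ is immediate from $f$ being order-reversing on $\hat{P}$, and the chain bound reduces to the telescoping estimate along a saturated chain $x_1 \lessdot x_2 \lessdot \cdots \lessdot x_n \lessdot \hat{M}$:
\[
\sum_{i=1}^n \orb^{-1}(f)(x_i) \leq \sum_{i=1}^n \bigl(f(x_i) - f(x_{i+1})\bigr) = f(x_1) - f(\hat{M}) = f(x_1) \leq 1,
\]
with the convention $x_{n+1} = \hat{M}$ and $f(\hat{M}) = 0$. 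An arbitrary chain can be refined to a saturated one and then extended upward to $\hat{M}$; the extra summands are non-negative, so the bound persists.

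The core of (c) and (d) will be the recursion
\[
\orb(g)(x) = g(x) + \max_{y \in \hat{P},\, y \gtrdot x} \orb(g)(y),
\]
with the convention $\orb(g)(\hat{M}) = 0$, obtained by separating off the first term in the saturated chain of the definition. Rearranging this immediately yields (c). For (d), I would prove both inequalities for $\orb(\orb^{-1}(f))(x) = f(x)$: the $\leq$ direction is the telescoping argument from (b), while the $\geq$ direction comes from building a saturated chain from $x$ greedily, at each step picking $y_{i+1} \gtrdot y_i$ to be an element achieving the maximum appearing in the formula $\orb^{-1}(f)(y_i) = f(y_i) - \max_{z \gtrdot y_i} f(z)$; along this greedy chain the telescoping inequality becomes an equality with value $f(x) - f(\hat{M}) = f(x)$. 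The main work throughout is bookkeeping: distinguishing saturated from general chains, keeping the boundary conventions at $\hat{m}$ and $\hat{M}$ straight, and handling cleanly the case when $x$ is maximal in $P$ so that $\hat{M}$ appears directly as a cover. No step is genuinely difficult—this is essentially Stanley's classical transfer map between the chain and order polytopes.
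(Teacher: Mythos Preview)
Your proposal is correct and matches exactly the four-part verification the paper itself names---showing $\orb$ lands in $OR(P)$, showing the putative inverse lands in $\calc(P)$, and checking both compositions---but the paper in fact omits the proof entirely, remarking only that these checks are ``straightforward.'' Your outline supplies precisely the details the paper leaves to the reader, including the recursion $\orb(g)(x) = g(x) + \max_{y\gtrdot x}\orb(g)(y)$ (which the paper records separately as Eq.~(\ref{eq:recur-orb})) and the greedy-chain argument for the $\geq$ direction in (d).
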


We omit the proof as it is straightforward to show that $\orb$ (resp.\ $\opb$) sends elements of $\calc(P)$ to elements of $OR(P)$ (resp.\ $OP(P)$), that $\orb^{-1}$ (resp.\ $\opb^{-1}$) sends elements of $OR(P)$ (resp.\ $OP(P)$) to elements of $\calc(P)$, and that $\orb^{-1}$ and $\opb^{-1}$ are inverses of $\orb$ and $\opb$.
The map $\opb^{-1}$ is what Stanley calls the ``transfer map'' because it can be used to transfer properties from one of $OP(P)$ or $\calc(P)$ to the other~\cite[\S3]{Sta86}.  Also $\orb$ is just $\opb$ but applied to the dual poset that reverses the `$\geq$' and `$\leq$' relations.  Clearly if $Q$ is the dual poset of $P$, then they have the same chains and antichains, so $\calc(P) = \calc(Q)$.

We can replace $y\gtrdot x$ with $y> x$ in the definition of $\orb^{-1}$, since it would produce the same result by the order-reversing property.  Similarly, we can replace $y\lessdot x$ with $y< x$ in the definition of $\opb^{-1}$.
Also any $g\in\calc(P)$ has only nonnegative outputs.  So in the $\orb$ and $\opb$ definitions, we can replace ``$x=y_1 \lessdot y_2 \lessdot \cdots \lessdot y_k \lessdot \hat{M}$'' and ``$\hat{m} \lessdot y_1 \lessdot y_2 \lessdot \cdots \lessdot y_k=x$'' with ``$x=y_1<y_2 < \cdots < y_k$''
and ``$y_1<y_2 < \cdots < y_k=x$'' respectively since the maximum sum must occur on a chain that cannot be extended.

It is easy to see that $\orb$ and $\opb$ can be described recursively as well.

\begin{equation}\label{eq:recur-orb}
(\orb(g))(x) =
\left\{\begin{array}{ll}
0 &\text{if }x=\hat{M}\\
g(x)+\max\limits_{y\gtrdot x}(\orb(g))(y)&\text{if }x\in P\\
1
&\text{if }x=\hat{m}\end{array}\right.
\end{equation}

\begin{equation}\label{eq:recur-opb}
(\opb(g))(x) =
\left\{\begin{array}{ll}
0 &\text{if }x=\hat{m}\\
g(x)+\max\limits_{y\lessdot x}(\opb(g))(y)&\text{if }x\in P\\
1
&\text{if }x=\hat{M}\end{array}\right.
\end{equation}

We call $\orb(g)$ and $\opb(g)$ the order-reversing and order-preserving labelings generated by the chain polytope element $g$.

\begin{prop}\label{prop:same-on-anti}
If $g\in \cala(P)$, then $\orb(g)=\bfI(g)$ and $\opb(g)=\bfF(g)$.
\end{prop}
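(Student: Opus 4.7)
The plan is to exploit the fact that when $g\in\cala(P)$ is viewed as its $\{0,1\}$-indicator labeling, the antichain condition from Proposition~\ref{prop:as binary labelings}(1) forces the sum of $g$-values along any chain in $P$ to be at most $1$. Plugging this observation into the definitions of $\orb$ and $\opb$ from Proposition~\ref{prop:transfer-map} will collapse the ``$\max$'' to a $\{0,1\}$-value, which we can then identify directly with the indicator function of $\bfI(g)$ or $\bfF(g)$.

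More precisely, for the $\orb$ half I would fix $x\in P$ and consider the quantity
\[
(\orb(g))(x)\;=\;\max\left\{g(y_1)+\cdots+g(y_k)\,\left|\,x=y_1\lessdot y_2\lessdot\cdots\lessdot y_k\lessdot\hat M\right.\right\}.
\]
Each summand is $0$ or $1$, and the elements $y_1,\dots,y_k$ form a chain in $P$, so by the antichain hypothesis each such sum lies in $\{0,1\}$. The maximum equals $1$ exactly when some saturated chain from $x$ up to $\hat M$ passes through an element $y$ with $g(y)=1$. Since any $y\geq x$ in $P$ can be reached by a saturated chain starting at $x$ and then extended to $\hat M$, this occurs iff there exists $y\in P$ with $y\geq x$ and $g(y)=1$, i.e.\ iff $x\in\bfI(g)$. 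Thus $(\orb(g))(x)=\bfI(g)(x)$ for all $x\in P$. The equality at the extra nodes is automatic: $(\orb(g))(\hat m)=1=\bfI(g)(\hat m)$ and $(\orb(g))(\hat M)=0=\bfI(g)(\hat M)$ by the recursive formula~(\ref{eq:recur-orb}) and the convention from Section~\ref{subsec:P-hat} for extending elements of $OR(P)$ to $\hat P$.

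The argument for $\opb(g)=\bfF(g)$ is the mirror image: one runs saturated chains $\hat m\lessdot y_1\lessdot\cdots\lessdot y_k=x$ downward from $x$ rather than upward, the antichain bound still gives a value in $\{0,1\}$, and that value is $1$ iff some $y\leq x$ lies in the antichain $g$, which is exactly the condition $x\in\bfF(g)$.

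I do not expect any real obstacle here; the proposition is essentially an unpacking of the definitions once the ``sum $\leq 1$ on chains'' observation is in hand. The only subtlety worth flagging in the write-up is the existence of a saturated chain through a prescribed comparable element, which is immediate in a finite poset, and careful bookkeeping at $\hat m$ and $\hat M$ so that the stated equality of labelings holds on all of $\hat P$ and not merely on $P$.
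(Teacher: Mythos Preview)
Your proposal is correct and follows essentially the same approach as the paper's proof: both use the antichain condition to force each chain sum into $\{0,1\}$, then identify the maximum with the indicator of $\bfI(g)$ (respectively $\bfF(g)$) via the existence of a saturated chain through any comparable element. Your version is slightly more explicit about the saturated-chain existence and the boundary values at $\hat m$ and $\hat M$, but the argument is the same.
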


\begin{proof}
Let $g\in\cala(P)$ and $f=\bfI(g)$.  Then for $x\in P$, $f(x)=1$ if and only if there exists $y\geq x$ such that $g(y)=1$.  Otherwise $f(x)=0$.  Since $g$ is an antichain, it is a $\{0,1\}$-labeling.  Therefore, any chain $x=y_1\lessdot y_2\lessdot \cdots \lessdot y_k \lessdot \hat{M}$ satisfies
$$\sum\limits_{i=1}^k g(y_i)=1$$ precisely when some $g(y_i)=1$; otherwise the sum is 0.  Such a chain exists precisely when some $y\geq x$ satisfies $g(y)=1$.  Thus, $f=\orb(g)$.

Proving that $\opb(g)=\bfF(g)$ is analogous.
\end{proof}

Since $\orb$ and $\opb$ are extensions of $\bfI$ and $\bfF$ to $\calc(P)$, $OR(P)$, and $OP(P)$, we can extend the definition of rowmotion to these polytopes by composing these similarly to the definitions of $\row_\cala$, $\row_\calj$, and $\row_\calf$.  In fact, $\row_\cala$, $\row_\calj$, and $\row_\calf$ are the restrictions of the following maps to $\cala(P)$, $\calj(P)$, and $\calf(P)$, respectively.

\begin{defn}
Let $\row_\calc$, $\row_{OR}$, $\row_{OP}$ be defined by composing maps as follows.
\begin{center}
\begin{tabular}{ccccccccc}
$\row_\calc$ &:& $\calc(P)$ & $\stackrel{\orb}{\longrightarrow}$ & $OR(P)$ & $\stackrel{\comp}  {\longrightarrow}$ & $OP(P)$ & $\stackrel{\opb^{-1}}{\longrightarrow}$ & $\calc(P)$\\
$\row_{OR}$ &:& $OR(P)$ & $\stackrel{\comp}{\longrightarrow}$ & $OP(P)$ & $\stackrel{\opb^{-1}}{\longrightarrow}$ & $\calc(P)$ & $\stackrel{\orb}{\longrightarrow}$ & $OR(P)$\\
$\row_{OP}$ &:& $OP(P)$ & $\stackrel{\comp}{\longrightarrow}$ & $OR(P)$ & $\stackrel{\orb^{-1}}{\longrightarrow}$ & $\calc(P)$ & $\stackrel{\opb}{\longrightarrow}$ & $OP(P)$
\end{tabular}
\end{center}
\end{defn}

\begin{ex}\label{ex:a3-row-cpl}
We demonstrate $\row_\calc$ and $\row_{OR}$.

\begin{center}
\begin{tikzpicture}[scale=0.567]
\node at (-3.5,1) {$\row_\calc:$};
\node at (-3.5,-3) {$\row_{OR}:$};
\begin{scope}
\draw[thick] (-0.3, 1.7) -- (-0.7, 1.3);
\draw[thick] (0.3, 1.7) -- (0.7, 1.3);
\draw[thick] (-1.3, 0.7) -- (-1.7, 0.3);
\draw[thick] (-0.7, 0.7) -- (-0.3, 0.3);
\draw[thick] (0.7, 0.7) -- (0.3, 0.3);
\draw[thick] (1.3, 0.7) -- (1.7, 0.3);
\node at (0,2) {0.2};
\node at (-1,1) {0.7};
\node at (1,1) {0};
\node at (-2,0) {0.1};
\node at (0,0) {0};
\node at (2,0) {0.3};
\end{scope}
\node at (3.5,1) {$\stackrel{\orb}{\longmapsto}$};
\begin{scope}[shift={(7,0)}]
\draw[thick] (-0.3, 1.7) -- (-0.7, 1.3);
\draw[thick] (0.3, 1.7) -- (0.7, 1.3);
\draw[thick] (-1.3, 0.7) -- (-1.7, 0.3);
\draw[thick] (-0.7, 0.7) -- (-0.3, 0.3);
\draw[thick] (0.7, 0.7) -- (0.3, 0.3);
\draw[thick] (1.3, 0.7) -- (1.7, 0.3);
\node at (0,2) {0.2};
\node at (-1,1) {0.9};
\node at (1,1) {0.2};
\node at (-2,0) {1};
\node at (0,0) {0.9};
\node at (2,0) {0.5};
\end{scope}
\node at (10.5,1) {$\stackrel{\comp}{\longmapsto}$};
\begin{scope}[shift={(14,0)}]
\draw[thick] (-0.3, 1.7) -- (-0.7, 1.3);
\draw[thick] (0.3, 1.7) -- (0.7, 1.3);
\draw[thick] (-1.3, 0.7) -- (-1.7, 0.3);
\draw[thick] (-0.7, 0.7) -- (-0.3, 0.3);
\draw[thick] (0.7, 0.7) -- (0.3, 0.3);
\draw[thick] (1.3, 0.7) -- (1.7, 0.3);
\node at (0,2) {0.8};
\node at (-1,1) {0.1};
\node at (1,1) {0.8};
\node at (-2,0) {0};
\node at (0,0) {0.1};
\node at (2,0) {0.5};
\end{scope}
\node at (17.5,1) {$\stackrel{\opb^{-1}}{\longmapsto}$};
\begin{scope}[shift={(21,0)}]
\draw[thick] (-0.3, 1.7) -- (-0.7, 1.3);
\draw[thick] (0.3, 1.7) -- (0.7, 1.3);
\draw[thick] (-1.3, 0.7) -- (-1.7, 0.3);
\draw[thick] (-0.7, 0.7) -- (-0.3, 0.3);
\draw[thick] (0.7, 0.7) -- (0.3, 0.3);
\draw[thick] (1.3, 0.7) -- (1.7, 0.3);
\node at (0,2) {0};
\node at (-1,1) {0};
\node at (1,1) {0.3};
\node at (-2,0) {0};
\node at (0,0) {0.1};
\node at (2,0) {0.5};
\end{scope}

\begin{scope}[shift={(0,-4)}]
\draw[thick] (-0.3, 1.7) -- (-0.7, 1.3);
\draw[thick] (0.3, 1.7) -- (0.7, 1.3);
\draw[thick] (-1.3, 0.7) -- (-1.7, 0.3);
\draw[thick] (-0.7, 0.7) -- (-0.3, 0.3);
\draw[thick] (0.7, 0.7) -- (0.3, 0.3);
\draw[thick] (1.3, 0.7) -- (1.7, 0.3);
\node at (0,2) {0.2};
\node at (-1,1) {0.9};
\node at (1,1) {0.2};
\node at (-2,0) {1};
\node at (0,0) {0.9};
\node at (2,0) {0.5};
\end{scope}
\node at (3.5,-3) {$\stackrel{\comp}{\longmapsto}$};
\begin{scope}[shift={(7,-4)}]
\draw[thick] (-0.3, 1.7) -- (-0.7, 1.3);
\draw[thick] (0.3, 1.7) -- (0.7, 1.3);
\draw[thick] (-1.3, 0.7) -- (-1.7, 0.3);
\draw[thick] (-0.7, 0.7) -- (-0.3, 0.3);
\draw[thick] (0.7, 0.7) -- (0.3, 0.3);
\draw[thick] (1.3, 0.7) -- (1.7, 0.3);
\node at (0,2) {0.8};
\node at (-1,1) {0.1};
\node at (1,1) {0.8};
\node at (-2,0) {0};
\node at (0,0) {0.1};
\node at (2,0) {0.5};
\end{scope}
\node at (10.5,-3) {$\stackrel{\opb^{-1}}{\longmapsto}$};
\begin{scope}[shift={(14,-4)}]
\draw[thick] (-0.3, 1.7) -- (-0.7, 1.3);
\draw[thick] (0.3, 1.7) -- (0.7, 1.3);
\draw[thick] (-1.3, 0.7) -- (-1.7, 0.3);
\draw[thick] (-0.7, 0.7) -- (-0.3, 0.3);
\draw[thick] (0.7, 0.7) -- (0.3, 0.3);
\draw[thick] (1.3, 0.7) -- (1.7, 0.3);
\node at (0,2) {0};
\node at (-1,1) {0};
\node at (1,1) {0.3};
\node at (-2,0) {0};
\node at (0,0) {0.1};
\node at (2,0) {0.5};
\end{scope}
\node at (17.5,-3) {$\stackrel{\orb}{\longmapsto}$};
\begin{scope}[shift={(21,-4)}]
\draw[thick] (-0.3, 1.7) -- (-0.7, 1.3);
\draw[thick] (0.3, 1.7) -- (0.7, 1.3);
\draw[thick] (-1.3, 0.7) -- (-1.7, 0.3);
\draw[thick] (-0.7, 0.7) -- (-0.3, 0.3);
\draw[thick] (0.7, 0.7) -- (0.3, 0.3);
\draw[thick] (1.3, 0.7) -- (1.7, 0.3);
\node at (0,2) {0};
\node at (-1,1) {0};
\node at (1,1) {0.3};
\node at (-2,0) {0};
\node at (0,0) {0.4};
\node at (2,0) {0.8};
\end{scope}
\end{tikzpicture}
\end{center}
\end{ex}

Rowmotion on $OR(P)$ and $OP(P)$ has received much attention, particularly in~\cite{einpropp} and~\cite{grinberg-roby}.  For certain ``nice'' posets, $\row_{OR}$ has been shown to exhibit many of the same properties as $\row_\calj$.  For example, on a product of two chains $[a]\times[b]$, the order of rowmotion in
both the combinatorial ($\row_\calj,\row_\cala$)
and piecewise-linear ($\row_{OR},\row_\calc$) realms is $a+b$, and the homomesy of cardinality for $\row_\calj$ extends to $\row_{OR}$.  On the other hand, we will see in Subsection~\ref{subsec:zigzag} that a homomesy for $\row_\calj$ on zigzag posets~\cite[\S5]{indepsetspaper} does not extend in general to $\row_{OR}$.

We will focus primarily on $\row_\calc$ and $\row_{OR}$ since $\row_{OP}:OP(P)\ra OP(P)$ is equivalent to $\row_{OR}$ for the dual poset.
Some literature focuses more on $OP(P)$ as order-preserving maps may seem more natural to work with, as in Stanley's order polytope definition.  However, as $OR(P)$ generalizes order ideals, we will be consistent and focus on $OR(P)$.

As is clear by definition, there is a relation between $\row_\calc$ and $\row_{OR}$ depicted by the following commutative diagram.  This relation can also be seen in Example~\ref{ex:a3-row-cpl}, in which  the order-reversing labeling we started with is the one generated by the element of $\calc(P)$ we started with.

\begin{center}
\begin{tikzpicture}
\node at (0,1.8) {$\calc(P)$};
\node at (0,0) {$OR(P)$};
\node at (3.25,1.8) {$\calc(P)$};
\node at (3.25,0) {$OR(P)$};
\draw[semithick, ->] (0,1.3) -- (0,0.5);
\node[left] at (0,0.9) {$\orb$};
\draw[semithick, ->] (0.7,0) -- (2.5,0);
\node[below] at (1.5,0) {$\row_{OR}$};
\draw[semithick, ->] (0.7,1.8) -- (2.5,1.8);
\node[above] at (1.5,1.8) {$\row_\calc$};
\draw[semithick, ->] (3.25,1.3) -- (3.25,0.5);
\node[right] at (3.25,0.9) {$\orb$};
\end{tikzpicture}
\end{center}

\subsection{Toggles on poset polytopes}\label{subsec:tog-poly}

Toggles on $OR(P)$ and $OP(P)$, referred to as \emph{piecewise-linear toggles}, have been explored by Einstein and Propp~\cite{einpropp} and by Grinberg and Roby~\cite{grinberg-roby}, who have taken the concept further and analyzed birational toggling also.  In this section, we define toggles on the chain polytope $\calc(P)$.  We prove that almost all of the algebraic properties from Section~\ref{sec:combinatorial} relating toggles on $\cala(P)$ to those on $\calj(P)$ also hold for the piecewise-linear setting $\calc(P)$ and $OR(P)$.

\begin{prop}\label{prop:heavy machinery} Let $f\in OR(P)$, $e\in P$,
$L=\max\limits_{y\gtrdot e} f(y)$, and $R=\min\limits_{y\lessdot e} f(y)$.
Let $h: \hat{P}\ra [0,1]$ be defined by
$$h(x) =
\left\{\begin{array}{ll}
f(x) &\text{if }x\not=e\\
L+R - f(e)
&\text{if }x=e\end{array}\right..
$$
\begin{enumerate}
\item If $f\in OR(P)$, then $h\in OR(P)$.
\item If $f\in \calj(P)$, then $h=t_e(f)$.
\end{enumerate}
\end{prop}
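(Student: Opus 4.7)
The plan is to work directly from the definition, using the order-reversing property of $f$ to pin down the relative sizes of $L$, $R$, and $f(e)$.

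First I would observe that since $f$ is order-reversing on $\hat{P}$, every $y\gtrdot e$ satisfies $f(y)\leq f(e)$, so $L\leq f(e)$, and every $y\lessdot e$ satisfies $f(y)\geq f(e)$, so $R\geq f(e)$. Thus $L\leq f(e)\leq R$, and by the symmetric formula $h(e)=L+R-f(e)$, the same double inequality $L\leq h(e)\leq R$ holds. In particular, since $L\geq 0$ and $R\leq 1$ (they are values of $f$, which lies in $[0,1]^{\hat{P}}$), we already have $h(e)\in[0,1]$, so $h\in[0,1]^{\hat{P}}$.

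For part~(1), I would note that $h$ and $f$ agree away from $e$, so the order-reversing condition need only be verified for comparisons involving $e$. Since all comparisons are generated by cover relations and transitivity, and away from $e$ nothing changes, it suffices to check covers at $e$: for $y\lessdot e$ in $\hat P$ we have $h(y)=f(y)\geq R\geq h(e)$, and for $y\gtrdot e$ we have $h(y)=f(y)\leq L\leq h(e)$. This gives $h\in OR(P)$.

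For part~(2), I would casework according to Proposition~\ref{prop:wild nile ride} applied to the order ideal $I$ with indicator $f$. If $e$ is a maximal element of $I$, then $f(e)=1$ and $f(y)=0$ for every $y\gtrdot e$, so $L=0$; order-reversibility plus $f(e)=1$ forces $f(y)=1$ for every $y\lessdot e$, so $R=1$; thus $h(e)=0+1-1=0$, matching $t_e(f)(e)=0$. Symmetrically, if $e$ is a minimal element of $P\sm I$ then $f(e)=0$, $R=1$, $L=0$, and $h(e)=1=t_e(f)(e)$. Otherwise $t_e(f)=f$, and one checks that either $f(e)=1$ with some $y\gtrdot e$ having $f(y)=1$ (so $L=1$, forcing $R=1$ as well by order-reversing, giving $h(e)=1=f(e)$) or $f(e)=0$ with some $y\lessdot e$ having $f(y)=0$ (so $R=0$, forcing $L=0$, giving $h(e)=0=f(e)$). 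Since $h$ agrees with $f$ off of $e$ in all cases, $h=t_e(f)$.

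The only mild obstacle is making sure one does not implicitly assume more than order-reversibility; the key trick is that the formula $h(e)=L+R-f(e)$ is designed precisely so that the interval $[L,R]$ is preserved under the reflection $x\mapsto L+R-x$, which automatically delivers both the order-reversing property in part~(1) and the involutive flip in the extremal cases of part~(2).
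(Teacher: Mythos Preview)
Your proof is correct and follows essentially the same approach as the paper. Both arguments for part~(1) reduce to the observation that $f(e)\in[L,R]$ implies $h(e)=L+R-f(e)\in[L,R]$, and for part~(2) the paper organizes the casework by whether $L=R$ or $L\neq R$ rather than by the trichotomy of Proposition~\ref{prop:wild nile ride}, but the underlying computations are the same.
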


Recall that we can extend to the poset $\hat{P}$ when necessary.  So if $e$ is a maximal element of $P$, then $L=f\left(\hat{M}\right)=0$ and if $e$ is a minimal element of $P$, then
$R=f\left(\hat{m}\right)=1$.

\begin{proof}\hspace{-8 in}\begin{tiny}.\end{tiny}

\begin{enumerate}
\item Note that $h(x)$ and $f(x)$ can only differ if $x=e$, so $h\in OR(P)$ if and only if $h(e)\in [L,R]$.
Since $f(e)\in[L,R]$, $h(e)=L+R-f(e)\in [L,R]$. Thus, $h\in OR(P)$.

\item Since $f\in \calj(P)$, it follows that $L,R,f(e)\in\{0,1\}$ and $L\leq f(e)\leq R$.

\noindent\textbf{Case 1: } $L=R$. Then $f(e)=L=R$ so $h(e)=f(e)+f(e)-f(e)=f(e)$.  If $L=1$, then some $y\gtrdot e$ is in the order ideal $f$, so applying $t_e$ does not change $f$.  If $R=0$, then some $y\lessdot e$ is not in the order ideal $f$, so likewise applying $t_e$ does not change $f$.  So $t_e(f)=h$.

\noindent\textbf{Case 2: } $L\not=R$. Then $L=0$ and $R=1$ so all elements covered by $e$ are in $f$ and no element that covers $e$ is in $f$.  So $t_e$ changes the label of $e$ between 0 and 1.  Since $h(e)=1-f(e)=(t_e(f))(e)$, it follows that $t_e(f)=h$.

\end{enumerate}
\end{proof}

\begin{defn}
A \textbf{maximal} chain of $P$ is a chain that cannot be extended into a longer chain, i.e., a chain that starts at a minimal element, uses only cover relations, and ends at a maximal element.  
For each $e\in P$, let $\mc_e(P)$ denote the set of all maximal chains $(y_1,\dots,y_k)$ in $P$ that contain $e$ as some $y_i$.  That is,
$$\mc_e(P) = 
\left\{
(y_1,\dots,y_k)\; \left|\; \hat{m} \lessdot y_1 \lessdot y_2 \lessdot \cdots \lessdot y_k \lessdot \hat{M}, e=y_i \text{ for some }i\right.
\right\}.
$$
\end{defn}

\begin{prop}\label{prop:castle machinery} Let $g\in \calc(P)$, $e\in P$,
and let $h:P\ra [0,1]$ be defined by
$$h(x) =
\left\{\begin{array}{ll}
g(x) &\text{if }x\not=e\\
1 - \max\left\{\left.
\sum\limits_{i=1}^k g(y_i) \right| (y_1,\dots,y_k)\in\mc_e(P)
\right\}
&\text{if }x=e\end{array}\right..
$$
\begin{enumerate}
\item If $g\in \calc(P)$, then $h\in \calc(P)$.
\item If $g\in \cala(P)$, then $h=\tau_e(g)$.
\end{enumerate}
\end{prop}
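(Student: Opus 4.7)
The plan is to reduce both parts of the proposition to a clean analysis of the single quantity
$$M(g,e) := \max\left\{\sum_{i=1}^k g(y_i) \;:\; (y_1,\dots,y_k) \in \mc_e(P)\right\},$$
in terms of which $h(e) = 1 - M(g,e)$ while $h$ agrees with $g$ on $P \setminus \{e\}$.

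For part~(1), I will first verify that $h(e) \in [0,1]$: every maximal chain through $e$ satisfies $g(e) \leq \sum g(y_i) \leq 1$ (the upper bound from $g \in \calc(P)$, the lower from nonnegativity of the other summands), so $g(e) \leq M(g,e) \leq 1$ and $h(e) \in [0,\, 1-g(e)]$. To verify the chain-sum inequality $\sum_j h(x_j) \leq 1$ for a chain $C = (x_1 < \cdots < x_n)$ of $P$, I split on whether $e \in C$. The case $e \notin C$ is immediate since $h$ agrees with $g$ on $C$. For $e \in C$, the strategy is to extend $C$ to some maximal chain $C' \in \mc_e(P)$; by nonnegativity of $g$,
$$\sum_{j:\, x_j \neq e} g(x_j) \;\leq\; \sum_{y_i \in C',\, y_i \neq e} g(y_i) \;=\; \left(\sum_i g(y_i)\right) - g(e) \;\leq\; M(g,e) - g(e),$$
so that $\sum_j h(x_j) \leq (1 - M(g,e)) + (M(g,e) - g(e)) = 1 - g(e) \leq 1$.

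For part~(2), when $g \in \cala(P)$ is a $\{0,1\}$-antichain, I will case-split along the three clauses of Definition~\ref{def:comb-tau}. If $e \in g$, every maximal chain through $e$ contributes exactly $1$ (namely the term $g(e)=1$, with no other element of $g$ on such a chain by the antichain property), so $M(g,e) = 1$ and $h(e) = 0$. If $e \notin g$ and $g \cup \{e\} \in \cala(P)$, no element of $g$ is comparable to $e$, hence none lies on any chain through $e$, giving $M(g,e) = 0$ and $h(e) = 1$. Otherwise some $z \in g$ is comparable to $e$; a maximal chain through the pair $\{e,z\}$ witnesses $M(g,e) \geq 1$, with equality forced by the antichain property, so $h(e) = 0 = g(e)$. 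Each case matches $\tau_e(g)(e)$ as specified in Definition~\ref{def:comb-tau}.

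The main point requiring care is the chain-extension argument in part~(1): the identity $h(e) + (M(g,e) - g(e)) = 1 - g(e)$ closes the inequality precisely because $h(e) = 1 - M(g,e)$ is designed to absorb the worst-case contribution of all non-$e$ labels along any maximal chain through $e$. Everything else is a routine verification once $M(g,e)$ is isolated as the governing statistic; indeed, the analogy with Proposition~\ref{prop:heavy machinery} (where the order-polytope toggle reflects $f(e)$ around the midpoint of the interval allowed by its neighbors) tells us this is the ``correct'' chain-polytope toggle formula.
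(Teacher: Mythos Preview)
Your argument is correct and follows essentially the same approach as the paper's proof: for part~(1) you verify $h(e)\in[0,1]$ and then check the chain-sum condition by reducing to maximal chains through $e$ (the paper does this implicitly, checking only $(y_1,\dots,y_k)\in\mc_e(P)$ and using nonnegativity), obtaining the same bound $\sum_j h(x_j)\le 1-g(e)$; for part~(2) your three-case split along Definition~\ref{def:comb-tau} is just a slight reorganization of the paper's two cases (``no comparable element of $g$'' versus ``some comparable element of $g$''), with the same computation of $M(g,e)\in\{0,1\}$ in each case.
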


\begin{proof}\hspace{-8 in}\begin{tiny}.\end{tiny}
\begin{enumerate}
\item Let $g\in\calc(P)$.  Since $h(x)=g(x)$ unless $x=e$, we only need to confirm that $h(e)\geq 0$ and that
$\sum\limits_{i=1}^k h(y_i)\leq 1$  for all
$(y_1,\dots,y_k)\in\mc_e(P)$.  Since $\sum\limits_{i=1}^k g(y_i)\leq 1$ for all chains containing $e$, $h(e)\geq 0$.  Also for any $(y_1,\dots,y_k)\in\mc_e(P)$,
\begin{align*}
\sum\limits_{i=1}^k h(y_i) &= h(e)-g(e) + \sum\limits_{i=1}^k g(y_i)
\\&=
1-\max\left\{\left.
\sum\limits_{i=1}^\ell g(z_i) \right| (z_1,\dots,z_\ell)\in\mc_e(P)
\right\} - g(e)+\sum\limits_{i=1}^k g(y_i)
\\&\leq
1-g(e)
\\&\leq 1.
\end{align*}
So $h\in\calc(P)$.

\item Let $g\in\cala(P)$.  If no $y$ that is comparable with $e$ (including $e$ itself) is in $g$, then
$\max\left\{\left.
\sum\limits_{i=1}^k g(y_i) \right| (y_1,\dots,y_k)\in\mc_e(P)
\right\}=0$.  In this case, $h(e)=1=(\tau_e(g))(e)$.
Otherwise, some $y$ comparable with $e$ (possibly $e$ itself) is in the antichain $g$.  Then $e$ is not in $\tau_e(g)$, either by removing $e$ from $g$ or by the inability to insert $e$ into $g$.  In this case, $\max\left\{\left.
\sum\limits_{i=1}^k g(y_i) \right| (y_1,\dots,y_k)\in\mc_e(P)
\right\}=1$, so $h(e)=0=(\tau_e(g))(e)$.
\end{enumerate}
\end{proof}

As we have just shown, we can extend our earlier definitions of $t_e: \calj(P) \ra \calj(P)$ and $\tau_e: \cala(P) \ra \cala(P)$ to $t_e: OR(P) \ra OR(P)$ and $\tau_e: \calc(P) \ra \calc(P)$ below in Definitions~\ref{def:cpl-t} and~\ref{def:cpl-tau}.
While $t_e: OR(P) \ra OR(P)$ and $\tau_e: \calc(P) \ra \calc(P)$ are now continuous and piecewise-linear functions, they correspond exactly to the earlier definitions when restricted to $\calj(P)$ and $\cala(P)$. So it is not ambiguous to use the same notation for the combinatorial and piecewise-linear toggles.

\begin{defn}[\cite{einpropp}]\label{def:cpl-t}
Given $f\in OR(P)$ and $e\in P$, let
$t_e(f):\hat{P}\ra [0,1]$ be defined by
$$(t_e(f))(x) =
\left\{\begin{array}{ll}
f(x) &\text{if }x\not=e\\
\max\limits_{y\gtrdot e} f(y) + 
\min\limits_{y\lessdot e} f(y) - f(e)
&\text{if }x=e\end{array}\right..
$$
This defines a map $t_e: OR(P)\ra OR(P)$ because of Proposition~\ref{prop:heavy machinery}. The group $\tog_{OR}(P)$ generated by $\{t_e\;|\; e\in P\}$ is called the \textbf{toggle group} of $OR(P)$.
(Each $t_e$ is an involution and thus invertible as we will prove in Proposition~\ref{prop:cpl-inv-commute}(1), so we do obtain a group.)
\end{defn}

\begin{ex}\label{ex:cpl-3x3}
For the poset $P$ with elements named as on the left, we consider $f\in OR(P)$.  The dashed lines indicate $f\left(\hat{m}\right)$ and $f\left(\hat{M}\right)$ and their position within $\hat{P}$.  Then
$(t_E(f))(E)=\max(0.1,0.1) + \min(0.5,0.7) - 0.4 =0.2$
and
$(t_A(f))(A)=\max(0.5,0.7) + 1 - 0.9 =0.8$.
\begin{center}
\begin{tikzpicture}[scale=0.567]
\begin{scope}
\draw[thick] (-0.25, 1.75) -- (-0.75, 1.25);
\draw[thick] (0.25, 1.75) -- (0.75, 1.25);
\draw[thick] (-1.25, 0.75) -- (-1.75, 0.25);
\draw[thick] (-0.75, 0.75) -- (-0.25, 0.25);
\draw[thick] (0.75, 0.75) -- (0.25, 0.25);
\draw[thick] (1.25, 0.75) -- (1.75, 0.25);
\draw[thick] (-0.25, -1.75) -- (-0.75, -1.25);
\draw[thick] (0.25, -1.75) -- (0.75, -1.25);
\draw[thick] (-1.25, -0.75) -- (-1.75, -0.25);
\draw[thick] (-0.75, -0.75) -- (-0.25, -0.25);
\draw[thick] (0.75, -0.75) -- (0.25, -0.25);
\draw[thick] (1.25, -0.75) -- (1.75, -0.25);
\node at (0,2) {\footnotesize{$I$}};
\node at (-1,1) {\footnotesize{$G$}};
\node at (1,1) {\footnotesize{$H$}};
\node at (-2,0) {\footnotesize{$D$}};
\node at (0,0) {\footnotesize{$E$}};
\node at (2,0) {\footnotesize{$F$}};
\node at (-1,-1) {\footnotesize{$B$}};
\node at (1,-1) {\footnotesize{$C$}};
\node at (0,-2) {\footnotesize{$A$}};
\draw[semithick] (2.75,-4.2) -- (2.75,4.2);
\draw[semithick] (15.25,-4.2) -- (15.25,4.2);
\end{scope}
\begin{scope}[shift={(5.5,0)}]
\draw[thick] (-0.25, 1.75) -- (-0.75, 1.25);
\draw[thick] (0.25, 1.75) -- (0.75, 1.25);
\draw[thick] (-1.25, 0.75) -- (-1.75, 0.25);
\draw[thick] (-0.75, 0.75) -- (-0.25, 0.25);
\draw[thick] (0.75, 0.75) -- (0.25, 0.25);
\draw[thick] (1.25, 0.75) -- (1.75, 0.25);
\draw[thick] (-0.25, -1.75) -- (-0.75, -1.25);
\draw[thick] (0.25, -1.75) -- (0.75, -1.25);
\draw[thick] (-1.25, -0.75) -- (-1.75, -0.25);
\draw[thick] (-0.75, -0.75) -- (-0.25, -0.25);
\draw[thick] (0.75, -0.75) -- (0.25, -0.25);
\draw[thick] (1.25, -0.75) -- (1.75, -0.25);
\draw[dashed] (0,2.25) -- (0,3.35);
\draw[dashed] (0,-2.25) -- (0,-3.35);
\node at (0,2) {\footnotesize{0}};
\node at (-1,1) {\footnotesize{0.1}};
\node at (1,1) {\footnotesize{0.1}};
\node at (-2,0) {\footnotesize{0.5}};
\node at (0,0) {\footnotesize{{\color{red} 0.4}}};
\node at (2,0) {\footnotesize{0.7}};
\node at (-1,-1) {\footnotesize{0.5}};
\node at (1,-1) {\footnotesize{0.7}};
\node at (0,-2) {\footnotesize{0.9}};
\node at (0,3.6) {\footnotesize{0}};
\node at (0,-3.6) {\footnotesize{1}};
\node at (3.5,0) {$\stackrel{t_E}{\longmapsto}$};
\end{scope}
\begin{scope}[shift={(12.5,0)}]
\draw[thick] (-0.25, 1.75) -- (-0.75, 1.25);
\draw[thick] (0.25, 1.75) -- (0.75, 1.25);
\draw[thick] (-1.25, 0.75) -- (-1.75, 0.25);
\draw[thick] (-0.75, 0.75) -- (-0.25, 0.25);
\draw[thick] (0.75, 0.75) -- (0.25, 0.25);
\draw[thick] (1.25, 0.75) -- (1.75, 0.25);
\draw[thick] (-0.25, -1.75) -- (-0.75, -1.25);
\draw[thick] (0.25, -1.75) -- (0.75, -1.25);
\draw[thick] (-1.25, -0.75) -- (-1.75, -0.25);
\draw[thick] (-0.75, -0.75) -- (-0.25, -0.25);
\draw[thick] (0.75, -0.75) -- (0.25, -0.25);
\draw[thick] (1.25, -0.75) -- (1.75, -0.25);
\draw[dashed] (0,2.25) -- (0,3.35);
\draw[dashed] (0,-2.25) -- (0,-3.35);
\node at (0,2) {\footnotesize{0}};
\node at (-1,1) {\footnotesize{0.1}};
\node at (1,1) {\footnotesize{0.1}};
\node at (-2,0) {\footnotesize{0.5}};
\node at (0,0) {\footnotesize{{\color{red} 0.2}}};
\node at (2,0) {\footnotesize{0.7}};
\node at (-1,-1) {\footnotesize{0.5}};
\node at (1,-1) {\footnotesize{0.7}};
\node at (0,-2) {\footnotesize{0.9}};
\node at (0,3.6) {\footnotesize{0}};
\node at (0,-3.6) {\footnotesize{1}};
\end{scope}
\begin{scope}[shift={(18,0)}]
\draw[thick] (-0.25, 1.75) -- (-0.75, 1.25);
\draw[thick] (0.25, 1.75) -- (0.75, 1.25);
\draw[thick] (-1.25, 0.75) -- (-1.75, 0.25);
\draw[thick] (-0.75, 0.75) -- (-0.25, 0.25);
\draw[thick] (0.75, 0.75) -- (0.25, 0.25);
\draw[thick] (1.25, 0.75) -- (1.75, 0.25);
\draw[thick] (-0.25, -1.75) -- (-0.75, -1.25);
\draw[thick] (0.25, -1.75) -- (0.75, -1.25);
\draw[thick] (-1.25, -0.75) -- (-1.75, -0.25);
\draw[thick] (-0.75, -0.75) -- (-0.25, -0.25);
\draw[thick] (0.75, -0.75) -- (0.25, -0.25);
\draw[thick] (1.25, -0.75) -- (1.75, -0.25);
\draw[dashed] (0,2.25) -- (0,3.35);
\draw[dashed] (0,-2.25) -- (0,-3.35);
\node at (0,2) {\footnotesize{0}};
\node at (-1,1) {\footnotesize{0.1}};
\node at (1,1) {\footnotesize{0.1}};
\node at (-2,0) {\footnotesize{0.5}};
\node at (0,0) {\footnotesize{0.4}};
\node at (2,0) {\footnotesize{0.7}};
\node at (-1,-1) {\footnotesize{0.5}};
\node at (1,-1) {\footnotesize{0.7}};
\node at (0,-2) {\footnotesize{{\color{red} 0.9}}};
\node at (0,3.6) {\footnotesize{0}};
\node at (0,-3.6) {\footnotesize{1}};
\node at (3.5,0) {$\stackrel{t_A}{\longmapsto}$};
\end{scope}
\begin{scope}[shift={(25,0)}]
\draw[thick] (-0.25, 1.75) -- (-0.75, 1.25);
\draw[thick] (0.25, 1.75) -- (0.75, 1.25);
\draw[thick] (-1.25, 0.75) -- (-1.75, 0.25);
\draw[thick] (-0.75, 0.75) -- (-0.25, 0.25);
\draw[thick] (0.75, 0.75) -- (0.25, 0.25);
\draw[thick] (1.25, 0.75) -- (1.75, 0.25);
\draw[thick] (-0.25, -1.75) -- (-0.75, -1.25);
\draw[thick] (0.25, -1.75) -- (0.75, -1.25);
\draw[thick] (-1.25, -0.75) -- (-1.75, -0.25);
\draw[thick] (-0.75, -0.75) -- (-0.25, -0.25);
\draw[thick] (0.75, -0.75) -- (0.25, -0.25);
\draw[thick] (1.25, -0.75) -- (1.75, -0.25);
\draw[dashed] (0,2.25) -- (0,3.35);
\draw[dashed] (0,-2.25) -- (0,-3.35);
\node at (0,2) {\footnotesize{0}};
\node at (-1,1) {\footnotesize{0.1}};
\node at (1,1) {\footnotesize{0.1}};
\node at (-2,0) {\footnotesize{0.5}};
\node at (0,0) {\footnotesize{0.4}};
\node at (2,0) {\footnotesize{0.7}};
\node at (-1,-1) {\footnotesize{0.5}};
\node at (1,-1) {\footnotesize{0.7}};
\node at (0,-2) {\footnotesize{{\color{red} 0.8}}};
\node at (0,3.6) {\footnotesize{0}};
\node at (0,-3.6) {\footnotesize{1}};
\end{scope}
\end{tikzpicture}
\end{center}
\end{ex}

We do not define toggles for $\hat{m}$ and $\hat{M}$; the values $f\left(\hat{m}\right)$ and $f\left(\hat{M}\right)$ are fixed across all of $OR(P)$.  We could generalize $OR(P)$ and $OP(P)$ and consider the order polytopes of $[a,b]$-labelings where the values $f\left(\hat{m}\right)$ and $f\left(\hat{M}\right)$ are set to any $a<b$.  However these polytopes are just linear rescalings of $OR(P)$ and $OP(P)$.  Furthermore, Einstein and Propp have extended these toggles from acting on order polytopes to acting on $\rr^{\hat{P}}$~\cite{einpropp}.

\begin{defn}\label{def:cpl-tau}
Given $g\in\calc(P)$ and $e\in P$, let $\tau_e(g):P\ra[0,1]$ be defined by
$$(\tau_e(g))(x) =
\left\{\begin{array}{ll}
g(x) &\text{if }x\not=e\\
1 - \max\left\{\left.
\sum\limits_{i=1}^k g(y_i) \right| (y_1,\dots,y_k)\in\mc_e(P)
\right\}
&\text{if }x=e\end{array}\right..
$$
This defines a map $\tau_e:\calc(P)\ra\calc(P)$ because of Proposition~\ref{prop:castle machinery}. The group $\tog_\calc(P)$ generated by $\{\tau_e\;|\; e\in P\}$ is called the \textbf{toggle group} of $\calc(P)$.
(Each $\tau_e$ is an involution and thus invertible as we will prove in Proposition~\ref{prop:cpl-inv-commute}(1), so we do obtain a group.)
\end{defn}

Every chain in $\mc_e(P)$ can be split into segments below $e$, $e$ itself, and above $e$, and we can take the maximum sum of $g$ on each part.  So an equivalent formula for $(\tau_e(g))(e)$ is
\begin{equation}\label{eq:alt_tau_e}
(\tau_e(g))(e) = 1-\max\limits_{y\lessdot e} (\opb(g))(y) - g(e) - \max\limits_{y\gtrdot e} (\orb(g))(y).
\end{equation}
In Eq.~(\ref{eq:alt_tau_e}), note that we regard $\opb(g)$ as an order-preserving labeling,
so $(\opb(g))(\hat{m})=0$.  Similarly, we regard $\orb(g)$ as
an order-reversing labeling, so $(\orb(g))\left(\hat{M}\right)=0$.

Also, note that since any $g\in\calc(P)$ has nonnegative labels, it would be equivalent in the definition of $\tau_e$ to use the set of all chains of $P$ through $e$, instead of the set $\mc_e(P)$ of maximal chains through $e$.  We will use the definition with maximal chains for various reasons.  For one, it gives us far fewer chains to worry about in computations, as in the following example.

\begin{ex}
For the poset $P$ with elements named as on the left, we consider $g\in \calc(P)$.  Then summing the outputs of $g$ along the maximal chains
through $F$, we get the following.

\begin{align*}
g(A) + g(C) + g(F) + g(G) + g(I) &=
0.2+0+0.1+0.1+0.1 &= 0.5\\
g(A) + g(C) + g(F) + g(H) + g(J) &=
0.2+0+0.1+0.2+0.1 &= 0.6\\
g(A) + g(C) + g(F) + g(H) + g(K) &= 
0.2+0+0.1+0.2+0 &= 0.5\\
g(B) + g(F) + g(G) + g(I) &= 
0.3+0.1+0.1+0.1 &= 0.6\\
g(B) + g(F) + g(H) + g(J) &= 
0.3+0.1+0.2+0.1 &= 0.7\\
g(B) + g(F) + g(H) + g(K) &= 
0.3+0.1+0.2+0 &= 0.6
\end{align*}

So $\tau_F$ changes the output value of $F$ to
$1-\max(0.5,0.6,0.5,0.6,0.7,0.6)=1-0.7=0.3$.
\begin{center}
\begin{tikzpicture}[scale=0.82]
\begin{scope}
\draw[thick] (0,0.3) -- (0,1.2);
\draw[thick] (-0.2,0.2) -- (-1.8,2.8);
\draw[thick] (-0.2,1.7) -- (-0.8,2.3);
\draw[thick] (0.2,1.7) -- (0.8,2.3);
\draw[thick] (-0.2,3.3) -- (-0.8,2.7);
\draw[thick] (0.2,3.3) -- (0.8,2.7);
\draw[thick] (1.8,0.2) -- (1.2,2.3);
\draw[thick] (0,3.8) -- (0,4.7);
\draw[thick] (-1.8,3.2) -- (-0.2,4.8);
\draw[thick] (1.8,3.3) -- (1.2,2.7);
\draw[thick] (2,3.8) -- (2,4.7);
\draw[thick] (2.2,3.7) -- (3.05,4.8);
\node at (0,0) {\small{$A$}};
\node at (2,0) {\small{$B$}};
\node at (0,1.5) {\small{$C$}};
\node at (-2,3) {\small{$D$}};
\node at (-1,2.5) {\small{$E$}};
\node at (1,2.5) {\small{$F$}};
\node at (0,3.5) {\small{$G$}};
\node at (2,3.5) {\small{$H$}};
\node at (0,5) {\small{$I$}};
\node at (2,5) {\small{$J$}};
\node at (3.25,5) {\small{$K$}};
\end{scope}
\begin{scope}[shift={(7,0)}]
\draw[thick] (0,0.3) -- (0,1.2);
\draw[thick] (-0.2,0.2) -- (-1.8,2.8);
\draw[thick] (-0.2,1.7) -- (-0.8,2.3);
\draw[thick] (0.2,1.7) -- (0.8,2.3);
\draw[thick] (-0.2,3.3) -- (-0.8,2.7);
\draw[thick] (0.2,3.3) -- (0.8,2.7);
\draw[thick] (1.8,0.2) -- (1.2,2.3);
\draw[thick] (0,3.8) -- (0,4.7);
\draw[thick] (-1.8,3.2) -- (-0.2,4.8);
\draw[thick] (1.8,3.3) -- (1.2,2.7);
\draw[thick] (2,3.8) -- (2,4.7);
\draw[thick] (2.2,3.7) -- (3.05,4.8);
\node at (0,0) {\small{0.2}};
\node at (2,0) {\small{0.3}};
\node at (0,1.5) {\small{0}};
\node at (-2,3) {\small{0.6}};
\node at (-1,2.5) {\small{0.4}};
\node at (1,2.5) {\small{{\color{red}0.1}}};
\node at (0,3.5) {\small{0.1}};
\node at (2,3.5) {\small{0.2}};
\node at (0,5) {\small{0.1}};
\node at (2,5) {\small{0.1}};
\node at (3.25,5) {\small{0}};
\node at (3.5,2.5) {\large{$\stackrel{t_F}{\longmapsto}$}};
\end{scope}
\begin{scope}[shift={(14,0)}]
\draw[thick] (0,0.3) -- (0,1.2);
\draw[thick] (-0.2,0.2) -- (-1.8,2.8);
\draw[thick] (-0.2,1.7) -- (-0.8,2.3);
\draw[thick] (0.2,1.7) -- (0.8,2.3);
\draw[thick] (-0.2,3.3) -- (-0.8,2.7);
\draw[thick] (0.2,3.3) -- (0.8,2.7);
\draw[thick] (1.8,0.2) -- (1.2,2.3);
\draw[thick] (0,3.8) -- (0,4.7);
\draw[thick] (-1.8,3.2) -- (-0.2,4.8);
\draw[thick] (1.8,3.3) -- (1.2,2.7);
\draw[thick] (2,3.8) -- (2,4.7);
\draw[thick] (2.2,3.7) -- (3.05,4.8);
\node at (0,0) {\small{0.2}};
\node at (2,0) {\small{0.3}};
\node at (0,1.5) {\small{0}};
\node at (-2,3) {\small{0.6}};
\node at (-1,2.5) {\small{0.4}};
\node at (1,2.5) {\small{{\color{red}0.3}}};
\node at (0,3.5) {\small{0.1}};
\node at (2,3.5) {\small{0.2}};
\node at (0,5) {\small{0.1}};
\node at (2,5) {\small{0.1}};
\node at (3.25,5) {\small{0}};
\end{scope}
\end{tikzpicture}
\end{center}
\end{ex}

We now show that most of the algebraic properties of $t_e$ and $\tau_e$ we proved for the combinatorial setting extend to the piecewise-linear setting.

\begin{prop}\label{prop:cpl-inv-commute}\hspace{-8 in}.
\begin{enumerate}
\item For $x\in P$, $t_x$ and $\tau_x$ are involutions.
\item Two toggles $t_x,t_y$ commute if and only if neither $x$ nor $y$ covers the other.
\item Two toggles $\tau_x,\tau_y$ commute if and only if $x=y$ or $x\parallel y$.
\end{enumerate}
\end{prop}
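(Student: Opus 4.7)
The plan is to prove each of the three parts by exploiting the one-coordinate-at-a-time structure of the piecewise-linear toggles, and then to reduce the ``only if'' directions of parts (2) and (3) to their combinatorial analogues already established in the paper.

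For part (1), I would argue as follows. For $t_x$ acting on $f \in OR(P)$, only $f(x)$ is changed, so the quantities $L = \max_{y \gtrdot x} f(y)$ and $R = \min_{y \lessdot x} f(y)$ are invariant under applying $t_x$; two applications then send $f(x) \mapsto L + R - f(x) \mapsto L + R - (L + R - f(x)) = f(x)$. For $\tau_x$ acting on $g \in \calc(P)$, the crucial structural observation is that every chain in $\mc_x(P)$ splits as a portion strictly below $x$, together with $x$ itself, together with a portion strictly above $x$; the maxima of $\sum g$ over the two end-portions can be taken independently of the middle value. Writing $M$ for the max over $\mc_x(P)$, we get $M = g(x) + M_{\mathrm{below}} + M_{\mathrm{above}}$ with $M_{\mathrm{below}}$ and $M_{\mathrm{above}}$ independent of $g(x)$. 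Since $M_{\mathrm{below}}$ and $M_{\mathrm{above}}$ are also unaffected by $\tau_x$ (which only touches $g(x)$), a telescoping computation analogous to the one for $t_x$ returns $g(x)$ after two applications.

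For the ``if'' direction of part (2), I would note that when neither $x$ nor $y$ covers the other, the set of elements covering or covered by $x$ excludes $y$, and vice versa. So $t_y$ cannot change any value on which the formula for $t_x$ depends, and the two maps act independently on disjoint coordinates. The ``only if'' direction is handled by restriction: Proposition~\ref{prop:heavy machinery}(2) shows that the piecewise-linear toggles extend the combinatorial ones on $\calj(P)$, so any failure of commutation on $\calj(P)$---which occurs exactly in a cover situation by Proposition~\ref{prop:J-toggle-inv-commute}---lifts to a failure in $OR(P)$.

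Part (3) proceeds similarly. When $x \parallel y$, no chain of $P$ contains both, so in particular no maximal chain does; hence neither of $\tau_x, \tau_y$ can see the coordinate changed by the other, and they commute. When $x = y$, commutativity is immediate from part (1). For the converse, if $x$ and $y$ are comparable and distinct, Proposition~\ref{prop:tau-commute} supplies a counterexample in $\cala(P) \subseteq \calc(P)$, which lifts via Proposition~\ref{prop:castle machinery}(2). I do not anticipate a genuine obstacle: the entire argument is essentially definitional bookkeeping. The one point requiring care is the additive decomposition of the max over $\mc_x(P)$ used in part (1) for $\tau_x$, as it underlies both the involution computation and the commutativity analysis for antichain toggles.
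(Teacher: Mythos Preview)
Your proposal is correct and follows essentially the same approach as the paper: the involution checks use the same telescoping computations (your decomposition $M = g(x) + M_{\mathrm{below}} + M_{\mathrm{above}}$ is exactly the paper's Eq.~(\ref{eq:alt_tau_e})), and the commutativity arguments likewise combine the ``independent coordinates'' observation for the ``if'' direction with restriction to $\calj(P)$ and $\cala(P)$ for the ``only if'' direction. There is nothing substantively different to compare.
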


\begin{proof}\hspace{-8 in}.
\begin{enumerate}
\item We start with $t_x$.  Let $f\in OR(P)$.  Then
$t_x$ does not change the label for any vertex other than $x$, and
\begin{align*}
\left(t_x^2(f)\right)(x)&=
\max\limits_{y\gtrdot x} (t_x(f))(y) + 
\min\limits_{y\lessdot x} (t_x(f))(y) - (t_x(f))(x)
\\&=
\max\limits_{y\gtrdot x} f(y) + 
\min\limits_{y\lessdot x} f(y) - 
\left(\max\limits_{y\gtrdot x} f(y) + 
\min\limits_{y\lessdot x} f(y) - f(x)
\right)
\\&= f(x)
\end{align*}
so $t_x^2(f)=f$. Now we consider $\tau_x$.  Let $g\in\calc(P)$ and $h=\tau_x(g)$.  Again, $\tau_x$ does not change the label for any vertex other than $x$ so it suffices to show that $(\tau_x(h))(x)=g(x)$. By Eq.~(\ref{eq:alt_tau_e}),
\begin{align*}
(\tau_x(h))(x)
&= 1-\max\limits_{y\lessdot x} (\opb(h))(y) - h(x) - \max\limits_{y\gtrdot x} (\orb(h))(y)
\\&=
1-\max\limits_{y\lessdot x} (\opb(g))(y)-\\& 
\left(1-\max\limits_{y\lessdot x} (\opb(g))(y) - g(x) - \max\limits_{y\gtrdot x} (\orb(g))(y)\right)
- \max\limits_{y\gtrdot x} (\orb(g))(y)
\\&=g(x).
\end{align*}
Note we are able to replace $h$ with $g$ in the second equality above for expressions that do not include inputting $x$ into $g$ or $h$ (the only input where $g$ and $h$ can differ).

\item If $x\lessdot y$ or $y\lessdot x$, then $t_x t_y\not=t_y t_x$ when restricted to $\calj(P)$ by Proposition~\ref{prop:J-toggle-inv-commute}, so they are also unequal over the larger set $OR(P)$.
If $x=y$, then $t_xt_y=t_xt_x=t_yt_x$.
Now suppose neither $x$ nor $y$ covers the other and $x\not=y$.
Only the label of $x$ can be changed by $t_x$ and only the label of $y$ can be changed by $t_y$.
For $f\in OR(P)$, the definition of $(t_x(f))(x)$ only involves elements that cover $x$, are covered by $x$, and $x$ itself.
This is similar for $y$ in $(t_y(f))(y)$. Thus, the label of $x$ has no effect on what $t_y$ does and the label of $y$ has no effect on what $t_x$ does.  So $t_x t_y=t_y t_x$.

\item If $x=y$, then $\tau_x\tau_y=\tau_x\tau_x=\tau_y\tau_x$.
If $x$ and $y$ are comparable and unequal, then $\tau_x\tau_y\not=\tau_y\tau_x$ when restricted to $\cala(P)$ by Proposition~\ref{prop:tau-commute}, so they are also unequal over the larger set $\calc(P)$.
Now suppose $x\parallel y$.
Only the label of $x$ can be changed by $\tau_x$ and only the label of $y$ can be changed by $\tau_y$.
No chain contains both $x$ and $y$, so the label of $x$ has no effect on what $\tau_y$ does and the label of $y$ has no effect on what $\tau_x$ does.  Thus, $\tau_x\tau_y=\tau_y\tau_x$.
\end{enumerate}
\end{proof}

\begin{defn}
For $e\in P$ and $S\subseteq P$, we define $t_e^*\in \tog_\calc(P)$, $\eta_S\in \tog_{OR}(P)$,\\$\eta_e\in \tog_{OR}(P)$, and $\tau_e^*\in \tog_{OR}(P)$ exactly
as we defined them in $\tog_\cala(P)$ and $\tog_\calj(P)$:
\begin{itemize}
\item Let $t_e^* := \tau_{e_1} \tau_{e_2}\cdots \tau_{e_k} \tau_e \tau_{e_1} \tau_{e_2}\cdots \tau_{e_k}$ where $e_1,\dots,e_k$ are the elements of $P$ covered by $e$.
\item Let $\eta_S:=t_{x_1}t_{x_2}\cdots t_{x_k}$
where $(x_1,x_2,\dots,x_k)$ is a linear extension of the subposet $\{x\in P \;|\; x<y,y\in S\}$ of $P$.
\item Let $\eta_e:=\eta_{\{e\}}$.
\item Let $\tau_e^* := \eta_e t_e \eta_e^{-1}$.
\end{itemize}
\end{defn}

The following is an analogue of Theorems~\ref{thm:t-star} and~\ref{thm:tau-star}.

\begin{thm}\label{thm:iso-cpl}
For any $e\in P$, the following diagrams commute.
So there is an isomorphism from $\tog_\calc(P)$ to $\tog_{OR}(P)$ given by $\tau_e\mapsto \tau_e^*$, and inverse $t_e\mapsto t_e^*$.
\begin{center}
\phantom{1}
\hfill
\begin{tikzpicture}
\node at (0,1.8) {$\calc(P)$};
\node at (0,0) {$OR(P)$};
\node at (3.25,1.8) {$\calc(P)$};
\node at (3.25,0) {$OR(P)$};
\draw[semithick, ->] (0,1.3) -- (0,0.5);
\node[left] at (0,0.9) {$\orb$};
\draw[semithick, ->] (0.7,0) -- (2.5,0);
\node[below] at (1.5,0) {$t_e$};
\draw[semithick, ->] (0.7,1.8) -- (2.5,1.8);
\node[above] at (1.5,1.8) {$t_e^*$};
\draw[semithick, ->] (3.25,1.3) -- (3.25,0.5);
\node[right] at (3.25,0.9) {$\orb$};
\end{tikzpicture}
\hfill
\begin{tikzpicture}
\node at (0,1.8) {$\calc(P)$};
\node at (0,0) {$OR(P)$};
\node at (3.25,1.8) {$\calc(P)$};
\node at (3.25,0) {$OR(P)$};
\draw[semithick, ->] (0,1.3) -- (0,0.5);
\node[left] at (0,0.9) {$\orb$};
\draw[semithick, ->] (0.7,0) -- (2.5,0);
\node[below] at (1.5,0) {$\tau_e^*$};
\draw[semithick, ->] (0.7,1.8) -- (2.5,1.8);
\node[above] at (1.5,1.8) {$\tau_e$};
\draw[semithick, ->] (3.25,1.3) -- (3.25,0.5);
\node[right] at (3.25,0.9) {$\orb$};
\end{tikzpicture}
\hfill
\phantom{1}
\end{center}
\end{thm}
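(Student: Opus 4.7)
The plan is to establish the left commutative diagram first (the piecewise-linear analogue of Theorem~\ref{thm:t-star}), then derive the right one by group-theoretic bootstrapping, and finally deduce the isomorphism.

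First I would prove $\orb(t_e^*(g)) = t_e(\orb(g))$ for $g \in \calc(P)$. Write $f = \orb(g)$, $h = t_e^*(g)$, let $e_1,\ldots,e_k$ be the elements of $P$ covered by $e$, and set $L = \max_{y \gtrdot e} f(y)$ and $R = \min_i f(e_i)$. Since $t_e^* = \tau_{e_1}\cdots\tau_{e_k}\tau_e\tau_{e_1}\cdots\tau_{e_k}$ alters only the labels at $e$ and $e_1,\ldots,e_k$, while $t_e$ alters only the label at $e$, the task is to verify (a)~$\orb(h)(e) = L + R - f(e)$ and (b)~$\orb(h)(x) = f(x)$ for every $x \ne e$. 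Tracking $g$ through the three phases of $t_e^*$ using (\ref{eq:alt_tau_e}) and the recursion (\ref{eq:recur-orb}), phase~2 sets the label at $e$ to $R - g(e) - L$ and phase~3 leaves it alone, yielding $\orb(h)(e) = h(e) + L = R - g(e) = L + R - f(e)$, giving (a). Phase~3 then updates each $e_i$'s label by a compensating amount so that $\orb(h)(e_i) = f(e_i)$, via a cancellation between $\max_{y \gtrdot e_i} \orb(h)(y)$ and the updated value $\orb(h)(e) = R - g(e)$. For any $x \ne e$ with $x \ne e_i$ for all $i$, no $y \gtrdot x$ equals $e$, so (\ref{eq:recur-orb}) together with downward induction along cover relations (using the already-established equalities at the $e_i$ and the trivial fact that $\orb$-values strictly above $e$ are untouched by $t_e^*$) yields $\orb(h)(x) = \orb(g)(x)$.

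Next, for the right diagram $\tau_e^*(\orb(g)) = \orb(\tau_e(g))$, I would transplant the proofs of Lemma~\ref{lem:tau-star} and Theorem~\ref{thm:tau-star} verbatim. As the author indicates, those proofs are phrased entirely in terms of group-theoretic identities among the $t_x$'s and $\tau_x$'s and invoke only commutativity of non-covering order ideal toggles (Proposition~\ref{prop:J-toggle-inv-commute}) together with Theorem~\ref{thm:t-star}; both now have piecewise-linear analogues in hand (Proposition~\ref{prop:cpl-inv-commute}(2) and the previous paragraph).

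For the isomorphism, conjugation by the bijection $\orb$ gives an injective homomorphism $C : \phi \mapsto \orb\circ\phi\circ\orb^{-1}$ from permutations of $\calc(P)$ to permutations of $OR(P)$. The right diagram says $C(\tau_e) = \tau_e^*$, so $C$ sends every generator of $\tog_\calc(P)$ into $\tog_{OR}(P)$; the left diagram says $C^{-1}(t_e) = t_e^*$, so every generator of $\tog_{OR}(P)$ already lies in the image. Thus $C$ restricts to an isomorphism $\tog_\calc(P) \to \tog_{OR}(P)$ with inverse $t_e \mapsto t_e^*$. The hard part will be the first step: in the combinatorial setting Theorem~\ref{thm:t-star} splits into four clean Boolean cases depending on whether $e \in I$ and whether $I \sm \{e\}$ or $I \cup \{e\}$ lies in $\calj(P)$, but those dichotomies vanish in the piecewise-linear world. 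The substitute is the direct three-phase computation above, where the delicate cancellation between the pre- and post-update quantities $\max_{y \gtrdot e_i} \orb(g)(y)$ is what forces $\orb$-values to be preserved at every vertex other than $e$.
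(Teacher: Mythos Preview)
Your plan is essentially the paper's own proof: direct three-phase computation for the left square, downward induction below $e$, then transplant Lemma~\ref{lem:tau-star} and Theorem~\ref{thm:tau-star} verbatim for the right square. Two small points to tidy: the paper treats $e$ minimal separately (your $R=\min_i f(e_i)$ is an empty minimum there, and one must read $R=f(\hat m)=1$), and the phase-3 cancellation at $e_i$ is that the term $\max_{y\gtrdot e_i}\orb(h)(y)$ appears once subtracted in the $\tau_{e_i}$ formula~(\ref{eq:alt_tau_e}) and once added in the recursion~(\ref{eq:recur-orb}), hence drops out regardless of which $y$ realizes the max---it is not a cancellation against the particular value $\orb(h)(e)$.
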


See Figure~\ref{fig:iso-cpl} for an example demonstrating the first commutative diagram in this theorem.

\begin{figure}
\begin{tikzpicture}[scale=0.455]
\draw[thick] (-0.25, 1.75) -- (-0.75, 1.25);
\draw[thick] (0.25, 1.75) -- (0.75, 1.25);
\draw[thick] (-1.25, 0.75) -- (-1.75, 0.25);
\draw[thick] (-0.75, 0.75) -- (-0.25, 0.25);
\draw[thick] (0.75, 0.75) -- (0.25, 0.25);
\draw[thick] (1.25, 0.75) -- (1.75, 0.25);
\draw[thick] (-0.25, -1.75) -- (-0.75, -1.25);
\draw[thick] (0.25, -1.75) -- (0.75, -1.25);
\draw[thick] (-1.25, -0.75) -- (-1.75, -0.25);
\draw[thick] (-0.75, -0.75) -- (-0.25, -0.25);
\draw[thick] (0.75, -0.75) -- (0.25, -0.25);
\draw[thick] (1.25, -0.75) -- (1.75, -0.25);
\node at (0,2) {\footnotesize{0}};
\node at (-1,1) {\footnotesize{0.1}};
\node at (1,1) {\footnotesize{0.1}};
\node at (-2,0) {\footnotesize{{\color{red}0.4}}};
\node at (0,0) {\footnotesize{0.3}};
\node at (2,0) {\footnotesize{0.6}};
\node at (-1,-1) {\footnotesize{0}};
\node at (1,-1) {\footnotesize{0}};
\node at (0,-2) {\footnotesize{0.2}};
\draw[thick, ->] (2.75,0) -- (3.5,0);
\node[below] at (3.125,0) {$\tau_{D}$};
\begin{scope}[shift={(6.25,0)}]
\draw[thick] (-0.25, 1.75) -- (-0.75, 1.25);
\draw[thick] (0.25, 1.75) -- (0.75, 1.25);
\draw[thick] (-1.25, 0.75) -- (-1.75, 0.25);
\draw[thick] (-0.75, 0.75) -- (-0.25, 0.25);
\draw[thick] (0.75, 0.75) -- (0.25, 0.25);
\draw[thick] (1.25, 0.75) -- (1.75, 0.25);
\draw[thick] (-0.25, -1.75) -- (-0.75, -1.25);
\draw[thick] (0.25, -1.75) -- (0.75, -1.25);
\draw[thick] (-1.25, -0.75) -- (-1.75, -0.25);
\draw[thick] (-0.75, -0.75) -- (-0.25, -0.25);
\draw[thick] (0.75, -0.75) -- (0.25, -0.25);
\draw[thick] (1.25, -0.75) -- (1.75, -0.25);
\node at (0,2) {\footnotesize{0}};
\node at (-1,1) {\footnotesize{0.1}};
\node at (1,1) {\footnotesize{0.1}};
\node at (-2,0) {\footnotesize{0.3}};
\node at (0,0) {\footnotesize{{\color{red}0.3}}};
\node at (2,0) {\footnotesize{0.6}};
\node at (-1,-1) {\footnotesize{0}};
\node at (1,-1) {\footnotesize{0}};
\node at (0,-2) {\footnotesize{0.2}};
\draw[thick, ->] (2.75,0) -- (3.5,0);
\node[below] at (3.125,0) {$\tau_{E}$};
\end{scope}
\begin{scope}[shift={(12.5,0)}]
\draw[thick] (-0.25, 1.75) -- (-0.75, 1.25);
\draw[thick] (0.25, 1.75) -- (0.75, 1.25);
\draw[thick] (-1.25, 0.75) -- (-1.75, 0.25);
\draw[thick] (-0.75, 0.75) -- (-0.25, 0.25);
\draw[thick] (0.75, 0.75) -- (0.25, 0.25);
\draw[thick] (1.25, 0.75) -- (1.75, 0.25);
\draw[thick] (-0.25, -1.75) -- (-0.75, -1.25);
\draw[thick] (0.25, -1.75) -- (0.75, -1.25);
\draw[thick] (-1.25, -0.75) -- (-1.75, -0.25);
\draw[thick] (-0.75, -0.75) -- (-0.25, -0.25);
\draw[thick] (0.75, -0.75) -- (0.25, -0.25);
\draw[thick] (1.25, -0.75) -- (1.75, -0.25);
\node at (0,2) {\footnotesize{0}};
\node at (-1,1) {\footnotesize{{\color{red}0.1}}};
\node at (1,1) {\footnotesize{0.1}};
\node at (-2,0) {\footnotesize{0.3}};
\node at (0,0) {\footnotesize{0.4}};
\node at (2,0) {\footnotesize{0.6}};
\node at (-1,-1) {\footnotesize{0}};
\node at (1,-1) {\footnotesize{0}};
\node at (0,-2) {\footnotesize{0.2}};
\draw[thick, ->] (2.75,0) -- (3.5,0);
\node[below] at (3.125,0) {$\tau_{G}$};
\end{scope}
\begin{scope}[shift={(18.75,0)}]
\draw[thick] (-0.25, 1.75) -- (-0.75, 1.25);
\draw[thick] (0.25, 1.75) -- (0.75, 1.25);
\draw[thick] (-1.25, 0.75) -- (-1.75, 0.25);
\draw[thick] (-0.75, 0.75) -- (-0.25, 0.25);
\draw[thick] (0.75, 0.75) -- (0.25, 0.25);
\draw[thick] (1.25, 0.75) -- (1.75, 0.25);
\draw[thick] (-0.25, -1.75) -- (-0.75, -1.25);
\draw[thick] (0.25, -1.75) -- (0.75, -1.25);
\draw[thick] (-1.25, -0.75) -- (-1.75, -0.25);
\draw[thick] (-0.75, -0.75) -- (-0.25, -0.25);
\draw[thick] (0.75, -0.75) -- (0.25, -0.25);
\draw[thick] (1.25, -0.75) -- (1.75, -0.25);
\node at (0,2) {\footnotesize{0}};
\node at (-1,1) {\footnotesize{0.3}};
\node at (1,1) {\footnotesize{0.1}};
\node at (-2,0) {\footnotesize{{\color{red}0.3}}};
\node at (0,0) {\footnotesize{0.4}};
\node at (2,0) {\footnotesize{0.6}};
\node at (-1,-1) {\footnotesize{0}};
\node at (1,-1) {\footnotesize{0}};
\node at (0,-2) {\footnotesize{0.2}};
\draw[thick, ->] (2.75,0) -- (3.5,0);
\node[below] at (3.125,0) {$\tau_{D}$};
\end{scope}
\begin{scope}[shift={(25,0)}]
\draw[thick] (-0.25, 1.75) -- (-0.75, 1.25);
\draw[thick] (0.25, 1.75) -- (0.75, 1.25);
\draw[thick] (-1.25, 0.75) -- (-1.75, 0.25);
\draw[thick] (-0.75, 0.75) -- (-0.25, 0.25);
\draw[thick] (0.75, 0.75) -- (0.25, 0.25);
\draw[thick] (1.25, 0.75) -- (1.75, 0.25);
\draw[thick] (-0.25, -1.75) -- (-0.75, -1.25);
\draw[thick] (0.25, -1.75) -- (0.75, -1.25);
\draw[thick] (-1.25, -0.75) -- (-1.75, -0.25);
\draw[thick] (-0.75, -0.75) -- (-0.25, -0.25);
\draw[thick] (0.75, -0.75) -- (0.25, -0.25);
\draw[thick] (1.25, -0.75) -- (1.75, -0.25);
\node at (0,2) {\footnotesize{0}};
\node at (-1,1) {\footnotesize{0.3}};
\node at (1,1) {\footnotesize{0.1}};
\node at (-2,0) {\footnotesize{0.2}};
\node at (0,0) {\footnotesize{{\color{red}0.4}}};
\node at (2,0) {\footnotesize{0.6}};
\node at (-1,-1) {\footnotesize{0}};
\node at (1,-1) {\footnotesize{0}};
\node at (0,-2) {\footnotesize{0.2}};
\draw[thick, ->] (2.75,0) -- (3.5,0);
\node[below] at (3.125,0) {$\tau_{E}$};
\end{scope}
\begin{scope}[shift={(31.25,0)}]
\draw[thick] (-0.25, 1.75) -- (-0.75, 1.25);
\draw[thick] (0.25, 1.75) -- (0.75, 1.25);
\draw[thick] (-1.25, 0.75) -- (-1.75, 0.25);
\draw[thick] (-0.75, 0.75) -- (-0.25, 0.25);
\draw[thick] (0.75, 0.75) -- (0.25, 0.25);
\draw[thick] (1.25, 0.75) -- (1.75, 0.25);
\draw[thick] (-0.25, -1.75) -- (-0.75, -1.25);
\draw[thick] (0.25, -1.75) -- (0.75, -1.25);
\draw[thick] (-1.25, -0.75) -- (-1.75, -0.25);
\draw[thick] (-0.75, -0.75) -- (-0.25, -0.25);
\draw[thick] (0.75, -0.75) -- (0.25, -0.25);
\draw[thick] (1.25, -0.75) -- (1.75, -0.25);
\node at (0,2) {\footnotesize{0}};
\node at (-1,1) {\footnotesize{0.3}};
\node at (1,1) {\footnotesize{0.1}};
\node at (-2,0) {\footnotesize{0.2}};
\node at (0,0) {\footnotesize{0.1}};
\node at (2,0) {\footnotesize{0.6}};
\node at (-1,-1) {\footnotesize{0}};
\node at (1,-1) {\footnotesize{0}};
\node at (0,-2) {\footnotesize{0.2}};
\end{scope}
\draw[thick, ->] (0,-2.75) -- (0,-4.25);
\draw[thick, ->] (31.25,-2.75) -- (31.25,-4.25);
\node[left] at (0,-3.5) {$\orb$};
\node[right] at (31.25,-3.5) {$\orb$};
\begin{scope}[shift={(0,-7)}]
\draw[thick] (-0.25, 1.75) -- (-0.75, 1.25);
\draw[thick] (0.25, 1.75) -- (0.75, 1.25);
\draw[thick] (-1.25, 0.75) -- (-1.75, 0.25);
\draw[thick] (-0.75, 0.75) -- (-0.25, 0.25);
\draw[thick] (0.75, 0.75) -- (0.25, 0.25);
\draw[thick] (1.25, 0.75) -- (1.75, 0.25);
\draw[thick] (-0.25, -1.75) -- (-0.75, -1.25);
\draw[thick] (0.25, -1.75) -- (0.75, -1.25);
\draw[thick] (-1.25, -0.75) -- (-1.75, -0.25);
\draw[thick] (-0.75, -0.75) -- (-0.25, -0.25);
\draw[thick] (0.75, -0.75) -- (0.25, -0.25);
\draw[thick] (1.25, -0.75) -- (1.75, -0.25);
\node at (0,2) {\footnotesize{0}};
\node at (-1,1) {\footnotesize{{\color{red}0.1}}};
\node at (1,1) {\footnotesize{0.1}};
\node at (-2,0) {\footnotesize{0.5}};
\node at (0,0) {\footnotesize{0.4}};
\node at (2,0) {\footnotesize{0.7}};
\node at (-1,-1) {\footnotesize{0.5}};
\node at (1,-1) {\footnotesize{0.7}};
\node at (0,-2) {\footnotesize{0.9}};
\draw[thick, ->] (2.75,0) -- (28.5,0);
\node[below] at (15.625,0) {$t_G$};
\end{scope}
\begin{scope}[shift={(31.25,-7)}]
\draw[thick] (-0.25, 1.75) -- (-0.75, 1.25);
\draw[thick] (0.25, 1.75) -- (0.75, 1.25);
\draw[thick] (-1.25, 0.75) -- (-1.75, 0.25);
\draw[thick] (-0.75, 0.75) -- (-0.25, 0.25);
\draw[thick] (0.75, 0.75) -- (0.25, 0.25);
\draw[thick] (1.25, 0.75) -- (1.75, 0.25);
\draw[thick] (-0.25, -1.75) -- (-0.75, -1.25);
\draw[thick] (0.25, -1.75) -- (0.75, -1.25);
\draw[thick] (-1.25, -0.75) -- (-1.75, -0.25);
\draw[thick] (-0.75, -0.75) -- (-0.25, -0.25);
\draw[thick] (0.75, -0.75) -- (0.25, -0.25);
\draw[thick] (1.25, -0.75) -- (1.75, -0.25);
\node at (0,2) {\footnotesize{0}};
\node at (-1,1) {\footnotesize{{\color{red}0.3}}};
\node at (1,1) {\footnotesize{0.1}};
\node at (-2,0) {\footnotesize{0.5}};
\node at (0,0) {\footnotesize{0.4}};
\node at (2,0) {\footnotesize{0.7}};
\node at (-1,-1) {\footnotesize{0.5}};
\node at (1,-1) {\footnotesize{0.7}};
\node at (0,-2) {\footnotesize{0.9}};
\end{scope}
\end{tikzpicture}
\caption{In this example of Theorem~\ref{thm:iso-cpl}, the poset elements are named as in Example~\ref{ex:cpl-3x3}, so $t_G^*=\tau_E\tau_D\tau_G\tau_E\tau_D$.}
\label{fig:iso-cpl}
\end{figure}

\begin{proof}
We begin with the left commutative diagram.  Let $g\in\calc(P)$.  We must show that $\orb(t_e^* g)=t_e(\orb(g))$.

Throughout the proof, we several times make use of the fact $\orb$ only ``looks up'' while $\opb$ only ``looks down.''
By this we mean, for any $x\in P$, the value of
$(\orb(g))(x)$ depends only on $g(y)$ for $y\geq x$, whereas
$(\opb(g))(x)$ depends only on $g(y)$ for $y\leq x$.

Suppose $e$ is a minimal element of $P$.  Then
$t_e^*=\tau_e$.  By the definition of $\orb$ and minimality of $e$, $(\orb(\tau_e g))(x)=(\orb(g))(x)$ for all $x\not=e$.  Thus,
$(\orb(\tau_e g))(x)=(t_e(\orb(g)))(x)$ for $x\not=e$, so we only have to check at $x=e$.
Since $e$ is minimal, $(\tau_e g)(e)=1-(\orb(g))(e)$
as can be seen from the definitions.
By Eq.~(\ref{eq:recur-orb}),
\begin{align*}
(\orb(\tau_e g))(e)&= (\tau_e g)(e) +
\max\limits_{y\gtrdot e} (\orb(\tau_e g))(y)
\\&=
1-(\orb(g))(e) +
\max\limits_{y\gtrdot e} (\orb(g))(y)\\
&=
(\orb(g))\left(\hat{m}\right)-(\orb(g))(e) +
\max\limits_{y\gtrdot e} (\orb(g))(y)\\
&=
\max\limits_{y\gtrdot e} (\orb(g))(y)+
\min\limits_{y\lessdot e} (\orb(g))(y) -(\orb(g))(e)
\\&=
\big(t_e(\orb(g))\big)(e).
\end{align*}

Now assume $e$ is not minimal in $P$.  Let $e_1,\dots,e_k$ be the elements that $e$ covers. Let
\begin{align*}
g' &= \tau_e\tau_{e_1}\cdots \tau_{e_k}g,\\
g'' &= \tau_{e_1}\cdots \tau_{e_k}g'=t_e^* g,\\
f &= \orb(g),\\
f' &= \orb(g'),\\
f'' &= \orb(g'').
\end{align*}
The goal is to show that $f''=t_e f$.  Note that $g,g',g''$ can only possibly differ in the labels of $e,e_1,e_2,\dots,e_k$.  From the definition of $\orb$ and the fact that $t_e$ can only change the label of $e$, it follows that $t_e f$ and $f''$ can only possibly differ in the labels of elements $\leq e$.

We begin by proving $f''(e)=(t_e f)(e)$.  From Eq.~(\ref{eq:alt_tau_e}) and from the fact that $e_1,e_2,\dots,e_k$ are
pairwise incomparable so each chain can contain at most one of them,
\begin{align}\label{eqarr:tomb time}
(\tau_{e_1}\cdots \tau_{e_k}g)(e_j)
&=
1-\max\limits_{y\lessdot e_j} (\opb(g))(y) \underbrace{- g(e_j) - \max\limits_{y\gtrdot e_j} (\orb(g))(y)}
_{-f(e_j) \text{ by Eq.~(\ref{eq:recur-orb})}}
\\
&=
1-\max\limits_{y\lessdot e_j} (\opb(g))(y)-f(e_j)\nonumber
\end{align}
for $1\leq j\leq k$.  Then to get $g'(e)$, we apply
Eq.~(\ref{eq:alt_tau_e}) to
$\tau_{e_1}\cdots \tau_{e_k}g$ instead of $g$, yielding
\begin{align*}
g''(e)=g'(e)
&=
1-\max\limits_{e_i\lessdot e} (\opb(\tau_{e_1}\cdots \tau_{e_k}g))(e_i) - (\tau_{e_1}\cdots \tau_{e_k}g)(e) - \max\limits_{y\gtrdot e}
\underbrace{(\orb(\tau_{e_1}\cdots \tau_{e_k}g))(y)}_
{(\orb(g))(y) \text{ as $\orb$ only looks up}}
\\
&=
1-\max\limits_{e_i\lessdot e} (\opb(\tau_{e_1}\cdots \tau_{e_k}g))(e_i)
\underbrace{- g(e) - \max\limits_{y\gtrdot e} (\orb(g))(y)}
_{-f(e) \text{ by Eq.~(\ref{eq:recur-orb})}}
\\
&=
1-\max\limits_{e_i\lessdot e} (\opb(\tau_{e_1}\cdots \tau_{e_k}g))(e_i)-f(e)
\\&=
1-\max\limits_{e_i\lessdot e}
\underbrace{\left(
\max\limits_{y\lessdot e_i} (\opb(\tau_{e_1}\cdots \tau_{e_k}g))(y)
+ (\tau_{e_1}\cdots \tau_{e_k}g)(e_i)\right)}_
{\text{from Eq. (\ref{eq:recur-opb})}}
-f(e)
\\&=
1-\max\limits_{e_i\lessdot e}
\underbrace{\left(
\max\limits_{y\lessdot e_i} (\opb(g))(y)
+ (\tau_{e_1}\cdots \tau_{e_k}g)(e_i)
\right)}_
{\text{since $\opb$ only looks down}}
-f(e)
\\&=
1-\max\limits_{e_i\lessdot e}
\Big(
\max\limits_{y\lessdot e_i} (\opb(g))(y)
+ \underbrace{1-\max\limits_{y\lessdot e_i} (\opb(g))(y)-f(e_i)}_
{\text{from Eq.~(\ref{eqarr:tomb time})}}
\Big)
-f(e)
\\&=
\min\limits_{e_i\lessdot e}f(e_i) - f(e).
\end{align*}
Then using Eq.~(\ref{eq:recur-orb})
\begin{align*}
f''(e)&=
g''(e)+\max\limits_{y\gtrdot e}f''(y)
\\&=
g''(e)+\max\limits_{y\gtrdot e}f(y)
\\&=
\min\limits_{e_i\lessdot e}f(e_i) - f(e)
+\max\limits_{y\gtrdot e}f(y)
\\&=
(t_e f)(e).
\end{align*}
Now we will prove that $f''(x)=f(x)=(t_e f)(x)$ for every $x<e$ using 
downward induction on $x$. So we begin with the base case $x\lessdot e$.
From Eq.~(\ref{eq:alt_tau_e}),
\begin{align*}
g''(e_j)
&=
1-\max\limits_{y\lessdot e_j} (\opb(g'))(y) - g'(e_j) - \max\limits_{y\gtrdot e_j} (\orb(g'))(y)
\\
&=
1-\max\limits_{y\lessdot e_j} (\opb(g))(y) - \underbrace{\Big(
1-\max\limits_{y\lessdot e_j} (\opb(g))(y)-f(e_j)
\Big)}_
{\text{from Eq.~(\ref{eqarr:tomb time})}}
- \max\limits_{y\gtrdot e_j} f'(y)
\\&=
f(e_j)-\max\limits_{y\gtrdot e_j} f'(y)
\\&=
f(e_j)-\max\limits_{y\gtrdot e_j} f''(y)
\end{align*}
for $1\leq j\leq k$.
Note that the last equality is because $f'(y)$ and $f''(y)$ only depend on $g'(x)$ and $g''(x)$ for $x\geq y$.  Since $g'(x)=g''(x)$ for $x\geq y>e_j$, $f'(y)=f''(y)$ for such $y$.
Continuing, Eq.~(\ref{eq:recur-orb}) yields
\begin{align*}
f''(e_j)&=
g''(e_j)+\max\limits_{y\gtrdot e_j} f''(y)\\
&=\underbrace{f(e_j)-\max\limits_{y\gtrdot e_j} f''(y)}_
{\text{from above}}
+\max\limits_{y\gtrdot e_j} f''(y)
\\
&=f(e_j)\\
&=(t_e f)(e_j).
\end{align*}

Now let $x<e$ and $x\not\in\{e_1,e_2,\dots,e_k\}$.  Assume
(as induction hypothesis) 
that $f''(y)=f(y)=(t_e  f)(y)$ for every $y$ covering $x$ (which cannot include $y=e$ since $x\not\in\{e_1,\dots,e_k\}$).
Again using Eq.~(\ref{eq:recur-orb}),
\begin{align*}
f''(x)&=g''(x) +\max\limits_{y\gtrdot x}f''(y)\\
&= g(x)+\max\limits_{y\gtrdot x}f(y)\\
&=f(x)\\
&= (t_e f)(x).
\end{align*}
For the second equality above, recall that $g(x)=g''(x)$ because
$x\not=e,e_1,\dots,e_k$.

This concludes the proof of the left commutative diagram.

The right commutative diagram is an analogue of Theorem~\ref{thm:tau-star}.  The proof of that theorem (as well as Lemma~\ref{lem:tau-star}) only depended on algebraic properties of $\tog_\cala(P)$ and $\tog_\calj(P)$, namely that toggles are involutions, when toggles commute, and Theorem~\ref{thm:t-star}.  We have proven analogues for these to $\tog_\calc(P)$ and $\tog_{OR}(P)$ in Proposition~\ref{prop:cpl-inv-commute} and this theorem's first commutative diagram.  Thus the proof of the second commutative diagram is the same as that of Theorem~\ref{thm:tau-star}.
\end{proof}

We will not prove the following piecewise-linear analogue of Proposition~\ref{prop:row-toggles} here.  The result is essentially~\cite[Thm.~4.2]{einpropp}.
In that paper, piecewise-linear rowmotion is defined in terms of toggles and proven to be equivalent to the composition of three maps (our definition of $\row_{OR}$).
There they are defining rowmotion on $OP(P)$ not $OR(P)$, so there is a change of notation between this paper and~\cite{einpropp}, given by $\Theta=\comp$, $\rho_P=\row_{OP}^{-1}=\comp\circ\row_{OR}\circ\comp$, $\nabla=\opb^{-1}$, $\Delta=\orb^{-1}$.

\begin{thm}[{\cite[Thm.~4.2]{einpropp}}]\label{thm:row-OR}
Let $(x_1,x_2,\dots,x_n)$ be any linear extension of a finite poset $P$.  Then $\row_{OR}=t_{x_1} t_{x_2} \cdots t_{x_n}$.
\end{thm}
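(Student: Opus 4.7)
The plan is to prove this by downward induction on the index, showing that applying the toggles in the order prescribed by reading $t_{x_1}t_{x_2}\cdots t_{x_n}$ right-to-left (so $t_{x_n}$ first, $t_{x_1}$ last) reproduces $\row_{OR}(f)$ entry by entry, from the top of the poset downward. The crucial preparatory step is to derive a recursion for $\row_{OR}$ that mirrors the toggle formula. Specifically, for $f\in OR(P)$ (extended to $\hat{P}$ via $f(\hat{m})=1$, $f(\hat{M})=0$) and any $x\in P$, I claim
\[
\row_{OR}(f)(x) \;=\; \min_{y\lessdot x} f(y) \;+\; \max_{y\gtrdot x}\row_{OR}(f)(y) \;-\; f(x).
\]
This follows by unpacking $\row_{OR}(f)=\orb(\opb^{-1}(\comp(f)))$: Equation~(\ref{eq:recur-orb}) gives $\row_{OR}(f)(x) = g(x)+\max_{y\gtrdot x}\row_{OR}(f)(y)$ where $g=\opb^{-1}(\comp(f))$, and the formula in Proposition~\ref{prop:transfer-map} combined with $\max_{y\lessdot x}(1-f(y)) = 1-\min_{y\lessdot x}f(y)$ gives $g(x) = \min_{y\lessdot x}f(y) - f(x)$.

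With this recursion in hand, I would set $f_n := f$ and $f_{i-1}:=t_{x_i}(f_i)$ for $i=n,n-1,\ldots,1$, and prove by downward induction on $i\in\{0,1,\ldots,n\}$ that
\[
f_i(x_j) \;=\; \begin{cases} \row_{OR}(f)(x_j) & \text{if } j>i,\\ f(x_j) & \text{if } j\leq i.\end{cases}
\]
The base case $i=n$ is trivial since $f_n=f$. For the inductive step, $t_{x_i}$ changes only the label at $x_i$, so I only need to verify $f_{i-1}(x_i)=\row_{OR}(f)(x_i)$. Because $(x_1,\ldots,x_n)$ is a linear extension, every $y\gtrdot x_i$ in $P$ has index strictly greater than $i$, and every $y\lessdot x_i$ in $P$ has index strictly less than $i$. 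The inductive hypothesis therefore identifies $f_i(y)$ with $\row_{OR}(f)(y)$ above $x_i$ and with $f(y)$ below $x_i$; the boundary values $\hat{m}$ and $\hat{M}$ cause no trouble since $f(\hat{m})=\row_{OR}(f)(\hat{m})=1$ and $f(\hat{M})=\row_{OR}(f)(\hat{M})=0$. Plugging into the defining formula of $t_{x_i}$ yields
\[
f_{i-1}(x_i) \;=\; \max_{y\gtrdot x_i}\row_{OR}(f)(y) \;+\; \min_{y\lessdot x_i} f(y) \;-\; f(x_i),
\]
which is precisely $\row_{OR}(f)(x_i)$ by the recursion above. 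Specializing to $i=0$ gives $t_{x_1}t_{x_2}\cdots t_{x_n}(f)=\row_{OR}(f)$ on every element, completing the proof.

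The main obstacle is not conceptual but bookkeeping: one must be attentive that the recursion for $\row_{OR}$ and the toggle formula share exactly the same structure (a min over covers-below, a max over covers-above, and a subtraction of the current label), and that the linear-extension ordering is what causes the ``already updated'' and ``not yet updated'' regions to split cleanly along cover relations of $x_i$. A secondary technicality is handling the extension to $\hat{P}$ consistently, but since the constants $0$ and $1$ at $\hat{M}$ and $\hat{m}$ agree under both $f$ and $\row_{OR}(f)$, the induction goes through without case splits on whether $x_i$ is minimal or maximal in $P$.
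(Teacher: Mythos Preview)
Your proof is correct. The recursion
\[
\row_{OR}(f)(x) \;=\; \min_{y\lessdot x} f(y) \;+\; \max_{y\gtrdot x}\row_{OR}(f)(y) \;-\; f(x)
\]
is exactly what falls out of unwinding $\orb\circ\opb^{-1}\circ\comp$, and your downward induction on $i$ cleanly exploits the linear-extension property to separate ``already-toggled'' labels (above $x_i$) from ``not-yet-toggled'' labels (below $x_i$). The treatment of $\hat{m}$ and $\hat{M}$ is handled correctly.

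As for comparison: the paper does not actually prove this theorem. It cites the result as \cite[Thm.~4.2]{einpropp} and explains that Einstein and Propp take the reverse route---they \emph{define} piecewise-linear rowmotion as the toggle product and then prove it coincides with the three-map composition $\orb\circ\opb^{-1}\circ\comp$. Your argument goes in the opposite direction (starting from the three-map definition and showing it agrees with the toggle product), but the two are of course logically equivalent, and your direct inductive computation is a perfectly natural way to establish the identity.
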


We use this to prove a similar expression about $\row_\calc$ that is analogous to Proposition~\ref{prop:row-toggles-anti}.

\begin{thm}\label{thm:row-C}
Let $(x_1,x_2,\dots,x_n)$ be any linear extension of a finite poset $P$.  Then $\row_\calc=\tau_{x_n}\cdots \tau_{x_2} \tau_{x_1}$.
\end{thm}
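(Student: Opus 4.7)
The plan is to reduce Theorem~\ref{thm:row-C} to the analogous statement for $\row_{OR}$ (Theorem~\ref{thm:row-OR}) via the isomorphism of Theorem~\ref{thm:iso-cpl}. Combining the commutative diagram in Subsection~\ref{subsec:row-polytope} (giving $\orb \circ \row_\calc = \row_{OR} \circ \orb$) with the left diagram of Theorem~\ref{thm:iso-cpl} (giving $\orb \circ \tau_e = \tau_e^* \circ \orb$ for every $e\in P$), one sees that since $\orb$ is a bijection, Theorem~\ref{thm:row-C} is equivalent to the identity
\begin{equation*}
\tau_{x_n}^* \cdots \tau_{x_2}^* \tau_{x_1}^* \;=\; \row_{OR} \;=\; t_{x_1} t_{x_2} \cdots t_{x_n}
\end{equation*}
in $\tog_{OR}(P)$, where the last equality is Theorem~\ref{thm:row-OR}. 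This reduced identity is purely group-theoretic inside $\tog_{OR}(P)$, so the same argument simultaneously reproves Proposition~\ref{prop:row-toggles-anti} via Theorem~\ref{thm:tau-star}.

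Next I would prove the identity by induction on $k$, showing $\tau_{x_k}^* \cdots \tau_{x_1}^* = t_{x_1} \cdots t_{x_k}$ inside $\tog_{OR}(P)$ for every $k$ with $1 \leq k \leq n$. The base case $k=1$ is immediate: $x_1$ is minimal in $P$, so $\eta_{x_1}$ is the empty product and $\tau_{x_1}^* = t_{x_1}$. For the step from $k$ to $k+1$, abbreviate $T_k := t_{x_1} \cdots t_{x_k}$. Using the induction hypothesis $\tau_{x_k}^* \cdots \tau_{x_1}^* = T_k$, the goal reduces to showing $\tau_{x_{k+1}}^* \, T_k = T_k \, t_{x_{k+1}}$.

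The crux is a factorization $T_k = \eta_{x_{k+1}} \cdot U$, where $U$ is a product of toggles $t_z$ with $z \parallel x_{k+1}$. To obtain it, observe that $\{x_1, \ldots, x_k\}$ is an order ideal of $P$ (because $(x_1,\dots,x_n)$ is a linear extension) and decomposes as $A \sqcup B$, where $A = \{y \in P : y < x_{k+1}\}$ and $B$ consists of those $x_i$ with $i \leq k$ that are incomparable with $x_{k+1}$. No element of $B$ lies below any element of $A$ (else transitivity puts it into $A$), so $\{x_1, \ldots, x_k\}$ admits a linear extension listing $A$ first and then $B$. By Remark~\ref{rem:etienne-84} together with Proposition~\ref{prop:cpl-inv-commute}(2), rearranging the linear extension does not change the product $T_k$, yielding $T_k = \eta_{x_{k+1}} \cdot U$ with $U$ as described. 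Each factor of $U$ commutes with $t_{x_{k+1}}$ by Proposition~\ref{prop:cpl-inv-commute}(2), so
\begin{equation*}
\tau_{x_{k+1}}^* \, T_k \;=\; \eta_{x_{k+1}} t_{x_{k+1}} \eta_{x_{k+1}}^{-1} \cdot \eta_{x_{k+1}} U \;=\; \eta_{x_{k+1}} U \, t_{x_{k+1}} \;=\; T_k \, t_{x_{k+1}},
\end{equation*}
closing the induction.

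The main obstacle is securing the factorization $T_k = \eta_{x_{k+1}} \cdot U$; once that is in hand, the rest is routine bookkeeping with involutions. The factorization rests on the elementary $A \sqcup B$ decomposition of the order ideal $\{x_1,\dots,x_k\}$ and on the insensitivity of $\eta$-type products to the choice of linear extension, both of which are already available by the time one reaches Theorem~\ref{thm:row-C}, so no genuinely new technical ingredient is required beyond Theorems~\ref{thm:iso-cpl} and~\ref{thm:row-OR}.
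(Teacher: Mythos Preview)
Your proposal is correct and follows essentially the same approach as the paper: reduce via $\orb$ and Theorem~\ref{thm:iso-cpl} to the identity $\tau_{x_n}^*\cdots\tau_{x_1}^* = t_{x_1}\cdots t_{x_n}$ in $\tog_{OR}(P)$, then induct on $k$ using the factorization $t_{x_1}\cdots t_{x_k} = \eta_{x_{k+1}}\cdot U$ with $U$ a product of toggles for elements incomparable with $x_{k+1}$, so that $\eta_{x_{k+1}}^{-1}$ cancels and $t_{x_{k+1}}$ commutes past $U$. One minor slip: the commutative diagram giving $\orb\circ\tau_e = \tau_e^*\circ\orb$ is the \emph{right} diagram of Theorem~\ref{thm:iso-cpl}, not the left.
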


\begin{proof}
The isomorphism from $\tog_\calc(P)$ to $\tog_{OR}(P)$ given by $\tau_e \mapsto \tau_e^*$ sends $\row_\calc$ to $\row_{OR}$.  This is from Theorem~\ref{thm:iso-cpl} and the commutative diagram at the end of Subsection~\ref{subsec:row-polytope}.  Therefore, it suffices to show that
$\tau_{x_n}^*\cdots \tau_{x_2}^* \tau_{x_1}^*=\row_{OR} = t_{x_1} t_{x_2} \cdots t_{x_n}$.
We will use induction to prove that $\tau_{x_k}^*\cdots \tau_{x_2}^* \tau_{x_1}^*
=t_{x_1} t_{x_2} \cdots t_{x_k}$ for $1\leq k\leq n$.

For the base case, $\tau_{x_1}^*=t_{x_1}$ since $x_1$ is a minimal element of $P$.
For the induction hypothesis, let $1\leq k\leq n-1$ and assume that
$\tau_{x_k}^*\cdots \tau_{x_2}^* \tau_{x_1}^*
=t_{x_1} t_{x_2} \cdots t_{x_k}$.
Then 
\begin{equation}\label{eq:future frenzy}
\tau_{x_{k+1}}^*\tau_{x_k}^*\cdots \tau_{x_2}^* \tau_{x_1}^*
= \eta_{x_{k+1}} t_{x_{k+1}} \eta_{x_{k+1}}^{-1} t_{x_1} t_{x_2} \cdots t_{x_k}.
\end{equation}
Let $(y_1,\dots,y_{k'})$ be a linear extension of the subposet $\{y\in P \;|\; y<x_{k+1}\}$ of $P$.
Then since $(x_1,\dots,x_n)$ is a linear extension of $P$, all of $y_1,\dots,y_{k'}$ must be in $\{x_1,\dots,x_k\}$.
Furthermore, any element less than one of $y_1,\dots,y_{k'}$ must be less than $x_{k+1}$ so none of the elements of $\{x_1,\dots,x_k\}$ outside of $\{y_1,\dots,y_{k'}\}$ are less than any of $y_1,\dots,y_{k'}$.
Therefore, we can name these elements in such a way that $(y_1,\dots,y_{k'}, y_{k'+1}, \dots, y_k)$ is a linear extension of $\{x_1,\dots,x_k\}$.
We remind the reader of Remark~\ref{rem:etienne-84}: any two linear extensions of a poset differ by a sequence of swaps between adjacent incomparable elements~\cite{etienne-84}.
Toggles of incomparable elements commute so $t_{x_1}t_{x_2}\cdots t_{x_k} = t_{y_1}\cdots t_{y_{k'}}t_{y_{k'+1}}\cdots t_{y_k}$.
From Eq.~(\ref{eq:future frenzy}) and $\eta_{x_{k+1}}=t_{y_1}\cdots t_{y_{k'}}$, we obtain
\begin{align*}
\tau_{x_{k+1}}^*\tau_{x_k}^*\cdots \tau_{x_2}^* \tau_{x_1}^* &=
\eta_{x_{k+1}} t_{x_{k+1}} \eta_{x_{k+1}}^{-1} t_{x_1} t_{x_2} \cdots t_{x_k}
\\&=
t_{y_1}\cdots t_{y_{k'}} t_{x_{k+1}} t_{y_{k'}}\cdots t_{y_1} t_{y_1}\cdots t_{y_{k'}}t_{y_{k'+1}}\cdots t_{y_k}
\\&=
t_{y_1}\cdots t_{y_{k'}} t_{x_{k+1}} t_{y_{k'+1}}\cdots t_{y_k}
\\&=
t_{y_1}\cdots t_{y_{k'}} t_{y_{k'+1}}\cdots t_{y_k}  t_{x_{k+1}}
\\&=
t_{x_1} t_{x_2} \cdots t_{x_k} t_{x_{k+1}}.
\end{align*}
In the fourth equality above, we could move $t_{x_{k+1}}$ to the right of $t_{y_{k'+1}}\cdots t_{y_k}$ because $x_{k+1}$ is incomparable with each of $y_{k'+1},\dots, y_k$.  This is because none of these are less than $x_{k+1}$ by design nor greater than $x_{k+1}$ by position within the linear extension $(x_1,\dots,x_n)$ of $P$.

By induction, we have $\tau_{x_n}^*\cdots \tau_{x_2}^* \tau_{x_1}^* = t_{x_1} t_{x_2} \cdots t_{x_n} = \row_{OR}$ so
$\tau_{x_n}\cdots \tau_{x_2} \tau_{x_1} = \row_\calc$.
\end{proof}

\begin{figure}
\begin{center}
\begin{tikzpicture}[scale=0.515]
\node at (-3.5,1) {$\row_\calc:$};
\node at (-3.5,-5) {$\row_{OR}:$};
\node at (-7,2) {rank 2};
\node at (-7,1) {rank 1};
\node at (-7,0) {rank 0};
\node at (-7,-4) {rank 2};
\node at (-7,-5) {rank 1};
\node at (-7,-6) {rank 0};
\begin{scope}
\draw[thick] (-0.3, 1.7) -- (-0.7, 1.3);
\draw[thick] (0.3, 1.7) -- (0.7, 1.3);
\draw[thick] (-1.3, 0.7) -- (-1.7, 0.3);
\draw[thick] (-0.7, 0.7) -- (-0.3, 0.3);
\draw[thick] (0.7, 0.7) -- (0.3, 0.3);
\draw[thick] (1.3, 0.7) -- (1.7, 0.3);
\node at (0,2) {0.2};
\node at (-1,1) {0.7};
\node at (1,1) {0};
\node at (-2,0) {{\color{red}0.1}};
\node at (0,0) {{\color{red}0}};
\node at (2,0) {{\color{red}0.3}};
\end{scope}
\node at (3.5,1) {\Large{$\stackrel{\normalsize{\tau_{\rk=0}}}{\longmapsto}$}};
\begin{scope}[shift={(7,0)}]
\draw[thick] (-0.3, 1.7) -- (-0.7, 1.3);
\draw[thick] (0.3, 1.7) -- (0.7, 1.3);
\draw[thick] (-1.3, 0.7) -- (-1.7, 0.3);
\draw[thick] (-0.7, 0.7) -- (-0.3, 0.3);
\draw[thick] (0.7, 0.7) -- (0.3, 0.3);
\draw[thick] (1.3, 0.7) -- (1.7, 0.3);
\node at (0,2) {0.2};
\node at (-1,1) {{\color{red}0.7}};
\node at (1,1) {{\color{red}0}};
\node at (-2,0) {0};
\node at (0,0) {0.1};
\node at (2,0) {0.5};
\end{scope}
\node at (10.5,1) {\Large{$\stackrel{\normalsize{\tau_{\rk=1}}}{\longmapsto}$}};
\begin{scope}[shift={(14,0)}]
\draw[thick] (-0.3, 1.7) -- (-0.7, 1.3);
\draw[thick] (0.3, 1.7) -- (0.7, 1.3);
\draw[thick] (-1.3, 0.7) -- (-1.7, 0.3);
\draw[thick] (-0.7, 0.7) -- (-0.3, 0.3);
\draw[thick] (0.7, 0.7) -- (0.3, 0.3);
\draw[thick] (1.3, 0.7) -- (1.7, 0.3);
\node at (0,2) {{\color{red}0.2}};
\node at (-1,1) {0};
\node at (1,1) {0.3};
\node at (-2,0) {0};
\node at (0,0) {0.1};
\node at (2,0) {0.5};
\end{scope}
\node at (17.5,1) {\Large{$\stackrel{\normalsize{\tau_{\rk=2}}}{\longmapsto}$}};
\begin{scope}[shift={(21,0)}]
\draw[thick] (-0.3, 1.7) -- (-0.7, 1.3);
\draw[thick] (0.3, 1.7) -- (0.7, 1.3);
\draw[thick] (-1.3, 0.7) -- (-1.7, 0.3);
\draw[thick] (-0.7, 0.7) -- (-0.3, 0.3);
\draw[thick] (0.7, 0.7) -- (0.3, 0.3);
\draw[thick] (1.3, 0.7) -- (1.7, 0.3);
\node at (0,2) {0};
\node at (-1,1) {0};
\node at (1,1) {0.3};
\node at (-2,0) {0};
\node at (0,0) {0.1};
\node at (2,0) {0.5};
\end{scope}

\begin{scope}[shift={(0,-6)}]
\draw[thick] (-0.3, 1.7) -- (-0.7, 1.3);
\draw[thick] (0.3, 1.7) -- (0.7, 1.3);
\draw[thick] (-1.3, 0.7) -- (-1.7, 0.3);
\draw[thick] (-0.7, 0.7) -- (-0.3, 0.3);
\draw[thick] (0.7, 0.7) -- (0.3, 0.3);
\draw[thick] (1.3, 0.7) -- (1.7, 0.3);
\node at (0,2) {{\color{red}0.2}};
\node at (-1,1) {0.9};
\node at (1,1) {0.2};
\node at (-2,0) {1};
\node at (0,0) {0.9};
\node at (2,0) {0.5};
\draw[dashed] (0,2.3) -- (0,3.3);
\draw[dashed] (0,-0.3) -- (0,-1.3);
\draw[dashed] (-1.8,-0.3) -- (-0.2,-1.3);
\draw[dashed] (1.8,-0.3) -- (0.2,-1.3);
\node at (0,3.6) {0};
\node at (0,-1.6) {1};
\end{scope}
\node at (3.5,-5) {\Large{$\stackrel{\normalsize{t_{\rk=2}}}{\longmapsto}$}};
\begin{scope}[shift={(7,-6)}]
\draw[thick] (-0.3, 1.7) -- (-0.7, 1.3);
\draw[thick] (0.3, 1.7) -- (0.7, 1.3);
\draw[thick] (-1.3, 0.7) -- (-1.7, 0.3);
\draw[thick] (-0.7, 0.7) -- (-0.3, 0.3);
\draw[thick] (0.7, 0.7) -- (0.3, 0.3);
\draw[thick] (1.3, 0.7) -- (1.7, 0.3);
\node at (0,2) {0};
\node at (-1,1) {{\color{red}0.9}};
\node at (1,1) {{\color{red}0.2}};
\node at (-2,0) {1};
\node at (0,0) {0.9};
\node at (2,0) {0.5};
\draw[dashed] (0,2.3) -- (0,3.3);
\draw[dashed] (0,-0.3) -- (0,-1.3);
\draw[dashed] (-1.8,-0.3) -- (-0.2,-1.3);
\draw[dashed] (1.8,-0.3) -- (0.2,-1.3);
\node at (0,3.6) {0};
\node at (0,-1.6) {1};
\end{scope}
\node at (10.5,-5) {\Large{$\stackrel{\normalsize{t_{\rk=1}}}{\longmapsto}$}};
\begin{scope}[shift={(14,-6)}]
\draw[thick] (-0.3, 1.7) -- (-0.7, 1.3);
\draw[thick] (0.3, 1.7) -- (0.7, 1.3);
\draw[thick] (-1.3, 0.7) -- (-1.7, 0.3);
\draw[thick] (-0.7, 0.7) -- (-0.3, 0.3);
\draw[thick] (0.7, 0.7) -- (0.3, 0.3);
\draw[thick] (1.3, 0.7) -- (1.7, 0.3);
\node at (0,2) {0};
\node at (-1,1) {0};
\node at (1,1) {0.3};
\node at (-2,0) {{\color{red}1}};
\node at (0,0) {{\color{red}0.9}};
\node at (2,0) {{\color{red}0.5}};
\draw[dashed] (0,2.3) -- (0,3.3);
\draw[dashed] (0,-0.3) -- (0,-1.3);
\draw[dashed] (-1.8,-0.3) -- (-0.2,-1.3);
\draw[dashed] (1.8,-0.3) -- (0.2,-1.3);
\node at (0,3.6) {0};
\node at (0,-1.6) {1};
\end{scope}
\node at (17.5,-5) {\Large{$\stackrel{\normalsize{t_{\rk=0}}}{\longmapsto}$}};
\begin{scope}[shift={(21,-6)}]
\draw[thick] (-0.3, 1.7) -- (-0.7, 1.3);
\draw[thick] (0.3, 1.7) -- (0.7, 1.3);
\draw[thick] (-1.3, 0.7) -- (-1.7, 0.3);
\draw[thick] (-0.7, 0.7) -- (-0.3, 0.3);
\draw[thick] (0.7, 0.7) -- (0.3, 0.3);
\draw[thick] (1.3, 0.7) -- (1.7, 0.3);
\node at (0,2) {0};
\node at (-1,1) {0};
\node at (1,1) {0.3};
\node at (-2,0) {0};
\node at (0,0) {0.4};
\node at (2,0) {0.8};
\draw[dashed] (0,2.3) -- (0,3.3);
\draw[dashed] (0,-0.3) -- (0,-1.3);
\draw[dashed] (-1.8,-0.3) -- (-0.2,-1.3);
\draw[dashed] (1.8,-0.3) -- (0.2,-1.3);
\node at (0,3.6) {0};
\node at (0,-1.6) {1};
\end{scope}
\end{tikzpicture}
\end{center}
\caption{We demonstrate Theorems~\ref{thm:row-C} and~\ref{thm:row-OR} on this poset.  Since this is a graded poset, we can toggle by ranks at once.  Compare with Example~\ref{ex:a3-row-cpl}.}
\label{fig:row-cpl-rank}
\end{figure}

All of Subsection~\ref{subsec:gp} about graded posets also extends to the piecewise-linear toggling (with $\cala$, $\calj$, and $\bfI$ replaced with $\calc$, $OR$, and $\orb$ respectively) because those results all used algebraic properties that we have proven also hold for the piecewise-linear toggles.  In Figure~\ref{fig:row-cpl-rank}, we demonstrate $\row_\calc$ and $\row_{OR}$ in terms of toggles, but we toggle by ranks since this is a graded poset (like in Corollary~\ref{cor:row-rank}).

\subsection{Toggling the chain polytope of a zigzag poset}~\label{subsec:zigzag}

In~\cite{indepsetspaper}, Roby and the author analyze toggling within the set of independent sets of a path graph.  The set of independent sets of a path graph with $n$ vertices can easily be seen to be the same as the set of antichains of a zigzag poset with $n$ elements.

\begin{defn}[{\cite[p. 367]{ec1ed2}}]
The \textbf{zigzag poset} (or \textbf{fence poset}) with $n$ elements, denoted $\calz_n$, is the poset consisting of
elements $a_1,...,a_n$ and relations $a_{2i-1} < a_{2i}$ and $a_{2i+1} < a_{2i}$.
\end{defn}

Zigzag posets have Hasse diagrams that can be drawn in a zigzag formation.  For example,

\begin{center}
\begin{tikzpicture}
\draw[thick] (0,0) -- (1,1) -- (2,0) -- (3,1) -- (4,0) -- (5,1);
\draw[fill] (0,0) circle [radius=0.07];
\draw[fill] (1,1) circle [radius=0.07];
\draw[fill] (2,0) circle [radius=0.07];
\draw[fill] (3,1) circle [radius=0.07];
\draw[fill] (4,0) circle [radius=0.07];
\draw[fill] (5,1) circle [radius=0.07];
\node[below] at (0,0) {$a_1$};
\node[above] at (1,1) {$a_2$};
\node[below] at (2,0) {$a_3$};
\node[above] at (3,1) {$a_4$};
\node[below] at (4,0) {$a_5$};
\node[above] at (5,1) {$a_6$};
\node[left] at (-0.2,0.5) {$\calz_6=$};
\node at (6,0.5) {and};
\end{tikzpicture}
\begin{tikzpicture}
\draw[thick] (0,0) -- (1,1) -- (2,0) -- (3,1) -- (4,0) -- (5,1) -- (6,0);
\draw[fill] (0,0) circle [radius=0.07];
\draw[fill] (1,1) circle [radius=0.07];
\draw[fill] (2,0) circle [radius=0.07];
\draw[fill] (3,1) circle [radius=0.07];
\draw[fill] (4,0) circle [radius=0.07];
\draw[fill] (5,1) circle [radius=0.07];
\draw[fill] (6,0) circle [radius=0.07];
\node[below] at (0,0) {$a_1$};
\node[above] at (1,1) {$a_2$};
\node[below] at (2,0) {$a_3$};
\node[above] at (3,1) {$a_4$};
\node[below] at (4,0) {$a_5$};
\node[above] at (5,1) {$a_6$};
\node[below] at (6,0) {$a_7$};
\node[left] at (-0.2,0.5) {$\calz_7=$};
\node[left] at (6.5,0) {.};
\end{tikzpicture}
\end{center}

The main results in~\cite{indepsetspaper} pertain to the homomesy phenomenon.  First isolated by Propp and Roby in~\cite{propproby}, this phenomenon has proven to be quite widespread in combinatorial
dynamical systems consisting of a set and invertible action.

\begin{defn}[\cite{propproby}]
Suppose we have a set $\cals$, an invertible map $w:\cals\ra \cals$ such that every $w$-orbit is finite, and a function (``statistic") $f:\cals\ra\kk$, where $\kk$ is a
field of characteristic 0.
If there exists a constant $c\in\kk$ such that for every $w$-orbit $\eso\subseteq \cals$, $$\frac{1}{\#\eso}\sum\limits_{x\in\eso}f(x)=c,$$
then we say
the statistic $f$
is \textbf{homomesic with average \textsl{c}} (or \textbf{\textsl{c}-mesic}
for short) under the action of $w$ on $\cals$.

\end{defn}

Below we restate~\cite[Cor.~2.31]{indepsetspaper} in terms of
antichains of $\calz_n$.

\begin{thm}\label{thm:indep-sets}
Consider the zigzag poset $\calz_n$.
Let $w$ be a product of each of the antichain toggles
$\tau_{a_1},\dots,\tau_{a_n}$ each used exactly once in some order (called a Coxeter element), and consider the action of $w$ on $\cala(\calz_n)$.
For $1\leq j\leq n$, let $I_j:\cala(\calz_n) \ra \{0,1\}$
be the function defined
as
$$I_j(A) =
\left\{\begin{array}{ll}
0 &\text{if }a_j\not\in A,\\
1 &\text{if }a_j\in A.\\\end{array}\right.
$$
\begin{enumerate}
\item The statistic $I_j-I_{n+1-j}$ is 0-mesic for every
$1\leq j\leq n$.
\item The statistics $2I_1+I_2$ and $I_{n-1}+2I_n$ are both
1-mesic.
\end{enumerate}\vspace{0.1 in}
\end{thm}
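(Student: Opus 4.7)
The statement is a restatement (via Remark~\ref{rem:indep-sets}) of \cite[Cor.~2.31]{indepsetspaper}: antichains of $\calz_n$ correspond to independent sets of the path graph $P_n$, with each toggle $\tau_{a_i}$ becoming the independent-set toggle at vertex $i$ and each $I_j$ becoming the indicator of vertex $j$. The most economical proof is therefore to make this translation explicit and cite the corollary. For a self-contained argument, I would proceed in three stages.

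First I would reduce to a single canonical Coxeter element. By Proposition~\ref{prop:tau-commute}, $\tau_{a_i}$ and $\tau_{a_j}$ commute whenever $|i-j|\geq 2$, so any ordering of the generators yields an element equal, up to a controlled sequence of adjacent-transposition conjugations, to $\row_\cala = \tau_{a_n}\cdots \tau_{a_1}$ (Proposition~\ref{prop:row-toggles-anti}). Adapting the kind of argument Striker and Williams use for rowed-and-columned posets, one shows that these conjugations preserve orbit-wise sums of the indicator statistics $I_j$, reducing the problem to the single case $w = \row_\cala$.

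Second, I would build an explicit orbit model for $\row_\cala$. Each antichain of $\calz_n$ is a binary string of length $n$ with no two consecutive $1$'s, and by Proposition~\ref{prop:row-toggles-anti} applying $\row_\cala$ amounts to toggling these bits from left to right. Tracking the positions of the $1$'s lets one parameterize each orbit by an auxiliary combinatorial object (a lattice path, cyclic word, or similar), from which the orbit sums $\sum_{A\in\eso} I_j(A)$ can be read off.

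With this model in place, Part~(1) follows by combining the explicit orbit sums with the poset (anti-)automorphism $\phi\colon a_i\mapsto a_{n+1-i}$, which preserves $\cala(\calz_n)$, satisfies $I_j\circ\phi=I_{n+1-j}$, and conjugates $\row_\cala$ to its inverse (so preserves the set of orbits). The subtle point is that $\phi$ need not fix individual orbits, so the symmetry alone is insufficient; combining it with the orbit model, however, forces $\sum_{A\in\eso} I_j(A)=\sum_{A\in\eso} I_{n+1-j}(A)$ for each orbit. The main obstacle will be Part~(2). Unwinding the $1$-mesy of $2I_1+I_2$ and using that $\{a_1,a_2\}$ is never an antichain reduces the claim to the identity $\#\{A\in\eso: a_1\in A\}=\#\{A\in\eso: a_1,a_2\notin A\}$ in every orbit $\eso$; this enjoys no symmetry interpretation and must be extracted from a careful boundary analysis of the orbit model, tracking how $\tau_{a_1}$ and $\tau_{a_2}$ interact with the subsequent toggles as an orbit is traversed. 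The symmetric claim for $I_{n-1}+2I_n$ then follows by applying $\phi$.
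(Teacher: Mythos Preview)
Your opening identification is exactly right and matches the paper precisely: the paper does not prove this theorem but simply restates \cite[Cor.~2.31]{indepsetspaper} in antichain language, so the citation is the entire argument here.

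Your self-contained sketch goes beyond anything the paper attempts. One caution about its first stage: conjugating a Coxeter element by $\tau_{a_i}$ sends each orbit $\eso$ to $\tau_{a_i}(\eso)$, which preserves $\sum_{A\in\eso} I_j(A)$ only when $j\neq i$ (for $j=i$ one gets the orbit-sum of the ``toggleable-in'' indicator instead). Since passing from an arbitrary Coxeter element to $\row_\cala$ generally requires conjugations by $\tau_{a_1},\tau_{a_2}$ (for Part~(2)) or by $\tau_{a_j},\tau_{a_{n+1-j}}$ (for Part~(1)), the claim that ``these conjugations preserve orbit-wise sums of the indicator statistics $I_j$'' is not automatic and would itself need justification; the argument in \cite{indepsetspaper} handles general Coxeter elements directly rather than via such a reduction.
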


\begin{figure}[htb]
\begin{center}
\begin{minipage}{.45\textwidth}
\centering
\begin{tabular}{c||c|c|c|c|c|c}
&$I_1$&$I_2$&$I_3$&$I_4$&$I_5$&$I_6$\\\hline\hline\\[-1.111em]
$             A $&1&0&0&1&0&0\\\hline\\[-1.111em]
$\varphi     (A)$&0&1&0&0&1&0\\\hline\\[-1.111em]
$\varphi^2   (A)$&0&0&1&0&0&1\\\hline\hline\\[-1.111em]
\textbf{Total} & \textbf{1} & \textbf{1} & \textbf{1} & \textbf{1} & \textbf{1} & \textbf{1}
\end{tabular}
\vspace{0.3 in}

\begin{tabular}{c||c|c|c|c|c|c}
&$I_1$&$I_2$&$I_3$&$I_4$&$I_5$&$I_6$\\\hline\hline\\[-1.111em]
$             B $&0&0&0&0&0&0\\\hline\\[-1.111em]
$\varphi     (B)$&1&0&1&0&1&0\\\hline\\[-1.111em]
$\varphi^2   (B)$&0&0&0&0&0&1\\\hline\\[-1.111em]
$\varphi^3   (B)$&1&0&1&0&0&0\\\hline\\[-1.111em]
$\varphi^4   (B)$&0&0&0&1&0&1\\\hline\\[-1.111em]
$\varphi^5   (B)$&1&0&0&0&0&0\\\hline\\[-1.111em]
$\varphi^6   (B)$&0&1&0&1&0&1\\\hline\hline\\[-1.111em]
\textbf{Total} & \textbf{3} & \textbf{1} & \textbf{2} & \textbf{2} & \textbf{1} & \textbf{3}
\end{tabular}
\end{minipage}
\begin{minipage}{.45\textwidth}
\centering
\begin{tabular}{c||c|c|c|c|c|c}
&$I_1$&$I_2$&$I_3$&$I_4$&$I_5$&$I_6$\\\hline\hline\\[-1.111em]
$             C $&1&0&1&0&0&1\\\hline\\[-1.111em]
$\varphi     (C)$&0&0&0&1&0&0\\\hline\\[-1.111em]
$\varphi^2   (C)$&1&0&0&0&1&0\\\hline\\[-1.111em]
$\varphi^3   (C)$&0&1&0&0&0&1\\\hline\\[-1.111em]
$\varphi^4   (C)$&0&0&1&0&0&0\\\hline\\[-1.111em]
$\varphi^5   (C)$&1&0&0&1&0&1\\\hline\\[-1.111em]
$\varphi^6   (C)$&0&1&0&0&0&0\\\hline\\[-1.111em]
$\varphi^7   (C)$&0&0&1&0&1&0\\\hline\\[-1.111em]
$\varphi^8   (C)$&1&0&0&0&0&1\\\hline\\[-1.111em]
$\varphi^9   (C)$&0&1&0&1&0&0\\\hline\\[-1.111em]
$\varphi^{10}(C)$&0&0&0&0&1&0\\\hline\hline\\[-1.111em]
\textbf{Total} & \textbf{4} & \textbf{3} & \textbf{3} & \textbf{3} & \textbf{3} & \textbf{4}
\end{tabular}
\end{minipage}
\end{center}
\caption{For the action
$\varphi=\tau_{a_6}\tau_{a_5}\tau_{a_4}\tau_{a_3}\tau_{a_2}\tau_{a_1}\in
\tog_\cala(\calz_6)$, there are three orbits.
The orbits have size 3, 7, and 11, so $\varphi^3(A)=A$, $\varphi^7(B)=B$,
and $\varphi^{11}(C)=C$.
All statistics in Theorem~\ref{thm:indep-sets} can be verified to be
homomesic for this particular action $\varphi$ on $\calz_6$.  See
Example~\ref{ex:3-7-11}.}
\label{fig:3-7-11}
\end{figure}

\begin{example}~\label{ex:3-7-11}
On $\calz_6$, let
$\varphi=\tau_{a_6}\tau_{a_5}\tau_{a_4}\tau_{a_3}\tau_{a_2}\tau_{a_1}\in
\tog_\cala(\calz_6)$
be the composition that toggles each element from left to right.
In Figure~\ref{fig:3-7-11}, the three $\varphi$-orbits are shown.
According to Theorem~\ref{thm:indep-sets}(2), the statistic
$2I_1+I_2$ is 1-mesic under the action of $\varphi$.  That is, $2I_1+I_2$ has average 1 across every orbit.  We can verify this by computing the averages
$$
\frac{2(1)+1}{3}=1,\hspace{0.55 in}
\frac{2(3)+1}{7}=1,\hspace{0.55 in}
\frac{2(4)+3}{11}=1.
$$
Also, Theorem~\ref{thm:indep-sets}(1) says that $I_2-I_5$ has average 0 across every orbit, which we can also verify
$$
\frac{1-1}{3}=0,\hspace{0.55 in}
\frac{1-1}{7}=0,\hspace{0.55 in}
\frac{3-3}{11}=0.
$$
\end{example}

As one may expect, some results for antichain toggling continue to hold
for chain polytope toggling, and some results do not.
The example in Figure~\ref{fig:counterexample} proves as a counterexample
for Theorem~\ref{thm:indep-sets}(1), e.g.~the average of $h\mapsto h(3)-h(6)$ across this orbit is $\frac{1}{20}\left(\frac{13}{2}-6\right) = \frac{1}{40}\not=0$.
On the other hand, Theorem~\ref{thm:indep-sets}(2) still holds for this orbit, e.g.~the average of $h\mapsto 2h(1)+h(2)$ is $\frac{2(8)+4}{20}=1$.  Indeed, this result can be extended to chain polytope toggles.

\begin{figure}[htb]
\begin{center}
\begin{tabular}{c||c|c|c|c|c|c|c|c}
&$I_1$&$I_2$&$I_3$&$I_4$&$I_5$&$I_6$&$I_7$&$I_8$\\\hline\hline\\[-1.111em]
$             g $
&0&0&0&0&$\frac12$&0&0&1
\\\hline\\[-1.111em]
$\varphi     (g)$
&1&0&1&0&$\frac12$&$\frac12$&0&0
\\\hline\\[-1.111em]
$\varphi^2   (g)$
&0&0&0&$\frac12$&0&$\frac12$&$\frac12$&$\frac12$
\\\hline\\[-1.111em]
$\varphi^3   (g)$
&1&0&$\frac12$&0&$\frac12$&0&0&$\frac12$
\\\hline\\[-1.111em]
$\varphi^4   (g)$
&0&$\frac12$&0&$\frac12$&0&1&0&$\frac12$
\\\hline\\[-1.111em]
$\varphi^5   (g)$
&$\frac12$&0&$\frac12$&0&0&0&$\frac12$&0
\\\hline\\[-1.111em]
$\varphi^6   (g)$
&$\frac12$&$\frac12$&0&1&0&$\frac12$&0&1
\\\hline\\[-1.111em]
$\varphi^7   (g)$
&0&$\frac12$&0&0&$\frac12$&0&0&0
\\\hline\\[-1.111em]
$\varphi^8   (g)$
&$\frac12$&0&1&0&$\frac12$&$\frac12$&$\frac12$&$\frac12$
\\\hline\\[-1.111em]
$\varphi^9   (g)$
&$\frac12$&0&0&$\frac12$&0&0&0&$\frac12$
\\\hline\\[-1.111em]
$\varphi^{10}(g)$
&$\frac12$&$\frac12$&$\frac12$&0&1&0&$\frac12$&0
\\\hline\\[-1.111em]
$\varphi^{11}(g)$
&0&0&$\frac12$&0&0&$\frac12$&0&1
\\\hline\\[-1.111em]
$\varphi^{12}(g)$
&1&0&$\frac12$&$\frac12$&$\frac12$&0&0&0
\\\hline\\[-1.111em]
$\varphi^{13}(g)$
&0&$\frac12$&0&0&$\frac12$&$\frac12$&$\frac12$&$\frac12$
\\\hline\\[-1.111em]
$\varphi^{14}(g)$
&$\frac12$&0&1&0&0&0&0&$\frac12$
\\\hline\\[-1.111em]
$\varphi^{15}(g)$
&$\frac12$&0&0&1&0&1&0&$\frac12$
\\\hline\\[-1.111em]
$\varphi^{16}(g)$
&$\frac12$&$\frac12$&0&0&0&0&$\frac12$&0
\\\hline\\[-1.111em]
$\varphi^{17}(g)$
&0&$\frac12$&$\frac12$&$\frac12$&$\frac12$&$\frac12$&0&1
\\\hline\\[-1.111em]
$\varphi^{18}(g)$
&$\frac12$&0&0&0&0&$\frac12$&0&0
\\\hline\\[-1.111em]
$\varphi^{19}(g)$
&$\frac12$&$\frac12$&$\frac12$&$\frac12$&$\frac12$&0&1&0
\\\hline\hline\\[-1.111em]
\textbf{Total} & \textbf{8} & \textbf{4} & $\frac{\textbf{13}}{\textbf{2}}$ & \textbf{5} & $\frac{\textbf{11}}{\textbf{2}}$ & \textbf{6} & \textbf{4} & \textbf{8}
\end{tabular}
\end{center}
\caption{An example orbit under the action
$\varphi=\tau_{a_8}\tau_{a_7}\tau_{a_6}\tau_{a_5}\tau_{a_4}\tau_{a_3}\tau_{a_2}\tau_{a_1}\in
\tog_\calc(\calz_8)$
on the chain polytope of $\calz_8$.
This orbit has size 20, so $\varphi^{20}(g)=g$.
This shows that Theorem~\ref{thm:indep-sets}(1) fails for the chain polytope toggles.}
\label{fig:counterexample}
\end{figure}

There is an issue with this notion, however.  By toggling within the
finite set $\cala(P)$ of a poset, all orbits are guaranteed to finite.
The chain polytope, on the other hand, is infinite, so there is no guarantee that orbits have finite order.
It is believed that for $n\geq 6$, there are infinite orbits under a
Coxeter element.\footnote{
Using results in~\cite{grinberg-roby}, one can prove that birational rowmotion on $\calz_n$ has finite order for $n\leq 5$.
This implies $\row_{OR}$ has finite order, and therefore
$\row_\calc=\orb^{-1} \circ \row_{OR} \circ \orb$ does too.
However, for $n=7$, birational rowmotion has infinite order~\cite[\S20]{grinberg-roby}, so $\row_{OR}$ may have infinite order.
All Coxeter elements in $\tog_\cala(P)$ are conjugate using
\cite[Lemma~5.1]{strikerwilliams}, so the order of toggling does not affect the order.  David Einstein and James Propp have made progress in proving
infinite order for $\row_{OR}$ for $\calz_n$ with $n\geq 6$, though details
are still being worked out.
}
The original definition of homomesy requires orbits to be finite.
Nonetheless, we can generalize to orbits that need not be finite where
the average value of the statistic $f$, as $w$ is iterated $N$ times, approaches a constant $c$.
This asymptotic generalization, first considered by Propp and Roby, has been used by Vorland for actions on order ideals of infinite posets.

\begin{defn}[{\cite[Definition 5.3.1]{vorland-thesis}}]
Suppose we have a set $\cals$, a map $w:\cals\ra \cals$, and a function (``statistic") $f:\cals\ra\rr$.  If there exists a real number $c$ such that for every $x\in \cals$,
$$
\lim\limits_{N\ra\infty} \frac{1}{N}\sum\limits_{i=0}^{N-1}f\big(w^i(x)\big)=c
$$
then we say that $f$ is \textbf{homomesic with average \textsl{c}} (or \textbf{\textsl{c}-mesic}) under the action of $w$ on $\cals$.
\end{defn}

Below is a generalization of Theorem~\ref{thm:indep-sets}(2) to
$\calc(\calz_n)$.
Despite the numerous homomesy results for finite sets,
this result is notable as one of the few known instances of asymptotic homomesy for orbits that are probably not always finite.

\begin{thm}\label{thm:indep-sets-C}
Let $n\geq 2$, and $\tau_{a_1}, \cdots, \tau_{a_n}$ be chain polytope toggles on $\calc(\calz_n)$.
\begin{enumerate}
\item Let $w\in\tog_\calc(\calz_n)$ be a composition of toggles in which $\tau_{a_1}$ appears exactly once,
$\tau_{a_2}$ at most once, and other toggles can appear any number of times (possibly none).  Then the statistic $g\mapsto 2g(a_1)+g(a_2)$ is
1-mesic under the action of $w$ on $\calc(\calz_n)$.
\item Let $w\in\tog_\calc(\calz_n)$ be a composition of toggles in which $\tau_{a_n}$ appears exactly once,
$\tau_{a_{n-1}}$ at most once, and other toggles can appear any number of times (possibly none).  Then the statistic $g\mapsto g(a_{n-1})+2g(a_n)$ is
1-mesic under the action of $w$ on $\calc(\calz_n)$.
\end{enumerate}
\end{thm}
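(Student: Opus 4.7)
The plan is to track the two scalar coordinates $u_k := (w^k g)(a_1)$ and $v_k := (w^k g)(a_2)$ along the orbit, derive a simple linear recurrence between consecutive time steps, and read off a telescoping sum. The starting observation is that the chain-polytope toggle $\tau_e$ only alters the value at $e$ itself, so the coordinate $a_1$ can be altered only by $\tau_{a_1}$ and the coordinate $a_2$ only by $\tau_{a_2}$. Hence within a single application of $w$, the $a_1$-coordinate is modified exactly once and the $a_2$-coordinate at most once. I would split into three subcases depending on how these two toggles are interleaved: (A) $\tau_{a_2}$ does not appear in $w$; (C) both appear, with $\tau_{a_1}$ fired before $\tau_{a_2}$; (B) both appear, with $\tau_{a_2}$ fired before $\tau_{a_1}$.

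In cases (A) and (C), at the instant $\tau_{a_1}$ fires, the coordinate $a_2$ still holds $v_k$ (no preceding toggle in the iteration touches $a_2$). The unique maximal chain in $\calz_n$ through $a_1$ is $(a_1,a_2)$, so Definition~\ref{def:cpl-tau} gives $u_{k+1}=1-u_k-v_k$ directly, and no further toggle in the iteration changes $a_1$. Rearranging, $2u_k+v_k=1+u_k-u_{k+1}$, which telescopes:
\[
\sum_{k=0}^{N-1}\bigl(2u_k+v_k\bigr)=N+u_0-u_N.
\]
Since the chain polytope lies in $[0,1]^P$, $|u_0-u_N|\le 1$ and the Ces\`aro average equals $1+O(1/N)\to 1$.

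The main obstacle is case (B): when $\tau_{a_2}$ fires, the new value at $a_2$ depends on $\max(g(a_1),g(a_3))$ at that instant, and $g(a_3)$ can depend on the entire history of toggles applied earlier in the iteration. The saving grace is an algebraic cancellation. Let $M_k$ denote the value of $\max(g(a_1),g(a_3))$ that $\tau_{a_2}$ sees when it fires in iteration $k$ (interpreted as just $u_k$ when $n=2$, since then $(a_1,a_2)$ is the only maximal chain through $a_2$). Since no toggle between $\tau_{a_2}$ and $\tau_{a_1}$ touches $a_2$, the value at $a_2$ just before $\tau_{a_1}$ fires equals $1-v_k-M_k$, and the value at $a_1$ just before $\tau_{a_1}$ fires is still $u_k$. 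Direct computation from Definition~\ref{def:cpl-tau} therefore yields
\[
v_{k+1}=1-v_k-M_k,\qquad u_{k+1}=v_k+M_k-u_k.
\]
Adding the two expressions gives $u_{k+1}+v_{k+1}=1-u_k$, \emph{independently of} $M_k$; this cancellation of the max-terms is the crux of the argument. Rewriting, $2u_{k+1}+v_{k+1}=1+u_{k+1}-u_k$, and the telescoping calculation $\sum_{k=1}^{N}(2u_k+v_k)=N+u_N-u_0$ again yields average $1$ in the limit.

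Part (2) of the theorem will follow from part (1) by symmetry. The involution $a_j\leftrightarrow a_{n+1-j}$ of $\calz_n$ is a poset automorphism when $n$ is odd and an anti-automorphism when $n$ is even; in either case it permutes the set of maximal chains, hence induces an isomorphism of $\calc(\calz_n)$ that intertwines $\tau_{a_j}$ with $\tau_{a_{n+1-j}}$. Under this correspondence the statistic $g(a_{n-1})+2g(a_n)$ pulls back to $2g(a_1)+g(a_2)$, and any $w$ satisfying the hypothesis of (2) pulls back to a $w'$ satisfying the hypothesis of (1), completing the reduction.
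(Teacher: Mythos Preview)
Your proof is correct and follows essentially the same strategy as the paper: derive a one-step recurrence linking $u_k$, $u_{k+1}$, and a $v$-term via the formula $(\tau_{a_1}g)(a_1)=1-g(a_1)-g(a_2)$, then telescope and bound the boundary terms by $[0,1]$. Your cases (A)/(C) match the paper's Case~1, and your case (B) matches its Case~2; the only stylistic difference is that in case (B) you introduce $M_k$ and then observe the cancellation $u_{k+1}+v_{k+1}=1-u_k$, whereas the paper bypasses this by noting directly that the $a_2$-label seen by $\tau_{a_1}$ is already $v_{k+1}$, giving $u_{k+1}=1-u_k-v_{k+1}$ without ever computing $v_{k+1}$. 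Your symmetry reduction for part~(2) is a clean alternative to the paper's ``analogous'' remark.
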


\begin{proof}
We only prove (1) as the proof of (2) is analogous. Let $w$ be as in the theorem.  Let $g_0\in\calc(\calz_n)$ and define $g_i:=w^i(g_0)$.
Notice that $a_1\lessdot a_2$ is the only maximal chain containing $a_1$.
So, for any $g\in\calc(\calz_n)$,
\begin{equation}\label{eq:boulders}
(\tau_{a_1}g)(a_1)=1-g(a_1)-g(a_2).
\end{equation}
There are two cases.

\textbf{Case 1: }The toggle $\tau_{a_2}$ is performed either after $\tau_{a_1}$ or not at all while applying $w$.
Then, when applying $w$ to $g_i$, the labels of $a_1$ and $a_2$ are unchanged with the toggle $\tau_{a_1}$ is applied.  Thus, by Eq.~(\ref{eq:boulders}),
$(\tau_{a_1}g_i)(a_1)=1-g_i(a_1)-g_i(a_2)$.
Since $a_1$ is only toggled once in $w$, we have
\begin{equation}\label{eq:turtle woods}
g_{i+1}(a_1)=1-g_i(a_1)-g_i(a_2).
\end{equation}
Using this formula is the third equality below,
\begin{align*}
\lim\limits_{N\ra\infty}\frac{1}{N} \sum\limits_{i=0}^{N-1}
\big(2g_i(a_1)+g_i(a_2)\big)
&= \lim\limits_{N\ra\infty}\frac{1}{N} \sum\limits_{i=0}^{N-1}
\big(g_i(a_1)+ g_i(a_1) + g_i(a_2)\big)\\
&= \lim\limits_{N\ra\infty}\frac{1}{N} \left(
g_0(a_1) - g_N(a_1) + \sum\limits_{i=0}^{N-1}
\big(g_{i+1}(a_1)+ g_i(a_1) + g_i(a_2)\big)
\right)\\
&= \lim\limits_{N\ra\infty}\frac{1}{N} \left(
g_0(a_1) - g_N(a_1) + \sum\limits_{i=0}^{N-1} 1
\right)\\
&= \lim\limits_{N\ra\infty}\frac{1}{N} \big(
g_0(a_1) - g_N(a_1) + N
\big)\\
&= 1 + \lim\limits_{N\ra\infty}\frac{1}{N} \big(
g_0(a_1) - g_N(a_1) \big)\\
&= 1+0\\&=1.
\end{align*}
The limit calculation follows from the Squeeze Theorem since $g_N(a_1)$ is between 0 and 1 for all $N$ by the chain polytope's definition.

\textbf{Case 2: }The toggle $\tau_{a_2}$ is performed before $\tau_{a_1}$ while applying $w$.  Then, recall that $\tau_{a_2}$ only appears once in $w$.
So the label of $a_2$ after applying $\tau_{a_2}$ to $g_i$ is $g_{i+1}(a_2)$.
Then using Eq.~(\ref{eq:boulders}),
$(\tau_{a_1}g_i)(a_1)=1-g_i(a_1)-g_{i+1}(a_2)$.
Since $a_1$ is only toggled once in $w$, we have
\begin{equation}\label{eq:snow go}
g_{i+1}(a_1)=1-g_i(a_1)-g_{i+1}(a_2).
\end{equation}
Using this (with $i-1$ in place of $i$) in the third equality below,
\begin{align*}
\lim\limits_{N\ra\infty}\frac{1}{N} \sum\limits_{i=0}^{N-1}
\big(2g_i(a_1)+g_i(a_2)\big)
&= \lim\limits_{N\ra\infty}\frac{1}{N} \sum\limits_{i=0}^{N-1}
\big(g_i(a_1)+ g_i(a_1) + g_i(a_2)\big)\\
&= \lim\limits_{N\ra\infty}\frac{1}{N} \left(
g_{N-1}(a_1) - g_{-1}(a_1) + \sum\limits_{i=0}^{N-1}
\big(g_i(a_1)+ g_{i-1}(a_1) + g_i(a_2)\big)
\right)\\
&= \lim\limits_{N\ra\infty}\frac{1}{N} \left(
g_{N-1}(a_1) - g_{-1}(a_1) + \sum\limits_{i=0}^{N-1} 1
\right)\\
&= \lim\limits_{N\ra\infty}\frac{1}{N} \big(
g_{N-1}(a_1) - g_{-1}(a_1) + N
\big)\\
&= 1 + \lim\limits_{N\ra\infty}\frac{1}{N} \big(
g_{N-1}(a_1) - g_{-1}(a_1) \big)\\
&= 1.
\end{align*}
\end{proof}

\section{Future directions}\label{sec:future tense}
As mentioned before, Einstein and Propp (and others) have generalized the piecewise-linear toggles $t_e$ further to the birational setting, an idea they credit to Kirillov and Berenstein~\cite{berenstein}.
In that generalization, the poset labels are elements of a semifield (e.g.\ $\rr^+$), and in the definition of toggling,
0, addition, subtraction, max, and min are respectively replaced with 1, multiplication, division, addition, and a ``parallel summation.''  Any result that holds true using the semifield axioms (so no use of subtraction nor additive inverses) holds
for the piecewise-linear toggling by working over the tropical semiring discussed in~\cite{tropical-semiring}. This gives a fruitful technique for proving results about piecewise-linear toggling or even just combinatorial toggling.
The author believes that the toggles $\tau_e$ we have defined for $\calc(P)$ can similarly be generalized to the birational setting.  This will likely prove useful in studying $\tau_e$ on $\calc(P)$ or $\cala(P)$ and is the next direction in which the author has begun to collaborate with others for further research.

Also worth noting is that many of the results proved here are at the purely group-theoretic level.  For example, the proofs of
Theorem~\ref{thm:tau-star}, Lemma~\ref{lem:tau-star}, and every result in Subsection~\ref{subsec:gp} only rely on the algebraic properties proven previously. They use the properties that toggles are involutions, conditions for commutativity of toggles, and the homomorphism from $\tog_\calj(P)$ to $\tog_\cala(P)$ (proven later to be an isomorphism) given by $t_e\mapsto t_e^*$ of Theorem~\ref{thm:t-star}.  Due to this, the piecewise-linear analogues extend automatically after proving analogues of these algebraic conditions.

So one could abstract from $\tog_\cala(P)$ to a generic
group $G$ generated by involutions $\{\gamma_e\;|\;e\in P\}$ with relation $\gamma_x, \gamma_y$ that commute if $x$ and $y$ are incomparable (so $\gamma_e$ mimics $\tau_e$).
If one defines
$h_e:=\gamma_{e_1}\cdots \gamma_{e_k} \gamma_{e}\gamma_{e_1}\cdots \gamma_{e_k}$ for $e_1,\dots,e_k$ the elements $e$ covers (so $h_e$ mimics $t_e^*$)
and adds relations in $G$ that $h_x$ and $h_y$ commute when neither $x$ nor $y$ covers the other, then in $G$ one automatically obtains analogues of several results discussed here.
Exploring this generalization may prove useful, and it may even be possible
that this idea could be naturally extended from posets to a larger class of objects (such as directed graphs which generalize Hasse diagrams).

\section*{Acknowledgements}
The author thanks Jessica Striker for motivating the study of generalized toggle groups.
The author is also grateful for David Einstein, James Propp, and Tom Roby for many helpful conversations about dynamical algebraic combinatorics over the years.  Additionally, the author thanks the group he worked with on toggling noncrossing partitions, which includes the aforementioned Einstein and Propp, as well as Miriam Farber, Emily Gunawan, Matthew Macauley, and Simon Rubinstein-Salzedo.  The author began to discover the results of this paper after
exploring if the homomesy for toggling noncrossing partitions also holds for nonnesting partitions.
The author is also quite grateful for an anonymous referee whose multiple careful readings and numerous helpful comments have been of great assistance in improving the paper, and also for suggesting one of the mentioned directions for future research.

\bibliography{bibliography}
\bibliographystyle{halpha}

\end{document}